\newtheorem{theorem}{Theorem}[section]
\newtheorem{lemma}[theorem]{Lemma}
\newtheorem*{lemma*}{Lemma}
\newtheorem*{lemmacoords}{Lemma \ref{coords}}
\newtheorem*{lemmaC2}{Lemma \ref{C2}}
\newtheorem*{definition*}{Definition}
\newtheorem{definition}[theorem]{Definition}
\newtheorem{corollary}[theorem]{Corollary}
\theoremstyle{definition}
\newtheorem*{example}{Example}
\newtheorem*{remark}{Remark}
\numberwithin{equation}{section}
\let\inf\relax \DeclareMathOperator*\inf{\vphantom{p}inf}
\newcommand{\leqn}{\begin{equation}\label}
\def\endeqn{\end{equation}}
      \def\RR{\mathbb{R}}
\newcommand{\Ss}{\mathbb{S}}
\newcommand{\NN}{\mathbb{N}}
\def\tr{\mathop\mathrm{tr}\nolimits}
\def\dim{\mathrm{dim}} 
 \def\ZZ{\mathbb{Z}} 
\def\vphi{\varphi}
\newcommand{\cB}{\mathcal{B}}
\newcommand{\cC}{\mathcal{C}}
\newcommand{\cE}{\mathcal{E}}
\newcommand{\cQ}{\mathcal{Q}}
\newcommand{\cL}{\mathcal{L}}
\newcommand{\cH}{\mathcal{H}}
\newcommand{\Id}{\mathrm{Id}}
\newcommand{\supp}{\mathrm{supp\,}}
\newcommand{\D}[3]{D^{#1}\hspace{-3 pt}\left(#2,#3\right)}
\newcommand{\Da}[3]{\twid{D}^{#1}\hspace{-3 pt}\left(#2,#3\right)}
\newcommand{\hd}[3]{d^{#1}\hspace{-3 pt}\left(#2,#3\right)}
\newcommand{\hda}[3]{\twid{d}^{#1}\hspace{-3 pt}\left(#2,#3\right)}
\newcommand{\frexp}[2]{^{\frac{#1}{#2}}}
\newcommand{\inv}{^{-1}}
\newcommand{\sleq}{\leqslant}
\newcommand{\sgeq}{\geqslant}
\newcommand{\twid}[1]{\widetilde{#1}}
\newcommand{\meang}{\measuredangle}
\newcommand{\Tan}{\mathrm{Tan}}
\newcommand{\res}{\hbox{{\vrule height .22cm}{\leaders\hrule\hskip .2cm}}}
\renewcommand{\@makefnmark}{\mbox{\textsuperscript{}}}
\def\adots{\mathinner{\mkern2mu\raise0pt\hbox{.}  
\mkern2mu\raise4pt\hbox{.}\mkern1mu
\raise7pt\vbox{\kern7pt\hbox{.}}\mkern1mu}}
\begin{document}

\title{Singular points of H\"older asymptotically optimally doubling measures}
\author{Stephen Lewis
\footnote{The author was paritally supported by NSF DMS 0838212 during this research}
\\ Department of Mathematics\\ University of Washington\\  \textsf{stedalew@u.washington.edu}
}

\date{}

\maketitle

\abstract{We consider the question of how the doubling characteristic of a measure determines the regularity of its support. The question was considered in \cite{DKT} for codimension 1 under a crucial assumption of flatness, and later in \cite{PTT} in higher codimension. However, the studies leave open the geometry of the support of such measures in a neighborhood about a non-flat point of the support. We here answer the question (in an almost classical sense) for codimension-1 H\"older doubling measures in $\RR^4$.}

\vspace{10pt}

\noindent AMS-Subject Classification: 28E99, 28A75

\noindent Keywords: Uniform measure, asymptotically optimally doubling, parametrization

\section{Introduction}

In this paper we study the relationship between the optimal doubling properties of a measure and the regularity and geometry of its support. This question had been considered in \cite{DKT} and \cite{PTT} where one of their crucial hypothesis was a baseline assumption of flatness. Roughly speaking they showed that if a Radon measure doubles asymptotically like Lesbegue measure of the appropriate dimension and the support of the measure is sufficiently flat then it can be locally parameterized as the image of an open set of the plane. Their study leaves open the question of what happens in the presence of non-flat points. In this paper we address that question.

An $(n-1)$-uniform measure on $\RR^n$ is one for which the measure of any ball of radius $r$ centered in the support is the same as a ball of $m$-dimensional Lebesgue measure, $\omega_{n-1} r^{n-1}$. Kowalski and Preiss showed in \cite{KP} that an $(n-1)$-uniform measure on $\RR^n$ is, up to translation and rotation, surface measure on either a hyperplane or the cone $\cC = \{x_1^2 + x_2^2 + x_3^2 = x_4^2\}$ (hereforward called a KP cone). An $(n-1)$-asymptotically optimally doubling measure is one whose asymptotic doubling properties coincide with $(n-1)$-dimensional Lebesgue measure (see Definition \ref{mureg}). Our first main result in Section 3 is to show that the support of an $(n-1)$-asymptotically optimally doubling measure at a nonflat point is well approximated by a (translated and rotated) KP cone (see Definition \ref{thetas}). In Sections 4 and 5 we suppose that the Lebesgue doubling holds up to a H\"older error term (see Definition \ref{mureg}), and show that in some neighborhood of a nonflat point, the support admits a $C^{1,\beta}$ parametrization by a KP cone. The KP cone is of course singular, so adequate care is taken to make this precise.  In \cite{KT}, the authors showed that the tangent measures of an $(n-1)$-asymptotically optimally doubling measure are $(n-1)$-uniform. Coupling this with the classification of \cite{KP}, the appearance of KP cones should not be surprising in the context of this paper.

 Section 5 fits into a larger picture of using set approximations to construct parametrizations. The simplest case of this is Reifenberg's topological disk theorem which roughly speaking says that if $\Sigma$ is a closed set containing 0 such that every point in $B(0,1)\cap \Sigma$ is well approximated by a plane at all scales $0<r\sleq 1$, then $\Sigma\cap B(0,1)$ admits a $C^{0,\alpha}$ parametrization by a disk (see Theorem \ref{DKTdisk}). Similar situations and generalizations include \cite{Ta} where $C^{1,\beta}$ parametrizations are constructed for approximately minimal sets in $\RR^3$, \cite{DT} where $C^{0,\alpha}$ parametrizations for sets which have holes are constructed, and \cite{DDT} where $C^{0,\alpha}$ parametrizations for sets which are very close to the minimal cones of \cite{Ta} are constructed.

We begin by giving some precise definitions. We take $B(x,r)$ to be the closed ball of center $x$ and radius $r$ in $\RR^n$.
Let $A, B\subseteq \RR^n$ be nonempty sets. We define
\begin{equation}\label{ddef}
d(A, B) = \sup_{a\in A}\inf_{b\in B} |a-b|.
\end{equation}
Note that $d$ is neither a symmetric quantity nor a metric, but does satisfy the triangle inequality. Further, if $A$ and $B$ are closed sets, then $A\subseteq B$ if and only if $\hd{} A B = 0$.
We then define the \emph{Hausdorff distance} between $A$ and $B$ as
\begin{equation}\label{Ddef}
D(A, B) = \max( \hd{}A B, \hd{} B A).
\end{equation}
Note that $D$ forms a metric on the nonempty compact subsets of $\RR^n$.

For $x\in \RR^n$, $r>0$, and sets $A, B\subseteq \RR^n$ such that $A\cap B(x,r)\neq \emptyset$ and $B\cap B(x,r)\neq \emptyset$, we define
\begin{equation}\label{dxrdef}
\hd{x, r}A B = \frac 1 r \hd{}{A\cap B(x,r)}{B\cap B(x,r)}.
\end{equation}
Note that $d^{x,r}$ is neither a symmetric quantity nor a metric, but does satisfy the triangle inequality. Further, if $A$ and $B$ are closed sets intersecting $B(x,r)$, then $A\cap B(x,r)\subseteq B\cap B(x,r)$ if and only if $\hd{x,r} A B = 0$. We then define the \emph{relative Hausdorff distance at point $x$ and scale $r$} to be 
\begin{equation}\label{Dxrdef}
\D{x,r}A B = \max( \hd{x,r} A B, \hd{x,r} B A).
\end{equation}
Note that $D^{x,r}$ forms a pseudometric on closed subsets of $\RR^n$ intersecting $B(x,r)$ (that is, it satisfies the triangle inequality). Further, it forms a metric on the set of closed subsets of $\RR^n$ modulo $B(x,r)^c$ (that is, $A\sim B$ if $A\cap B(x,r) = B\cap B(x,r)$). We note also that $0\sleq \D{x,r}AB\sleq 2$.

Let a \emph{KP cone based at $y\in\RR^n$} be a set $\cC$ such that in some orthonormal coordinates $(x_1,\ldots,x_n)$ centered at the origin,
$$\cC - y = \{x_4^2 = x_1^2+ x_2^2+x_3^2\}.$$

\begin{definition}\label{thetas}\emph{For a set $\Sigma\subseteq \RR^n$, an integer $0< m \sleq n$, a point $x\in\Sigma$ and a scale $r>0$, we define the following three quantities:
\begin{itemize}
  \item$\theta_\Sigma^{P(m)}(x,r) = \inf\{ \D{x,r}\Sigma L\mid L\mbox{ is an affine $m$-plane containing }x\}$. When $m$ is clear (usually $m=n-1$) or unimportant to specify, then we will simplify the notation by setting $\theta_\Sigma^P(x,r) = \theta_\Sigma^{P(m)}(x,r)$.
  \item $\theta_\Sigma^C(x,r) =  \inf\{ \D{x,r}\Sigma \cC\mid \cC\mbox{ is a KP cone based at }x\}$.
  \item $\vartheta_\Sigma^C(x,r) = \inf\{ \D{x,r}\Sigma \cC\mid \cC\mbox{ is a KP cone containing }x\}$.
\end{itemize}
Roughly speaking, these quantities estimate how well a set is approximated in $B(x,r)$ by a plane containing $x$, a KP cone based at $x$, or KP cone containing $x$ respectively (see Figure 1).
We call a point $x\in \Sigma$ \emph{flat} if $\theta^P_\Sigma(x,r)\to 0$ as $r\downarrow 0$. A point which is not flat is \emph{nonflat}.  The set $\Sigma$ is said to be \emph{$\delta$-Reifenberg flat} if for all compact sets $K\subseteq \Sigma$, there exists a radius $r_K>0$ such that for all $x\in K$, $0<r\sleq r_K$, $\theta^P_\Sigma(x,r)\sleq \delta$. The set $\Sigma$ is said to be \emph{Reifenberg flat with vanishing constant} if it is $\delta$-Reifenberg flat for every $\delta>0$. Equivalently, $\Sigma$ is Reifenberg flat with vanishing constant if and only if the quantity $\theta^P_\Sigma(x,r)\to 0$ uniformly on compact sets as $r\downarrow0$.
}\end{definition}

\begin{figure}
\begin{center}
\includegraphics[width=300pt]{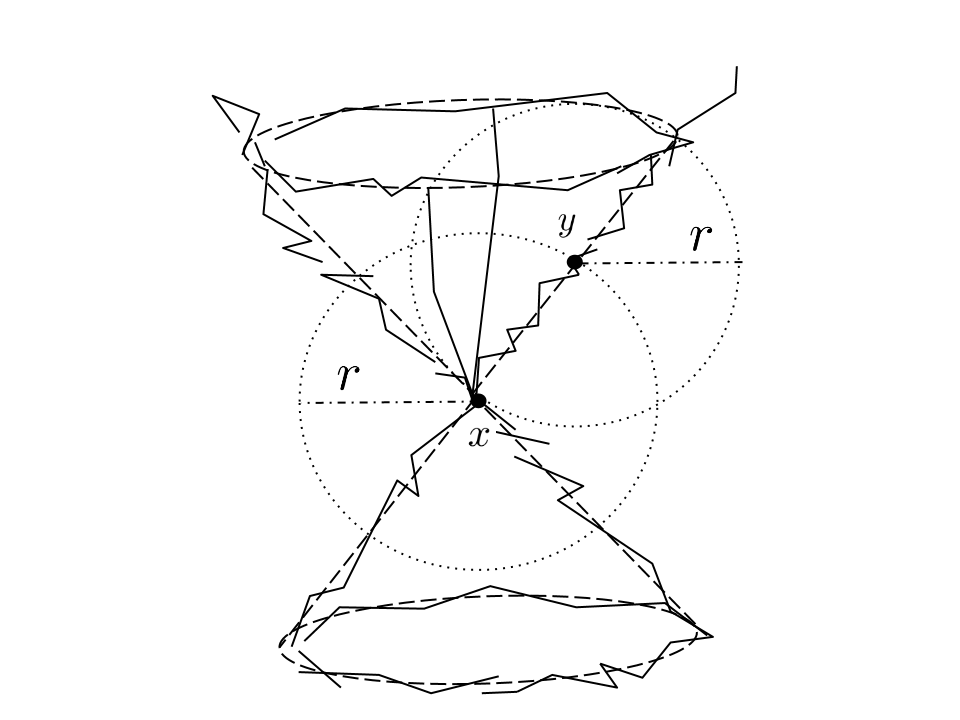}
\caption{A set $\Sigma$ such that $\theta_\Sigma^C(x,r)$ and $\vartheta_\Sigma^C(y,r)$ are small, $\theta_\Sigma^C(y,r)$ big.}
\end{center}
\end{figure}

\begin{remark}  Note that the quantity $\vartheta^{P(m)}(x,r)$, appropriately defined, would be the same as the quantity $\theta_\Sigma^{P(m)}(x,r)$.\end{remark}

We now give increasingly strong conditions on the regularity of a measure $\mu$. First, we define the \emph{support of $\mu$} as 
\begin{equation}\label{suppdef}
\supp(\mu) = \{x : \mu( B(x,r) )> 0 \mbox{ for all } r>0\}.
\end{equation} 
The support may be alternatively viewed as the minimal closed set of comeasure 0.

\begin{definition}\label{mureg} Let $\mu$ be a nonzero Radon measure on $\RR^n$ and $\Sigma = \supp \mu$.
 \begin{enumerate}[(1)]
 \item We say $\mu$ is \emph{locally doubling at $x\in \Sigma$} if there exists a neighborhood $U$ of $x$ and a constant $C$ such that for all $y\in\Sigma\cap U$ and all $r>0$ such that $B(y,r)\subseteq U$,
     $$\frac {\mu(B(y,r))}{\mu(B(y,r/2))} \sleq C.$$
 \item We say that $\mu$ is \emph{locally doubling} if it is locally doubling at all $x\in\Sigma$.
 \item For $x\in \Sigma$, $r>0$, and and integer $0< m\sleq n$ (understood) we define
 \leqn{Rdef1}
 R(\mu, x, r) = \sup\left\{ \left| \frac{\mu(B(x,\tau r'))}{\mu(B(x,r'))} - \tau^m \right| :0<r'\sleq r, \tau\in[1/2,1] \right\}.
 \endeqn
 For $K\subseteq\Sigma$, we define
 \leqn{Rdef2}
 R(\mu, K, r) = \sup_{x\in K} R(\mu, x, r).
 \endeqn
 \item For an integer $0<m\sleq n$, we say that $\mu$ is \emph{$m$-asymptotically optimally doubling} if for all compact sets $K\subseteq \Sigma$ and $\delta>0$, there exists a radius $r_0>0$ such that
 \leqn{AOD1}
 R(\mu, K, r_0) < \delta.
 \endeqn
 That is, for all $x\in K$, $0<r\sleq r_0$ and $\tau\in [1/2,1]$,
     \begin{equation}\label{AOD}
     \left|\frac{\mu(B(x,\tau\,r))}{\mu(B(x,r))} - \tau^m\right|<\delta.
     \end{equation}
     Equivalently, we may say that the quantity $\frac{\mu(B(x,\tau r))}{\mu(B(x,r))}-\tau^m$ converges to $0$ uniformly on compact sets as $r\downarrow 0$ (independent of $\tau\in[1/2,1]$). In the case where $m$ is understood, we will drop it from the beginning.
 \item For $\alpha>0$ and an integer $0<m\sleq n$, we say that $\mu$ is \emph{$(\alpha,m)$-H\"older asymptotically optimally doubling} if for all compact sets $K\subseteq \Sigma$, there exist a constant $C_K$ and a radius $r_0>0$ such that for $0<r\sleq r_0$,
\begin{equation}
R(\mu, K, r)\sleq C_K r^\alpha.
\endeqn
That is, for all $x\in K$, $0<r\sleq r_0$, $\tau\in[1/2,1]$,
     \begin{equation}
     \left|\frac{\mu(B(x,\tau\,r))}{\mu(B(x,r))} - \tau^m\right|\sleq C_K r^\alpha.
     \end{equation}
     In the case where $\alpha$ and $m$ are understood, we will drop them from the beginning.
 \item For $\alpha >0$ and an integer $0<m\sleq n$, we say $\mu$ is \emph{$(\alpha,m)$-H\"older asymptotically uniform} if for each compact set $K\subseteq \Sigma$, there exist a constant $C_K$ and a radius $r_0>0$ such that for all $x\in K$ and $0<r\sleq r_0$,
     \begin{equation}
     \left|\frac{\mu(B(x,r))}{\omega_m r^m}-1\right|\sleq C_K r^\alpha,
     \end{equation}
where $\omega_m = \cL^m(B^m(0,1))$.
 \item For an integer $0<m\sleq n$, we say that $\mu$ is \emph{$m$-uniform} if for all $x\in \Sigma$, $r>0$,
\begin{equation}
\mu(B(x,r)) = \omega_m r^m,
\end{equation}
where $\omega_m = \cL^m(B^m(0,1))$.

 \end{enumerate}
 \end{definition}

Although Definition \ref{mureg}(6) gives a stronger property than Definition \ref{mureg}(5), we observe the following result. We use the notation $\mu\res_g$ to be the measure $\mu\res_g(A) = \int_A g \,d\mu$.

\begin{lemma}[\cite{DKT},\cite{PTT}]\label{HAL}
 Let $\mu$ be an $(\alpha,m)-$H\"older asymptotically optimally doubling measure on $\RR^n$. Then the density $f(x) = \lim_{r\downarrow0} \theta(x,r)$ exists and is finite and nonzero at $\mu$-almost every $x\in \RR^n$ and $\nu = \mu\res_{1/f}$ is $(\frac{\alpha}{\alpha+1},m)$-H\"older asymptotically uniform.
\end{lemma}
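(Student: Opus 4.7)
The plan is to first prove pointwise Hölder convergence of the density quotient $\theta(x,r) := \mu(B(x,r))/(\omega_m r^m)$ to a positive finite limit $f(x)$, then upgrade this to Hölder continuity of $f$ itself with exponent $\alpha/(\alpha+1)$, and finally combine the two estimates to verify asymptotic uniformity of $\nu$.

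For the first step I would apply the HAOD hypothesis at $\tau = 1/2$, giving
$$\left| \frac{\theta(x, r/2)}{\theta(x,r)} - 1 \right| \;=\; 2^m \left| \frac{\mu(B(x,r/2))}{\mu(B(x,r))} - 2^{-m} \right| \;\sleq\; 2^m C_K\, r^\alpha$$
for $x$ in a compact $K \subseteq \supp \mu$ and $0 < r \sleq r_0$. Taking logs (for $r$ small enough to make the right side less than $1/2$) and telescoping along $r_k := 2^{-k} r_0$ produces a geometrically summable Cauchy sequence, so $\log \theta(x, r_k)$ converges to some $\log f(x) \in \RR$ for each $x \in \supp\mu$ (starting from the finite real $\log \theta(x, r_0) > -\infty$), with residual bound $|\log \theta(x, r) - \log f(x)| \sleq C r^\alpha$. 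Exponentiating yields $|\theta(x, r) - f(x)| \sleq C f(x)\, r^\alpha$ with $C$ depending only on $K$.

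For Hölder continuity of $f$, fix $y$ with $|y - x| = s$ small and pick an intermediate scale $\rho \geq s$. The inclusions $B(x, \rho - s) \subseteq B(y, \rho) \subseteq B(x, \rho + s)$ combined with Step 1 applied at $x$ give
$$\theta(y, \rho) \;=\; f(x) \bigl(1 + O(\rho^\alpha + s/\rho)\bigr).$$
Comparing with the Step 1 estimate at $y$, namely $\theta(y, \rho) = f(y)(1 + O(\rho^\alpha))$, and using the rough comparability $f(y) \asymp f(x)$ this forces for $\rho$ small, one obtains $|f(y) - f(x)| \sleq C f(x)(\rho^\alpha + s/\rho)$. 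I would then balance the two error terms by choosing $\rho = s^{1/(\alpha+1)}$, which produces $|f(y) - f(x)| \sleq C f(x)\, s^{\alpha/(\alpha+1)}$. This two-scale sandwich with $\rho$-optimization is the main obstacle: it is precisely what drops the exponent from $\alpha$ to $\alpha/(\alpha+1)$, and any cruder choice (e.g.\ $\rho \asymp s$) would lose the Hölder bound.

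Finally, for $y \in B(x, r)$, Step 2 gives $1/f(y) = (1/f(x))\bigl(1 + O(r^{\alpha/(\alpha+1)})\bigr)$, so integrating against $\mu$ yields
$$\nu(B(x, r)) \;=\; \int_{B(x, r)} \frac{d\mu(y)}{f(y)} \;=\; \frac{\mu(B(x, r))}{f(x)} \bigl(1 + O(r^{\alpha/(\alpha+1)})\bigr) \;=\; \omega_m r^m \bigl(1 + O(r^{\alpha/(\alpha+1)})\bigr),$$
where the last equality uses $\mu(B(x,r)) = \omega_m r^m f(x)(1 + O(r^\alpha))$ from Step 1 and the fact that $r^\alpha$ is dominated by $r^{\alpha/(\alpha+1)}$ for small $r$. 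This is exactly the claimed $(\alpha/(\alpha+1), m)$-Hölder asymptotic uniformity of $\nu$.
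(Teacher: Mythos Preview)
The paper does not supply its own proof of this lemma; it is quoted directly from \cite{DKT} and \cite{PTT}. Your argument is correct and is essentially the standard one found in those references: dyadic telescoping of the ratio $\theta(x,r/2)/\theta(x,r)$ gives convergence of $\theta(x,r)$ to a positive finite $f(x)$ with rate $r^\alpha$; the two-scale sandwich $B(x,\rho-s)\subseteq B(y,\rho)\subseteq B(x,\rho+s)$ together with the optimal choice $\rho=|x-y|^{1/(\alpha+1)}$ yields the $\alpha/(\alpha+1)$ H\"older exponent for $f$ on $\Sigma$; and integrating $1/f$ against $\mu$ combines the two into the asymptotic uniformity of $\nu$.

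One small point worth making explicit: you telescope along the dyadic sequence $r_k=2^{-k}r_0$, but the conclusion is stated for the full continuum $r\downarrow 0$. The fill-in between dyadic scales comes from applying the HAOD bound at a general $\tau\in[1/2,1]$, which gives $|\theta(x,\tau r_k)/\theta(x,r_k)-1|\sleq 2^m C_K r_k^\alpha$; you use this implicitly but do not say it. Everything else is in order.
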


As a consequence of Lemma \ref{HAL}, we note that to study the support of H\"older asymptotically optimally doubling measures, we can study the support of H\"older asymptotically uniform measures. We note, however, that this is not true in general for asymptotically optimally doubling measures. That is, there are asymptotically optimally doubling measures whose measure is not given by the density of the set.

We now give precise statements of the theorems mentioned earlier. We begin with a theorem from \cite{DKT} which says that 

\begin{theorem}[\cite{DKT}]\label{DKTaod}
Suppose that $\mu$ is an $(n-1)$-asymptotically optimally doubling measure on $\RR^n$, and $\Sigma = \supp \mu$. If $n>3$, suppose also that $\Sigma$ is $1/(4\sqrt 2)$-Reifenberg flat. Then $\Sigma$ is Reifenberg flat with vanishing constant.
\end{theorem}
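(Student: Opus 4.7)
The plan is a contradiction argument combining a tangent-measure blow-up with the Kowalski--Preiss classification \cite{KP} and a direct geometric computation showing that KP cones are uniformly far from hyperplanes at unit scale. Suppose the conclusion fails: then there exist a compact set $K\subseteq \Sigma$, $\delta_0>0$, and sequences $x_k\in K$, $r_k\downarrow 0$ such that $\theta^P_\Sigma(x_k,r_k)\geq \delta_0$. After passing to a subsequence, $x_k\to x_\infty\in K$. Form the normalized blow-ups
$$\mu_k(E) = \frac{\omega_{n-1}}{\mu(B(x_k,r_k))}\,\mu(x_k+r_k E),$$
so that $\mu_k(B(0,1))=\omega_{n-1}$; the $(n-1)$-AOD hypothesis makes them uniformly locally doubling. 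A standard extraction yields a weak-$*$ subsequential limit $\mu_\infty$. By the tangent-measure argument of \cite{KT}, AOD passes to the limit and forces $\mu_\infty$ to be $(n-1)$-\emph{uniform}, and the uniform doubling also implies that the rescaled supports $\Sigma_k := (\Sigma-x_k)/r_k$ converge to $\Sigma_\infty := \supp \mu_\infty$ locally in Hausdorff metric.

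By the Kowalski--Preiss classification, $\Sigma_\infty$ is either a hyperplane $L_\infty$ through $0$, or (when $n\geq 4$) a (possibly cylindrical) KP cone $\cC_\infty$ based at $0$. In the hyperplane case, $D^{0,1}(\Sigma_k,L_\infty)\to 0$, which after unwinding gives
$$\theta^P_\Sigma(x_k,r_k) \leq D^{x_k,r_k}(\Sigma, x_k+r_k L_\infty) = D^{0,1}(\Sigma_k, L_\infty)\longrightarrow 0,$$
contradicting $\theta^P_\Sigma(x_k,r_k)\geq \delta_0$. In the KP-cone case, $n>3$ and the Reifenberg flatness hypothesis supplies hyperplanes $P_k$ through $x_k$ with $D^{x_k,r_k}(\Sigma,P_k)\leq 1/(4\sqrt 2)$. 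Grassmannian compactness yields a further subsequence with $(P_k-x_k)/r_k\to L_0$ for some hyperplane $L_0$ through $0$, and Hausdorff convergence of supports then gives $D^{0,1}(\cC_\infty, L_0)\leq 1/(4\sqrt 2)$.

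The final contradiction comes from the standalone geometric claim that for every KP cone $\cC$ based at $0$ and every hyperplane $L$ through $0$,
$$D^{0,1}(\cC,L) \geq \tfrac{1}{\sqrt 2} > \tfrac{1}{4\sqrt 2}.$$
For the pure four-dimensional cone, write a unit normal to $L$ as $\nu=(\vec\nu,\nu_4)$ with $|\vec\nu|^2+\nu_4^2=1$ and parametrize points of $\cC$ as $p=s(\hat u,\epsilon)$ with $s\geq 0$, $|\hat u|=1$, $\epsilon\in\{\pm 1\}$; since $|p|=\sqrt 2\, s$, we have $p\in B(0,1)$ iff $s\leq 1/\sqrt 2$, and
$$\sup_{p\in\cC\cap B(0,1)} \mathrm{dist}(p,L) = \tfrac{1}{\sqrt 2}\bigl(|\vec\nu|+|\nu_4|\bigr) \geq \tfrac{1}{\sqrt 2},$$
since $(|\vec\nu|+|\nu_4|)^2 \geq |\vec\nu|^2+|\nu_4|^2=1$. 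The cylindrical case $n>4$ reduces to the four-dimensional one by testing the supremum only on the non-cylindrical factor. I expect the main technical obstacle to be not this geometric computation but rather the careful verification that the blow-up limit $\mu_\infty$ is genuinely $(n-1)$-uniform (and not merely AOD) and that $\Sigma_k\to \Sigma_\infty$ in the local Hausdorff metric; both ingredients constitute the tangent-measure apparatus of \cite{KT} and must be quoted with care. Note finally that the constant $1/(4\sqrt 2)$ is not sharp: any constant strictly less than $1/\sqrt 2$ would suffice for this argument.
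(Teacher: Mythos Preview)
The paper does not give its own proof of this statement; Theorem~\ref{DKTaod} is quoted from \cite{DKT}. So there is nothing to compare against in the present paper, and I will simply assess your argument on its merits.

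Your strategy---blow up along a sequence violating the conclusion, identify the limit via \cite{KT} and \cite{KP}, then rule out the KP-cone alternative using the Reifenberg-flat hypothesis---is the right one. But there is a genuine gap in the KP-cone branch. You assert that the limiting support $\cC_\infty$ is a KP cone \emph{based at $0$}, and your final computation $D^{0,1}(\cC,L)\geq 1/\sqrt 2$ is carried out only for cones based at $0$. However, because your blow-up centers $x_k$ vary, $\mu_\infty$ is only a \emph{pseudo}-tangent measure. Theorem~\ref{KTaod} guarantees that $\cC_\infty$ is a KP cone \emph{containing} $0$, not one based there; the refinement to ``based at $0$'' (Corollary~\ref{tanmeasures}) uses the tangent-to-tangent lemma and Preiss connectedness, both of which require a fixed blow-up center. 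And your geometric inequality fails badly for cones based far from $0$: if $\cC$ is based at $y$ with $|y|$ large, Lemma~\ref{KPflat1}(2) gives $D^{0,1}(\cC,T_0\cC)\leq C_0/|y|$, which can be arbitrarily small.

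The fix is short. Let $y$ be the vertex of $\cC_\infty$ and choose $z_k\in\Sigma_k$ with $z_k\to y$; the corresponding points $w_k=x_k+r_k z_k\in\Sigma$ converge to $x_\infty$, so for $k$ large they lie in a fixed compact set and the Reifenberg-flat hypothesis gives $\theta^P_\Sigma(w_k,r_k)\leq 1/(4\sqrt 2)$, i.e.\ $\theta^P_{\Sigma_k}(z_k,1)\leq 1/(4\sqrt 2)$. Passing to the limit (with the usual care, e.g.\ via $\twid D$ and Lemma~\ref{ConeG1}) yields $\theta^P_{\cC_\infty}(y,1)\leq 1/(4\sqrt 2)$, contradicting Lemma~\ref{KPflat1}(1), which gives $\theta^P_{\cC_\infty}(y,1)=1/\sqrt 2$. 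In other words, apply the flatness hypothesis at points tracking the \emph{singular} point of the limit cone, not at the blow-up centers $x_k$ themselves. With this correction your argument goes through, and your remark about the non-sharpness of $1/(4\sqrt 2)$ remains valid.
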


In \cite{DKT}, the authors showed a similar statement for arbitrary $m$ (i.e., arbitrary codimension). However, since $m=n-1$ (codimension 1) will be our focus, we omit the generalization and refer the curious reader to \cite{DKT}. We expand their study from the specialized setting of flat points to all $(n-1)-$asymptotically optimally doubling measures and show a global statement akin to Reifenberg flatness, as well as showing that at a nonflat point $x$ the support is well approximated by a KP cone based at $x$.

\begin{theorem}\label{introAOD}
Suppose that $\mu$ is an $(n-1)$-asymptotically optimally doulbing measure on $\RR^n$ and $\Sigma = \supp\mu$. Then 
$$\min(\vartheta_\Sigma^C(x,r),\theta_\Sigma^P(x,r))\to 0\quad\mbox{ as }\quad r\downarrow 0$$
uniformly on compact subsets. Further, if $x$ is a nonflat point, then
$$\theta_\Sigma^C(x,r)\to 0\quad\mbox{ as } r\downarrow 0.$$
\end{theorem}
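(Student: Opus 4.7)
My plan is to prove the first assertion by a compactness and blow-up argument, then combine it with a persistence-of-flatness step to deduce the second; the persistence step will be the main technical obstacle. For the first assertion, I argue by contradiction: suppose there exist a compact $K\subseteq\Sigma$, $\delta>0$, and sequences $x_k\in K$, $r_k\downarrow 0$ with both $\vartheta_\Sigma^C(x_k,r_k)>\delta$ and $\theta_\Sigma^P(x_k,r_k)>\delta$. After extracting, $x_k\to x_\infty\in K$. I rescale: $\mu_k = c_k(T_{x_k,r_k})_\#\mu$ with $T_{x_k,r_k}(y)=(y-x_k)/r_k$ and $c_k=1/\mu(B(x_k,r_k))$. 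The asymptotically optimally doubling hypothesis makes the $\mu_k$ locally uniformly bounded and nontrivial, so along a further subsequence $\mu_k\rightharpoonup\nu$ weak-$*$ and $\supp\mu_k\to\supp\nu$ in local Hausdorff distance. By \cite{KT} $\nu$ is $(n-1)$-uniform, and by \cite{KP} its support is either a hyperplane or a KP cone; it must contain $0$ since every $\supp\mu_k$ does. In the plane-through-$0$ case the convergence unscales to $\theta_\Sigma^P(x_k,r_k)\to 0$; in the KP-cone-containing-$0$ case, to $\vartheta_\Sigma^C(x_k,r_k)\to 0$; each contradicts the standing assumption.

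Before handling nonflat points I will establish the following persistence-of-flatness claim: if $\theta_\Sigma^P(x,s_0)<\epsilon_0$ at some sufficiently small scale $s_0$ with $\epsilon_0$ depending only on $n$, then $\theta_\Sigma^P(x,r)\to 0$ as $r\downarrow 0$. The idea is that a single small planar approximation at $(x,s_0)$ propagates, via the AOD error bound, to nearby points at smaller scales, making $\Sigma\cap B(x,s_0/2)$ a $\tfrac{1}{4\sqrt{2}}$-Reifenberg flat subset at scales up to $s_0/2$. A local application of Theorem \ref{DKTaod} (to the measure rescaled by $T_{x,s_0}$) then upgrades this to vanishing constant, giving $\theta_\Sigma^P(x,r)\to 0$. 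Consequently, $x$ is nonflat if and only if there is $\eta>0$ with $\theta_\Sigma^P(x,r)\geq\eta$ for all sufficiently small $r$.

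For the second assertion, fix a nonflat $x$. By persistence, $\theta_\Sigma^P(x,r)\geq\eta>0$ for all small $r$, and by the first assertion (with $K=\{x\}$), $\vartheta_\Sigma^C(x,r)\to 0$. Suppose for contradiction that $\theta_\Sigma^C(x,r_k)\geq\delta$ along some $r_k\downarrow 0$. Rerun the blow-up at $x$ along $r_k$ to produce an $(n-1)$-uniform tangent measure $\nu$ with support $\Sigma_\infty$ containing $0$; upper semicontinuity of $\theta^P$ under Hausdorff convergence of supports gives $\theta_{\Sigma_\infty}^P(0,1)\geq\eta$, so $\Sigma_\infty$ is not a plane and is therefore a KP cone with some vertex $y_\infty\in\RR^n$. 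If $y_\infty\neq 0$, then $0$ is a smooth point of $\Sigma_\infty$ at distance $|y_\infty|$ from the vertex, so the tangent plane $L_0$ to $\Sigma_\infty$ at $0$ satisfies $\D{0,\rho}{\Sigma_\infty}{L_0}\leq C\rho/|y_\infty|$ for $0<\rho<|y_\infty|/2$. Transporting this estimate back through the blow-up yields $\theta_\Sigma^P(x,\rho r_k)<\eta$ for a fixed sufficiently small $\rho$ and all large $k$, contradicting the lower bound from persistence. Hence $y_\infty=0$, so $\Sigma_\infty$ is a KP cone based at the rescaled image of $x$; unscaling exhibits a KP cone based at $x$ approximating $\Sigma$ at scale $r_k$ to arbitrary precision, so $\theta_\Sigma^C(x,r_k)\to 0$, the desired contradiction.
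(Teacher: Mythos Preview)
Your first assertion is proved correctly by the same blow-up and compactness argument the paper uses in Theorem~\ref{AODstructure}: extract a pseudo-tangent, invoke Theorem~\ref{KTaod} and the Kowalski--Preiss classification, and read off the contradiction. Note that the paper observes $\vartheta_\Sigma^C(x,r)\sleq\theta_\Sigma^P(x,r)$ always (planes are limits of KP cones), so the minimum is just $\vartheta_\Sigma^C$, and the paper states the conclusion in that equivalent form.

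For the second assertion your route diverges from the paper's. The paper does not prove a persistence-of-flatness lemma directly; instead it invokes Preiss's connectedness of the tangent cone (Corollary~\ref{KPTstrongconnectedness}) together with the tangents-of-tangents inclusion (Lemma~\ref{tangenttotangent}) to conclude that every tangent measure at a nonflat point is a KP cone \emph{based at the origin} (Corollary~\ref{tanmeasures}), and then runs the same compactness argument. Your approach avoids Preiss and aims for the same conclusion via a hands-on contradiction, which is attractive.

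However, there is a genuine gap in your persistence step. You assert that a single small value $\theta_\Sigma^P(x,s_0)<\epsilon_0$ ``propagates, via the AOD error bound, to nearby points at smaller scales,'' producing $\tfrac{1}{4\sqrt 2}$-Reifenberg flatness on $B(x,s_0/2)$, after which Theorem~\ref{DKTaod} applies. The AOD error bound is a statement about the ratio $\mu(B(y,\tau r))/\mu(B(y,r))$; it does not by itself propagate flatness from one scale to all smaller scales. Elementary geometry gives $\theta_\Sigma^P(y,s_0/2)\lesssim\epsilon_0$ for $y$ near $x$, but Theorem~\ref{DKTaod} requires control at \emph{every} scale $0<r\sleq s_0/2$, and nothing you have written supplies that. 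This is exactly the step where the paper imports Preiss's deep result.

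Your argument can be repaired without Preiss, but it needs an extra ingredient. Combine Part~1 (which, via $\vartheta^C\sleq\theta^P$, gives $\vartheta_\Sigma^C(y,r)<\epsilon$ uniformly for $y$ near $x$ and all small $r$) with Badger's flatness-propagation lemma (Lemma~\ref{Badflat3}/Corollary~\ref{KPflat2}): uniform smallness of $\vartheta^C$ at all scales plus $\theta^P(y,s_0/2)<\eta$ at one scale yields $\theta^P(y,r)<\delta$ for all $0<r\sleq s_0/2$. That gives the Reifenberg flatness needed to feed into Theorem~\ref{DKTaod}. In $\RR^4$ this is exactly Corollary~\ref{KPflat2}; for general $n$ you must check the dichotomy hypothesis of Lemma~\ref{Badflat3} for translates of KP cones, which follows from the analogue of Lemma~\ref{KPflat1}. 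Once persistence is secured, the remainder of your argument (ruling out $y_\infty\neq 0$ by transporting the smooth-point estimate back through the blow-up) is correct.
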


\cite{DKT} also gave strong regularity results about measures which are $(\alpha,n-1)$-H\"older asymptotically uniform in the special setting of flatness.

\begin{theorem}[\cite{DKT}]\label{DKThaod}
For all $\alpha>0$, there exists $\beta = \beta(\alpha)>0$ with the following property. Suppose that $\mu$ is $(\alpha,n-1)$-H\"older asymptotically uniform. If $n>3$, suppose also that $\supp\mu$ is $\frac 1{4\sqrt 2}$-Reifenberg flat. Then $\Sigma = \supp\mu$ is a $C^{1,\beta}$-manifold of dimension $n-1$.
\end{theorem}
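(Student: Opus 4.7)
The plan is to combine Theorem~\ref{DKTaod} with a quantitative improvement of the Reifenberg flatness and a standard graph-parametrization argument. First, every $(\alpha, n-1)$-H\"older asymptotically uniform measure is in particular $(\alpha, n-1)$-H\"older asymptotically optimally doubling (and hence asymptotically optimally doubling): if $|\mu(B(x,r))/(\omega_{n-1} r^{n-1}) - 1|\sleq C r^\alpha$, then $\mu(B(x,\tau r))/\mu(B(x,r))$ differs from $\tau^{n-1}$ by $O(r^\alpha)$. So Theorem~\ref{DKTaod} applies (invoking the Reifenberg hypothesis when $n>3$) and yields that $\Sigma$ is Reifenberg flat with vanishing constant; that is, $\theta^P_\Sigma(x,r)\to 0$ uniformly on compact subsets as $r\downarrow 0$.

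The heart of the proof is to upgrade this qualitative vanishing to a quantitative H\"older decay $\theta^P_\Sigma(x,r)\sleq C_K r^\beta$, uniformly on compact sets $K\subseteq \Sigma$, for some $\beta = \beta(\alpha) > 0$. I would prove this via a dyadic tilt estimate of the form
\[
\theta^P_\Sigma(x, r/2) \sleq A\, \theta^P_\Sigma(x, r) + C r^\alpha
\]
with a constant $A<1$. The estimate relies on the observation that, at a point $x\in \Sigma$ with $\theta^P_\Sigma(x, r)$ small, the measure $\mu\res_{B(x,r)}$ is close to the $(n-1)$-dimensional Lebesgue measure on the best approximating plane $L(x,r)$ through $x$: the H\"older uniform hypothesis bounds the density deficit by $O(r^\alpha)$ and the Reifenberg approximation bounds the geometric defect by $O(\theta^P_\Sigma(x, r))$. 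Computing, for instance, first moments of $\mu\res_{B(x,r/2)}$ on the two halves of $B(x, r/2)$ cut off by $L(x,r)$---which cancel exactly for Lebesgue measure on $L(x,r)$---pins down the best approximating plane at scale $r/2$ up to an error combining these two contributions. Iterating this inequality dyadically and summing the resulting geometric series produces $\theta^P_\Sigma(x, r)\sleq C_K r^\beta$ with $\beta = \beta(\alpha) > 0$.

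Given this uniform H\"older decay, the tangent plane $T_x\Sigma := \lim_{r\downarrow 0}L(x,r)$ exists at every $x\in\Sigma$, with quantitative rate $\D{x,r}{L(x,r)}{T_x\Sigma}\sleq C_K r^\beta$. The same comparison of best planes carries over to two nearby points: for $x, y$ in a compact subset of $\Sigma$ at distance $s = |x-y|$, both $L(x, 4s)$ and $L(y, 4s)$ approximate $\Sigma$ well on $B(x,2s)\cap B(y,2s)$, and the triangle inequality for $D^{\cdot,\cdot}$ together with the single-point rate forces the angle between them to be at most $C s^\beta$; combining with the single-point convergence rates gives H\"older continuity of $x\mapsto T_x\Sigma$. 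Coupled with Reifenberg's topological disk theorem applied to the $\delta$-Reifenberg flat set $\Sigma$ (for any $\delta>0$ at sufficiently small scales), this H\"older control on tangent planes lifts the topological parametrization to a $C^{1,\beta}$ parametrization locally near any $x_0\in \Sigma$ as a graph over $T_{x_0}\Sigma$, establishing that $\Sigma$ is a $C^{1,\beta}$-manifold of dimension $n-1$.

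The main obstacle is the dyadic tilt estimate of the second paragraph: translating the measure-theoretic H\"older regularity into quantitative control of how much the best approximating plane can rotate between successive scales. Identifying a suitable pinning quantity (such as first moments or centers of mass of $\mu$ on carefully chosen pieces of $B(x,r)$), verifying its stable dependence both on $\mu$ and on the chosen plane, and ensuring the constants remain uniform over compact subsets, is where the technical work concentrates; the resulting exponent satisfies $\beta\sleq\alpha$, with possible loss depending on the damping constant $A$ in the dyadic step.
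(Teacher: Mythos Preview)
The paper does not prove Theorem~\ref{DKThaod}; it is quoted from \cite{DKT} as background, with a more quantitative restatement appearing later as Theorem~\ref{DKTprop} (also attributed to \cite{DKT}). So there is no ``paper's own proof'' to compare against directly; what the paper does expose of the \cite{DKT} machinery is the moment method summarized in Theorem~\ref{DKTbounds}.

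Your outline is broadly sound and matches the architecture of \cite{DKT}: reduce to the asymptotically optimally doubling setting to get vanishing Reifenberg constant, then upgrade to a power rate $\theta_\Sigma^P(x,r)\sleq C_K r^\beta$, then harvest a $C^{1,\beta}$ graph parametrization from H\"older-continuous tangent planes. Where your sketch diverges is in the mechanism for the key decay step. You propose a contraction $\theta_\Sigma^P(x,r/2)\sleq A\,\theta_\Sigma^P(x,r)+Cr^\alpha$ with $A<1$, obtained from ``first moments on two halves.'' The \cite{DKT} argument instead works with the explicit first and second moments $b_{x,r}$ and $Q_{x,r}$ (see (\ref{bdef})--(\ref{qdef})) and the identity in Theorem~\ref{DKTbounds}(3); the approximating plane is read off from the spectral data of $Q_{x,r}$, and the decay comes from comparing $Q_{x,r}$ across nearby scales (items (4)--(6)), not from a direct geometric contraction of $\theta_\Sigma^P$. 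Your contraction inequality is plausible in spirit but would need independent justification---in particular, getting $A<1$ purely from the H\"older-uniform hypothesis and smallness of $\theta_\Sigma^P$ is not automatic, and the ``moments on two halves'' heuristic is too vague to carry it. If you want to stay close to \cite{DKT}, the honest route is to cite Theorem~\ref{DKTprop} (or the corresponding proposition in \cite{DKT}) for the rate $\theta_\Sigma^P(x,r)\sleq C(r/r_1')^\beta$ and then run your third paragraph, which is correct as written.
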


In \cite{DKT}, the authors showed a quantitative version of this statement, but we omit the extra complication until it will prove useful to us later in the paper (see Theorem \ref{DKTprop}). We complete the study of $(n-1,\alpha)-$H\"older asymptotically optimally doubling measures on $\RR^4$ by giving a parametrization of the support in the neighborhood of a nonflat point by a KP cone. Theorem \ref{introHAOD} is our main result.

\begin{theorem}\label{introHAOD}
For all $\alpha>0$, there exists $\beta = \beta(\alpha)>0$ with the following property. Suppose that $\mu$ is an $(\alpha,3)-$H\"older asymptotically uniform measure on $\RR^4$ and $x\in\Sigma = \supp \mu$ is a nonflat point. Then there exists a neighborhood of $x$ which is $C^{1,\beta}$ diffeomorphic to an open piece of the KP cone containing the singular point $0$.
\end{theorem}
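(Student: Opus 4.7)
The strategy is to combine Theorem \ref{introAOD} with the Hölder asymptotic uniformity of $\mu$ to obtain a Hölder rate for the KP cone approximation at the nonflat point $x$, and then to adapt the Reifenberg-type $C^{1,\beta}$ construction underlying Theorem \ref{DKThaod} (and its quantitative form Theorem \ref{DKTprop}) with a KP cone as model in place of an affine hyperplane. First I promote the qualitative convergence $\theta_\Sigma^C(x,r)\to 0$ from Theorem \ref{introAOD} into a quantitative estimate $\theta_\Sigma^C(x,r) \le Cr^{\gamma}$ for some $\gamma=\gamma(\alpha)>0$. At each dyadic scale $r_k = 2^{-k}r_0$, pick an almost-optimal KP cone $\cC_k$ based at $x$. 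The Hölder asymptotic uniformity of $\mu$ forces the doubling profile of $\mu$ on $\Sigma\cap B(x,r_k)$ to deviate from the Lebesgue profile by at most $Cr_k^\alpha$, and this in turn pins down $\cC_k$ up to an error $\lesssim r_k^\alpha$ in the finite-dimensional parameter space of KP cones based at $x$ (an axis direction in $\RR^4$ together with an orthogonal frame on the base). Comparing consecutive scales shows that the sequence $\cC_k$ is geometric Cauchy, and its limit $\cC_\infty$ is a KP cone based at $x$ that satisfies $D^{x,r}(\Sigma,\cC_\infty) \le Cr^\gamma$ for $0<r\le r_0$.

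Next I address regularity of $\Sigma$ away from $x$. Any $y\in\Sigma$ with $\rho := |y-x|$ sufficiently small sits near the smooth part of $\cC_\infty$, so the cone approximation converts into Reifenberg flatness with Hölder decay at $y$ at all scales $r\le c\rho$. Theorem \ref{DKThaod}, or rather its quantitative strengthening Theorem \ref{DKTprop}, then produces a $C^{1,\beta_0}$ hypersurface description of $\Sigma$ in a neighborhood of $y$, with norms uniform down to scale $\rho$. Patching across $y$ realizes $\Sigma\setminus\{x\}$ as a $C^{1,\beta_0}$ hypersurface lying uniformly close to $\cC_\infty\setminus\{x\}$.

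Finally I build the parametrization $\Phi : \cC_\infty\cap B(x,r_0) \to \Sigma\cap B(x,r_0)$. On each of the two sheets of $\cC_\infty$ a Reifenberg-type graph construction (using the successive approximating cones $\cC_k$ interpolated by a dyadic partition of unity on the sheet) yields a $C^{1,\beta}$ map of the smooth parts; the rate $D^{x,r}(\Sigma,\cC_\infty) \le Cr^\gamma$ together with the $C^{1,\beta_0}$ manifold structure from the previous step gives the required $C^{1,\beta}$ bounds there. Setting $\Phi(x)=x$ and invoking $D^{x,\rho}(\Sigma,\cC_\infty)\le C\rho^\gamma$ yields $|\Phi(z)-z| \le C|z-x|^{1+\gamma}$ near the apex, so $\Phi$ is differentiable at $x$ with derivative the identity and Hölder modulus controlled by $\gamma$. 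A symmetric construction of $\Phi^{-1}$ completes the diffeomorphism with $\beta = \min(\beta_0,\gamma)$. The principal obstacle is precisely the vertex: the two sheets of $\cC_\infty$ both limit to $x$, so no single graph parametrization works, and the Hölder rate established in the first step is essential both for gluing the two sheets consistently through $x$ and for obtaining $C^{1,\beta}$ regularity, rather than merely Lipschitz or continuous behavior, at the singular point.
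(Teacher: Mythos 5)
Your broad strategy coincides with the paper's: (i) upgrade the cone approximation at the apex from qualitative to a H\"older rate, (ii) upgrade the plane approximation away from the apex to a H\"older rate via the quantitative David--Kenig--Toro theorem, and (iii) assemble a $C^{1,\beta}$ parametrization from these two estimates. But there are two places where you assert the conclusion of a substantial technical argument without supplying the mechanism, and those are precisely the places where the paper works hardest.

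First, the claim that the H\"older asymptotic uniformity ``pins down $\cC_k$ up to an error $\lesssim r_k^\alpha$ in the finite-dimensional parameter space of KP cones'' is not a priori true and cannot be read off from the doubling profile alone. Knowing $\bigl|\mu(B(x,r))/(\omega_3 r^3) - 1\bigr| \le C r^\alpha$ constrains the \emph{mass} of balls, not directly the \emph{direction} of the approximating cone. Converting mass information into geometric information is exactly what the moment analysis of Section~4 accomplishes: one shows the second moment $Q_{0,r}$ is (after an orthonormal change of basis) close to the matrix $K = \diag(\tfrac32,\tfrac12,\tfrac12,\tfrac12)$ with $O(r^\alpha)$ error (Lemma~\ref{QvsK}), diagonalizes $Q_{0,r}$ (Lemma~\ref{diagonalbasis}), and then compares $\Sigma$ to $\twid Q_{0,r}^{-1}(0)$ via Lemma~\ref{Polynomial}. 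The Cauchy-in-scale argument you describe is present in the paper (equations~(\ref{atsing.11})--(\ref{atsing.13})), but it is powered by these moment bounds, together with Theorem~\ref{Top}, which is needed separately to get the $d(\cC,\Sigma)$ direction of the Hausdorff distance. None of this follows from merely asserting that the doubling profile constrains a finite-dimensional parameter.

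Second, your parametrization step glosses over the hardest point: how to obtain a single map that is $C^{1,\beta}$ \emph{at} the vertex, not merely differentiable there with H\"older derivative on each sheet separately. The ``Reifenberg-type graph construction interpolated by a dyadic partition of unity'' is a reasonable route to a $C^{0,\alpha}$ (or with extra care $C^{1,\beta}$) parametrization of the smooth parts, but patching the two nappes through $x$ while controlling the modulus of continuity of the derivative uniformly in the angle variable is exactly what makes this problem nontrivial. The paper instead builds the map globally: it shows the cross-sectional nearest-point projection $\pi$ restricted to $\Sigma$ near the vertex is a lower-Lipschitz bijection onto $\cC$ (Lemma~\ref{bilip}), defines $\vphi = \pi^{-1}$, constructs at each $a\in\cC$ an explicit linear map $M_a$ built from the approximating tangent planes $P(\vphi(a))$ and the cone geometry, and verifies the two-scale compatibility estimates $\rho_0,\rho_1 \lesssim r^{\beta_3/(1+\gamma)}$ (Lemmas~\ref{poly} and~\ref{Mnorm2}) needed to invoke the Whitney extension theorem. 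The choice $M_0 = \Id$ and the estimate $\|M_a - \Id\|\le C|a|^{\beta_3}$ are what give $C^{1,\beta}$ at the apex; the estimate $|\Phi(z)-z|\le C|z-x|^{1+\gamma}$ you cite gives only differentiability at $x$, not the H\"older modulus of $D\Phi$ there. Without something like the explicit $M_a$ construction (or an equally explicit substitute), the claim $\beta = \min(\beta_0,\gamma)$ is unjustified.

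A smaller issue: in step (ii), you need the precise scaling $r \le |y-x|^{1+\gamma}/A$ from Corollary~\ref{FlatSides} and Theorem~\ref{DKTprop} to obtain the uniform H\"older exponent $\beta_2$ in $r$ alone (rather than in $r/|y-x|$); saying ``any $y$ with $\rho = |y-x|$ small sits near the smooth part'' omits the reparametrization $r_1' = |y-x|/A$ that makes the exponent degenerate controllably as $y\to x$.

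In short: the skeleton of your argument matches the paper's, but the two load-bearing technical ingredients---the moment computation that produces the H\"older rate at the apex, and the Whitney-type construction that makes the parametrization $C^{1,\beta}$ \emph{through} the apex---are asserted rather than proved, and neither follows from the more generic Reifenberg machinery you invoke.
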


We note that the lowest dimension in which the KP cone appears is $\RR^4$, and this is the only dimension to which Theorem \ref{introHAOD} applies. In this case, the singular set of a KP cone is a single point, and this makes $n=4$ the simplest case to construct a parametrization. Future work includes the question of local parametrization of the support about a nonflat point in dimension $n\sgeq 5$. Further, in Section 4 we profit implicitly several times from the following fact; let $\cC$ be a rotation by $O$ of the KP cone $\{x_4^2 = x_1^2 + x_2^2 + x_3^2\}$ in $\RR^4$. If we know $O(x_4)$, then we know what $\cC$ is.

The structure of this paper is as follows: In Section 2, we investigate the geometry of sets approximated by cones (i.e., dilation invariant sets, of which the KP cone and planes are examples). In particular, we investigate the behavior of a set $\Sigma$ under different assumptions on $\theta_\Sigma^C$, $\vartheta_\Sigma^C$, and $\theta_\Sigma^P$ at differening locations and scales, often exploiting their interplay. In Section 3, we investigate $(n-1)-$asymptotically optimally doubling measures, culminating in Theorem \ref{introAOD}. Section 4 begins our study of $(n-1)$-H\"older asymptotically optimally doubling measures on $\RR^4$. We prove Theorem \ref{introHAOD} in two main parts. In Section 4, we prove H\"older estimates on the quanities $\theta_\Sigma^C$ and $\theta_\Sigma^P$ at different scales (see Theorem \ref{theorem}). In Section 5, we use the demonstrated estimates to construct a local $C^{1,\beta}$ parametrization of the set by a KP cone (see Theorem \ref{parametrization}).

\section{The Geometry of Sets Approximated by Planes and KP Cones}

In this section, our goal is to study sets which are well approximated by planes and KP cones. In Section 2.1, we show some nice properties of a modified relative Hausdorff distance $\twid D^{x,r}$ on dilation invariant sets. In Section 2.2, we give some simple geometry of planes. In Section 2.3, we study sets well approximated by KP cones in different ways, often with an eye for the interactions between $\theta_\Sigma^C$, $\vartheta_\Sigma^C$, and $\theta_\Sigma^P$ at different points and scales.

\subsection{Behavior of Hausdorff Distance and a Modified Hausdorff Distance}

Our goal in this section is to study the geometry of sets which are well approximated by cones; that is, dilation invariant sets. 

\begin{definition}\emph{
A set $C\subseteq\RR^n$ is a \emph{cone based at $y$} if $y\in C$ and $C-y$ satisfies that $s(C-y) = C-y$  for all $s>0$, .}
\end{definition}

\begin{example}
 Recall that a KP cone based at $y$ is a set $\cC$ which in some orthonormal coordinates $(x_i)$ centered at the origin satisfies
$\cC -y = \{x_4^2 = x_1^2 + x_2^2 + x_3^2\}.$
Note that a KP cone based at $y$ is a cone based at $y$ and that a plane including $y$ is a cone based at $y$.
\end{example}

A useful tool, especially for dilation invariant sets, will be the following. For $x\in \RR^n$, $r>0$, and sets $A, B\subseteq \RR^n$ such that $A\cap B(x,r)\neq \emptyset$ and $B\neq \emptyset$, we define
\begin{equation}\label{daxrdef}
\hda{x, r}A B = \frac 1 r \hd{}{A\cap B(x,r)}{B}
\end{equation}
(see (\ref{dxrdef}) for comparison with $d^{x,r}$). Note that $\twid d^{x,r}$ is neither a symmetric quantity nor a metric and, unlike $d$ and $d^{x,r}$, does not satisfy the triangle inequality. For sets $A, B\subseteq \RR^n$ such that $A\cap B(x,r)\neq \emptyset$ and $B\cap B(x,r)\neq \emptyset$, we define 
\begin{equation}\label{Daxrdef}
\Da{x,r} A B = \max( \hda{x,r}A B , \hda{x,r} B A).
\end{equation}
Note that $\twid D^{x,r}$ is symmetric, and $A\cap B(x,r) = B\cap B(x,r)$ if and only if $\Da{x,r} A B = 0$ , but that $\twid D^{x,r}$ does not satisfy the triangle inequality.

We note that $D^{x,r}$ is a pseudometric, but $\twid D^{x,r}$ is not. However, $\twid D^{x,r}$ enjoys much more stability as $x$ and $r$ vary (for $A$ and $B$ fixed) than $D^{x,r}$ does, and so each one may be argued to be more natural. Further, in practice $\twid D^{x,r}$ is often less finicky to work with than is $D^{x,r}$.  We note also that $\hda{x,r}AB\sleq \hd{x,r}AB$ (and hence $\Da{x,r}AB\sleq\D{x,r}AB$) for all sets $A,B\subseteq\RR^{n}$ interesecting $B(x,r)$.

\begin{lemma}\label{ConeG1}
Let $C$ be a cone based at $y\in\RR^n$ and $\Sigma\subseteq\RR^n$ be any set. Then for any $r>0$ such that $B(y,r)\cap \Sigma\neq \emptyset$, we have that
\begin{equation}\label{ConeG1.-1}
\hd{y,r}\Sigma C= \hda{y,r}\Sigma C\quad\mbox{and}\quad \hda{y,r}C\Sigma\sleq\hd{y,r}C\Sigma\sleq 2\hda{y,r}C\Sigma.
\end{equation}
In particular,
\begin{equation}\label{ConeG1.1}
\Da{y,r}C\Sigma\sleq\D{y,r}C\Sigma\sleq 2\Da{y,r}C\Sigma.
\end{equation}
\end{lemma}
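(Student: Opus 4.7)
The plan is to prove the two statements in (\ref{ConeG1.-1}) separately --- exploiting the cone structure of $C$ in two different ways --- after which (\ref{ConeG1.1}) drops out by taking maxima in the definitions of $D^{y,r}$ and $\widetilde D^{y,r}$.

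For the identity $\hd{y,r}\Sigma C = \hda{y,r}\Sigma C$, the bound $\hda{y,r}\Sigma C\sleq \hd{y,r}\Sigma C$ is immediate since $C\cap B(y,r)\subseteq C$. For the reverse direction I would fix $\sigma\in\Sigma\cap B(y,r)$ and $c\in C$, and use the fact that the ray $R_c = \{y + s(c-y) : s\geq 0\}$ lies in $C$ by the cone property and contains $y$. Let $p$ be the nearest point on $R_c$ to $\sigma$ --- the orthogonal projection if its parameter is $\geq 0$, and $y$ otherwise. Then Cauchy--Schwarz gives $|p-y|\sleq |\sigma-y|\sleq r$, so $p\in C\cap B(y,r)$, while minimality on $R_c$ gives $|p-\sigma|\sleq |c-\sigma|$. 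Taking infima over $c\in C$ and then the supremum over $\sigma$ yields $\hd{y,r}\Sigma C \sleq \hda{y,r}\Sigma C$.

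For the chain $\hda{y,r}C\Sigma \sleq \hd{y,r}C\Sigma \sleq 2\hda{y,r}C\Sigma$, the left inequality is again immediate. For the factor-of-two bound, set $\epsilon = \hda{y,r}C\Sigma$; it is trivial when $\epsilon\geq 1$ since $\hd{y,r}\sleq 2$ always, so assume $\epsilon<1$. For $c\in C\cap B(y,r)$, the idea is to contract radially by setting $c_t = y + t(c-y)$ with $t$ a hair below $1-\epsilon$, so that $c_t\in C\cap B(y,(1-\epsilon)r)$. Applying the hypothesis to $c_t$ supplies $\sigma\in\Sigma$ with $|c_t-\sigma|$ arbitrarily close to $r\epsilon$; the triangle inequality then forces $|\sigma-y|<r$, so $\sigma\in\Sigma\cap B(y,r)$, and $|c-\sigma|\sleq |c-c_t|+|c_t-\sigma|\sleq 2r\epsilon$ after sending the auxiliary slack to zero. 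Taking the supremum over $c$ yields $\hd{y,r}C\Sigma \sleq 2\epsilon$.

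The main subtlety throughout is that the witness $\sigma$ promised by the $\widetilde d$-hypothesis need not itself lie in $B(y,r)$; the cone structure furnishes two different remedies --- orthogonal projection along $R_c$ in the first half, radial contraction in the second --- each engineered to produce a witness inside $B(y,r)$ without inflating the relevant distance too much. I expect these two constructions to be essentially the entire content of the proof, with (\ref{ConeG1.1}) then following by definition.
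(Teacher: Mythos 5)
Your proof is correct and takes essentially the same approach as the paper: orthogonal projection onto the radial ray through $c$ to handle $\hd{y,r}\Sigma C = \hda{y,r}\Sigma C$, and radial contraction of $c$ toward $y$ to handle $\hd{y,r}C\Sigma \sleq 2\hda{y,r}C\Sigma$, with (\ref{ConeG1.1}) following by taking maxima. You are in fact a bit more careful than the paper in two small spots: you project onto the ray $R_c$ rather than the full line (the definition of cone only guarantees invariance under positive dilations, so the full line need not lie in $C$), and you dispose of the case $\hda{y,r}C\Sigma\sgeq 1$ separately (the paper asserts $\hda{y,r}C\Sigma\sleq 1$, which can fail, e.g.\ when $C$ is a half-line and $\Sigma$ a single point diametrically opposite in $B(y,r)$), so your write-up closes both of these minor gaps.
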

\begin{proof}
Let $C$ be a cone based at $y$ and $\Sigma\subseteq\RR^n$. Without loss of generality, take $y = 0$. Let $B(0,r)\cap \Sigma\neq \emptyset$.  We begin by showing that
\begin{equation}\label{ConeG1.2}
\hd{0,r}\Sigma C= \hda{0,r}\Sigma C.\end{equation}
Because $\hda{0,r}AB\sleq\hd{0,r}AB$ for any $A$ and $B$, it suffices to show that $\hd{0,r}\Sigma\cC\sleq\hda{0,r}\Sigma\cC$. Applying the definition, we must show that for any $x\in \Sigma\cap B(0,r)$, $d(x,C\cap B(0,r)) \sleq d(x,C)$. To do so, we show that if $z\in C$, there is another point $z'\in C\cap B(0,r)$ such that $|x-z'|\sleq |x-z|$. Let $x\in \Sigma\cap B(0,r)$ and $z\in C\setminus B(0,r)$. Let $\ell = \{sz:s\in\RR\}$ be the line through $z$ passing through 0; note that $\ell$ is a cone through the origin and that $\ell\subseteq C$ because $C$ is a cone and $z\in C$. Let $\pi:\RR^n\to\ell$ be the orthogonal projection onto $\ell$. Then the nearest point to $x$ in $\ell$ is $z' = \pi(x)$. But $ | \pi (x) |\sleq |x| \sleq r$, and so $\pi(x)\in C\cap B(0,r)$. Hence, we have shown (\ref{ConeG1.2}).

We now seek to show that
\begin{equation}\label{ConeG1.2'}
\hda{0,r}C\Sigma\sleq \hd{0,r}C\Sigma \sleq 2 \hda{0,r}C\Sigma .\end{equation}
Again, we note that the left inequality is automatic. So, by applying the definition, we must show that for any $z\in C\cap B(0,r)$, $d(z,\Sigma\cap B(0,r)) \sleq 2 \hda{0,r}C\Sigma$. Let $\twid d = \hda{0,r}C\Sigma$. Note that since $\Sigma\cap B(0,r)\neq\emptyset$ and $0\in C$, we have that $\twid d\sleq 1$. Let $z\in C\cap B(0,r)$. Then the point $z' = (1-\twid d)z\in C\cap B(0,r-\twid dr)$, and $|z-z'| = \twid d|z|\leq\twid  d\,r$. By assumption, there exists some point $x\in \Sigma$ with $|x-z'|\sleq \twid d\,r$. Thus, we have that
$|x-z|\sleq |x-z'|+|z'-z| \sleq 2\twid dr.$
Further, since $z'\in B(0,r-\twid dr)$, we have that
$|x|\sleq |x-z'| + |z'|\sleq \twid dr + (1-\twid d)r = r.$
So $x\in B(0,r)$. Hence,
\begin{equation}\label{ConeG1.100}
d(z,\Sigma\cap B(0,r)\sleq 2 \twid dr.
\end{equation}
Because (\ref{ConeG1.100}) holds for all $z\in B(0,r)$, we have shown (\ref{ConeG1.2'}). Thus, we have established (\ref{ConeG1.-1}). We conclude by noting that (\ref{ConeG1.1}) follows immediately from (\ref{ConeG1.-1}) and the definitions of $D^{0,r}$ and $\twid D^{0,r}$.

\end{proof}

\begin{lemma}\label{ConeG2}
Let $C$ be a cone based at $y$ and $\Sigma\subseteq\RR^{n}$. Let $x\in \RR^{n}$ and $r,s>0$ satisfy $B(y,s)\subseteq B(x,r)$ and $B(y,s)\cap \Sigma\neq \emptyset$. Then
\begin{equation}\label{ConeG2.1}
\D{y,s}C\Sigma\sleq 2 \frac rs \Da{x,r}C\Sigma.
\end{equation}
In particular,
\begin{equation}\label{ConeG2.2}
\D{y,s}C\Sigma\sleq 2 \frac rs \D{x,r}C\Sigma.
\end{equation}
\end{lemma}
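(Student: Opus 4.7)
The plan is to bound the two halves of $\D{y,s}C\Sigma$ separately, namely $\hd{y,s}C\Sigma \sleq 2(r/s)\Da{x,r}C\Sigma$ and $\hd{y,s}\Sigma C \sleq 2(r/s)\Da{x,r}C\Sigma$. Write $\tilde d = \Da{x,r}C\Sigma$. The leverage in both directions is that, since $C$ is a cone based at $y$, the ray $\{y + t(p-y): t\sgeq 0\}$ lies in $C$ whenever $p\in C$, so we can slide any point of $C$ radially toward or away from $y$ while remaining in $C$.

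For $\hd{y,s}C\Sigma$, pick $z \in C \cap B(y,s)$. If $\tilde d\, r > s$, then $2\tilde d\, r/s > 2 \sgeq \D{y,s}C\Sigma$ and the desired bound holds trivially (note that $\D{y,s}C\Sigma$ is defined since $y \in C \cap B(y,s)$ and $\Sigma\cap B(y,s)\neq\emptyset$ by hypothesis). Otherwise set $\lambda = 1 - \tilde d\, r/s \in [0,1]$ and $z' := y + \lambda(z-y) \in C$, so that $z' \in B(y,\lambda s) \subseteq B(x,r)$. By definition of $\Da{x,r}$ there is $w\in \Sigma$ with $|z'-w|\sleq \tilde d\, r$, and two triangle inequalities give $|w-y| \sleq \tilde d\, r + \lambda s = s$ (so $w\in B(y,s)$) and $|z-w| \sleq (1-\lambda)s + \tilde d\, r = 2 \tilde d\, r$, as required.

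For $\hd{y,s}\Sigma C$, pick $w \in \Sigma \cap B(y,s) \subseteq \Sigma\cap B(x,r)$ and choose $z\in C$ with $|z-w|\sleq \tilde d\, r$. If $z \in B(y,s)$ we are done; otherwise slide $z$ radially to $z' := y + (s/|z-y|)(z-y) \in C\cap \partial B(y,s)$ and observe that $|z'-z| = |z-y|-s\sleq(|z-w|+|w-y|)-s \sleq \tilde d\, r$, whence $|z'-w|\sleq 2\tilde d\, r$. The two bounds together prove (\ref{ConeG2.1}), and (\ref{ConeG2.2}) follows from $\Da{x,r}C\Sigma \sleq \D{x,r}C\Sigma$, a direct consequence of $\hda{x,r}AB\sleq\hd{x,r}AB$ noted in the text just before Lemma \ref{ConeG1}. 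The main obstacle is pure bookkeeping: tracking the case split on the size of $\tilde d\, r$ relative to $s$ in the first direction, and in each direction choosing the correct radial slide so that the resulting point of $C$ both sits inside $B(y,s)$ and stays within $2\tilde d\, r$ of the target.
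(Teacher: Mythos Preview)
Your proof is correct. The paper's argument is shorter because it factors through two already-established facts: first the immediate monotonicity $\Da{y,s}C\Sigma \sleq (r/s)\Da{x,r}C\Sigma$ (which holds for any sets since $B(y,s)\subseteq B(x,r)$), and then Lemma~\ref{ConeG1}, which gives $\D{y,s}C\Sigma \sleq 2\Da{y,s}C\Sigma$ for cones based at $y$. Your argument instead unpacks both directions by hand, essentially re-deriving the content of Lemma~\ref{ConeG1} inline with the radial-slide trick. This is perfectly valid and self-contained, but note that in the $\hd{y,s}\Sigma C$ direction your radial projection of $z$ onto $\partial B(y,s)$ costs you a factor of $2$ that the paper's orthogonal-projection argument in Lemma~\ref{ConeG1} avoids (it gets $\hd{y,s}\Sigma C = \hda{y,s}\Sigma C$ exactly); the slack is harmless for the stated conclusion since the factor $2$ is already needed in the other direction.
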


\begin{proof}
Let all notation and suppositions hold. It follows immediately from the definitions that
\begin{equation}\label{ConeG2.3}
\Da{y,s} C\Sigma \sleq \frac rs \Da{x,r} C\Sigma.
\end{equation}
We then have that (\ref{ConeG2.1}) follows immediately by Lemma \ref{ConeG1}.
We note that (\ref{ConeG2.2}) follows immediately from (\ref{ConeG2.1}).

\end{proof}

\subsection{Geometry of Planes}

In this section we give some basic definitions and lemmas about planes and their geometry. Let $V_1$ and $V_2$ be vector spaces with $\dim V_1\sleq \dim V_2$. We define
\begin{equation}
\Gamma(V_1, V_2) = \hd{0,1}{V_1}{ V_2}.
\end{equation}
It is not hard to see that for any $r>0$, we have that
$
\Gamma(V_1, V_2) = \hd{0,r}{V_1}{V_2}.
$
We extend $\Gamma$ to a pseudometric on the set of all affine planes. Let $P_1$ and $P_2$ be planes $p_1\in P_1, p_2\in P_2$ and $\dim P_1\sleq \dim P_2$. Define
\begin{equation}
\Gamma(P_1, P_2) = \Gamma(P_1-p_1, P_2-p_2).
\end{equation}
That is, we extend $\Gamma$ to arbitrary affine planes by first translating them to pass through the origin. Note that this of course does not depend on the $p_i$. Note that $0\sleq \Gamma(P_1, P_2)\sleq 1$. Further, $\Gamma(P_1, P_2) = 0$ if and only if $P_1 || P_2$, and $\Gamma(P_1, P_2) = 1$ if and only if $P_1$ contains a vector perpendicular to $P_2$. It is not hard to see that if $P_i^\perp$ is an affine orthogonal complement to $P_i$, then
\begin{equation}\label{normals}
\Gamma(P_2^\perp, P_1^\perp) = \Gamma(P_1, P_2).
\end{equation}
It will sometimes be convenient to make reference to the angle between two affine planes. We define the angle between $P_1$ and $P_2$ to be
\begin{equation}
\meang(P_1, P_2) = \arcsin\Gamma(P_1, P_2).
\end{equation}

Alternatively, we may define the angle between two planes as follows. Let $\Ss^{n-1} = \{x\in \RR^n\mid |x| = 1\}$ be the unit sphere in $\RR^n$. Let $d_{\Ss}$ denote the path metric on $\Ss^{n-1}$ defined by $d_{\Ss}(a , b) = \inf\{{\rm len}(\gamma)\mid \gamma:[0,1]\to \Ss^{n-1}, \gamma(0) = a, \gamma(1) = b\}$. For nonempty sets $A,B\subseteq \Ss^{n-1}$, we define
\begin{equation}\label{dsn-1}
d_\Ss(A, B) = \sup_{a\in A} \inf_{b\in B} d_\Ss(a, b).
\end{equation}
(See for comparison (\ref{ddef}) and (\ref{dxrdef}).) For two vector spaces $V_1$ and $V_2$ with $\dim V_1 \sleq \dim V_2$, we then have that
\begin{equation}\label{anglesdistance}
\meang(V_1, V_2) = d_{\Ss}(V_1\cap \Ss^{n-1}, V_2\cap \Ss^{n-1}).
\end{equation}
It follows that for planes $P_1$ and $P_2$ with $\dim P_1\sleq \dim P_2$ and $p_i\in P_i$, that
\begin{equation}\label{planeanglesdistance}
\meang(P_1, P_2) = d_\Ss((P_1-p_1)\cap \Ss^{n-1}, (P_2-p_2)\cap \Ss^{n-1})
\end{equation}
Further, from (\ref{planeanglesdistance}), subadditivity of angles follows. That is, if $P_1, P_2, $ and $P_3$ are planes with $\dim(P_1)\sleq\dim(P_2)\sleq \dim(P_3)$, we get that
\begin{equation}\label{subadditivityofangles}
\meang(P_1, P_3)\sleq \meang(P_1, P_2) + \meang(P_2, P_3).
\endeqn
Further, if $y\in P_1$, $x\in P_2$, then
\begin{equation}\label{distlessthanangle}
\hd{y,r}{P_1}{P_2}\sleq 2\left(\meang(P_1,P_2) + \frac{|x-y|}{r}\right).
\endeqn

\begin{lemma}\label{planes} Let $P_1$ and $P_2$, planes in $\RR^n$, $\dim P_1\sleq \dim P_2$.
\begin{enumerate}[(1)]
\item Let  $y\in P_1$, $r>0$, and $P_2\cap B(y,r)\neq \emptyset$. Then $\meang(P_1, P_2)\sleq \frac {3\pi}{2} \hd{y,r}{P_1}{P_2}$.
\item If $\dim P_1 = \dim P_2 = n-1$ and $\nu_i$ is a normal vector to $P_i$ with $\nu_1\cdot \nu_2\sgeq 0$, then $|\nu_1-\nu_2|\sleq \meang(P_1, P_2)$.
\end{enumerate}
\end{lemma}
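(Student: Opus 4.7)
The plan for (1) is to translate so that $y=0$ and, for each unit vector $v$ parallel to $P_1$, produce a nearby vector parallel to $P_2$, then convert the resulting chord estimate into an angular one. Set $\epsilon=\hd{0,r}{P_1}{P_2}$. Since $0\in P_1\cap B(0,r)$, the hypothesis $P_2\cap B(0,r)\neq\emptyset$ together with the definition of $\hd{0,r}{\cdot}{\cdot}$ yields a point $q\in P_2\cap B(0,r)$ with $|q|\sleq r\epsilon$. For a unit vector $v$ in the direction of $P_1$ (equivalently $v\in P_1$, since $0\in P_1$), the point $rv$ lies in $P_1\cap B(0,r)$, so we obtain $p\in P_2\cap B(0,r)$ with $|rv-p|\sleq r\epsilon$. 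The key observation is that $u:=(p-q)/r$ is a direction of $P_2$ (differences of points of an affine plane lie in its directing subspace), and the triangle inequality gives $|u-v|\sleq 2\epsilon$.

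Since the distance from a unit vector $v$ to the directing subspace $W$ of $P_2$ equals $\sin\meang(v,W)$, we conclude $\meang(v,P_2)\sleq\arcsin(2\epsilon)$ whenever $2\epsilon\sleq 1$. Taking the supremum over unit $v\in P_1$ (via (\ref{anglesdistance})) and applying $\arcsin x\sleq\frac{\pi}{2}x$ on $[0,1]$ gives $\meang(P_1,P_2)\sleq\pi\epsilon$, which is even stronger than the claimed $\frac{3\pi}{2}\epsilon$. When $2\epsilon>1$ the bound holds trivially since $\meang(P_1,P_2)\sleq\pi/2<\frac{3\pi}{2}\epsilon$. The only subtle point is extracting a vector genuinely in the directing subspace of $P_2$ rather than merely a point near $P_2$; the auxiliary point $q$ is introduced precisely to subtract off the affine offset of $P_2$ from the origin.

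For (2), I would transfer the question to the normal lines via equation (\ref{normals}). Since $\dim P_1=\dim P_2 = n-1$, the orthogonal complements of the directing subspaces are one-dimensional and spanned by $\pm\nu_i$, so $\meang(P_1,P_2)=\meang(\RR\nu_1,\RR\nu_2)$ is the unsigned angle between two lines in $\RR^n$. The hypothesis $\nu_1\cdot\nu_2\sgeq 0$ ensures that this unsigned line angle coincides with the angle $\phi\in[0,\pi/2]$ satisfying $\cos\phi=\nu_1\cdot\nu_2$, so $\meang(P_1,P_2)=\phi$. A direct calculation $|\nu_1-\nu_2|^2=2(1-\cos\phi)=4\sin^2(\phi/2)$ together with the inequality $\sin x\sleq x$ for $x\sgeq 0$ then yields $|\nu_1-\nu_2|=2\sin(\phi/2)\sleq\phi=\meang(P_1,P_2)$. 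Neither part presents a serious obstacle: once the problem is reduced to the directing subspaces (or their normals), both conclusions follow from elementary trigonometric inequalities; the only bookkeeping item is to verify that one can genuinely work with linear (rather than affine) subspaces.
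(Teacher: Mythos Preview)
Your proof is correct and follows essentially the same outline as the paper's for part (2): both transfer to the normal lines via (\ref{normals}) and then compare the spherical distance $d_\Ss(\nu_1,\nu_2)=\phi$ to the chord $|\nu_1-\nu_2|=2\sin(\phi/2)$.

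For part (1) your argument is also correct but differs from the paper's in how you handle the affine offset of $P_2$. The paper first treats the case where both planes pass through the origin, obtaining $\meang(P_1,P_2)\sleq\frac\pi2\,\hd{0,1}{P_1}{P_2}$ directly from $\arcsin z\sleq\frac\pi2 z$, and then reduces the general case to this one by translating $P_2$ to $P_2-p$ and invoking Lemma~\ref{ConeG1} to bound $\hd{0,1}{P_1}{P_2-p}\sleq 3\,\hd{0,1}{P_1}{P_2}$; the factor $3$ here is what produces the constant $\frac{3\pi}{2}$. You instead pick two points $q,p\in P_2\cap B(0,r)$ close to $0$ and $rv$ respectively and form $u=(p-q)/r$, which lies in the directing subspace of $P_2$ with $|u-v|\sleq 2\epsilon$; this sidesteps the translation step and Lemma~\ref{ConeG1} entirely, yielding the sharper constant $\pi$. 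Your route is slightly more economical and gives a better bound; the paper's route is more modular in that it isolates the linear case as a separate statement.
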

\begin{proof}
Without loss of generality, take $y=0$, $r=1$. We first prove that if both $P_1$ and $P_2$ go through 0, then
\begin{equation}\label{planes.1}
\meang(P_1, P_2)\sleq \frac \pi 2 \hd{0,1}{P_1}{P_2}.
\end{equation}
We note that $\arcsin z\sleq (\pi/2) z$ for any $z\sgeq 0$. Hence,
\begin{equation}\label{planes.3}
\meang(P_1,P_2) = \arcsin(\Gamma(P_1, P_2))\sleq \frac \pi2 \hd{0,1}{P_1}{P_2},
\end{equation}
which is (\ref{planes.1}).

Now, suppose that $0\in P_1$, but that $0\not\in P_2$.  Then there exists $p\in P_2$ such that
\begin{equation}\label{planes.4}
|p|\sleq \hd{0,1}{P_1}{P_2}.
\end{equation}
We compute that
\begin{equation}\label{planes.5}
\hd{0,1}{P_1}{P_2-p}\sleq \hd{0,1}{P_1}{ P_2} + \hd{0,1}{P_2-p}{P_2}.
\end{equation}
Because $P_2-p$ is a cone through the origin, Lemma \ref{ConeG1} tells us that
\begin{equation}\label{planes.6}
\hd{0,1}{P_2-p}{P_2}\sleq 2\hda{0,1}{P_2-p}{P_2} = 2|p|\sleq2 \hd{0,1}{P_1}{P_2}.
\end{equation}
Combining (\ref{planes.5}) and (\ref{planes.6}), we get that
\begin{equation}\label{planes.7}
\hd{0,1}{P_1}{ P_2-p}\sleq 3 \hd{0,1}{P_1}{P_2}.
\end{equation}
Because $P_2-p$ goes through the origin, we combine (\ref{planes.1}) and (\ref{planes.7}) to get
\begin{equation}\label{planes.8}
\meang(P_1,P_2) \sleq \frac\pi2\hd{0,1}{P_1}{P_2-p}\sleq \frac{3\pi}2\hd{0,1}{P_1}{P_2}.
\end{equation}

We now prove (2). Without loss of generality, suppose that $P_1$ and $P_2$ are codimension 1 planes through the origin. Suppose that for $i=1,2$,  $\nu_i\perp P_i$, $|\nu_i| = 1$, and that $\nu_1\cdot \nu_2 \sgeq 0$. It follows from (\ref{normals}) and (\ref{planeanglesdistance}) that
\begin{equation}\label{plane.9}
\meang(P_1, P_2) = \meang(P_2^\perp, P_1^\perp) = d_\Ss(\{\pm \nu_2\}, \{\pm\nu_1\}).
\end{equation}
From the fact that $\nu_1\cdot \nu_2\sgeq 0$, it follows that
\begin{equation}\label{plane.10}
d_\Ss(\{\pm \nu_2\}, \{\pm\nu_1\}) = d_\Ss(\nu_2, \nu_1)\sgeq |\nu_2-\nu_1|.
\end{equation}
Putting together (\ref{plane.9}) and (\ref{plane.10}), we prove (2).
\end{proof}

\subsection{Geometry of KP Cone Approximated Sets}

The goal of this section is to analyze the geometry of sets which are well approximated by KP cones.  For a set $\Sigma$ with a tangent plane at $a$,  let $T_a\Sigma$ be the tangent plane to $\Sigma$ at $a$. We use the convention that $a\in T_a\Sigma$.

\begin{lemma}\label{KPflat1}
Let $\cC$ be a KP cone in $\RR^4$ based at $0$. There exists $C_0$ such that for any $a\in \cC$, we have that exactly one of the following holds:
\begin{enumerate}
         \item[(1)] $a=0$, in which case $\theta_\cC^P(a,r) = 1/\sqrt 2$ for all $r>0$,
         \item[(2)] or $a\neq 0$, in which case for all $0<r\sleq |a|/2$, $\D{a,r}\cC{T_a\cC}\sleq C_0 r/|a|$ 
          (and thus $\theta_\cC^P(a,r)\sleq C_0 r/|a|$).
       \end{enumerate}
\end{lemma}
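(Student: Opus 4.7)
The plan is to treat the vertex case (1) and the regular-point case (2) with independent arguments. For case (1), since $\cC$ has rotational symmetry about the $x_4$-axis, I would just compute directly. Parametrize the cone as $\cC = \{(v, \pm|v|) : v \in \RR^3\}$, and let $L$ be any $3$-plane through $0$ with unit normal $\nu = (\nu', \nu_4) \in \RR^3 \times \RR$. Because $|x|^2 = 2|v|^2$ for $x = (v, \pm |v|) \in \cC$, the intersection $\cC \cap B(0,r)$ corresponds exactly to $|v| \leq r/\sqrt 2$, and $d(x, L) = |v \cdot \nu' \pm |v|\,\nu_4|$ is maximized at $|v| = r/\sqrt 2$ with $v$ parallel to $\nu'$ and a compatible sign, giving
$$\max_{x \in \cC \cap B(0,r)} d(x, L) = \tfrac{r}{\sqrt 2}(|\nu'| + |\nu_4|) \geq \tfrac{r}{\sqrt 2},$$
where the final inequality uses $(|\nu'|+|\nu_4|)^2 \geq |\nu'|^2 + \nu_4^2 = 1$. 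This yields $\theta_\cC^P(0,r) \geq 1/\sqrt 2$. For the matching upper bound I would check $L = \{x_4 = 0\}$: the computation above already handles one direction, and the reverse direction reduces by rotation to finding the nearest cone point to $(s,0,0,0) \in L$, which is $(s/2, 0, 0, \pm s/2)$ at distance $s/\sqrt{2}\leq r/\sqrt 2$.

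For case (2), the scale-invariance of $\cC$ collapses the problem. The dilation $x \mapsto x/|a|$ maps $\cC$ to $\cC$ and $T_a\cC$ to $T_{a/|a|}\cC$, and conjugates balls appropriately, so
$$\D{a,r}\cC{T_a\cC} = \D{a/|a|,\, r/|a|}\cC{T_{a/|a|}\cC}.$$
It therefore suffices to prove a single uniform bound $\D{\tilde a, s}\cC{T_{\tilde a}\cC} \leq C_0 s$ for $\tilde a \in \cC \cap \Ss^3$ and $s \leq 1/2$. On $\cC \setminus \{0\}$, the defining function $f(x) = x_4^2 - x_1^2 - x_2^2 - x_3^2$ has nonvanishing gradient, so $\cC \setminus \{0\}$ is a smooth embedded $3$-submanifold. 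A standard Taylor expansion at $\tilde a$ gives $d(x, T_{\tilde a}\cC) \lesssim_{\tilde a} |x-\tilde a|^2$ for $x \in \cC$ near $\tilde a$, and the inverse function theorem realizes $\cC$ as a $C^\infty$ graph over $T_{\tilde a}\cC$ near $\tilde a$, yielding the symmetric estimate $d(p, \cC) \lesssim_{\tilde a} |p - \tilde a|^2$ for $p \in T_{\tilde a}\cC$ near $\tilde a$.

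The main (indeed only) subtlety is making these implicit constants uniform in $\tilde a$, and I would handle it by compactness: $K := \cC \cap \Ss^3$ is compact, while the second fundamental form of $\cC$ together with the size of the local graph neighborhood depend continuously on $\tilde a \in K$. This yields uniform constants $C_1$ and $s_0 > 0$ with $\D{\tilde a, s}\cC{T_{\tilde a}\cC} \leq C_1 s$ for all $\tilde a \in K$ and all $s \leq s_0$. The remaining range $s_0 < s \leq 1/2$ is disposed of by the trivial bound $\D{\tilde a, s}\cC{T_{\tilde a}\cC} \leq 2 \leq (2/s_0)s$, giving a combined constant $C_0 = \max(C_1, 2/s_0)$. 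Rescaling via the dilation invariance transports the estimate to all $a \neq 0$ and $r \leq |a|/2$, and the stated bound on $\theta_\cC^P(a,r)$ is immediate since $a \in T_a\cC$.
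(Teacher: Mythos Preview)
Your proof is correct and follows essentially the same strategy as the paper: dispatch case~(1) by direct computation (the paper just calls this ``elementary''), and for case~(2) use dilation invariance to reduce to $|\tilde a|=1$ together with smoothness of $\cC\setminus\{0\}$ for the local quadratic estimate. The one minor difference is how uniformity over $\cC\cap\Ss^3$ is obtained: you invoke compactness and continuity of the second fundamental form, whereas the paper uses that the isometry group of $\cC$ acts transitively on $\cC\cap\Ss^3$, which transports the single-point estimate to all unit points in one step and avoids the separate treatment of the range $s_0<s\leq 1/2$.
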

\begin{proof}
Claim (1) follows from an elementary computation.  Let $a\in \cC, |a|=1$.  Note that $\cC\setminus B(0, 1/2)$ is a smooth manifold, and hence there exists a constant $C_0$ such that for all $0<r\sleq 1/2$, $\D{a,r}{\cC}{T_a\cC}\sleq C_0 r$. For any other point $b\in\cC$, $|b|=1$, there is an isometry fixing $0$, taking $a$ to $b$, and taking $\cC$ to $\cC$. Hence, for all $|a| = 1, a\in\cC$, $0<r\sleq 1/2$, $\D{a,r}{\cC}{T_a\cC}\sleq C_0r$. For $a\neq 0$, set $b = a/|a|\in\cC$. Then
$\D{b,r}{\cC}{T_b\cC}\sleq C_0 r$ for $0<r\sleq 1/2$. Because $\cC$ is a cone based at $0$ and $|a|b = a$, we get that $|a|\cC = \cC$ and $|a|T_b \cC = T_a\cC$.  This gives that $ \D{a,|a|r}{\cC}{T_a\cC} = \D{b,r}{\cC}{T_b\cC} \sleq C_0r$ for all $0<r\sleq 1/2$. Plugging in $r/|a|$ for $r$, we get that
$$\D{a,r}{\Sigma}{\cC}\sleq C_0\frac r{|a|}$$
for all $0<r\sleq |a|/2$, and hence have proven (2).
\end{proof}

\begin{remark}
Using Lemma 4.1 of \cite{Bad}, one can obtain that in Lemma \ref{KPflat1}, $C_0 = 1$. This will not be of crucial importance to us, and so we leave the details to the interested reader.
\end{remark}

We now state a lemma of \cite{Bad} of which we employ a variation. First, we must give some definitions from \cite{Bad}. For a fixed $n$ understood, let $\cH_d$ denote the set of nonconstant harmonic polynomials in $\RR^n$ of degree at most $d$. Let $\cQ$ be any set of polynomials from $\RR^n$ to $\RR$. For a set $\Sigma\subseteq \RR^n$, $x\in \Sigma$, and $r>0$, we define
$$\theta_\Sigma^{\cQ}(x,r) = \inf \D{x,r}\Sigma{\Sigma_q}$$
where the infimum is taken over all polynomials in $q\in\cQ$ with $q(x) = 0$, and $\Sigma_q = q\inv(0)$.

\begin{lemma}[\cite{Bad}]\label{Badflat}
For all $n\sgeq 2$, $d\sgeq 1$, and $\delta>0$, there exist $\epsilon>0$ and $\eta>0$ with the following property. Let $\Sigma\subseteq \RR^n$, $x\in \Sigma$, $r>0$, and assume that
$$\sup_{0<r'\sleq r} \theta_\Sigma^{\cH_d}(x,r') < \epsilon.$$
If $\theta_\Sigma^P(x,r)< \eta$, then $\sup_{0<r'\sleq r}\theta_\Sigma^P(x,r')<\delta$.
\end{lemma}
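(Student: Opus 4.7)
The plan is a proof by contradiction centered on a rigidity lemma for harmonic polynomials of bounded degree, combined with an induction across a geometric sequence of scales. If the conclusion fails, extract sequences $\epsilon_k, \eta_k \downarrow 0$, sets $\Sigma_k \subseteq \RR^n$, and scales $\sigma_k \in (0, r_k]$ with the hypotheses satisfied at constants $(\epsilon_k, \eta_k)$ but $\theta^P_{\Sigma_k}(x_k, \sigma_k) \sgeq \delta_0 > 0$. After translating so $x_k = 0$ and rescaling so $r_k = 1$, the task is to derive a contradiction from this sequence.

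The centerpiece to prove first, as an auxiliary lemma, is the following quantitative rigidity statement for harmonic polynomials: for every $\delta > 0$ there is a $\tau = \tau(n, d, \delta) > 0$ such that if $q$ is a nonconstant harmonic polynomial on $\RR^n$ of degree $\sleq d$ with $q(0) = 0$ and $\D{0,1}{\Sigma_q}{L} < \tau$ for some hyperplane $L$ through $0$, then $\theta^P_{\Sigma_q}(0, r') < \delta$ for every $0 < r' \sleq 1$. This follows from a compactness argument on the finite-dimensional space of degree-$\sleq d$ harmonic polynomials normalized, say, by $\|q\|_{L^2(B(0,1))} = 1$: a sequence of counterexamples converges to a harmonic polynomial $q_\infty$ whose zero set in $B(0, 1)$ coincides with a hyperplane $H$. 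Analytic continuation and the factorization $q_\infty = \ell \cdot p$ (for $\ell$ a linear form defining $H$), combined with the harmonic equation $2 \nabla \ell \cdot \nabla p + \ell \Delta p = 0$ and the constraint $\Sigma_{q_\infty} \cap B(0, 1) = H \cap B(0, 1)$, force $p$ to be a nonzero constant; so $q_\infty$ is linear, whose zero set is a hyperplane at every scale, contradicting the failure.

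Given the rigidity lemma, the proof proceeds by induction on a geometric sequence of scales $1 = \rho_0 > \rho_1 > \cdots$ with ratio $\rho_{j+1}/\rho_j$ close to $1$, maintaining $\theta^P_{\Sigma_k}(0, \rho_j) < \delta_0/2$. At step $j$, the triangle inequality places $\Sigma_{q_k^{(j)}}$ (the scale-$\rho_j$ harmonic polynomial approximating $\Sigma_k$) within $\epsilon_k + \delta_0/2$ of a plane at scale $\rho_j$; rigidity then places $\Sigma_{q_k^{(j)}}$ within $\delta'$ of some plane at scale $\rho_{j+1}$; and Lemma \ref{ConeG2} (exploiting the conical symmetry of planes through $0$) transfers this back to $\Sigma_k$ with controlled blow-up, also invoking the scale-$\rho_{j+1}$ harmonic polynomial approximation to absorb error. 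Choosing the ratio and initial tolerances so the aggregated error remains below $\delta_0/2$ yields $\theta^P_{\Sigma_k}(0, r') < \delta_0$ for every $r' \sleq 1$, contradicting $\theta^P_{\Sigma_k}(0, \sigma_k) \sgeq \delta_0$.

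The main obstacle is controlling the transfer across scales in the induction: passing from plane approximation of $\Sigma_{q_k^{(j)}}$ at scale $\rho_{j+1}$ to plane approximation of $\Sigma_k$ at the same scale naively costs a factor of $\rho_j/\rho_{j+1}$ from relative Hausdorff rescaling, and these factors would accumulate over many steps. The remedy is to apply Lemma \ref{ConeG2} to the approximating plane (which is a cone through $0$) and to switch to the harmonic polynomial approximation at the new scale $\rho_{j+1}$ rather than reusing the one from $\rho_j$. A secondary obstacle is the rigidity step itself, which relies on the classical fact that a harmonic polynomial vanishing on a hyperplane is forced, via the factorization-plus-harmonicity computation sketched above, to be linear up to a multiplicative constant.
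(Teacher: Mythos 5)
The paper does not prove Lemma \ref{Badflat}; it is quoted from \cite{Bad}, and the paper's contribution is to isolate the one property of harmonic polynomials that the proof uses, namely Theorem \ref{Badflat2}: there are constants $\delta_{n,d}>0$ and $C_{n,d}$ (independent of the polynomial) such that once $\theta^P_{\Sigma_h}(x,r_0)<\delta_{n,d}$ for some $r_0$, one gets the \emph{linear decay} $\theta^P_{\Sigma_h}(x,r)< C_{n,d}\,r/r_0$ for all $r\sleq r_0$. Your proposed proof substitutes for this a strictly weaker rigidity statement — a uniform, rate-free bound with a threshold $\tau(\delta)$ depending on the desired output $\delta$ — and this is where the argument breaks down.

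Concretely, for your rigidity statement the admissible threshold necessarily satisfies $\tau(\delta)\lesssim \delta^2/C_{n,d}$ (or at least $\tau(\delta)<\delta$ for small $\delta$): to control $\theta^P_{\Sigma_q}(0,r')$ for $r'$ close to $1$ one must use the rescaling $\theta^P_{\Sigma_q}(0,r')\sleq 2\tau/r'$ from Lemma \ref{ConeG2}, and balancing this against the small-scale decay forces $\tau\sim\delta^2$. In your induction, the invariant is $\theta^P_{\Sigma_k}(0,\rho_j)<\delta_0/2$, so the input to rigidity is $\epsilon_k+\delta_0/2\approx\delta_0/2$, and to apply it you would need $\delta_0/2<\tau(\delta')$ for the output level $\delta'$; but $\tau(\delta')\lesssim(\delta')^2$, which forces $\delta'\gtrsim\sqrt{\delta_0}\sgeq\delta_0/2$. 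Thus rigidity produces no gain: the output closeness is at least as large as the inductive invariant, and after adding the $\twid D$-rescaling error $\epsilon_k/\rho$ and the scale-ratio loss you cannot recover $\theta^P<\delta_0/2$ at the next scale. More bluntly: $a_{j+1}\sleq\delta'+\epsilon_k/\rho\sleq a_j$ together with $a_j+\epsilon_k<\tau(\delta')\sleq\delta'$ yields $a_j<\delta'\sleq a_j$, a contradiction, so the iteration cannot close for any choice of $\rho$, $\epsilon_k$, $\eta_k$. The decay rate $C_{n,d}r/r_0$ in Theorem \ref{Badflat2} is precisely what supplies the self-improvement: with the fixed threshold $\delta_{n,d}$ one obtains $\theta^P_{\Sigma_{q^{(j)}}}(0,\rho^{j+1})<C_{n,d}\rho$, which \emph{can} be made small by choosing $\rho$ small. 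A compactness argument on the finite-dimensional space of degree-$\sleq d$ harmonic polynomials, of the kind you sketch, cannot by itself extract this quantitative linear rate; it only gives the rate-free version.

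Two further, lesser remarks. First, your sketch of the rigidity proof claims that the limit polynomial $q_\infty$ with $\Sigma_{q_\infty}\cap B(0,1)=H\cap B(0,1)$ must be linear; this is false as stated ($q(x)=x_1+a x_1 x_2$ with $|a|<1$ is a harmonic, nonconstant, nonlinear polynomial whose zero set in $B(0,1)$ is exactly $\{x_1=0\}$), though the relevant consequence — that $\theta^P_{\Sigma_{q_\infty}}(0,r')=0$ for all $r'\sleq 1$ — still holds. Second, even granting Theorem \ref{Badflat2}, the intermediate-scale control (for $s$ strictly between consecutive dyadic/geometric scales) needs to be obtained by applying the decay two scales back, comparing $\Sigma_k$ at scale $s\in(\rho^{j+1},\rho^j)$ to $\Sigma_{q^{(j-1)}}$ via the hypothesis at scale $\rho^{j-1}$, so that the gain $C_{n,d}\,s/\rho^{j-1}\sleq C_{n,d}\rho$ beats the transfer factor $\rho^{j-1}/s\sleq\rho^{-2}$; your write-up does not address this.
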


This lemma is stated for sets approximated by zero sets of harmonic polynomials. Although the polynomial $p_{KP}(x) = x_4^2-x_1^2 - x_2^2-x_3^2$ satisfying $\cC = p_{KP}\inv(0)$ is not harmonic, the only fact used in the proof about zero sets of harmonic polynomials is the following.

\begin{theorem}[\cite{Bad}]\label{Badflat2}
For all $n\sgeq 2$ and $d\sgeq 1$, there exists $\delta_{n,d}>0$ and $C_{n,d}$ such that for any harmonic polynomial $h:\RR^n\to \RR$ and any $x\in \Sigma_h = h\inv(0)$,
\begin{enumerate}[(1)]
\item either $\theta_{\Sigma_h}^P(x,r)\sgeq \delta_{n,d}$ for all $r>0$, or
\item $\theta_{\Sigma_h}^P(x,r_0)<\delta_{n,d}$ for some $r_0>0$, in which case $\theta_\Sigma^P(x,r) < C_{n,d} r/r_0$ for all $0<r\sleq r_0$.
\end{enumerate}
\end{theorem}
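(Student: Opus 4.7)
The plan is to combine a compactness argument on the finite-dimensional space of normalized harmonic polynomials of degree at most $d$ with a structural lemma about zero sets, concluded by a quantitative implicit function theorem estimate. After translating $x$ to $0$ and rescaling $r_0 = 1$, normalize $h$ by $\|h\|_{L^\infty(B(0,1))} = 1$; the space $\mathcal{H}_d^0$ of such normalized harmonic polynomials of degree $\sleq d$ with $h(0)=0$ is a compact subset of a finite-dimensional vector space, and all $C^k$-norms on $B(0,1)$ are uniformly bounded on $\mathcal{H}_d^0$.

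The crux is a structural lemma: \emph{if $h \in \mathcal{H}_d^0$ satisfies $\theta_{\Sigma_h}^P(0,1)=0$, then $h$ is a nonzero linear polynomial, and hence $\Sigma_h$ is a hyperplane.} To prove it, decompose $h = h_{k_0} + h_{k_0+1} + \cdots$ into homogeneous harmonic parts, where $h_{k_0}$ is the lowest-order nonzero component. The tangent cone to $\Sigma_h$ at $0$ equals $\Sigma_{h_{k_0}}$, which by hypothesis must be a hyperplane $P$. After rotating so $P=\{x_1=0\}$, write $h_{k_0} = x_1 q$ and use harmonicity $x_1 \Delta q + 2 \partial_1 q = 0$. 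Either repeated factorization by $x_1$ terminates with $h_{k_0} = c\, x_1^{k_0}$ (harmonic only for $k_0\sleq 1$), or $q|_P\not\equiv 0$, in which case the spherical harmonic $h_{k_0}|_{S^{n-1}}$ has nodal set equal to a single great $(n-2)$-sphere, which classical Courant-type nodal arguments show forces $k_0 = 1$. This lemma is the main obstacle; the nodal-set classification step requires the most care.

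With the lemma in hand, suppose for contradiction that no constants $\delta_{n,d},\, C_{n,d}$ work. Extract sequences $h_j\in\mathcal{H}_d^0$ with $\theta_{\Sigma_{h_j}}^P(0,1)\to 0$ but $\theta_{\Sigma_{h_j}}^P(0,r_j)\sgeq j\, r_j$ for some $r_j\in(0,1]$ (forcing $r_j \to 0$ since $\theta^P\sleq 2$). By compactness $h_j \to h_\infty \in \mathcal{H}_d^0$ along a subsequence. A continuity argument on zero sets --- using uniform $C^1$-convergence of $h_j\to h_\infty$ combined with implicit-function-theorem stability, which prevents $h_\infty$ from vanishing on $B(0,1)\setminus P$ --- yields $\theta_{\Sigma_{h_\infty}}^P(0,1)=0$. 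So by the structural lemma $h_\infty$ is linear with $|\nabla h_\infty(0)| \sgeq c_{n,d} > 0$ (using $\|h_\infty\|_{L^\infty(B(0,1))}=1$), and hence $|\nabla h_j(0)| \sgeq c_{n,d}/2$ for $j$ large.

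Combined with the uniform $C^2$-bound $\|h_j\|_{C^2(B(0,1))} \sleq M_{n,d}$, the implicit function theorem produces $r^* = r^*(n,d) > 0$ and a $C^2$ graph representation of $\Sigma_{h_j}\cap B(0,r^*)$ over $T_0\Sigma_{h_j}$; Taylor's theorem then gives $\theta_{\Sigma_{h_j}}^P(0,r) \sleq C_{n,d}\, r$ for $r\sleq r^*$, while the trivial bound $\theta_{\Sigma_{h_j}}^P(0,r) \sleq 2 \sleq (2/r^*)r$ handles $r\in(r^*,1]$. Hence $\theta_{\Sigma_{h_j}}^P(0,r_j) \sleq C_{n,d}\, r_j$, contradicting $\theta_{\Sigma_{h_j}}^P(0,r_j)\sgeq j\, r_j$ once $j > C_{n,d}$. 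The general $(x, r_0)$ case follows by applying this analysis to $\tilde h(y) = h(x + r_0 y)$ and undoing the rescaling.
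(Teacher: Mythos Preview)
The paper does not give its own proof of this statement; Theorem~\ref{Badflat2} is quoted from \cite{Bad} and used as a black box, so there is no in-paper argument to compare against. Your compactness-plus-implicit-function-theorem strategy is a natural route and is likely close in spirit to Badger's original argument.

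There is, however, a genuine error in your structural lemma. You assert that $\theta_{\Sigma_h}^P(0,1)=0$ forces $h$ to be \emph{linear}, but this is false: for small $\varepsilon>0$, the polynomial
\[
h(x)=x_1\bigl(1+\varepsilon(x_1^2-3x_2^2)\bigr)\quad(n=2),\qquad h(x)=x_1\bigl(1+\varepsilon(x_2^2-x_3^2)\bigr)\quad(n\sgeq 3),
\]
is harmonic, has $\Sigma_h\cap B(0,1)=\{x_1=0\}\cap B(0,1)$ (the second factor does not vanish on $B(0,1)$), and hence $\theta_{\Sigma_h}^P(0,1)=0$, yet $h$ is cubic. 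What your tangent-cone argument actually establishes --- and all that the rest of your proof requires --- is the weaker conclusion that the lowest homogeneous part $h_{k_0}$ has degree $1$, i.e.\ $\nabla h(0)\neq 0$. Indeed, your sketch shows only that $\Sigma_{h_{k_0}}$ being a hyperplane forces $k_0=1$ for the \emph{homogeneous} harmonic polynomial $h_{k_0}$; the further jump to ``$h$ is linear'' is never justified and is simply false. With the lemma restated as ``$\theta_{\Sigma_h}^P(0,1)=0\Rightarrow\nabla h(0)\neq 0$'', the remainder of your argument survives intact: from $h_j\to h_\infty$ in $C^1$ and $\nabla h_\infty(0)\neq 0$ you still obtain $|\nabla h_j(0)|\sgeq\tfrac12|\nabla h_\infty(0)|>0$ for large $j$, and the uniform $C^2$ bound plus the implicit function theorem then yield $\theta_{\Sigma_{h_j}}^P(0,r)\sleq C\,r$ for all $0<r\sleq 1$, giving the desired contradiction with $\theta_{\Sigma_{h_j}}^P(0,r_j)\sgeq j\,r_j$.
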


We thus note that \cite{Bad} actually proved the following more general lemma.

\begin{lemma}[\cite{Bad}]\label{Badflat3}
Let $\cQ$ be a family of polynomials from $\RR^n$ to $\RR$ such that there exist constants $\delta_{\cQ}>0$ and $C_{\cQ}$ such that for all polynomials $q\in \cQ$ and any $x\in\Sigma_q = q\inv(0)$,
\begin{enumerate}[(1)]
\item either $\theta_{\Sigma_h}^P(x,r)\sgeq \delta_{\cQ}$ for all $r>0$, or
\item $\theta_{\Sigma_h}^P(x,r_x)<\delta_\cQ$ for some $r_x>0$, in which case $\theta_{\Sigma_h}^P(x,r) < C_{\cQ} r/{r_x}$ for all $0<r\sleq r_x$.
\end{enumerate}
Then $\cQ$ has the following property. For all $\delta>0$, there exist $\epsilon = \epsilon(\delta_\cQ,C_{\cQ}, \delta)>0$ and $\eta = \eta(\delta_\cQ,C_\cQ,\delta)>0$ with the following property. Let $\Sigma\subseteq \RR^n$, $x\in \Sigma$, $r>0$, and assume that
$$\sup_{0<r'\sleq r} \theta_{\Sigma}^{\cQ}(x,r') < \epsilon.$$
If $\theta_\Sigma^P(x,r)< \eta$, then $\sup_{0<r'\sleq r}\theta_\Sigma^P(x,r')<\delta$.
\end{lemma}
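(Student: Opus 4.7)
The plan is to follow Badger's proof of Lemma \ref{Badflat} in \cite{Bad} essentially verbatim, substituting $\cQ, \delta_\cQ, C_\cQ$ for $\cH_d, \delta_{n,d}, C_{n,d}$ throughout. The essential observation is that Badger's argument for $\cH_d$ uses harmonic polynomials only through the dichotomy encoded in Theorem \ref{Badflat2}; since $\cQ$ satisfies the same dichotomy by hypothesis (with constants $\delta_\cQ$ and $C_\cQ$), the argument extends without modification.

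For completeness, the core iteration proceeds on a geometric progression of scales $\sigma^k r$. Given $\delta>0$, one first chooses $\sigma\in(0,1/2]$ small enough that $C_\cQ\sigma$ is comfortably smaller than $\delta$, then chooses $\epsilon,\eta$ small enough (in terms of $\sigma, \delta, \delta_\cQ, C_\cQ$). One proves by induction on $k\sgeq 0$ that $\theta_\Sigma^P(x,\sigma^k r)<\delta_0$ for some fixed $\delta_0<\delta_\cQ$; the base case is $\theta_\Sigma^P(x,r)<\eta<\delta_0$.

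For the inductive step, use the hypothesis $\theta_\Sigma^{\cQ}(x,\sigma^k r)<\epsilon$ to produce $q_k\in\cQ$ with $q_k(x)=0$ and $\D{x,\sigma^k r}{\Sigma}{\Sigma_{q_k}}<\epsilon$. The triangle inequality then gives $\theta_{\Sigma_{q_k}}^P(x,\sigma^k r)<\delta_0+\epsilon<\delta_\cQ$, placing $q_k$ in case (2) of the dichotomy at $r_x=\sigma^k r$ and yielding $\theta_{\Sigma_{q_k}}^P(x,\sigma^{k+1}r)<C_\cQ\sigma$. Let $P_{k+1}\ni x$ realize this approximation. To transfer from $\Sigma_{q_k}$ back to $\Sigma$ at scale $\sigma^{k+1}r$, use the elementary scale-change estimate $\Da{x,\sigma^{k+1}r}{\Sigma}{\Sigma_{q_k}}\sleq\sigma^{-1}\Da{x,\sigma^k r}{\Sigma}{\Sigma_{q_k}}\sleq\epsilon/\sigma$, and convert this $\twid{D}$ bound into a $D$ bound against $P_{k+1}$ via Lemma \ref{ConeG1} (since $P_{k+1}$ is a cone based at $x$). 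Combined with the dichotomy, this yields $\theta_\Sigma^P(x,\sigma^{k+1}r)<C_\cQ\sigma+O(\epsilon/\sigma)<\delta_0$, closing the induction. Finally, $\theta_\Sigma^P(x,r')<\delta$ for all $r'\in(0,r]$ follows from the dyadic bound via Lemma \ref{ConeG2} (absorbing the $\sigma^{-1}$ interpolation loss into $\delta$).

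The main obstacle is the bookkeeping required to close the induction with a single $\delta_0$ satisfying simultaneously: (i) $\delta_0+\epsilon<\delta_\cQ$ (so the dichotomy fires at every step); (ii) $C_\cQ\sigma+O(\epsilon/\sigma)<\delta_0$ (so the induction is strictly contracting); and (iii) $\delta_0/\sigma<\delta$ (so scale interpolation stays below $\delta$). Choosing $\sigma$ first (to control the dichotomy improvement), then $\epsilon$ much smaller than $\sigma\delta_0$, and finally $\eta<\delta_0$, threads all three constraints. A secondary technical point is that $\Sigma_{q_k}$ is not itself a cone, preventing a direct application of Lemma \ref{ConeG2}; this is circumvented by routing the triangle inequality through the genuinely conical plane $P_{k+1}$ produced by the dichotomy at each step.
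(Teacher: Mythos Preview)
Your proposal is correct and takes exactly the same approach as the paper: the paper does not give an independent proof but simply observes that Badger's argument in \cite{Bad} for Lemma \ref{Badflat} uses harmonicity only through the dichotomy of Theorem \ref{Badflat2}, so the proof carries over verbatim to any family $\cQ$ satisfying that dichotomy. Your sketch of the iteration actually supplies more detail than the paper itself, which simply defers to \cite{Bad}.
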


We now obtain the following corollary.

\begin{corollary}\label{KPflat2} For all $\delta>0$ there exist $\epsilon>0$ and $\eta >0$ with the following property. Let $\Sigma\subseteq\RR^4$, $x\in \Sigma$, and $r>0$ satisfy
\begin{equation}\label{5.6}
\sup_{0<r'\sleq r} \vartheta_\Sigma^C(x,r')<\epsilon.
\end{equation}
If further $\theta_\Sigma^P(x,r)<\eta$, then $\sup_{0<r'\sleq r}\theta_\Sigma^P(x,r')<\delta$.
\end{corollary}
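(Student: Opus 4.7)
The plan is to realize this corollary as a direct application of Lemma \ref{Badflat3} to the family of KP polynomials. Let $\cQ_{KP}$ denote the family of all polynomials $q:\RR^4\to \RR$ of the form $q(z) = (O(z-y))_4^2 - \sum_{i=1}^3 (O(z-y))_i^2$ where $y\in\RR^4$ and $O$ is an orthogonal transformation; equivalently, $\cQ_{KP}$ is exactly the family of polynomials whose zero sets are KP cones. Then a KP cone $\cC$ contains $x$ if and only if the corresponding $q\in\cQ_{KP}$ satisfies $q(x)=0$, so by the definitions of $\vartheta_\Sigma^C$ and $\theta_\Sigma^\cQ$ we have $\vartheta_\Sigma^C(x,r') = \theta_\Sigma^{\cQ_{KP}}(x,r')$ for every $x\in\Sigma$ and every $r'>0$. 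Thus the hypothesis $\sup_{0<r'\sleq r}\vartheta_\Sigma^C(x,r')<\epsilon$ is precisely the hypothesis of Lemma \ref{Badflat3}, and the desired conclusion on $\theta_\Sigma^P$ is exactly its conclusion.

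The only remaining work is to verify that $\cQ_{KP}$ satisfies the structural hypothesis of Lemma \ref{Badflat3}, namely that there exist absolute constants $\delta_\cQ>0$ and $C_\cQ$ such that for every $q\in\cQ_{KP}$ and every $a\in\Sigma_q$, either $\theta_{\Sigma_q}^P(a,r)\sgeq \delta_\cQ$ for all $r>0$, or there is some $r_a>0$ with $\theta_{\Sigma_q}^P(a,r_a)<\delta_\cQ$ and $\theta_{\Sigma_q}^P(a,r)<C_\cQ r/r_a$ for all $0<r\sleq r_a$. This is exactly the content of Lemma \ref{KPflat1} applied to the KP cone $\Sigma_q$ with vertex $y$: its case (1) ($a=y$) gives $\theta_{\Sigma_q}^P(a,r) = 1/\sqrt 2$ for all $r>0$, which falls under the first alternative with $\delta_\cQ \sleq 1/\sqrt 2$; its case (2) ($a\neq y$) provides, for any $r_a \sleq |a-y|/(2C_0)$, the bound $\theta_{\Sigma_q}^P(a,r)\sleq C_0 r/|a-y|\sleq C_0 r/(2C_0 r_a)\cdot (\text{constant})$ at all $0<r\sleq r_a$, and so falls under the second alternative. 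A concrete choice such as $\delta_\cQ = 1/2$ and $C_\cQ = C_0$ works uniformly across the whole family $\cQ_{KP}$, because the estimate in Lemma \ref{KPflat1} is scale- and isometry-invariant.

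With $\cQ_{KP}$ verified to satisfy the hypothesis of Lemma \ref{Badflat3}, given $\delta>0$ we simply take the $\epsilon = \epsilon(\delta_\cQ,C_\cQ,\delta)$ and $\eta = \eta(\delta_\cQ,C_\cQ,\delta)$ furnished by that lemma, and the corollary follows. There is no genuine obstacle to overcome here; the work has been absorbed into Lemmas \ref{KPflat1} and \ref{Badflat3}. The only point requiring a moment's thought is the clean identification $\vartheta_\Sigma^C = \theta_\Sigma^{\cQ_{KP}}$, which in turn relies on the choice to formulate $\vartheta_\Sigma^C$ in terms of KP cones \emph{containing} $x$ rather than based at $x$, so that the optimizing cone is allowed to have its vertex anywhere (including far from $x$, which is precisely what happens near a non-singular point of $\Sigma$).
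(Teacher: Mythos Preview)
Your proof is correct and follows the same route as the paper: identify $\vartheta_\Sigma^C$ with $\theta_\Sigma^{\cQ}$ for the family $\cQ$ of KP polynomials, invoke Lemma~\ref{KPflat1} to verify the dichotomy hypothesis of Lemma~\ref{Badflat3}, and read off the conclusion. The only cosmetic differences are that you include rotations explicitly in $\cQ_{KP}$ (the paper's $\cQ = \{p_{KP}(\cdot - y)\}$ tacitly allows this since a KP cone is defined in \emph{some} orthonormal coordinates) and you take $\delta_\cQ = 1/2$ rather than $1/\sqrt 2$; your displayed inequality with the trailing ``$\cdot(\text{constant})$'' is a bit garbled, but the intended bound $\theta^P_{\Sigma_q}(a,r)\sleq C_0 r/|a-y| \sleq r/(2r_a)$ for $r_a\sleq |a-y|/(2C_0)$ is right (just make sure also $r_a\sleq |a-y|/2$ so that Lemma~\ref{KPflat1} applies, which is automatic once $C_0\sgeq 1$).
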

\begin{proof}
Take $\cQ$ to be the family
$$\cQ = \{q(x) = p_{KP}(x-y): y\in\RR^n\}.$$
Next, we note that
\begin{equation}\label{KPflat2.1}
\theta_\Sigma^{\cQ}(x,r) = \vartheta_\Sigma^\cC(x,r)
\end{equation}
for all $x$ and $r$. By Lemma \ref{KPflat1}, conditions (1) and (2) of Lemma \ref{Badflat3} are satisfied for $\delta_\cQ = 1/\sqrt2$, $C_\cQ = C_0$ (the constant from Lemma \ref{KPflat1}), and $r_0 =|x|/2$ for any $x\neq 0$. Thus, the hypotheses of Lemma \ref{Badflat3} are satisfied, and we are done.
\end{proof}

\begin{lemma}\label{KPflat3}
For all $\sigma>0$ and $0<s\sleq 1/2$, there exists $\eta = \eta(\sigma,s)$ with the following property. Let $\Sigma\subseteq \RR^4$ be a closed set, $x\in\Sigma$, and $\cC$ be a KP cone based at $x$. If $y\in \Sigma$ satisfies
\begin{equation}\label{KPflat3.1}
\D{x,3|x-y|/2}\Sigma\cC<\eta,
\end{equation}
then
\begin{equation}\label{KPflat3.2}
\D{y,s|x-y|}\Sigma\cC<\sigma.
\end{equation}
\end{lemma}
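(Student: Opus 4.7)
The plan is to argue by compactness and contradiction. Suppose the conclusion fails: there exist $\sigma_0 > 0$, $s_0 \in (0, 1/2]$, and sequences $\Sigma_n \subseteq \RR^4$ (closed), $x_n, y_n \in \Sigma_n$, and KP cones $\cC_n$ based at $x_n$ with $\D{x_n, \frac{3}{2}|x_n-y_n|}{\Sigma_n}{\cC_n} \to 0$ yet $\D{y_n, s_0|x_n-y_n|}{\Sigma_n}{\cC_n} \geq \sigma_0$ for all $n$. Since the relative Hausdorff distance $D^{\cdot,\cdot}$ is invariant under translation and dilation, translate $x_n$ to the origin and dilate by $|x_n - y_n|^{-1}$; this normalizes $x_n = 0$ and $|y_n| = 1$, and the new $\cC_n$ is still a KP cone based at the origin.

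Each $\cC_n$ is then a rotation of the standard KP cone $\cC_0 = \{x_4^2 = x_1^2+x_2^2+x_3^2\}$ by some $g_n \in O(4)$. By compactness of $\Ss^3$ and $O(4)$, pass to subsequences with $y_n \to y_\infty \in \Ss^3$ and $g_n \to g_\infty$, giving $\cC_n \to \cC_\infty := g_\infty(\cC_0)$ locally in Hausdorff distance. Combined with $\D{0, 3/2}{\Sigma_n}{\cC_n} \to 0$ and the triangle inequality for $D^{0,3/2}$, this yields $\Sigma_n \cap B(0, 3/2) \to \cC_\infty \cap B(0, 3/2)$ in Hausdorff distance. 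Since $y_n \in \Sigma_n$ and $y_n \to y_\infty$, we get $y_\infty \in \cC_\infty$; and since $|y_\infty| = 1 \neq 0$, $y_\infty$ is a smooth point of $\cC_\infty$ by Lemma \ref{KPflat1}.

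To close, I would show $\D{y_n, s_0}{\Sigma_n}{\cC_n} \to \D{y_\infty, s_0}{\cC_\infty}{\cC_\infty} = 0$, contradicting $\geq \sigma_0$. The main obstacle is this last convergence, which is a continuity statement for $D^{y,s}$ under simultaneous convergence of the center ($y_n \to y_\infty$) and of the two sets (in local Hausdorff on $B(0,3/2)$). Because $B(y_n, s_0) \neq B(y_\infty, s_0)$, one must prevent pathological behavior near $\partial B(y_\infty, s_0)$, where $\cC_\infty$ could in principle run tangent, causing points of $\Sigma_n \cap B(y_n, s_0)$ to fail to converge into $\cC_\infty \cap B(y_\infty, s_0)$. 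I would resolve this by a sandwich argument using balls $B(y_\infty, s_0 \pm \varepsilon_n)$ with $\varepsilon_n \to 0$, together with the local smoothness of $\cC_\infty$ at $y_\infty$ (from Lemma \ref{KPflat1}), which provides a Lipschitz nearest-point projection onto $\cC_\infty$ in a neighborhood of $y_\infty$ and controls any tangency between $\cC_\infty$ and the sphere $\partial B(y_\infty, s_0)$.
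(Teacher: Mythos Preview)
Your approach is essentially the paper's: contradict, normalize by translation and dilation, extract convergent subsequences of cones and sets, and arrive at a limit in which $\Sigma\cap B(0,3/2)=\cC\cap B(0,3/2)$, which is incompatible with $D^{y,s}\ge\sigma$ since $B(y,s)\subseteq B(0,3/2)$. One simplification you missed: after rescaling so that $|y_n|=1$, the paper applies a further rotation to each configuration so that all $y_n$ equal a \emph{fixed} point $y$ (rotations preserve $D^{\cdot,\cdot}$ and the class of KP cones based at $0$); this removes the moving-center complication you raised and leaves only the fixed-ball limit to justify.

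The residual boundary obstacle you flag is genuine in principle --- $D^{y,s}$ is not lower semicontinuous under local Hausdorff convergence without additional structure --- and the paper simply asserts the limiting inequality $D^{y,s}(\Sigma,\cC)\ge\sigma$ without comment. Your proposed fix via smoothness and transversality is correct and completes the argument: since $|y|=1$ and $s\le 1/2$, every point of $\cC\cap\overline{B(y,s)}$ has norm at least $1/2$ and hence lies in the smooth locus of $\cC$, and a direct check with the normal $\nu_p$ (see~(\ref{nu_a})) shows $\cC$ is nowhere tangent to $\partial B(y,s)$, which is exactly what your sandwich argument needs.
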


\begin{proof}
Suppose the contrary. Then there exist $\sigma>0$, $0<s\sleq 1/2$, sequences $x_i, y_i\in \RR^4$, KP cones $\cC_i$ based at $x_i$, and closed sets $\Sigma_i$ with $x_i, y_i\in\Sigma_i$ satisfying
\begin{equation}\label{KPflat3.3}
\D{x_i, 3|x_i-y_i|/2}{\Sigma_i}{\cC_i} < \frac 1i\quad\mbox{ but }\quad
\D{y_i, s|x_i-y_i|}{\Sigma_i}{\cC_i} \sgeq \sigma.
\end{equation}
Let
\begin{equation}\label{KPflat3.5}
\twid \Sigma_i = \frac{\Sigma_i - x_i}{|y_i-x_i|}, \twid \cC_i = \frac{\cC_i - x_i}{|y_i-x_i|}, \twid y_i = \frac{y_i - x_i}{|y_i-x_i|}.
\end{equation}
Because $|\twid y_i|=1$ we may assume (by applying a rotation) that there exists $y\in \RR^4$ with $\twid y_i = y$ for all $i$. From   (\ref{KPflat3.3}) and (\ref{KPflat3.5}), we have that
\begin{equation}\label{KPflat3.6}
\D{0,3/2}{\twid \Sigma_i}{\twid \cC_i} < \frac 1i
\quad\mbox{ but }\quad
\D{y,s}{\twid\Sigma_i}{\twid \cC_i}\sgeq \sigma.
\end{equation}
By passing to a subsequence, we may assume that $\twid\Sigma_i\to \Sigma$ and $\twid \cC_i\to \cC$ in the topology of Hausdorff distance on compact balls, where $\Sigma$ is some closed set and $\cC$ is a KP cone based at 0. Then (\ref{KPflat3.6}) implies that
\begin{equation}\label{KPflat3.8}
\D{0,3/2}{\Sigma}{\cC} = 0
\quad\mbox{ but }\quad
\D{y,s}{\Sigma}{\cC} \sgeq \sigma > 0.
\end{equation}
Because $s\sleq 1/2$ and $|y| = 1$, $B(y,s)\subseteq B(0,3/2)$. Hence, (\ref{KPflat3.8}) implies that $\Sigma \cap B(0,3/2)\neq \cC\cap B(0,3/2)$ but $\D{0,3/2}{\Sigma}{\cC} = 0,$ yielding a contradiction.

\end{proof}

From Corollary \ref{KPflat2} and Lemma \ref{KPflat3}, we obtain a statement about how flat the ``sides'' of a set approximated by KP cones are.

\begin{corollary}\label{FlatSidesSet} For all $\delta>0$, there exists $A = A(\delta)$, $\epsilon = \epsilon (\delta)>0$ and $\eta = \eta(\delta)>0$ with the following property. Let $x_0\in\Sigma\subseteq\RR^4$ and $r_0>0$. Suppose that
\begin{equation}\label{FlatSidesSet1}
\sup_{0<r\sleq r_0, x\in \Sigma\cap B(x_0,r_0)} \vartheta_\Sigma^C(x,r')<\epsilon
\end{equation}
and $\cC$ is a KP cone based at $x_0$ such that
\begin{equation}\label{FlatSidesSet2}
\D{x_0,r_0}\Sigma \cC < \eta.
\end{equation}
If $x\in \Sigma$ satisfies $r_0/3\sleq |x-x_0| \sleq 2 r_0/ 3$, then for $0<r\sleq {|x-x_0|}/{A}$,
\begin{equation}\label{FlatSidesSet3}
\theta^P_\Sigma(x,r)<\delta.
\end{equation}
\end{corollary}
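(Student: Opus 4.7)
The plan is to reduce the corollary to a single-scale flatness estimate and then invoke Corollary \ref{KPflat2} to propagate it to all smaller scales. Given $\delta>0$, apply Corollary \ref{KPflat2} to obtain constants $\epsilon_0=\epsilon_0(\delta)>0$ and $\eta_0=\eta_0(\delta)>0$ realizing the bootstrap, and set $\epsilon:=\epsilon_0$. It suffices to find $A=A(\delta)$ and $\eta=\eta(\delta)$ so that, at the single scale $r^*:=|x-x_0|/A$, one has $\theta_\Sigma^P(x,r^*)<\eta_0$; then, since $r^*\sleq r_0$ and $x\in\Sigma\cap B(x_0,r_0)$, the hypothesis (\ref{FlatSidesSet1}) supplies $\vartheta$-smallness at every subscale at $x$, and Corollary \ref{KPflat2} concludes (\ref{FlatSidesSet3}).

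The single-scale estimate is built in two steps. First, transfer the $\cC$-approximation of $\Sigma$ from $x_0$ down to $x$: since $\cC$ is a cone based at $x_0$ and $B(x_0,\tfrac{3}{2}|x-x_0|)\subseteq B(x_0,r_0)$, Lemma \ref{ConeG2} yields
$$\D{x_0,\tfrac{3}{2}|x-x_0|}{\Sigma}{\cC}\sleq \frac{4 r_0}{3|x-x_0|}\D{x_0,r_0}{\Sigma}{\cC}\sleq 4\eta,$$
using $|x-x_0|\sgeq r_0/3$. Fix parameters $\sigma\in(0,1]$ and $s\in(0,1/2]$ to be determined, and require $4\eta$ to lie below the threshold $\eta(\sigma,s)$ supplied by Lemma \ref{KPflat3}; that lemma then gives $\D{x,s|x-x_0|}{\Sigma}{\cC}<\sigma$. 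In particular, since $x\in\Sigma$, one obtains a point $x'\in\cC$ with $|x-x'|\sleq \sigma s|x-x_0|$, so $|x'-x_0|\sgeq(1-\sigma s)|x-x_0|>0$ and $x'$ is a smooth (nonvertex) point of $\cC$ at the relevant scales.

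Second, let $T$ denote the affine plane through $x$ parallel to the tangent plane $T_{x'}\cC$; thus $T$ is a cone based at $x$. Combining $\D{x,s|x-x_0|}{\Sigma}{\cC}<\sigma$ with the flatness estimate $\D{x',\rho}{\cC}{T_{x'}\cC}\sleq C_0 \rho/|x'-x_0|$ from Lemma \ref{KPflat1}(2) (valid for $\rho\sleq|x'-x_0|/2$) and then translating from $T_{x'}\cC$ to $T$ by $x-x'$, a direct triangle inequality chase bounds both $\hda{x,r^*}{\Sigma}{T}$ and $\hda{x,r^*}{T}{\Sigma}$ by a quantity of the form $O(\sigma s A)+O(1/A)$. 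Passing from $\twid d$ to $d$ via Lemma \ref{ConeG1} (which applies since $T$ is a cone at $x$) yields
$$\theta_\Sigma^P(x,r^*)\sleq \D{x,r^*}{\Sigma}{T}\sleq C_1\sigma s A+\frac{C_2}{A}$$
for absolute constants $C_1,C_2$ depending only on $C_0$. One then picks $A$ so large that $C_2/A<\eta_0/2$, then $\sigma s$ so small that $C_1\sigma s A<\eta_0/2$ (and that the requisite ball containments hold), and finally $\eta$ so small that $4\eta$ falls below the Lemma \ref{KPflat3} threshold for $(\sigma,s)$.

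The main obstacle is the bookkeeping: the constants $A,\sigma,s,\eta,\epsilon$ must be fixed in this specific order, each depending only on $\delta$ and the universal $C_0$. One must also track which object is a cone based at which point: $\cC$ is a cone at $x_0\neq x$, so moving the center down to $x$ requires Lemma \ref{ConeG2} rather than Lemma \ref{ConeG1}, while $T$ is a cone at $x$, enabling the final $\twid d\to d$ conversion and the identification with $\theta_\Sigma^P$.
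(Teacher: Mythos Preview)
Your proposal is correct and follows essentially the same route as the paper: use Lemma \ref{ConeG2} to pass from $(x_0,r_0)$ to the scale $\tfrac{3}{2}|x-x_0|$, invoke Lemma \ref{KPflat3} to transfer the cone approximation down to $(x,s|x-x_0|)$, replace $\cC$ by a tangent plane via Lemma \ref{KPflat1}, and then apply Corollary \ref{KPflat2} to push flatness to all scales $r\sleq r^*$. You are in fact slightly more careful than the paper in two respects: you work at radius $\tfrac{3}{2}|x-x_0|$ (which genuinely sits inside $B(x_0,r_0)$ when $|x-x_0|\sleq 2r_0/3$, whereas the paper's $2|x-x_0|$ need not), and you pass through a nearby smooth point $x'\in\cC$ before invoking Lemma \ref{KPflat1}, rather than writing $T_x\cC$ for $x\in\Sigma$.
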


\begin{proof}
Let $\delta>0$ be given. Fix parameters $A$, $\epsilon$, $\eta >0$ to be specified later. Let $x\in\Sigma$ satisfy ${r_0}/3\sleq |x-x_0| \sleq {2r_0}/3$. We apply Lemma \ref{ConeG2} to (\ref{FlatSidesSet2}) and get
\begin{equation}\label{FlatSidesSet3.5}
\D{x_0, 2|x-x_0|}{\Sigma}{\cC} \sleq  2\frac {r_0}{2|x-x_0|} \eta \sleq \frac{r_0}{r_0/3}\eta = 3 \eta.
\end{equation}
 Let $\sigma>0$ and $s = 1/A$.
Then by (\ref{FlatSidesSet3.5}), we apply Lemma \ref{KPflat3} to get that there exists $\eta$ small enough so that 
\begin{equation}
\D{x, |x-x_0|/A}\Sigma \cC < \sigma/2.
\end{equation}
 By Lemma \ref{KPflat1}, we have that for $A$ large enough, 
\begin{equation}\label{FlatSidesSet4}
\D{x,|x-x_0|/A}\cC {T_x\cC}<\sigma/2.
\end{equation}
 Hence, 
\begin{equation}\label{FlatSidesSet5}
\D{x,|x-x_0|/A}\Sigma {T_x\cC}\sleq \sigma.
\end{equation}
 By Corollary \ref{KPflat2}, (\ref{FlatSidesSet1}) and (\ref{FlatSidesSet5}) imply that for $\sigma$ and $\epsilon$ small enough, $\theta_\Sigma^P(x,r)<\delta$ for all $0<r\sleq |x-x_0|/A$.

\end{proof}

We now quote the Reifenberg Topological Disk Theorem (see for example \cite{DKT}).

\begin{theorem}[Reifenberg Topological Disk Theorem]\label{DKTdisk} There exists $\xi_0 >0$ and $C_0$ with the following property. Let $\Sigma\subseteq\RR^n$ be a closed set and $y_0\in \Sigma$. Assume that $r_0>0$ and $0<\xi\sleq \xi_0$ satisfy
$$\theta_\Sigma^P(y,r)\sleq \xi\quad\mbox{ for all }y\in\Sigma\cap B(y_0,4r_0), 0<r\sleq 10r_0.$$
Let $P_0$ be a plane through $y_0$ such that
$$\D{y_0,10r_0} {P_0}\Sigma\sleq \xi.$$
Then there exists a continuous injective map
$$\tau:B(y_0,3r_0)\cap P_0\to \Sigma\cap B(0,4r_0)$$
which satisfies
\begin{equation}\label{DKTdisk1}
|\tau(y)-y|\sleq C_0 \,\xi\, r_0 \quad\mbox{ for all }y\in P_0\cap B(y_0, 3r_0).
\end{equation}
\end{theorem}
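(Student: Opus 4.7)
The plan is to prove this by the classical Reifenberg construction, building $\tau$ as the uniform limit of a composition of ``straightening'' diffeomorphisms defined scale by scale. Set $r_k = 2^{-k}r_0$ and, at each scale $r_k$, fix a maximal $r_k$-separated net $\{y_{k,j}\}_j$ in $\Sigma \cap B(y_0, 4r_0)$. For each net point, the hypothesis $\theta_\Sigma^P(y_{k,j}, 10r_k) \sleq \xi$ lets me pick an $(n-1)$-plane $P_{k,j}$ through $y_{k,j}$ with $\D{y_{k,j},10r_k}{P_{k,j}}{\Sigma} \sleq 2\xi$. The first consistency check, obtained from the triangle inequality for $D^{x,r}$ on overlapping balls plus Lemma \ref{planes}(1), is that the planes across one change of scale or one step in the net satisfy $\meang(P_{k,j}, P_{k+1,j'}) \sleq C\xi$ whenever their balls overlap. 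Note $P_0$ itself fits naturally into this scheme at level $k=0$.

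Next I would assemble approximating maps $\sigma_k \colon \RR^n \to \RR^n$ from a smooth partition of unity $\{\varphi_{k,j}\}$ subordinate to $\{B(y_{k,j}, 2r_k)\}$: on the ball $B(y_{k,j}, 2r_k)$, $\sigma_k$ is a small shift in the normal direction of $P_{k,j}$ that takes points close to the planes $P_{k+1,j'}$ of the next scale, and $\sigma_k = \mathrm{id}$ far from $\Sigma$. The standard estimates read
\begin{equation}
\|\sigma_k - \mathrm{id}\|_\infty \sleq C\xi\, r_k, \qquad \|D\sigma_k - I\|_\infty \sleq C\xi,
\end{equation}
the latter because the normals of neighboring plaques are within angle $C\xi$ and the partition of unity has derivatives of order $r_k^{-1}$ balancing the $O(\xi r_k)$ displacement. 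Setting $f_k = \sigma_k \circ \sigma_{k-1} \circ \cdots \circ \sigma_0$ and restricting to $P_0 \cap B(y_0, 3r_0)$, the displacement bound gives a telescoping estimate $\|f_{k+1} - f_k\|_\infty \sleq C\xi r_k$. Summing a geometric series yields uniform Cauchyness and the bound (\ref{DKTdisk1}) with $C_0 = C\sum 2^{-k}$, and continuity of $\tau = \lim f_k$ follows automatically.

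To finish I need to show $\tau$ is injective and $\tau(P_0 \cap B(y_0,3r_0)) \subseteq \Sigma$. For the image, each $\sigma_k$ moves points to within $O(\xi r_k)$ of $\Sigma$ by construction, so $\mathrm{dist}(f_k(y), \Sigma) \to 0$; closedness of $\Sigma$ then gives $\tau(y) \in \Sigma$. For injectivity, I iterate the derivative estimate to obtain
\begin{equation}
\|Df_k - I\|_\infty \sleq (1+C\xi)^{k} - 1,
\end{equation}
which is not summable as stated, so the sharper argument is needed: one proves inductively that $\sigma_k$ acts only in the small annular band around $f_{k-1}(P_0)$ and coincides with the identity on the image of previous scales below a certain radius, giving at each point a bi-Lipschitz constant bounded uniformly in $k$ once $\xi \sleq \xi_0$ is small enough. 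The uniform bi-Lipschitz constants pass to the limit and force $\tau$ to be injective.

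The main obstacle will be this last step: Reifenberg-flat sets can have highly twisted local frames, and the delicate point is to keep the cumulative rotation of the approximating planes across all scales controlled in a summable way rather than merely finite at each level. That is precisely what fixes the threshold $\xi_0$ and forces the partition-of-unity construction to respect the geometry of the net; if one is careless in choosing either the net or the plaque size, the product of near-identity maps can drift arbitrarily far from a bi-Lipschitz embedding and injectivity fails.
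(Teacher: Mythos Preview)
The paper does not prove Theorem \ref{DKTdisk}; it is quoted as a known result with a reference to \cite{DKT} (and ultimately to Reifenberg). So there is no ``paper's own proof'' to compare against, only your sketch of the classical construction.

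Your outline through the displacement estimate $\|f_{k+1}-f_k\|_\infty\sleq C\xi r_k$, the uniform convergence, and the inclusion $\tau(P_0\cap B(y_0,3r_0))\subseteq\Sigma$ is the standard one and is fine as a plan. The gap is in your injectivity argument. You claim one can arrange ``uniform bi-Lipschitz constants'' for the $f_k$ and pass them to the limit. This is false in general: under the hypothesis $\theta_\Sigma^P\sleq\xi$ with $\xi$ merely small and fixed, the limiting map $\tau$ is only $C^{0,\alpha}$ for some $\alpha=\alpha(\xi)<1$, not bi-Lipschitz. Reifenberg-flat sets include snowflake-type examples whose natural parametrizations have unbounded metric distortion, so no rearrangement of the partition of unity will produce uniform Lipschitz bounds. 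Your own estimate $\|Df_k-I\|\sleq(1+C\xi)^k-1$ is the honest one, and it really does blow up.

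The correct injectivity argument does not pass through bi-Lipschitz control. One standard route is to prove, by induction on $k$, that the image $f_k(P_0\cap B(y_0,3r_0))$ is, at scale $r_k$, a $C\xi$-Lipschitz graph over each local plane $P_{k,j}$ inside $B(y_{k,j},5r_k)$; this graph property is stable under composition with the next $\sigma_{k+1}$ precisely because $\meang(P_{k,j},P_{k+1,j'})\sleq C\xi$. From that one reads off a uniform bi-H\"older estimate $|f_k(y)-f_k(y')|\sgeq c\,|y-y'|^{1/\alpha}$ (equivalently, $|\tau(y)-\tau(y')|\sgeq c\,|y-y'|^{1/\alpha}$), which survives the limit and gives injectivity. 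Alternatively one builds the inverse map simultaneously and checks the two limits compose to the identity. Either way, you should replace the bi-Lipschitz claim by a bi-H\"older one.
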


\begin{theorem}\label{Top} For all $\delta>0$, there exist $\epsilon = \epsilon(\delta)>0$ and $\eta = \eta(\delta)>0$ with the following property. Suppose that $\Sigma\subseteq \RR^4$ is a closed set,  and
\begin{equation}\label{Top1} 
\vartheta_\Sigma^C(z,r)<\epsilon\quad\mbox{for all }z\in\Sigma\cap  B(0,1), 0<r\sleq 1.
\end{equation}
Suppose also that $C$ is a KP cone based at the origin such that
\begin{equation}\label{Top3} \D{0,1} C\Sigma < \eta.\end{equation}
Let $x\in \cC$, $|x| = 1/2$, and $v$ be a unit vector such that
\begin{equation} \meang(v,T_xC)\sgeq \frac\pi4.\end{equation}
Then there exists $t\in \RR$ with $|t|\sleq \delta$ and $x+tv\in \Sigma$.
\end{theorem}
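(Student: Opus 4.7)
The plan is to apply Theorem \ref{DKTdisk} (Reifenberg) at a small scale $s = s(\delta)$ around a point $x' \in \Sigma$ near $x$---which exists because the hypothesis $\D{0,1}{\cC}{\Sigma}\sleq \eta$ forces some $x' \in \Sigma$ with $|x-x'|\sleq \eta$---using as approximating plane the translate $P_0 = T_x\cC + (x' - x)$ of the tangent plane $T_x\cC$. First I would fix $s$ small enough that $\Sigma\cap B(x', 4s)$ lies in the annulus $\{1/3\sleq |y|\sleq 2/3\}$ and that $10s\sleq |y|/A$ for the constant $A = A(\xi)$ from Corollary \ref{FlatSidesSet}, where $\xi$ is the Reifenberg flatness parameter. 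Choosing $\epsilon$ small, Corollary \ref{FlatSidesSet} then delivers $\theta_\Sigma^P(y,r)<\xi$ uniformly for $y\in\Sigma\cap B(x',4s)$ and $0<r\sleq 10s$. For the plane hypothesis of Reifenberg, Lemma \ref{ConeG2} gives $\D{0,3/4}{\cC}{\Sigma}\sleq (8/3)\eta$, Lemma \ref{KPflat3} upgrades this to $\D{x,10s}{\Sigma}{\cC}<\sigma$ for any prescribed $\sigma$ (by shrinking $\eta$), and Lemma \ref{KPflat1} gives $\D{x,10s}{\cC}{T_x\cC}\sleq 20C_0 s$; combining and translating the center from $x$ to $x'$ yields $\D{x',10s}{\Sigma}{P_0}\sleq \xi$. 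Theorem \ref{DKTdisk} then produces a continuous injective $\tau\colon P_0\cap B(x',3s)\to\Sigma$ with $|\tau(y)-y|\sleq C_0\xi s$.

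Next comes the topological step. Let $H = x + v^\perp$ and $\pi\colon\RR^4\to H$ be the orthogonal projection, so $\pi\inv(x) = x + \RR v$. The transversality $\meang(v,T_x\cC)\sgeq \pi/4$ makes $\pi|_{P_0}\colon P_0\to H$ a linear bijection whose inverse $\phi\colon H\to P_0$ has operator norm at most $\sqrt 2$. Define $\Psi\colon P_0\cap \bar B(x',3s)\to P_0$ by $\Psi(y) = \phi(\pi(\tau(y)))$; since $\phi\circ\pi|_{P_0}$ is the identity on $P_0$, we get $|\Psi(y)-y| = |\phi(\pi(\tau(y)-y))|\sleq \sqrt 2\, C_0\xi s$. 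Setting $q = \phi(x)\in P_0$, which lies within $O(\eta)$ of $x'$, the map $y\mapsto y-\Psi(y)+q$ sends $\bar B(x',3s)\cap P_0$ into itself for $\eta$ and $\xi s$ sufficiently small, so Brouwer's fixed point theorem produces $y^\ast$ with $\Psi(y^\ast) = q$, equivalently $\pi(\tau(y^\ast)) = x$, i.e.\ $\tau(y^\ast) = x + tv \in\Sigma$ for some $t\in\RR$. The triangle estimate $|t| = |\tau(y^\ast)-x|\sleq |\tau(y^\ast)-y^\ast| + |y^\ast-q| + |q-x|$ is then $O(\xi s + \eta)$.

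The main obstacle is coordinating the chain of parameter choices so that all estimates are simultaneously satisfied: $s = s(\delta)$ is fixed small first (to place $x$ in the annulus used by Corollary \ref{FlatSidesSet} and to let the cone-flattening lemmas produce an approximating plane at scale $10s$), then $\xi = \xi(\delta, s)$ is chosen small enough both for Reifenberg ($\xi\sleq \xi_0$) and so that the final error $C\xi s + O(\eta)$ stays below $\delta$, and finally $\epsilon, \eta$ are taken small in terms of $\xi$ and $s$ via Corollary \ref{FlatSidesSet} and Lemma \ref{KPflat3}. Aside from this bookkeeping, the geometric content is concentrated in the Brouwer step: the angle hypothesis $\meang(v,T_x\cC)\sgeq \pi/4$ enters precisely by guaranteeing that $\pi|_{P_0}$ is a bijection with bounded distortion, which is exactly what forces the line $x + \RR v$ to puncture the Reifenberg disk inside $\Sigma$.
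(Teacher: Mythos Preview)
Your approach is essentially the paper's: use Corollary~\ref{FlatSidesSet} to get Reifenberg flatness in the annulus, apply Theorem~\ref{DKTdisk} at a point $x'\in\Sigma$ near $x$ to produce a topological disk in $\Sigma$, and then run a topological argument to show the line $x+\RR v$ must puncture that disk. There are two differences worth noting. First, you take $P_0 = T_x\cC + (x'-x)$, which makes the angle bound $\meang(v,P_0)\sgeq\pi/4$ immediate; the paper instead lets $P_0$ be a best-approximating plane to $\Sigma$ at scale $10r_0$ (so the Reifenberg hypothesis $\D{x',10r_0}{P_0}{\Sigma}\sleq\xi$ is automatic) and then spends a page proving $\meang(v,P_0)\sgeq\pi/6$ via Lemmas~\ref{KPflat1} and~\ref{KPflat3}. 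Second, you close with Brouwer's fixed-point theorem on $y\mapsto y-\Psi(y)+q$, while the paper uses a degree/retraction argument on $\pi\circ\tau$; these are interchangeable here, and your Brouwer formulation is arguably cleaner.

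One point to watch: your choice of $P_0$ forces you to verify the Reifenberg plane hypothesis $\D{x',10s}{\Sigma}{P_0}\sleq\xi$ by hand, and your chain of estimates produces a bound of the form $\sigma + 20C_0 s + O(\eta/s)$. To absorb the term $20C_0 s$ into $\xi$, and also to satisfy $10s\sleq |y|/A(\xi)$, the parameter $\xi$ must be fixed \emph{before} $s$; your stated ordering (``$s=s(\delta)$ first, then $\xi=\xi(\delta,s)$ chosen small'') is circular. The fix is to set $\xi=\xi_0$ once and for all, then choose $s$ small in terms of $\xi_0$ and $\delta$, and finally $\sigma,\eta,\epsilon$ small in terms of $s$ and $\xi_0$. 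The paper's choice of $P_0$ sidesteps this at the cost of the separate angle computation.
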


\begin{proof}
Let $\delta>0$ be given. Fix parameters $\epsilon>0, \eta>0$ to be specified later.  By (\ref{Top1}), (\ref{Top3}), and Corollary \ref{FlatSidesSet}, for any $\xi>0$ there exist $\epsilon>0$, $\eta>0$ small enough, and $A>0$ such that
\begin{equation}\label{Top6}
\theta_\Sigma^P(z,r)<\xi\quad\mbox{ for all }z\in \Sigma, \frac 13\sleq |z| \sleq \frac 23, 0<r\sleq \frac{|z|}A.
\end{equation}
Fix $\xi>0$ to be determined and stipulate that $\epsilon>0$ is small enough that (\ref{Top6}) holds. By (\ref{Top3}), there exists $y\in\Sigma$ with $|x-y|<\eta$. Stipulate that $\eta\sleq 1/12$. Then for all $y'\in B(y,1/12)\cap \Sigma$, we have that
$|y'|\sgeq |x|-|x-y|-|y'-y|\sgeq 1/2-1/12-1/12 = 1/3$. Thus by (\ref{Top6}),
\begin{equation}\label{Top7}
\theta_\Sigma^P(y',r)<\xi\quad\mbox{ for all }y'\in \Sigma\cap B\left(y,\frac1{12}\right), 0<r\sleq \frac1{3A}.
\end{equation}

We now require that $\xi\sleq \xi_0$ from Theorem \ref{DKTdisk}, $10r_0\sleq 1/(3A)$, and $4 r_0\sleq 1/{12}$. Then (\ref{Top7}) implies
\begin{equation}\label{Top8}
\theta_\Sigma^P(y',r)<\xi\sleq \xi_0\quad\mbox{ for all }y'\in\Sigma\cap B(y,4r_0), 0<r\sleq 10r_0.
\end{equation}
Statement (\ref{Top8}) tells us that the hypotheses of Theorem \ref{DKTdisk} are satisfied. Let $P_0$ be a plane such that
\begin{equation}\label{Top9.1} \D{y,10r_0} {P_0}\Sigma<\xi\quad\mbox{ and }\quad y\in P_0.\end{equation}
Then by Theorem \ref{DKTdisk}, there exists a continuous injective map $\tau:P_0\cap B(y,3r_0)\to\Sigma\cap B(y,4r_0)$ such that
\begin{equation}\label{Top9.2} |\tau(y')-y'|< C_0\,\xi \,r_0\quad\mbox{ for all } y'\in P_0\cap B(y,3r_0).\end{equation}

We would now like to know that the angle between $v$ and $P_0$ is not too small. We seek to establish that
\begin{equation}\label{Top10}
\meang(v,P_0)\sgeq  \frac\pi 6.\end{equation}
To show this, we will first establish that for small enough $r_0$, $\eta$, and $\xi$,
\begin{equation}\label{Top10.1}
\meang({P_0},{ T_x\cC})\sleq \frac\pi{12}.
\end{equation}
Recall that $y\in P_0$ by (\ref{Top9.1}). So by Lemma \ref{planes}, it is sufficient to show that
\begin{equation}\label{Top10.2}
\hd{y,3r_0}{P_0}{ T_x\cC}\sleq \frac 1{18}.
\end{equation}
Let $p\in P_0\cap B(y, 3r_0)$. Then by (\ref{Top9.1}), there exists $z\in \Sigma$ such that
\begin{equation}\label{Top10.3}
|p-z|\sleq 10 r_0 \xi.
\end{equation}
We require that $\xi\sleq 1/10$, so that $z\in B(y, 4r_0)$. Recall that $|x-y|\sleq \eta$. We require that $\eta\sleq r_0$ so that $z\in B(x, 5r_0)$ (actually, we will require later that $\eta$ be significantly smaller than $r_0$). Fix $\sigma>0$ to be specified later.
By (\ref{Top3}) and Lemma \ref{KPflat3}, we have that
\begin{equation}\label{Top10.4}
\D{x,5r_0}\Sigma\cC \sleq \sigma\end{equation}
For as $\eta$ small enough (depending on $r_0$). Thus, there exists $c\in \cC\cap B(x,5r_0)$ such that
\begin{equation}\label{Top10.4.1}
|z-c|\sleq 5 r_0 \sigma.
\end{equation}
By Lemma \ref{KPflat1}, we have that
\begin{equation}\label{Top10.5}
\D{x,5r_0}\cC{T_x\cC}\sleq Cr_0,\end{equation}
where we use $C$ in this proof to denote a constant which may depend on $\delta$.
Thus, there exists a $q\in T_x\cC$ such that
\begin{equation}\label{Top10.5.1}
|c-q|\sleq Cr_0^2.
\end{equation}
Combining (\ref{Top10.3}), (\ref{Top10.4.1}), and (\ref{Top10.5.1}), we get
\begin{equation}\label{Top10.6}
|p-q|\sleq 10 r_0 \xi + 5 r_0 \sigma + C r_0^2.
\end{equation}
Because for every $p\in P_0\cap B(y, 3r_0)$, there exists a $q\in T_x\cC$ satisfying (\ref{Top10.6}), we get that
\begin{equation}\label{Top10.7}
\hda{y,3r_0}{P_0}{T_x\cC}\sleq \frac{10}3  \xi + \frac53 \sigma + C r_0.
\end{equation}
We now require that
\begin{equation}\label{Top10.8}
 \frac{10}3  \xi,\frac53 \sigma,C r_0\sleq \frac1{108}.
\end{equation}
(That is, we first choose $r_0$ small enough so that $Cr_0\sleq 1/{108}$, then by Lemma \ref{KPflat3}, we choose $\eta$ small enough so that $5/3 \sigma \sleq 1/{108}$. We also require that $\xi\sleq {3}/{1080}$.)
From (\ref{Top10.7}) and (\ref{Top10.8}), we get
\begin{equation}\label{Top10.9}
\hda{y,3r_0}{P_0}{T_x\cC} \sleq \frac 3{108} = \frac 1{36}.
\end{equation}
Because $P_0$ is a cone centered at $y$, Lemma \ref{ConeG1} tells us that
\begin{equation}
\hd{y,3r_0}{P_0}{T_x\cC}\sleq \frac1{18}.
\end{equation}
Thus, by previous remarks we have established (\ref{Top10.1}) and proven that
\begin{equation}\label{Top10.10}
\meang(P_0, T_x\cC)\sleq \frac \pi{12}.
\end{equation}
We conclude that
\begin{equation}\label{Top10.12}
\meang(v, P_0)\sgeq \meang(v, T_x\cC) - \meang(T_x\cC, P_0)\sgeq \frac\pi 4 - \frac \pi{12} = \frac \pi 6.
\end{equation}

For $C_0$ (still the same constant from (\ref{Top9.2})), we now define the cylinders
\begin{equation}\label{T}
\begin{split}
T &= \{x+su+tv:u\in \langle v\rangle^\perp, |u|=1, |s|\sleq 3\sin(\pi/6) r_0+C_0\xi r_0, |t|\sleq \delta\}\\
T' &= \{x+su+tv:u\in\langle v\rangle^\perp, |u|=1, |s|\sleq 3\sin(\pi/6) r_0, |t|\sleq \delta -C_0\xi r_0 \}.
\end{split}
\end{equation}
We recall that $r_0\sleq 1/{12}$, and we require that $\xi$ be small enough that
\begin{equation}
C_0\xi r_0\sleq C_0 \xi \frac 1{12}\sleq \frac \delta 2.
\end{equation}
Recall that $\eta < r_0$ so that in particular, $y$ is in the interior of $T'$. We now require additionally that
\begin{equation}\label{Top11}
\eta, 3\cos(\pi/6)r_0 < \frac\delta 4
\end{equation}
 We observe three key facts about the geometry of these cylinders. First,
\begin{equation}\label{Top12}
\tau(y')\in \Sigma\cap T \quad\mbox{ for all }y'\in P_0\cap T'.
\end{equation}
Second, we observe that by (\ref{Top10.12}), (\ref{Top11}) and the definition of $T'$ (\ref{T}), that
$\partial T'\cap P_0$ is an ellipse with minimal axis length at least $3 \sin(\pi/6) r_0$. Third, $T'\cap P_0\subseteq B(y,3r_0)$. In particular, the map $\tau$ is defined on $T'\cap P_0$.

Define $\pi:\RR^4\to P_0$ to be the projection in the $v$ direction onto $P_0$. Note that to prove the existence of $|t|\sleq \delta$ such that $x+tv\in \Sigma$, it suffices to show that $\pi(x)$ has a $\pi$-preimage in $T\cap \Sigma$. Suppose the contrary; that is, that for all $y'\in\Sigma\cap T$, $\pi(y')\neq \pi(x)$. Then in particular, consider the continuous map $\pi\circ\tau:T'\cap P_0\to T\cap P_0$. Because $\pi$ is projection onto $P_0$ in the direction of $v$, we have that $|\pi(z) - z| = \sec(\meang(v, P_0^\perp)) d(z, P_0)$ for $z\in \RR^4$. Hence, applying and (\ref{Top10.3}) and (\ref{Top10.12}), for all $y'\in \Sigma\cap B(y,3r_0)\supseteq \Sigma\cap T$, we have that 
\begin{equation}
|\pi(y')-y'| = \sec(\meang(v, P_0^\perp)) d(y', P_0) \sleq 3r_0\xi \,\sec\left(\frac\pi2-\frac\pi6\right)\sleq C r_0 \xi.
\end{equation}
 Coupling this with (\ref{Top9.2}), we get that
\begin{equation}\label{Top14}
|\pi\circ\tau(y')-y'|\sleq  C\xi r_0. \end{equation}
By assumption, $\pi\circ\tau$ misses $\pi(x)$, so we may define a continuous retract $h: P_0\setminus\{\pi(x)\}\cap T'\to P_0\cap \partial T'$ which fixes $P_0\cap \partial T'$ (e.g. radial projection). Thus, we create a continuous map $\vphi = h\circ\pi\circ\tau:P_0\cap T'\to P_0\cap \partial T'$ such that
\begin{equation}\label{Top15}
|\vphi(y')-y'|\sleq C\xi r_0 \quad\mbox{ for all }y'\in P_0\cap \partial T'.\end{equation}
But because $P_0\cap T'$ is an ellipse with minimal axis of length at least $3 \sin(\pi/6) r_0$, $\vphi$ restricted to $P_0\cap \partial T'$ has degree 1 for small enough $\xi$. However, by degree theory for the sphere, a continuous map on the sphere extends continuously over the ball if and only if the degree of the map is $0$. So for $\xi$ small enough, we get a contradiction. Hence for small enough $\xi$, $\pi(x)$ has a preimage in $T$ and the lemma is proven.
\end{proof}

\section{The Local Structure of $(n-1)$-Asymptotically Optimally Doubling Measures}

Define the map $T_{x,r} (y) = ry + x$. For a measure $\mu$ on $\RR^{n}$, define $\mu_{x,r} = \frac 1{\mu(B(x,r))}T_{x,r\#}\mu$ to be the (rescaled) push forward measure under the map $T_{x,r}$. That is, 
$$\mu_{x,r}(A) = \frac{\mu\left(rA + x\right)}{\mu(B(x,r))} .$$
We write $\mu_i\rightharpoonup \mu$ for a sequence of measures $\mu_i$ converging weakly to $\mu$ in the sense of Radon measures.

\begin{definition}
Let $\mu$ and $\nu$ be nonzero Radon measures on $\RR^n$. We say that $\nu$ is a \emph{pseudo-tangent measure} of $\mu$ at $x$ if $x\in \supp\mu$ and there exist a sequence $x_i\in\supp\mu$ such that $x_i\to x$, a sequence of positive numbers $r_i\to 0$, and a sequence of positive numbers $c_i$ such that $c_iT_{x_i, r_i\#}\mu\rightharpoonup \nu$. We say that $\nu$ is a \emph{tangent measure} if it is a pseudo-tangent measure with $x_i = x$ for all $i$, and we denote the set of tangent measures to $\mu$ at $x$ by $\Tan(\mu,x)$.
\end{definition}

As is suggested by the names, the idea of a tangent measure came first and is due to Preiss, with the idea of a pseudo-tangent measure appearing later as a generalization. The following theorem says roughly that the pseudo-tangents of a locally doubling measure behave as we would expect. The first part gives a normalization on $c_i$ and the second part says that blow ups of the support converge to the support of the pseudo-tangent measure.

\begin{theorem}\label{localdoubling} Let $\mu$ be a locally doubling measure on $\RR^n$, $\Sigma = \supp\mu$,and $\nu$ be a pseudo-tangent measure of $\mu$ with $c_iT_{x_i, r_i\#}\mu\rightharpoonup \nu$. Then the following hold.
\begin{itemize}
\item[\emph{\cite{Mattila}}] There exists a constant $c>0$ such that
\begin{equation}
c\,\mu_{x_i,r_i} = \frac c{\mu(B(x_i,r_i))} T_{x_i, r_i\#} \mu\rightharpoonup\nu.
\end{equation}
\item [\emph{\cite{KT}}] Let $\Sigma_i = (\Sigma-x_i)/r_i = \supp \mu_{x_i,r_i}$. Then $\Sigma_i\to \Sigma_\infty = \supp \nu$ as $i\to\infty$, where the convergence is in the topology of Hausdorff distance restricted to compact balls. In particular, $0\in \supp\nu$.
\end{itemize}
\end{theorem}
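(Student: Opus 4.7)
The plan is to prove the two bullets separately. For the Mattila normalization, observe that $c_i T_{x_i, r_i\#}\mu = a_i \mu_{x_i, r_i}$ with $a_i := c_i \mu(B(x_i, r_i))$, so the claim is equivalent to showing $a_i \to c \in (0, \infty)$ along the full sequence. Local doubling of $\mu$ near the limit $x$ of the $x_i$ bounds $\mu(B(x_i, \rho r_i))/\mu(B(x_i, r_i)) = \mu_{x_i, r_i}(B(0, \rho))$ between $C^{-N}$ and $C^N$ for a constant $N = N(\rho, C)$. Fixing a radius $\rho > 0$ with $\nu(\partial B(0, \rho)) = 0$ (all but countably many work) and $\nu(B(0, \rho)) > 0$ (possible since $\nu \neq 0$), weak convergence yields $a_i \mu_{x_i, r_i}(B(0, \rho)) \to \nu(B(0, \rho))$, so the above squeeze forces $a_i$ to lie in a fixed compact subinterval of $(0, \infty)$. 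Any subsequential limit $a_{i_k} \to c$, combined via precompactness of $\mu_{x_{i_k}, r_{i_k}}$ (again by local doubling) with a further weak limit $\mu_\infty$, satisfies $c \mu_\infty = \nu$. The standard Mattila argument (testing against a base of $\nu$-continuity radii together with the normalization $\mu_{x_i, r_i}(B(0, 1)) = 1$) then eliminates ambiguity in $c$ across subsequences, giving $a_i \to c$ along the full sequence.

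For the second bullet I would prove the two Kuratowski inclusions on each ball $B(0, R)$. For $\Sigma_\infty \subseteq \liminf \Sigma_i$: given $y \in \supp \nu$ and a $\nu$-continuity radius $\varepsilon > 0$ small enough that $\nu(B(y, \varepsilon)) > 0$, weak convergence gives $c\, \mu_{x_i, r_i}(B(y, \varepsilon)) \to \nu(B(y, \varepsilon)) > 0$, forcing $\Sigma_i \cap B(y, \varepsilon) \neq \emptyset$ for all large $i$. Letting $\varepsilon$ tend to $0$ through continuity radii yields $d(y, \Sigma_i) \to 0$. For the reverse $\limsup \Sigma_i \subseteq \Sigma_\infty$: take $y_{i_k} \to y$ with $y_{i_k} \in \Sigma_{i_k}$ and set $z_k := x_{i_k} + r_{i_k} y_{i_k} \in \Sigma$. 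For large $k$ the point $z_k$ lies in the local-doubling neighborhood of $x$ with $|z_k - x_{i_k}| \sleq r_{i_k}(|y|+1)$, and the inclusion $B(x_{i_k}, r_{i_k}) \subseteq B(z_k, (|y|+2) r_{i_k})$ combined with a fixed finite number of doubling steps at $z_k$ gives $\mu_{x_{i_k}, r_{i_k}}(B(y, \varepsilon)) \sgeq C^{-M}$ for a constant depending only on $\varepsilon$, $|y|$, and the local doubling data. The Portmanteau upper-semicontinuity bound for closed sets then gives $\nu(B(y, \varepsilon)) \sgeq c\, C^{-M} > 0$ for every small $\varepsilon > 0$, so $y \in \supp \nu = \Sigma_\infty$. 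A standard precompactness/$\varepsilon$-net argument upgrades both Kuratowski inclusions to Hausdorff convergence on compact balls, and $0 \in \Sigma_\infty$ is automatic from $0 \in \Sigma_i$ for every $i$.

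The main obstacle is securing the uniform positive lower bound on $\mu_{x_{i_k}, r_{i_k}}(B(y, \varepsilon))$ used in the reverse inclusion; without local doubling one cannot prevent the normalized measures from concentrating mass away from $y$ even while $\Sigma_i$ passes through $y_{i_k} \to y$. The same doubling-based mass-comparison mechanism is what secures the positive normalization constant $c$ in the first bullet, so both parts of the theorem rest on the local-doubling hypothesis in essentially the same way; the remainder of the work is bookkeeping with the Portmanteau theorem and the definition of $\supp$.
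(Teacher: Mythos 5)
The paper does not prove Theorem~\ref{localdoubling}: both bullets are cited directly from \cite{Mattila} and \cite{KT} without argument, so there is no internal proof to compare your reconstruction against, and your sketch has to be judged on its own terms.

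Your treatment of the second bullet is correct and is the expected argument. The inclusion $\Sigma_\infty\subseteq\liminf\Sigma_i$ uses Portmanteau's lower bound on open $\nu$-continuity balls; the reverse inclusion uses the doubling-based mass estimate $\mu_{x_{i_k},r_{i_k}}(B(y,\varepsilon))\sgeq C^{-M}$ (via $B(x_{i_k},r_{i_k})\subseteq B(z_k,(|y|+2)r_{i_k})$) combined with the upper Portmanteau bound on the compact ball; and the upgrade from Kuratowski to Hausdorff convergence on compact balls, together with $0\in\Sigma_\infty$, are routine. For the first bullet, the reduction to showing that $a_i:=c_i\mu(B(x_i,r_i))$ converges, and the doubling squeeze that confines $a_i$ to a fixed compact subinterval of $(0,\infty)$ for large $i$, are both correct and are exactly what the local-doubling hypothesis buys you.

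The weak spot is your closing invocation of ``the standard Mattila argument.'' Testing the normalization $\mu_{x_i,r_i}(B(0,1))=1$ against $\nu$-continuity radii tending to $1$ from above and from below only yields
\begin{equation*}
\nu\bigl(B(0,1)^\circ\bigr)\sleq\liminf_i a_i\sleq\limsup_i a_i\sleq\nu\bigl(\overline{B(0,1)}\bigr),
\end{equation*}
which identifies $\lim_i a_i$ precisely when $\nu(\partial B(0,1))=0$; uniform doubling of $\nu$ with $0\in\supp\nu$ does \emph{not} by itself rule out mass on the unit sphere. Moreover Mattila's normalization result is stated with a passage to a subsequence, so it cannot be cited as a black box for convergence along the full given sequence. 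To close this you either need an additional argument that the subsequential weak limits of $\mu_{x_i,r_i}$ assign no mass to $\partial B(0,1)$, or you should record only the subsequential conclusion (which is all that is actually used downstream in this paper, where the relevant pseudo-tangents are uniform measures and hence give no mass to any sphere). I would make this dependency explicit rather than leaving it unstated.
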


When $\mu$ is an $m$-asymptotically optimally doubling measure, the pseudo-tangent measures are (up to multiplication by a constant) $m$-uniform measures.

\begin{theorem}[{\cite{KT}}]\label{KTaod}
Suppose that $\mu$ is an $m$-asymptotically optimally doubling measure on $\RR^n$, and $\nu$ is a pseudo-tangent measure of $\mu$. Then up to multiplying $\nu$ by a constant, Theorem \ref{localdoubling} says that $\omega_m \mu_{x_i, r_i}\rightharpoonup \nu$. In this case, $\nu$ is an $m$-uniform measure on $\RR^n$. If $m = n-1$, the classification of \cite{KP} says that $\nu$ is $(n-1)$-dimensional Hausdorff measure restricted to either an $(n-1)$-plane containing 0 or a KP cone containing 0.
\end{theorem}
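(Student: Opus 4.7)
The plan is to show $\nu(B(y,r)) = \omega_m r^m$ for every $y \in \supp\nu$ and $r > 0$; once this $m$-uniformity is in hand, the case $m = n-1$ follows directly from the Kowalski--Preiss classification \cite{KP}, and the hyperplane or KP cone appearing there contains the origin because $0 \in \supp\nu$ by Theorem \ref{localdoubling}. The Mattila part of Theorem \ref{localdoubling} lets us normalize from the outset so that $\omega_m \mu_{x_i,r_i}\rightharpoonup\nu$.

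Fix $y \in \supp\nu$. By the Kenig--Toro part of Theorem \ref{localdoubling}, $\Sigma_i = (\Sigma - x_i)/r_i$ converges in Hausdorff distance on compact sets to $\supp\nu$, so we may pick $y_i \in \Sigma_i$ with $y_i \to y$; then $z_i = x_i + r_i y_i \in \Sigma$, and $z_i \to x$. For each $r$ such that $\nu(\partial B(y,r)) = 0$ (which excludes only countably many $r$), weak convergence combined with $y_i \to y$ gives
\begin{equation*}
\nu(B(y,r)) \;=\; \lim_{i\to\infty}\omega_m\mu_{x_i,r_i}(B(y_i,r)) \;=\; \omega_m\lim_{i\to\infty}\frac{\mu(B(z_i,r r_i))}{\mu(B(x_i,r_i))}.
\end{equation*}
Since $r \mapsto \nu(B(y,r))$ is monotone, establishing the value $\omega_m r^m$ on a dense set of $r$ extends it to all $r > 0$. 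Thus the whole theorem reduces to the limit identity $\mu(B(z_i,r r_i))/\mu(B(x_i,r_i)) \to r^m$ for each fixed $r > 0$.

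To prove this limit, I would introduce an auxiliary scale $R \gg |y|$ and factor
\begin{equation*}
\frac{\mu(B(z_i,r r_i))}{\mu(B(x_i,r_i))} \;=\; \frac{\mu(B(z_i,r r_i))}{\mu(B(z_i,R r_i))}\cdot\frac{\mu(B(z_i,R r_i))}{\mu(B(x_i,R r_i))}\cdot\frac{\mu(B(x_i,R r_i))}{\mu(B(x_i,r_i))}.
\end{equation*}
The set $K = \{x\}\cup\{x_i,z_i : i\sgeq 1\}$ is compact and contained in $\Sigma$, so Definition \ref{mureg}(4) applies uniformly on $K$; iterating the $[1/2,1]$ estimate a bounded number of times yields, for every fixed $s > 0$, $\mu(B(w,s r'))/\mu(B(w,r'))\to s^m$ as $r'\downarrow 0$ uniformly for $w\in K$. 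This sends the first factor to $(r/R)^m$ and the third to $R^m$. For the middle factor, the inclusions $B(z_i,Rr_i)\subseteq B(x_i,(R+|y_i|)r_i)$ and $B(x_i,Rr_i)\subseteq B(z_i,(R+|y_i|)r_i)$, together with the same uniform iterated-AOD statement applied at $x_i$ and at $z_i$, sandwich the middle factor between $(1 - |y|/R)^m + o_i(1)$ and $(1 + |y|/R)^m + o_i(1)$. Sending $i\to\infty$ first, then $R\to\infty$, collapses the product to exactly $r^m$.

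The crux, and the reason for the auxiliary scale $R r_i$, is the middle factor: $z_i$ and $x_i$ differ by $r_i|y_i|$, which is comparable to the native scale $r_i$, so one cannot directly expect $\mu(B(z_i,r_i))/\mu(B(x_i,r_i))\to 1$. Passing to a much larger scale $R r_i$ makes the translation error negligible, while the asymptotic doubling at each individual center provides the exact power-law rescaling back down to scale $r r_i$ and $r_i$. The Kowalski--Preiss classification then finishes the $(n-1)$-dimensional case.
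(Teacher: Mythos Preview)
The paper does not give its own proof of this statement: Theorem~\ref{KTaod} is quoted from \cite{KT} (with the $m=n-1$ clause from \cite{KP}) and used as a black box. So there is no in-paper argument to compare against; I can only assess your proof on its own merits.

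Your argument is correct and is essentially the standard one. A few small remarks. First, when you pass from $\omega_m\mu_{x_i,r_i}(B(y,r))$ to $\omega_m\mu_{x_i,r_i}(B(y_i,r))$ under weak convergence, you are using balls with moving centers; this is justified by the sandwich $B(y,r-\epsilon)\subseteq B(y_i,r)\subseteq B(y,r+\epsilon)$ for large $i$, together with the Portmanteau theorem applied at radii $r\pm\epsilon$ where $\nu(\partial B(y,r\pm\epsilon))=0$. You might make that sentence explicit. Second, in the middle-factor estimate the ratio $(R\pm|y_i|)/R$ depends on $i$; this is harmless because Definition~\ref{mureg}(3)--(4) gives convergence uniform in $\tau\in[1/2,1]$, and after a bounded number of iterations also uniform in $\tau$ over any fixed compact interval of $(0,\infty)$. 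Third, your lower bound on the middle factor can be obtained directly from $B(x_i,(R-|y_i|)r_i)\subseteq B(z_i,Rr_i)$, which yields exactly $(1-|y|/R)^m+o_i(1)$; the alternative inclusion you wrote gives $(R/(R+|y|))^m$, which works equally well but does not match the formula you stated. Finally, the normalization ``$\omega_m\mu_{x_i,r_i}\rightharpoonup\nu$'' really means replacing $\nu$ by $(\omega_m/c)\nu$ with $c$ from Theorem~\ref{localdoubling}; this is exactly the ``up to multiplying $\nu$ by a constant'' in the statement, so there is no circularity.
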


\begin{remark}
In light of Theorem \ref{KTaod}, it is helpful to recall that KP cones are defined in $\RR^n$ only for $n\sgeq 4$.
\end{remark}

Preiss \cite{Pre} showed that the cone of tangent measures satisfies a strong form of connectedness in the topology of weak convergence of Radon measures. This general feature of tangent measures, together with deep computations on the geometry of uniform measures by Preiss, establish the following result (which is a particular case of Preiss' Theorem). We follow the language of Preiss, saying that a measure $\mu$ is $m$-\emph{flat} if $\mu$ is $m$-dimensional Hausdorff measure restricted to some $m$-plane. Note that a flat measure is $m$-uniform but not every $m$-uniform measure is flat.

\begin{corollary}[\cite{Pre}]\label{KPTstrongconnectedness}
Let $\mu$ be an asymptotically optimally doubling measure, and $x\in \supp\mu$. Then if one tangent measure to $\mu$ at $x$ is flat, all tangent measures to $\mu$ at $x$ are flat.
\end{corollary}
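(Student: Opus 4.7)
The plan is to combine two deep results of Preiss cited above: the connectedness of $\Tan(\mu,x)$ in the weak topology on Radon measures (modulo positive scaling), and the clopen-ness of flat measures inside the space of $m$-uniform measures. First, I would observe that by Theorem \ref{KTaod} every element of $\Tan(\mu,x)$ is, up to a positive multiplicative constant, an $m$-uniform measure. Thus $\Tan(\mu,x)$ sits inside the space $\mathcal{U}_m$ of nonzero $m$-uniform measures on $\RR^n$, endowed with the weak topology of Radon measures (quotiented by positive dilations).

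Next I would separate $\mathcal{U}_m$ into the flat uniform measures $\mathcal{F}_m$ and its complement $\mathcal{U}_m \setminus \mathcal{F}_m$ (in codimension $1$, the latter consists of Hausdorff measure on KP cones, by \cite{KP}). The key geometric input from Preiss is that $\mathcal{F}_m$ is both open and closed in $\mathcal{U}_m$: closedness is essentially the fact that weak limits of flat measures are flat, while openness is the nontrivial content, coming from Preiss' quantitative study of the moments and asymptotic expansions of uniform measures near the flat ones.

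Finally, I would apply Preiss' theorem that $\Tan(\mu,x)$ is connected. Since $\Tan(\mu,x) \subseteq \mathcal{U}_m$ is connected and, by hypothesis, meets the clopen subset $\mathcal{F}_m$, it must be entirely contained in $\mathcal{F}_m$. This gives the conclusion that every tangent measure to $\mu$ at $x$ is flat.

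The main obstacle here is not in the argument itself, which is a short topological deduction, but rather in the two Preiss ingredients it relies on; we take these as given, since they are cited directly in the statement of the corollary as ``deep computations on the geometry of uniform measures by Preiss'' together with the Preiss connectedness principle for tangent cones.
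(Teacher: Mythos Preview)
Your proposal is correct and matches the paper's own treatment: the paper does not give a proof of this corollary but simply attributes it to Preiss, noting that it follows from the connectedness of $\Tan(\mu,x)$ together with Preiss' ``deep computations on the geometry of uniform measures'' separating flat from non-flat uniform measures. Your explicit clopen-plus-connected argument is exactly the standard way to package these two ingredients, so you have faithfully reconstructed the intended reasoning.
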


\begin{corollary}\label{strongconnectedness}
Let $\mu$ be an $m$-asymptotically optimally doubling measure, and $x\in \supp \mu$. If one tangent measure to $\mu$ at $x$ is flat, then $x$ is a flat point of $\supp\mu$.\end{corollary}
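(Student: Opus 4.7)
The plan is to argue by contradiction, leveraging Corollary \ref{KPTstrongconnectedness} to upgrade flatness of a single tangent measure to flatness of every tangent measure at $x$, and then converting the weak convergence of rescaled measures into Hausdorff convergence of rescaled supports via Theorem \ref{localdoubling}.

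Suppose $x$ is not a flat point of $\Sigma=\supp\mu$. Then by definition there exist $\delta>0$ and a sequence $r_i\downarrow 0$ with
$$\theta^P_\Sigma(x,r_i)\sgeq \delta \quad\text{for all } i.$$
Set $\nu_i=\omega_m\,\mu_{x,r_i}$; by Theorem \ref{KTaod} this sequence is weakly precompact and any weak subsequential limit is an $m$-uniform tangent measure of $\mu$ at $x$. After passing to a subsequence (not relabeled) we obtain $\nu_i\rightharpoonup\nu\in\Tan(\mu,x)$.

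By hypothesis, some tangent measure of $\mu$ at $x$ is flat, so Corollary \ref{KPTstrongconnectedness} forces every element of $\Tan(\mu,x)$ to be flat. In particular $\nu=\H^m\res_P$ for some $m$-plane $P$, and by the second clause of Theorem \ref{localdoubling} we have $0\in\supp\nu=P$, so $P$ is an $m$-plane through the origin.

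The second clause of Theorem \ref{localdoubling} also yields
$$\Sigma_i:=\frac{\Sigma-x}{r_i}\;\longrightarrow\;P$$
in Hausdorff distance on compact balls; restricting to $\overline{B(0,1)}$ this gives $D^{0,1}(\Sigma_i,P)\to 0$. The definition of $D^{x,r}$ is scale- and translation-covariant, so
$$D^{0,1}(\Sigma_i,P)=D^{x,r_i}(\Sigma,\,x+r_iP).$$
Since $0\in P$, the set $L_i:=x+r_iP$ is an $m$-plane through $x$, hence admissible in the infimum defining $\theta^P_\Sigma(x,r_i)$. Therefore
$$\theta^P_\Sigma(x,r_i)\sleq D^{x,r_i}(\Sigma,L_i)\;\longrightarrow\;0,$$
contradicting $\theta^P_\Sigma(x,r_i)\sgeq\delta$. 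No step requires new estimates; the only mild care is the translation between weak convergence of the rescaled measures and Hausdorff convergence of their supports, which is exactly what Theorem \ref{localdoubling} provides, so I do not anticipate a genuine obstacle.
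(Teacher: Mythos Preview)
Your proof is correct and follows essentially the same route as the paper: pick a sequence of radii witnessing the failure of flatness (the paper phrases this as taking a sequence achieving the $\limsup$ of $\theta^P_\Sigma(x,r)$), pass to a weakly convergent subsequence of blowups, invoke Corollary~\ref{KPTstrongconnectedness} to see the limit is flat, and use Theorem~\ref{localdoubling} to convert this into $\theta^P_\Sigma(x,r_i)\to 0$. One small remark: the weak precompactness of $\nu_i$ is not the content of Theorem~\ref{KTaod} but rather follows from local doubling (or standard weak compactness of Radon measures); the paper simply cites ``weak compactness of Radon measures'' at this step.
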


\begin{proof} Let all notation and suppositions hold. Define
\begin{equation}\label{sc1}
\ell = \limsup_{r\downarrow 0} \theta_\Sigma^P(x,r).
\end{equation}
Let $r_i\to 0$ be a sequence such that $\theta_\Sigma^P(x,r_i)\to \ell$. Then by weak compactness of Radon measures, there exists a subsequence such that $\mu_{x,r_i}\rightharpoonup \mu_\infty$. By Corollary \ref{KPTstrongconnectedness} and the assumption that one tangent measure is flat, $\mu_\infty$ is flat. Hence its support is some $m$-plane $P$. By Theorem \ref{localdoubling}, $\D{x,r}\Sigma{P+x}\to 0$, and hence $\ell = 0$.
\end{proof}

We begin by showing that the only tangent measures to $(n-1)$-asymptotically optimally doubling measures are Hausdorff measure on planes or KP cones based at the origin. To do so, we first quote a Lemma about tangent measures to tangent measures.

\begin{lemma}[\cite{Bad2}]\label{tangenttotangent}
If $\mu$ is a measure on $\RR^n$, $x\in \supp(\mu)$, and $\nu\in \Tan(\mu,x)$ such that $0\in \supp(\nu)$, then $\Tan(\nu,0)\subseteq \Tan(\mu,x)$.
\end{lemma}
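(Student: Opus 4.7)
The plan is to prove this by a standard diagonal extraction iterating two rescalings. First I would unpack the hypotheses: since $\nu \in \Tan(\mu, x)$, there exist $r_i \downarrow 0$ and $c_i > 0$ with $c_i T_{x, r_i \#} \mu \rightharpoonup \nu$, and since $\sigma \in \Tan(\nu, 0)$, there exist $s_j \downarrow 0$ and $d_j > 0$ with $d_j T_{0, s_j \#} \nu \rightharpoonup \sigma$. The goal is to produce a single sequence $t_k \downarrow 0$ and $e_k > 0$ with $e_k T_{x, t_k \#} \mu \rightharpoonup \sigma$, which is exactly the assertion $\sigma \in \Tan(\mu, x)$.

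The key observation is that the two rescaling operations compose in the expected way. Using the convention $T_{x,r \#}\mu(A) = \mu(rA + x)$, a direct computation gives
\[
T_{0, s \#}\bigl(T_{x, r \#} \mu\bigr) \;=\; T_{x, rs \#} \mu
\]
for all $r, s > 0$. Since $T_{0, s}$ is a linear homeomorphism of $\RR^n$, the pullback of any $\varphi \in C_c(\RR^n)$ by $T_{0,s}^{-1}$ again lies in $C_c(\RR^n)$, so weak convergence of Radon measures is preserved under $T_{0, s \#}$. Fixing $j$ and letting $i \to \infty$ therefore yields
\[
c_i d_j \, T_{x, r_i s_j \#}\mu \;=\; d_j T_{0, s_j \#}\bigl(c_i T_{x, r_i \#}\mu\bigr) \;\rightharpoonup\; d_j T_{0, s_j \#} \nu.
\]

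Next I would run a diagonal argument against a countable dense family $\{\varphi_k\} \subseteq C_c(\RR^n)$. For each $j$, the second hypothesis gives $\bigl|\int \varphi_k \, d(d_j T_{0, s_j \#} \nu) - \int \varphi_k \, d\sigma\bigr| < 1/j$ for $1 \le k \le j$ once $j$ is large; then by the intermediate convergence just established, I can choose an index $i(j)$ large enough that $r_{i(j)} s_j < 1/j$ and $\bigl|\int \varphi_k \, d(c_{i(j)} d_j T_{x, r_{i(j)} s_j \#} \mu) - \int \varphi_k \, d(d_j T_{0, s_j \#} \nu)\bigr| < 1/j$ for $1 \le k \le j$. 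Setting $t_j = r_{i(j)} s_j \to 0$ and $e_j = c_{i(j)} d_j$, the triangle inequality gives $\int \varphi_k \, d(e_j T_{x, t_j \#} \mu) \to \int \varphi_k \, d\sigma$ for every $k$, and density of $\{\varphi_k\}$ extends this to all of $C_c(\RR^n)$. Thus $e_j T_{x, t_j \#} \mu \rightharpoonup \sigma$ with $t_j \to 0$, exhibiting $\sigma$ as a tangent measure of $\mu$ at $x$.

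The main obstacle is really only bookkeeping: arranging the diagonal selection so that countably many asymptotic conditions (weak convergence against each $\varphi_k$ together with $t_j \to 0$) hold simultaneously. Everything conceptual reduces to the compositional identity for the rescaling maps and the elementary fact that pushforward under a dilation is weakly continuous; the hypothesis $0 \in \supp(\nu)$ is used only to guarantee that the blow-ups defining $\Tan(\nu, 0)$ are nontrivial in the first place.
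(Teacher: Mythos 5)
Your proof is correct and is the standard diagonal argument for this classical fact; the paper does not supply its own proof here but simply cites Badger [Ba2], so there is no internal argument to compare against. Two small points worth making explicit when writing this up: the claim that one can achieve $\bigl|\int\varphi_k\,d(d_jT_{0,s_j\#}\nu)-\int\varphi_k\,d\sigma\bigr|<1/j$ simultaneously for all $k\le j$ is really a statement about passing to a subsequence in $j$, not about all large $j$; and to upgrade convergence tested against the countable family $\{\varphi_k\}$ to weak convergence against all of $C_c(\RR^n)$ you should include in the family a sequence of cut-off functions $\psi_m$ with $\mathbf{1}_{B(0,m)}\sleq\psi_m\sleq\mathbf{1}_{B(0,m+1)}$, since that is what furnishes the uniform local mass bound $\sup_j\, e_jT_{x,t_j\#}\mu(B(0,m))<\infty$ needed for the density step. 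Both are routine, consistent with your remark that the remaining work is bookkeeping.
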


\begin{corollary}\label{tanmeasures}
Let $\mu$ be an $(n-1)$-asymptotically optimally doubling measure on $\RR^{n}$ and $\nu$ a tangent measure to $\mu$ at $x\in\supp \mu$. Then up to rescaling by a constant, $\nu$ is either $\cH^{n-1}|_\cC$ for a KP cone $\cC$ based at 0 or $\cH^{n-1}|_P$ for an $(n-1)$-plane $P$ containing 0.
\end{corollary}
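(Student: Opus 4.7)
The plan is to combine Theorem \ref{KTaod} with Lemma \ref{tangenttotangent} and Corollary \ref{KPTstrongconnectedness}. Since $\nu$ is an honest tangent measure (so the basepoint is $x_i = x$), Theorem \ref{KTaod} applies and, up to a multiplicative constant, $\nu$ is $(n-1)$-uniform. The Kowalski--Preiss classification quoted there then already says $\nu = \cH^{n-1}\res_{\Sigma_\infty}$ where $\Sigma_\infty$ is either an $(n-1)$-plane containing $0$ or a KP cone containing $0$. In the plane case there is nothing to prove; it suffices to upgrade ``containing $0$'' to ``based at $0$'' in the KP-cone case.

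Suppose, toward a contradiction, that $\nu = c\cH^{n-1}\res_{\cC}$ where $\cC$ is a KP cone that contains $0$ but is not based at $0$, i.e.\ $0$ is not the vertex of $\cC$. Then $0$ lies on the smooth part of $\cC$, so in any small ball $B(0,r)$ the set $\cC$ looks $O(r)$-close (in relative Hausdorff distance) to the affine tangent hyperplane $T_0\cC$, which is an $(n-1)$-plane through $0$. Rescaling by $r\downarrow 0$ and extracting a weak limit as in Theorem \ref{localdoubling}, any element of $\Tan(\nu,0)$ is (up to a constant) $\cH^{n-1}\res_{T_0\cC}$; in particular $\nu$ admits a flat tangent measure at $0$.

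Now invoke Lemma \ref{tangenttotangent}: since $0\in\supp\nu$ and $\nu\in\Tan(\mu,x)$, we have $\Tan(\nu,0)\subseteq\Tan(\mu,x)$, so $\mu$ itself admits a flat tangent measure at $x$. Corollary \ref{KPTstrongconnectedness} then forces \emph{every} element of $\Tan(\mu,x)$ to be flat. But $\nu\in\Tan(\mu,x)$ is a multiple of $\cH^{n-1}\res_{\cC}$ for a genuine KP cone $\cC$, which is not a plane, and hence $\nu$ is not flat. This contradiction shows that $\cC$ must be based at $0$, completing the proof.

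The only step requiring mild care is the claim that $\Tan(\nu,0)$ consists of flat measures when $0$ is a regular point of $\cC$; this is standard and follows from the fact that the rescaled supports $(\cC-0)/r$ converge to $T_0\cC$ in Hausdorff distance on compact sets together with the $(n-1)$-uniformity of $\nu$ (via Theorem \ref{localdoubling} applied to $\nu$ itself). Everything else is a direct application of previously cited results.
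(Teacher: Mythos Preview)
Your proof is correct and follows essentially the same route as the paper: reduce via Theorem \ref{KTaod} to the case of a KP cone not based at $0$, observe that $\Tan(\nu,0)$ is then flat, feed this through Lemma \ref{tangenttotangent} into $\Tan(\mu,x)$, and invoke Corollary \ref{KPTstrongconnectedness} to contradict the non-flatness of $\nu$. Your write-up is slightly more detailed in justifying why $\Tan(\nu,0)$ consists of flat measures, but the argument is the same.
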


\begin{remark}
Before giving its proof, we stop to note that Corollary \ref{tanmeasures} differs from Theorem \ref{KTaod} by telling us that if $\nu$ is a tangent measure of $\mu$ (and not just a pseudo-tangent measure), then it is either flat or Hausdorff measure on a KP-cone based at the origin (not just containing the origin).
\end{remark}

\begin{proof}
Let all notation and suppositions hold. Suppose that $\supp\nu$ is neither a KP cone centered at the origin nor a plane containing the origin. By Theorem \ref{KTaod}, the only other option is that $\supp\nu$ is a KP cone centered somewhere besides the origin. However, by Lemma \ref{tangenttotangent}, a tangent measure to $\nu$ at the origin is a tangent measure to $\mu$ at $x$. However, if $\nu = \cH^{n-1}|_\cC$ for some KP cone $\cC$ not centered at the origin, the (unique) tangent measure to $\nu$ at the origin is $\cH^{n-1}|_P$ for some plane $P$. However, this violates Corollary \ref{KPTstrongconnectedness}, yielding a contradiction and finishing the proof.
\end{proof}

\begin{theorem}\label{AODstructure}
Suppose that $\mu$ is an $(n-1)-$asymptotically optimally doubling measure on $\RR^n$ with $\Sigma = \supp\mu$. Then
$\vartheta_\Sigma^C(x,r)\to 0$ uniformly on compact sets. Further, 
suppose that $x\in\Sigma$ a nonflat point. Then $\theta_\Sigma^C(x,r) \to 0$ as $r\downarrow 0$.
\end{theorem}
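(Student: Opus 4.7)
Our plan is to prove both assertions by contradiction, in each case extracting a limiting pseudo-tangent or tangent measure and applying the classification results already assembled (Theorem~\ref{KTaod}, Corollary~\ref{tanmeasures}, and Corollary~\ref{strongconnectedness}). Throughout we use the scale invariance $\vartheta^C_\Sigma(x_i, r_i) = \vartheta^C_{\Sigma_i}(0,1)$ and $\theta^C_\Sigma(x, r_i) = \theta^C_{\Sigma_i}(0,1)$, where $\Sigma_i = (\Sigma - x_i)/r_i$, which lets us transfer questions about $\mu$ to questions about the blown-up supports.

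For the first assertion, suppose to the contrary that there exist a compact $K \subseteq \Sigma$, a $\delta > 0$, and sequences $x_i \in K$, $r_i \downarrow 0$ with $\vartheta^C_\Sigma(x_i, r_i) \sgeq \delta$. After passing to a subsequence, $x_i \to x_0 \in K$, and Theorems~\ref{localdoubling} and~\ref{KTaod} yield a pseudo-tangent measure $\nu$ with $\omega_{n-1}\,\mu_{x_i, r_i} \rightharpoonup \nu$ whose support is (by \cite{KP}) either an $(n-1)$-plane $P$ through $0$ or a KP cone $\cC_\infty$ containing $0$. By Theorem~\ref{localdoubling}, $\Sigma_i \to \supp \nu$ in Hausdorff distance on compact balls. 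If $\supp\nu = \cC_\infty$ then $\vartheta^C_{\Sigma_i}(0,1) \to 0$ immediately. If $\supp\nu = P$, we will exhibit, for each $\epsilon > 0$, a KP cone $\cC_R$ containing $0$ with $\D{0,1}{P}{\cC_R} < \epsilon$: fix a unit vector $u \in P$ and take $\cC_R$ to be the KP cone with vertex $-Ru$ and axis orientation chosen so that its tangent plane at $0$ equals $P$ (this can be arranged, since only the axis direction $e_4$ of the cone determines the tangent plane at a prescribed point, and there is enough freedom to align $T_0\cC_R$ with any $3$-plane through $0$ containing $u$). Since $0$ lies at distance $R$ from the vertex, Lemma~\ref{KPflat1} applied to the translated cone gives $\D{0,1}{\cC_R}{P} \sleq C_0/R$. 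Combining this with $\Sigma_i \to P$ forces $\vartheta^C_{\Sigma_i}(0,1) \to 0$, contradicting the choice of sequences.

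For the second assertion, fix a nonflat $x \in \Sigma$ and suppose for contradiction that there exists $r_i \downarrow 0$ with $\theta^C_\Sigma(x, r_i) \sgeq \delta > 0$. Extract a subsequence with $\omega_{n-1}\,\mu_{x, r_i} \rightharpoonup \nu \in \Tan(\mu, x)$. Since $x$ is nonflat, Corollary~\ref{strongconnectedness} forces every tangent measure at $x$ to be non-flat; Corollary~\ref{tanmeasures} then improves this to state that $\nu$ is, up to a multiplicative constant, $\cH^{n-1}|_\cC$ for a KP cone $\cC$ \emph{based at} $0$. Hence $r_i \cC + x$ is a KP cone \emph{based at $x$}, and $\Sigma_i \to \cC$ yields $\theta^C_\Sigma(x, r_i) \sleq \D{x, r_i}{\Sigma}{r_i\cC + x} = \D{0,1}{\Sigma_i}{\cC} \to 0$, the desired contradiction.

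The main technical step is the plane-by-KP-cone approximation in the first part, which is where the geometric content sits: we must place the vertex far from $0$ (to exploit Lemma~\ref{KPflat1}) and simultaneously choose the axis of the cone so that its tangent plane at the base point coincides with the target plane $P$. The contrast between the two conclusions is clarified by Corollary~\ref{tanmeasures}: pseudo-tangent KP cones may be based anywhere, so only $\vartheta^C$ can be controlled in general, whereas the nonflatness of $x$ promotes the tangent measure's cone to be based at $0$, which is exactly what enables control of $\theta^C$.
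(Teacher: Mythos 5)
Your proof is correct and follows essentially the same route as the paper: extract a (pseudo-)tangent measure, classify its support via Theorem~\ref{KTaod} / Corollaries~\ref{strongconnectedness} and~\ref{tanmeasures}, and in the plane case approximate $P$ by a KP cone whose vertex is far away (which the paper records as the inequality $\vartheta^C_\Sigma(x,r)\sleq\theta^P_\Sigma(x,r)$, see (\ref{AODstructure.6})). Your explicit construction of $\cC_R$ with vertex $-Ru$ and axis $\frac{u\pm\nu}{\sqrt 2}$, together with Lemma~\ref{KPflat1} to bound $\D{0,1}{\cC_R}{P}\sleq C_0/R$, is a correct unpacking of what the paper only sketches.
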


\begin{proof}
Let $\mu$ be an $(n-1)-$asymptotically optimally doubling measure on $\RR^n$ and $\Sigma = \supp\mu$. Fix a compact set $K\subseteq \Sigma$, and define
\begin{equation}\label{AODstructure.1}
\ell = \lim_{r\to 0}\sup_{x\in K} \vartheta_\Sigma^C(x,r).
\end{equation}
Let $x_i$ and $r_i$ be sequences such that $r_i\to 0$, $x_i\in K$, and 
\begin{equation}\label{AODstructure.2}
\ell = \lim_{i\to\infty} \vartheta_\Sigma^C(x_i,r_i).
\end{equation}
Because $K$ is compact, we have that there is an $x\in \Sigma$ and a subsequence (which we relabel) such that $x_i\to x$. By weak compactness of Radon measures, we may extract a subsequence (which we also relabel) such that
\begin{equation}\label{AODstructure.3}
\mu_{x_i,r_i}\rightharpoonup\mu_{\infty}.
\end{equation}
By Theorem \ref{KTaod}, we have that $\Sigma_\infty:=\supp \mu_\infty$ is either a plane or a KP cone containing 0. By Theorem \ref{localdoubling}, we have that
\begin{equation}\label{AODstructure.4}
\D{0,1}{\frac{\Sigma-x_i}{r_i}}{\Sigma_\infty}\to 0.
\end{equation}
Applying scale invariance, we get that
\begin{equation}\label{AODstructure.5}
\D{x_i,r_i}{\Sigma}{r_i \Sigma_\infty + x_i}\to 0.
\end{equation}
If $\Sigma_\infty$ is a plane or a KP cone including $0$, then $r_i \Sigma_\infty + x_i$ is a plane or KP cone including $x_i$ respectively. If $\Sigma_\infty$ is a KP cone, then (\ref{AODstructure.5}) shows that $\vartheta_\Sigma^C(x_i,r_i)\to 0$. Suppose that $\Sigma_\infty$ is a plane. Then (\ref{AODstructure.5}) shows that $\theta_\Sigma^P(x_i,r_i)\to 0$. We now claim that for any set $\Sigma$, any $x\in \Sigma$, and $r>0$, we have
\begin{equation}\label{AODstructure.6}
\vartheta_\Sigma^C(x,r)\sleq \theta_\Sigma^P(x,r).
\end{equation}
In light of (\ref{AODstructure.6}), we recall that while the infimum in the definition $\theta_\Sigma^P$ is always obtained (and hence could have been called a minimum), that the infimum of $\vartheta_\Sigma^C$ may not be obtained. However, if $P$ is any plane, $x\in P$, and $r>0$, then by choosing a KP cone $\cC$ whose nonflat points are very far away from $x$ and whose tangent plane at $x$ is $P$, we may make $\D{x,r}{\cC}P$ as small as we like.
 From (\ref{AODstructure.6}) and $\theta_\Sigma^P(x_i,r_i)\to 0$, we have that $\vartheta_\Sigma^C(x_i,r_i)\to 0$. Thus, $\ell = 0$ (see (\ref{AODstructure.1})), and we have that $\vartheta_\Sigma^C(x,r)\to 0$ uniformly on $K$ as $r\downarrow 0$. Thus, $\vartheta_\Sigma^C(x,r)\to 0$ uniformly on compact subsets as $r\downarrow 0$.

Suppose now that $x\in \Sigma$ is a nonflat point. Define
\begin{equation}\label{AODstructure.7}
\ell = \limsup_{r\downarrow 0} \theta_\Sigma^C(x,r).
\end{equation}
Let $r_i$ be a sequence such that $r_i\downarrow 0$ and $\theta_\Sigma^C(x,r_i)\to \ell.$ Identically to before, we extract a subsequence $r_i$ such that $\mu_{x,r_i}\to \mu_\infty$, and $\Sigma_\infty = \supp\mu_\infty$ is either a plane or a KP cone. However, by the nonflatness assumption and Corollary \ref{strongconnectedness}, we have that $\Sigma_\infty$ is not a plane. By Corollary \ref{tanmeasures}, we thus have that $\Sigma_\infty$ is a KP cone centered at $0$. Hence, we get that 
\begin{equation}\label{AODstructure.8}
\D{0,1}{\frac {\Sigma-x}{r_i}}{\Sigma_\infty}\to 0.
\end{equation}
Hence,
\begin{equation}\label{AODstructure.9}
\D{x,r_i}{\Sigma}{r_i\Sigma_\infty+x}\to 0.
\end{equation}
Because  $\Sigma_\infty$ is a KP cone centered at the origin (and hence $r_i\Sigma_\infty  + x= \Sigma_\infty+x$ is a KP cone centered at $x$), we have that $\ell = 0$.

\end{proof}

\begin{remark}
Let $\mathbf{KP}$ be the set of KP cones in $\RR^n$. We note that (\ref{AODstructure.6}) implies that the set of planes is contained in the closure of $\mathbf{ KP}$ (in the topology of Hausdorff distance restricted to compact balls). It is not hard to see that there is a $\delta_0$ such that if $\Sigma$ is any set with $\theta_\Sigma^P(x,r)\sleq \delta_0$, then we also have that $\theta_\Sigma^P(x,r)\sleq 2 \vartheta_\Sigma^C(x,r)$ (see Lemma \ref{KPflat1}). It follows that the closure of $\mathbf{KP}$ is $\mathbf{KP}\cup \{(n-1)-$planes$\}$.
\end{remark}

The following lemma gives quantitative information on the flatness of the support of $\mu$  at points near a nonflat point and scales which are sufficiently small.

\begin{corollary}\label{FlatSides} For any $\delta>0$, there exist $\epsilon = \epsilon(\delta)$, $\eta = \eta(\delta)$, and $A = A(\delta)$ with the following property. Let $\mu$ be a 3-asymptotically optimally doubling measure on $\RR^4$, and suppose that $0\in \Sigma = \supp \mu$ is a nonflat point. By Theorem \ref{AODstructure}, we have that there is an $r_0$ small enough such that
\leqn{Flatsides0}
\sup_{0<r\sleq r_0, x\in \Sigma\cap B(0,r_0)}\vartheta_\Sigma^C(x,r)< \epsilon\quad\mbox{ and }\quad\sup_{0<r\sleq r_0}\theta_\Sigma^C(0,r)<\eta.
\endeqn
For this $r_0$, it holds that
\begin{equation}\label{Flatsides1}
\theta_\Sigma^P(x,r) < \delta\quad\mbox{ for all } x\in B\left(0,\frac{2r_0}3\right)\cap\Sigma, r < \frac {|x|}A .
\end{equation}
\end{corollary}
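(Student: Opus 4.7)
The plan is to deduce this corollary directly from Corollary \ref{FlatSidesSet} by a rescaling argument. The key observation is that Corollary \ref{FlatSidesSet} only controls $\theta_\Sigma^P(x,r)$ at points $x$ lying in the annulus $r_0/3 \leq |x - x_0| \leq 2r_0/3$, so to cover an arbitrary $x \in B(0, 2r_0/3) \cap \Sigma$ I simply rechoose the outer scale so that $x$ falls in such an annulus around $x_0 = 0$.

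Given $x \in B(0, 2r_0/3) \cap \Sigma$ with $x \neq 0$, I will set $r' = 3|x|/2$. Then $r' \leq r_0$ (because $|x| \leq 2r_0/3$) and $|x| = 2r'/3 \in [r'/3, 2r'/3]$, placing $x$ in the annulus required by Corollary \ref{FlatSidesSet} at scale $r'$ around $0$. The first hypothesis of that corollary, namely $\vartheta_\Sigma^C(y,r) < \epsilon$ for all $y \in \Sigma \cap B(0, r')$ and all $0 < r \leq r'$, is inherited from (\ref{Flatsides0}) since $r' \leq r_0$ and $B(0, r') \subseteq B(0, r_0)$. The second hypothesis requires a KP cone $\cC$ based at $0$ with $\D{0,r'}\Sigma\cC < \eta$; because (\ref{Flatsides0}) gives $\theta_\Sigma^C(0, r') < \eta$ strictly, such a cone exists at scale $r'$ (the infimum defining $\theta_\Sigma^C$ may fail to be attained, but the strict upper bound lets me select an admissible cone). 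Corollary \ref{FlatSidesSet} then yields $\theta_\Sigma^P(x, r) < \delta$ for all $0 < r \leq |x|/A$, where I take $\epsilon$, $\eta$, $A$ to be the constants supplied by Corollary \ref{FlatSidesSet} for this $\delta$. The case $x = 0$ is vacuous, since the condition $r < |x|/A$ is then empty.

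There is essentially no obstacle here: the proof is a single-line reduction. The only point requiring slight care is the strict inequality in the cone hypothesis — since the infimum defining $\theta_\Sigma^C$ need not be attained (as the paper's remark after Theorem \ref{AODstructure} discusses), I rely on the strict upper bound in (\ref{Flatsides0}) to guarantee that some KP cone based at $0$ realizes $\D{0,r'}\Sigma\cC < \eta$. Everything else is scale invariance of the hypotheses and a mechanical verification that $|x|$ lies in the annulus $[r'/3, 2r'/3]$.
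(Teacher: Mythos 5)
Your proof is correct and follows the same underlying reduction the paper uses; it simply spells out the rescaling step that the paper's one-line proof (which just says "the hypotheses of Corollary \ref{FlatSidesSet} are satisfied") leaves implicit. In particular, your choice $r' = 3|x|/2$ to place $x$ in the annulus $[r'/3, 2r'/3]$ around the origin, the observation that both hypotheses of Corollary \ref{FlatSidesSet} are inherited at the smaller scale $r' \sleq r_0$, and the remark that the strict inequality $\theta_\Sigma^C(0,r') < \eta$ guarantees an admissible KP cone based at $0$ even though the infimum need not be attained, are exactly the details needed to make the deduction rigorous.
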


\begin{proof}
Let all notation and supposition hold. Note that the hypotheses of Corollary \ref{FlatSidesSet} are satisfied by Theorem \ref{AODstructure}, and the conclusion of Corollary \ref{FlatSidesSet} gives us (\ref{Flatsides1}).
\end{proof}

\section{Nonflat Points of H\"older Asymptotically Optimally Doubling Measures}\label{NonflatPointsofHOAOD}

In this section, we begin our investigation of the nonflat points in the support of a H\"older asymptotically uniform measure (see Theorem \ref{HAL}). We find appropriate H\"older estimates on $\theta_\Sigma^P$, $\theta_\Sigma^C$, and $\vartheta_\Sigma^C$ in a neighborhood of a nonflat point in the support. In Section 5, we use these estimates to construct a parametrization by a KP cone. Stated precisely, in the next two sections we prove the following theorem.

\begin{theorem}\label{theorem} For any $\alpha>0$, there exists $\beta = \beta(\alpha)$ with the following property. If $\mu$ is an $(\alpha, 3)$-asymptotically uniform measure, then for any $x\in \supp\mu$ which is nonflat
there exist a KP cone centered at $0$, neighborhoods $U$ of $0$ and $U'$ of $x$ and a diffeomorphism $\vphi\in C^{1,\beta}(U\to U')$ such that $\vphi(\cC\cap U) = \supp(\mu)\cap U'$. Further, $\vphi$ has the property that $\vphi(0) = x$ and $D_0\vphi = \Id$.
\end{theorem}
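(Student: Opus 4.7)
The plan has two main stages: first, upgrade the qualitative assertion $\theta_\Sigma^C(x,r)\to 0$ of Theorem \ref{AODstructure} to quantitative H\"older estimates using the H\"older uniformity of $\mu$; second, use these estimates together with the quantitative Reifenberg-style theorem (Theorem \ref{DKTprop}) and the transversality output of Theorem \ref{Top} to construct the diffeomorphism. After translating so that $x=0$, and using Lemma \ref{HAL} to reduce to the H\"older asymptotically uniform case at some exponent depending on $\alpha$, we work throughout with $\Sigma=\supp\mu$.

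For the first stage, the crux is a dyadic \emph{tilt estimate} for best-approximating KP cones based at $0$. Concretely, if $\cC_r$ nearly achieves the infimum defining $\theta_\Sigma^C(0,r)$, the aim is to show $\D{0,r/2}{\cC_r}{\cC_{r/2}}\sleq Cr^{\alpha'}$ for some $\alpha'=\alpha'(\alpha)>0$. The strategy is compactness-and-contradiction against the Kowalski-Preiss classification used in the proof of Theorem \ref{AODstructure}, but made effective: the $(\alpha,3)$-H\"older uniformity forces $\mu(B(y,s))/(\omega_3 s^3)=1+O(s^\alpha)$ on $\Sigma\cap B(0,r)$, and a rescaling argument pushing a hypothetical sequence of bad cone pairs to limit yields two distinct $3$-uniform measures on the same support, contradicting Corollary \ref{tanmeasures}. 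Summing the resulting geometric series of dyadic tilts produces a single limiting KP cone $\cC$ based at $0$ and gives $\theta_\Sigma^C(0,r)\sleq Cr^{\beta_1}$. At points $y\in\Sigma$ with $|y|$ small, a triangle inequality in $\twid{D}^{y,r}$ combined with Lemma \ref{ConeG2}, the smoothness of $\cC$ away from its apex (Lemma \ref{KPflat1}), and Corollary \ref{FlatSidesSet} converts this into H\"older control on $\theta_\Sigma^P(y,r)$ for $r$ small compared to $|y|$, with rate effective in both $r$ and $|y|$.

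For the second stage, decompose the punctured ball $B(0,r_0)\setminus\{0\}$ into dyadic shells $S_k=\{2^{-k-1}r_0\sleq|\cdot|\sleq 2^{-k}r_0\}$. The H\"older bound on $\theta_\Sigma^P$ from stage one feeds into Theorem \ref{DKTprop} on each $S_k$ to realize $\Sigma\cap S_k$ as a $C^{1,\beta_2}$ normal graph over $\cC\cap S_k$; Theorem \ref{Top} guarantees that the nearest-point projection from $\Sigma$ onto $\cC$ is surjective on each shell (this is exactly the transversality statement needed to make the inverse-graph map well defined near the apex). Uniqueness of the nearest-point projection makes the graphing maps on adjacent shells agree on overlap, assembling a single $C^{1,\beta_2}$ diffeomorphism $\varphi$ on the punctured cone. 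The extension $\varphi(0)=0$ is continuous because $\theta_\Sigma^C(0,|z|)\sleq C|z|^{\beta_1}$ forces $|\varphi(z)-z|\sleq C|z|^{1+\beta_1}$; after a final orthogonal rotation aligning $\cC$ with the limit of $\cC_r$ as $r\downarrow 0$, we obtain $D_0\varphi=\Id$ with $\|D\varphi(z)-\Id\|=O(|z|^\beta)$ across shells, giving the advertised $C^{1,\beta}$ regularity at the apex.

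The main obstacle is the H\"older tilt estimate between KP cones at successive scales. Unlike the hyperplane tilt of \cite{DKT}, the moduli space of KP cones carries a singular stratum where two cones can only be compared ``through'' a plane, and one must guard against the possibility that the best approximating cone at scale $r/2$ degenerates to a plane while the one at scale $r$ is genuinely conical---the nonflatness of $x$ rules this out qualitatively via Corollary \ref{strongconnectedness}, but an effective H\"older-rate version requires quantifying both the Kowalski-Preiss dichotomy and its stability under small perturbations of $\mu$ at the corresponding scales. Once the tilt estimate is available, the graphing-and-gluing construction of stage two is standard; the only subtlety remaining is verifying the $C^{1,\beta}$ matching at the apex, which follows from the H\"older decay of $\theta_\Sigma^C(0,r)$ together with the observation (as noted in the introduction) that a rotation of the standard KP cone in $\RR^4$ is determined by a single distinguished axis direction, so the full derivative of $\varphi$ at $0$ is encoded by finitely many H\"older-controlled scalar quantities.
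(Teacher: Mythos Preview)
Your outline correctly identifies the two stages and the right supporting lemmas (Theorem \ref{Top}, Lemma \ref{KPflat1}, Corollary \ref{FlatSidesSet}, Theorem \ref{DKTprop}), and your second stage is a plausible alternative to what the paper does. But there is a genuine gap in your first stage, and it is precisely at the point you flag as ``the main obstacle.''

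The tilt estimate $\D{0,r/2}{\cC_r}{\cC_{r/2}}\sleq Cr^{\alpha'}$ cannot be obtained by ``compactness-and-contradiction made effective.'' A compactness blow-up argument gives you a qualitative statement: if the tilt failed to vanish, a limit would contradict the classification of $3$-uniform measures. That recovers Theorem \ref{AODstructure}, but no rate. The phrase ``made effective'' hides the entire difficulty: there is no mechanism in your sketch by which the exponent $\alpha$ in the H\"older uniformity of $\mu$ is converted into an exponent $\alpha'$ on the cone tilt. (Your proposed limiting contradiction --- ``two distinct $3$-uniform measures on the same support'' --- is also not what Corollary \ref{tanmeasures} provides; that corollary constrains the \emph{type} of tangent measures, not their multiplicity on a given support.) The paper's route is entirely different and is the missing idea: it works with the \emph{second moment} quadratic form $Q_{0,r}$ of $\mu$ (see (\ref{Qdef}) and Theorem \ref{DKTbounds}), shows via explicit integral estimates (Lemma \ref{QvsK}, Lemma \ref{diagonalbasis}) that in a suitable frame $Q_{0,r}$ is H\"older-close to the second-moment matrix $K$ of Hausdorff measure on $\cC$, and then uses Lemma \ref{Polynomial} to pass from the polynomial $\twid Q_{0,r}$ to its zero set $\cE(\rho)$. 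The H\"older rate on $\theta_\Sigma^C(0,r)$ then emerges by summing a Cauchy estimate on the family $\cE(\rho)$ (Section \ref{atsing}); Theorem \ref{Top} is used to close the two-sided Hausdorff distance. This moment computation is where the $(\alpha,3)$-H\"older uniformity is actually spent, and nothing in your proposal substitutes for it.

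On the second stage, your dyadic-shell graph-and-glue construction differs from the paper's approach, which instead defines a single global projection $\pi$ onto $\cC$ along the cross-sectional normal field $\eta$ (see (\ref{nearestpoint})), proves $\pi|_{\Sigma\cap O}$ is lower-Lipschitz and surjective (Lemma \ref{bilip}, again using Theorem \ref{Top}), sets $\varphi=\pi^{-1}$, and extends $\varphi$ to a neighborhood via the $C^{1,\beta}$ Whitney Extension Theorem after verifying the polynomial compatibility conditions (Lemmas \ref{Mnorm2} and \ref{poly}). Your approach could likely be made to work once stage one is available, but the Whitney route handles the apex regularity and the gluing in one stroke rather than requiring separate $C^{1,\beta}$ matching across shells.
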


To this end, we assume that $\alpha >0$ and
\leqn{4hypos}
\begin{array}{l}
\mbox{$\mu$ is a Radon measure on $\RR^{4}$ which is $(\alpha,3)-$asymptotically uniform,}\\
\Sigma = \supp \mu\mbox{ satisfies that $0\in \Sigma$ is a nonflat point,}\\
\left| \frac{\mu(B(x,r))}{\omega_{3} r^{3}}-1 \right|\sleq C_{0} r^\alpha \mbox{ for }x\in\Sigma\cap B(0,1), 0<r\sleq 1 \mbox{ (see Definition \ref{mureg}).}
\end{array}
\endeqn
The second and third conditions may be viewed simply as a translation and dilation to normalize the scales at which we work. In Section \ref{Control of the Moments}, we adopt methods from \cite{DKT} to gain control on polynomials which we will call the moments of $\mu$. In Section \ref{atsing}, we use the information about the moments to get quantitative bounds on $\theta_\Sigma^C(0,r)$. Finally in Section \ref{AwayFromOrigin}, we develop quantitative information at points near the origin and scales sufficiently small. We then construct a parametrization in Section \ref{Parametrization} for a set $\Sigma$ satisfying the estimates we demonstrate. In Section 4, the constant $C$ depends on $\alpha$ and $C_0$ and radius $r_0$ is chosen small enough depending on $\alpha$ and $C_0$, as well as 
$$\sup_{0<r\sleq r_0}\theta^C(0,r)\quad\mbox{and}\quad \sup_{0<r\sleq r_0, x\in \Sigma\cap B(0,r_0)} \vartheta^C(x,r)$$
being small enough (see Theorem \ref{AODstructure}).

\subsection{Control of the Moments}\label{Control of the Moments}

Let $\nu$ be a Radon measure on $\RR^n$. Define the \emph{first moment} of $\nu$ at a point $x\in \supp(\nu)$ and a scale $r$ to be the vector
\leqn{bdef}
b_{x,r}(\nu) = \frac{n+1}{2\omega_{n-1} r^{n+1}} \int_{B(x,r)} (r^2 - |y-x|^2)(y-x) \, d\mu(y).
\endeqn
Define the \emph{second moment} of $\nu$ at a point $x\in \supp(\nu)$ and a scale $r$ to be the quadratic form
\leqn{Qdef}
Q_{x,r}(\nu) (y) = \frac{n+1}{\omega_{n-1} r^{n+1}} \int_{B(x,r)} \langle y, z-x\rangle^2 \, d\mu(z).
\endeqn
Define also the \emph{trace of $Q_{x,r}(\nu)$} to be
\leqn{trdef}
\tr Q_{x,r}(\nu) = \frac{n+1}{\omega_{n-1} r^{n+1} }\int_{B(x,r)} |z-x|^2 \, d\mu(z).
\endeqn
In any orthonormal coordinates centered at the origin $(x_1, \ldots,x_n)$ (and $x$ and $r$ understood), we set
\leqn{qdef}
q_{ij}(\nu)=  \frac{n+1}{\omega_{n-1} r^{n+1}} \int_{B(x,r)} (z_i-x_i)(z_j-x_j) \, d\nu(z).
\endeqn
It follows that $Q_{x,r}(\nu) = \sum_{i,j=1}^n q_{ij}(\nu) x_ix_j$. Moreover, $\tr Q_{x,r}(\nu) = \sum_{i=1}^n q_{ii}(\nu)$, and so coincides with the usual notion of the trace of a quadratic polynomial. For the rest of Section 4, we set $Q_{x,r} = Q_{x,r}(\mu)$ and $b_{x,r} = b_{x,r}(\mu)$ (see (\ref{4hypos})).

First, we set some notation. Fix $0<\gamma<\theta<\alpha/2$ for the remainder of Section 4. We will sometimes work at the scale $\rho = r^{1+\gamma}$. We also denote $\twid Q_{x,r}(z) = Q_{x,r}(z)-|z|^2$. Further, we set the blow up of $\Sigma$ at scale $r$ to be $\Sigma_r = (1/r)\Sigma$ (and so $\Sigma_\rho = (1/\rho)\Sigma$). We now summarize some of the results of \cite{DKT} which  highlight the interactions between the moment at the scale $r$ and the geometry at a different scale.

\begin{theorem}\label{DKTbounds}
Recall hypotheses (\ref{4hypos}). 
\begin{enumerate}[(1)]
\item{\rm{\cite{DKT}}} For $0<r\sleq 1/2$, $|\tr Q_{0,r}-(n-1)|\sleq C r^\alpha.$
\item{\rm{\cite{DKT}}} For $0<r\sleq 1/2$. $|b_{0,r}|\sleq C r^{1+\theta}.$
\item{\rm{\cite{DKT}}} For $0<r\sleq 1/2$ and $z\in \Sigma\cap B(0,r/2)$, 
\begin{equation}
|2\langle b_{0,r}, z\rangle + \twid Q_{0,r}(z)|\sleq C \frac{|z|^3}{r} + C r^{2+\alpha}
\end{equation}
\item For $0<r\sleq 1/2$, $0<s\sleq r/4$, and $z\in \Sigma_s\cap B(0,2)$,
\begin{equation}
|\twid Q_{0,r}(z)|\sleq C \frac s r + C \frac {r^{2+\alpha}}{s^2} + C \frac {r^{1+\theta}}{s}.
\endeqn
\item { For $M\sgeq 4$, $0<r\sleq 1/2$, and $z\in \Sigma_{r/M}\cap B(0,2)$ }
\begin{equation}
|\twid Q_{0,r}(z)|\sleq CMr^\theta + \frac CM+ CM^2 r^\alpha.
\end{equation}
\item For $\tau\in[1 /{2}, 1]$, $r$ such that $0<r^\gamma\sleq 1/{8}$, and $z\in \Sigma_{\rho}\cap B(0,2)$, 
\begin{equation}\label{DKTineq}
\left|\twid Q_{0,\tau r}(z)\right| \sleq Cr^{\theta-\gamma} +C r^\gamma + Cr^{\alpha-2\gamma}.
\end{equation}
Letting $\beta_0 = \min(\theta-\gamma, \gamma, \alpha-2\gamma)$, (\ref{DKTineq}) gives that
\begin{equation}\label{DKTineq'}
\left|\twid Q_{0,\tau r}(z)\right| \sleq Cr^{\beta_0}.
\end{equation}
\end{enumerate}
\end{theorem}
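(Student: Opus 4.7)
The plan is to treat parts (1)--(3) as direct quotations of the corresponding estimates in \cite{DKT}, and to derive the new parts (4)--(6) from (3) by rescaling arguments, exploiting that $Q_{0,r}$ and $|\cdot|^2$ are both homogeneous of degree~$2$, so $\twid Q_{0,r}$ is as well.

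For (4), the key observation is that, since $\tau r \sleq r$ and $z \in \Sigma_s \cap B(0,2)$ means $sz \in \Sigma \cap B(0, 2s) \subseteq \Sigma \cap B(0, r/2)$, we may apply (3) to the point $sz$ in place of $z$ at scale $r$. This yields
\begin{equation*}
|2 \langle b_{0,r}, sz\rangle + \twid Q_{0,r}(sz)| \sleq C\frac{|sz|^3}{r} + Cr^{2+\alpha}.
\end{equation*}
Using (2) to bound $|b_{0,r}| \sleq Cr^{1+\theta}$ and using $|sz| \sleq 2s$, one gets $|\twid Q_{0,r}(sz)| \sleq Cr^{1+\theta}s + Cs^3/r + Cr^{2+\alpha}$. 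Dividing by $s^2$ and invoking homogeneity $\twid Q_{0,r}(sz) = s^2 \twid Q_{0,r}(z)$ gives exactly the claimed inequality.

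For (5), specialize (4) to $s = r/M$; the condition $M \sgeq 4$ is exactly $s \sleq r/4$, and substitution of $s = r/M$ into the three terms of (4) gives $C/M$, $CM^2 r^\alpha$, and $CMr^\theta$ respectively. For (6), apply (5) at scale $\tau r$ in place of $r$ with the choice $M = \tau/r^\gamma$, so that $\tau r / M = r^{1+\gamma} = \rho$ matches the scale at which $z$ lives. The hypothesis $r^\gamma \sleq 1/8$ together with $\tau \sgeq 1/2$ guarantees $M \sgeq 4$, so (5) applies. The three terms $CM(\tau r)^\theta$, $C/M$, $CM^2(\tau r)^\alpha$ are then bounded (using $\tau \sleq 1$ and $M \sleq 1/r^\gamma$ up to constants) by $Cr^{\theta-\gamma}$, $Cr^\gamma$, and $Cr^{\alpha-2\gamma}$ respectively, which is (\ref{DKTineq}); (\ref{DKTineq'}) is immediate from the definition of $\beta_0$.

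There is no real obstacle here beyond careful bookkeeping of exponents and constants: parts (4)--(6) form a nested sequence of specializations of (3), each obtained by choosing the free scale parameter to equalize the dominant error terms. The strategic point of the chain is (6): by tuning $\rho = r^{1+\gamma}$ the quadratic form $Q_{0,\tau r}$ is forced to agree with $|\cdot|^2$ to H\"older order $\beta_0$ on the blow-up $\Sigma_\rho$, which will be the input needed to compare $\Sigma$ to a KP cone at the origin in Section~\ref{atsing}.
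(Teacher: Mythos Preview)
Your proposal is correct and follows essentially the same approach as the paper: apply (3) to $x=sz$, use homogeneity of $\twid Q_{0,r}$ to divide by $s^2$, bound the linear term via (2), and then specialize $s$ for (5) and (6). The only cosmetic differences are that the paper obtains (6) directly from (4) with $s=\rho$ and radius $\tau r$ rather than passing through (5) with $M=\tau/r^\gamma$, and that your stray clause ``since $\tau r\sleq r$'' in the proof of (4) does not belong there (there is no $\tau$ in (4)); otherwise the arguments are identical.
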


\begin{remark}
 We note that in \cite{DKT}, two cases with respect to (2) were considered: the case where $b_{0,r}$ is small (satisfies Theorem \ref{DKTbounds}(2)), and the case where it is large (does not satisfy Theorem \ref{DKTbounds}(2)). However, contained in their analysis, they showed that the latter case automatically implies flatness. Thus, our nonflatness assumption implies Theorem \ref{DKTbounds}(2).
\end{remark}

\begin{proof}[Proof of (4), (5), and (6)]
We begin by proving (4). Set $x = sz$. Then $x\in \Sigma\cap B(0,2s)\subseteq \Sigma\cap B(0,r/2)$. We apply (3) to see that
\begin{equation}
\left|2\left\langle \frac{b_{0,r}}s, z\right\rangle + \twid Q_{0,r}(z)\right| = \frac1{s^2}\left|2\langle  {b_{0,r}}, x\rangle + \twid Q_{0,r}(x) \right|  \sleq \frac 1 {s^2}\left(C\frac{|x|^3}r + Cr^{2+\alpha}\right)\sleq C\frac s r + c \frac{r^{2+\alpha}}{s^2}.
\endeqn
By applying (2), we get
\begin{equation}
|\twid Q_{0,r}(z)| \sleq 2\frac {|b_{0,r}||z|}{s} + C \frac s r + C\frac {r^{2+\alpha}}{s^2}\sleq C \frac {r^{1+\theta}}{s} + C \frac s r + C\frac {r^{2+\alpha}}{s^2}
\endeqn

We now have that (5) follows from (4) by setting $s = r/M$ and checking that $0<s\sleq r/4$. Similarly, (6) follows from (4) by taking $s = \rho = r^{1+\gamma}$ and taking $\tau r$ as our radius, and checking that $0<s\sleq \tau r/4$.

\end{proof}

We now seek to understand the second moment $Q_{0,r}(x)$. Let $(x_1, x_2,x_3,x_4)$ be orthonormal coordinates centered at the origin. Since $Q_{0,r}$ is a quadratic polynomial, we can represent it as the matrix $Q_{0,r} = (q_{ij})$.  Note that if we compute the second moment $Q_{0,r}(\cH^3\res_{\cC}) = K$ of 3-dimensional Hausdorff  measure on the KP cone $\cC = \{x_4^2 = x_1^2 + x_2^2 + x_3^2\}$ (at any radius) we get
$$K = \left(\begin{array}{cccc}
\frac 32 & 0 & 0 & 0 \\
0 & \frac 12 & 0 & 0 \\
0 & 0 & \frac 12 & 0 \\
0 & 0 & 0 & \frac 12 \end{array}
\right).$$ Our first lemma proves that at small enough radii, $Q_{0,r}$ becomes close to $K$.

\begin{lemma}\label{QvsK} Let $\delta > 0$. There exists an $r_0>0$ such that for all $0<r\sleq r_0$, there exists an orthonormal basis $(x_1, x_2, x_3, x_4)$ for which $\max_{ij}|q_{ij}-K_{ij}| <\delta$ .
\end{lemma}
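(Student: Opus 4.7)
The plan is to argue by contradiction, using compactness of tangent measures together with the classification at nonflat points. Suppose the lemma fails; then there exist $\delta_0>0$ and a sequence $r_i\downarrow 0$ such that for every orthonormal basis of $\RR^4$ centered at the origin the associated matrix $(q_{ij}(\mu,0,r_i))$ differs from $K$ by at least $\delta_0$ in some entry.

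By weak compactness of Radon measures combined with Theorems \ref{localdoubling} and \ref{KTaod}, after passing to a subsequence we may assume $\omega_3\mu_{0,r_i}\rightharpoonup \nu$ for some $3$-uniform measure $\nu$ on $\RR^4$. Since $0\in\Sigma$ is nonflat, Corollary \ref{strongconnectedness} rules out the flat case, and Corollary \ref{tanmeasures} then forces $\nu=\cH^3|_{\cC}$ for some KP cone $\cC$ based at the origin. Fix an orthonormal basis $(x_1,x_2,x_3,x_4)$ in which $\cC=\{x_4^2=x_1^2+x_2^2+x_3^2\}$. A direct calculation, using the symmetries of $\cC$ under sign changes of each $x_k$ and permutations of $x_1,x_2,x_3$ together with the trace identity $\tr Q_{0,1}(\nu)=3$, shows $q_{ij}(\nu,0,1)=K_{ij}$ in this basis.

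The remaining task is to verify $q_{ij}(\mu,0,r_i)\to K_{ij}$ in this fixed basis. The substitution $z=r_iy$ applied to definition (\ref{qdef}) yields
\begin{equation}
q_{ij}(\mu,0,r_i)=\frac{5\,\mu(B(0,r_i))}{\omega_3^2\, r_i^3}\int_{B(0,1)} y_iy_j\,d(\omega_3\mu_{0,r_i})(y).
\end{equation}
The prefactor $5\mu(B(0,r_i))/(\omega_3^2 r_i^3)$ tends to $5/\omega_3$ by the H\"older asymptotic uniformity hypothesis (\ref{4hypos}). For the integral, the weak convergence $\omega_3\mu_{0,r_i}\rightharpoonup \cH^3|_{\cC}$ together with the vanishing $\cH^3|_{\cC}(\partial B(0,1))=0$ (since $\cC\cap S^3$ is a $2$-dimensional compact set) allow application of the Portmanteau theorem to the bounded measurable function $y\mapsto y_iy_j\chi_{B(0,1)}(y)$, giving convergence to $\int_{B(0,1)} y_iy_j\,d\cH^3|_{\cC}=(\omega_3/5)K_{ij}$. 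Multiplying, $q_{ij}(\mu,0,r_i)\to K_{ij}$, contradicting the standing hypothesis for large $i$.

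The main delicate point is the basis-dependence of the limit: the cone $\cC$, and hence its adapted basis, only emerges after the extraction of a subsequence, so the good basis is not given in advance. The argument works because the hypothesized failure is required to hold in \emph{every} orthonormal basis, while we produce one basis tailored to the limiting cone. The only other technical wrinkle is the discontinuity of $\chi_{B(0,1)}$ under weak limits, handled via the vanishing of the boundary mass on $\cC\cap S^3$.
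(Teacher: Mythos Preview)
Your proof is correct and takes a genuinely different route from the paper's. The paper argues directly: it uses Theorem \ref{AODstructure} to pick, at each scale $r$, a KP cone $\cC$ with $\D{0,r}\Sigma\cC<\epsilon/M$, adapts coordinates to that cone, and then combines an $L^2$ estimate on $\int_{B(0,r)} d(x,\cC)^2\,d\mu$ (yielding control on $q_{44}-(q_{11}+q_{22}+q_{33})$) with the moment bound Theorem \ref{DKTbounds}(5) to evaluate $\twid Q_{0,r}$ at specific test points of $\cC$ and extract each $q_{ij}$. Your argument instead passes to a tangent measure via compactness, identifies the limit as $\cH^3|_\cC$ using the nonflatness hypothesis and Corollaries \ref{strongconnectedness} and \ref{tanmeasures}, and then checks continuity of $r\mapsto q_{ij}(\mu,0,r)$ under weak convergence. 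Your approach is shorter and more conceptual; the paper's is more quantitative and stays within the moment machinery (Theorem \ref{DKTbounds}) that is in any case needed for the H\"older-rate arguments of Section \ref{atsing}.

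One small remark on your computation of $q_{ij}(\nu,0,1)=K_{ij}$: the sign-change and permutation symmetries together with $\tr Q_{0,1}(\nu)=3$ yield $q_{ij}=0$ for $i\neq j$ and $3q_{11}+q_{44}=3$, but this is one equation in two unknowns. You also need the relation $q_{44}=q_{11}+q_{22}+q_{33}$, which follows immediately from integrating the defining identity $x_4^2=x_1^2+x_2^2+x_3^2$ over $\cC\cap B(0,1)$. With that, $q_{44}=3/2$ and $q_{11}=q_{22}=q_{33}=1/2$, as required.
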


\begin{proof}
Let $\delta>0$ be given. Fix parameters $M\sgeq4$ and $\epsilon>0$ to be specified later. By Theorem \ref{AODstructure}, there exists $r_0>0$ such that for all $0<r\sleq r_0$,
\begin{equation}\label{QvsK.1}
\theta_\Sigma^C(0,r)< \frac\epsilon M.
\end{equation}
Fix $0<r\sleq r_0$. Let $\cC$ be a KP cone centered at the origin such that
\begin{equation}\label{QvsK.2}
\D{0,r}\Sigma\cC <\frac\epsilon M.
\end{equation}
Let $(x_1, x_2, x_3,x_4)$ be orthonormal coordinates such that
\begin{equation}\label{QvsK.2.1}
\cC = \{x_4^2 = x_1^2 + x_2^2 + x_3^2\}.
\end{equation}
By finding the vector from $x$ to $\cC$ normal to $\cC$, a quick computation shows that for any point $x\in\RR^4$,
\begin{equation}\label{QvsK.3}
d(x,\cC) = \frac{\big| |x_4|- |(x_1,x_2,x_3)|\big|}{\sqrt 2}.
\end{equation}
We manipulate (\ref{QvsK.3}) to get
\begin{equation}\label{QvsK.4}
2 d(x,\cC)^2 = x_1^2 + x_2^2 + x_3^2 + x_4^2 -2|x_4||(x_1,x_2,x_3)|.
\end{equation}
Set $C_1 =  5/{\omega_3}$, we compute
\begin{equation}\label{QvsK.5}
\begin{split}\frac{C_1}{r^{5}} \int_{B(0,r)} 2 d(x,\cC)^2 \,d\mu(x) & = \frac{C_1}{r^{5}} \int_{B(0,r)} x_1^2 + x_2^2+x_3^2+x_4^2 - 2|x_4||(x_1,x_2,x_3)| \, d\mu(x) \\
& =  \tr Q_{0,r} - 2 \frac {C_1}{r^{5}} \int_{B(0,r)}|x_4||(x_1,x_2,x_3)|\, d\mu(x).\end{split}
\end{equation}
We apply H\"older's inequality to find
\begin{equation}\label{QvsK.6}
\begin{split}
\frac {C_1}{r^5} \int_{B(0,r)}|x_4||(x_1,x_2,x_3)|\, d\mu & \sleq 
 \left(\frac {C_1}{r^5} \int_{B(0,r)}|x_4|^2\, d\mu\right)^{\frac12} \left(\frac {C_1}{r^5} \int_{B(0,r)}|(x_1,x_2,x_3)|^2\, d\mu\right)^{\frac12}\\
& = \sqrt{(q_{44})(q_{11} + q_{22} + q_{33})}
\end{split}
\end{equation}
Substituting (\ref{QvsK.6}) into (\ref{QvsK.5}) and manipulating, we get that
\begin{equation}\label{QvsK.7}
\frac{C_1}{r^5} \int_{B(0,r)} 2d(x,\cC)^2 \,d\mu \sgeq \tr Q_{0,r} - 2\sqrt{(q_{11} + q_{22}+q_{33})(q_{44})} =  \left(\sqrt q_{44} - \sqrt{q_{11}+q_{22}+q_{33}}\right)^2.\end{equation}
By applying the H\"older asymptotically uniform property and (\ref{QvsK.2}), we also have that
\begin{equation}\label{QvsK.8}
\begin{split}
\frac{C_1}{r^5}\int_{B(0,r)} 2 d(x,\cC)^2 \,d\mu& \sleq \frac{2C_1}{r^5} \int_{B(0,r)} \left(\frac {\epsilon r}M\right)^2 \,d\mu = 10\frac{\mu(B(0,r))}{\omega_3r^3} \frac{\epsilon^2}{M^2}\\
&\sleq 10 \left(1 + C_0r_0^\alpha\right)\frac{\epsilon^2}{M^2}\sleq {C}\frac{\epsilon^2}{M^2}.
\end{split}
\end{equation}
Combining (\ref{QvsK.7}) and (\ref{QvsK.8}), we have that
\begin{equation}\label{QvsK.9}
\left(\sqrt q_{44} - \sqrt{q_{11}+q_{22}+q_{33}}\right)^2 \sleq C\frac{\epsilon^2}{M^2}.
\end{equation}
Note that $0\sleq q_{ii}\sleq 3 + Cr^\alpha$ by Theorem \ref{DKTbounds}(1) and the definition of $q_{ij}$. We use this fact and (\ref{QvsK.9}) to get
\begin{equation}\label{QvsK.10}
\big| q_{44} - q_{11} - q_{22} - q_{33} \big| = \big| \sqrt{q_{44} }-\sqrt{ q_{11} + q_{22} + q_{33}} \big|\cdot\big| \sqrt{q_{44}}+\sqrt{ q_{11} + q_{22} + q_{33}} \big|   \sleq C \frac{\epsilon}{M} 
\end{equation}
By Theorem \ref{DKTbounds}(1), we also have that
\begin{equation}\label{QvsK.11}
|q_{11} + q_{22} + q_{33} + q_{44} -3| \sleq C r^{\alpha}.
\end{equation}
From (\ref{QvsK.10}) and (\ref{QvsK.11}), we get that
\begin{equation}\label{QvsK.12}
|q_{44} - \frac32| \sleq C\frac{\epsilon}{M}+ C r^\alpha \quad\mbox{and}\quad
|q_{11}+ q_{22}+q_{33} - \frac32| \sleq C\frac{\epsilon}{M}+ C r^\alpha.
\end{equation}
Set $\sigma = \sigma(r, M, \epsilon): = C \epsilon/M + Cr_0^\alpha$ for the constants above. Note $\sigma\to 0$ as $r, \epsilon \to 0$ (recall $M\sgeq 4$).

From Theorem \ref{DKTbounds}, we have that
\begin{equation}\label{QvsK.13}
|\twid Q_{0,r}(z)| \sleq C Mr^{\theta} + \frac CM + C M^2 r^{\alpha} \quad \mbox{ for }z\in \Sigma_{r/M} \cap B(0,2).
\end{equation}
We now extend this to information about the points in $\cC$. First, we note that because $\cC$ is a cone centered at the origin, by Lemma \ref{ConeG1} and (\ref{QvsK.2}), it follows that
\begin{equation}\label{QvsK.14}
\D{0,1}{\cC}{\Sigma_{r/M}}\sleq 2 \epsilon.
\end{equation}
Since $\twid Q_{0,r}$ is a quadratic form with $|\twid Q_{0,r}(x)| \sleq C |x|^2$, we get that for any $e\in \Ss^3$,  
\begin{equation}\label{QvsK.15}
|\partial_{e} \twid Q_{0,r}(x)| \sleq C |x| \sleq C\quad\mbox{ for }x\in B(0,2).
\end{equation}
Let $a\in \cC\cap B(0,1)$. By (\ref{QvsK.14}) there is a point $x\in \Sigma_{r/M}$ such that $|x-a|<2\epsilon $. Setting $e = \frac{x-a}{|x-a|}$, we integrate along the path from $x$ to $a$ along $e$, apply (\ref{QvsK.15}), and get that
\begin{equation}\label{QvsK.16}
|\twid Q_{0,r}(a)| \sleq |\twid Q_{0,r}(x)| + |\twid Q_{0,r}(a) - \twid Q_{0,r}(x)|\sleq |\twid Q_{0,r}(x)| + C\epsilon.
\end{equation}
Combining (\ref{QvsK.13}) and (\ref{QvsK.16}), we get that
\begin{equation}\label{QvsK.17}
|\twid Q_{0,r}(a)|\sleq  C Mr^{\theta} + \frac CM + C M^2 r^{\alpha}+ C\epsilon = : \eta(r, M, \epsilon)\quad\mbox{ for }a\in \cC\cap B(0,1).
\end{equation}
Note that by choosing $\epsilon$ small, $M$ large, and $r$ very small (depending on $M$), we can make $\eta = \eta(r, M, \epsilon)$ as small as we like.

We now plug in some special points of $\cC$ to extract information about $Q_{0,r}$. We continue to work in the same orthonormal coordinates $(x_1,x_2,x_3,x_4)$ (see (\ref{QvsK.2.1})). Let $z_i^\pm = (x_i\pm x_4)/{\sqrt{2}}$ for $i = 1,2,3$. Because $z^{\pm}_i\in \cC\cap B(0,1)$ , we may apply (\ref{QvsK.17}) to get
\begin{equation}\label{QvsK.18}
|\twid Q_{0,r}(z_i^\pm)| = \left|\frac12q_{ii} + \frac12q_{44} - 1 \pm q_{i4}\right|\sleq \eta
\end{equation}
(recall that by (\ref{qdef}), $q_{i4} = q_{4i}$). From (\ref{QvsK.18}) and (\ref{QvsK.12}), we get
\begin{equation}\label{QvsK.19}
\left|q_{ii}-\frac12\right|\sleq \left|\frac 12 q_{ii} + \frac12q_{44} -1 + q_{i4}\right| + 
\left|\frac 12 q_{ii} + \frac12q_{44} -1 - q_{i4}  \right | + \left | \frac 32 - q_{44}\right | \sleq 2\eta + \sigma.
\end{equation}
We also get from (\ref{QvsK.18}) that
\begin{equation}\label{QvsK.20}
|q_{i4}|\sleq \frac12 \left| q_{i4} + \frac 12 q_{ii} + \frac 12 q_{44} - 1\right| + \frac12 \left| q_{i4} - \frac 12 q_{ii} - \frac 12 q_{44} + 1\right|\sleq \eta.
\end{equation}
Let $y_{ij} = (x_i + x_j)/2 + x_4/\sqrt 2$ for $i,j = 1,2,3$. Because $y_{ij}\in \cC\cap B(0,1)$, we may apply (\ref{QvsK.17}) to get
\begin{equation}\label{QvsK.21}
|\twid Q_{0,r}(y_{ij})| = \left| \frac 14 q_{ii} + \frac 14 q_{jj} + \frac 12 q_{44} + \frac 12 q_{ij}+ \frac 1{\sqrt 2} q_{i4} + \frac 1{\sqrt 2} q_{j4} - 1\right|\sleq \eta
\end{equation}
(recall that by (\ref{qdef}), $q_{ij} = q_{ji}$). So from (\ref{QvsK.12}), (\ref{QvsK.19}), (\ref{QvsK.20}), and (\ref{QvsK.21}), we get
\begin{equation}\label{QvsK.22}
|q_{ij} | \sleq 2 \eta + \left|q_{ii}-\frac12\right| + \left|q_{jj} - \frac 12 \right| + 2\left|q_{44} - \frac 32\right|  + 2\sqrt 2|q_{i4}| + 2\sqrt 2|q_{j4}|\sleq C \eta + C \sigma.
\end{equation}
Hence, by  (\ref{QvsK.12}), (\ref{QvsK.19}), (\ref{QvsK.20}), and (\ref{QvsK.22}), we can make $\eta$ and $\sigma$ small enough such that $\max_{ij}|q_{ij} - K_{ij}|\sleq \delta$  in the orthonormal basis $(x_1, x_2, x_3, x_4)$.

\end{proof}

\begin{lemma}\label{diagonalbasis}
For any $\delta > 0$, there is an $r_0$ small enough such that the following hold. For $0<r\sleq r_0$, there is an orthonormal basis $(x_1, x_2, x_3, x_4)$ diagonalizing $Q_{0,r}$ as
\begin{equation}\label{diagonalbasis.1}
Q_{0,r} = \sum_{i=1}^4 \lambda_i x_i^2
\end{equation}
and
\begin{equation}\label{diagonalbasis.2}
|\lambda_i - \frac12|<\delta \mbox{ for } i =1,2,3\quad\mbox{ and }\quad|\lambda_4-\frac32| < \delta.
\end{equation}
\end{lemma}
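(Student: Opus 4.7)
The plan is to combine Lemma \ref{QvsK} with the spectral theorem and Weyl's inequality on eigenvalue perturbations. Given $\delta > 0$, I would first fix a small parameter $\delta' = \delta'(\delta) > 0$, to be chosen depending only on $\delta$. Apply Lemma \ref{QvsK} with parameter $\delta'$: this furnishes an $r_0 > 0$ such that for every $0 < r \le r_0$, there is an orthonormal basis $(e_1, e_2, e_3, e_4)$ of $\RR^4$ in which the matrix $(q_{ij})$ representing $Q_{0,r}$ satisfies $\max_{ij}|q_{ij} - K_{ij}| < \delta'$, where $K = \diag(1/2, 1/2, 1/2, 3/2)$.

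Next, since $Q_{0,r}$ is a real symmetric quadratic form on $\RR^4$ (symmetry of $q_{ij}$ in $i,j$ is immediate from (\ref{qdef})), the spectral theorem yields an orthonormal basis $(x_1, x_2, x_3, x_4)$ and real eigenvalues $\lambda_1, \ldots, \lambda_4$ for which the diagonalization (\ref{diagonalbasis.1}) holds.

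To obtain the eigenvalue estimate (\ref{diagonalbasis.2}), I would invoke Weyl's perturbation inequality: if $A$ and $B$ are symmetric matrices with eigenvalues $a_1 \ge \cdots \ge a_n$ and $b_1 \ge \cdots \ge b_n$ respectively, then $|a_i - b_i| \le \|A - B\|_{\mathrm{op}}$ for each $i$. Applying this to $(q_{ij})$ and $K$, and bounding the operator norm by the Frobenius norm, we get $|a_i - b_i| \le 4\delta'$. Since the decreasing spectrum of $K$ is $(3/2, 1/2, 1/2, 1/2)$, it follows that exactly one eigenvalue of $Q_{0,r}$ lies within $4\delta'$ of $3/2$ and the other three lie within $4\delta'$ of $1/2$. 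Taking $\delta' = \delta/4$ with $\delta' < 1/4$ ensures that the ``large'' eigenvalue is unambiguously distinguished from the three small ones. A relabeling of the diagonalizing basis, putting the eigenvector of the largest eigenvalue into the $x_4$ slot, yields exactly the claim (\ref{diagonalbasis.2}).

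The substantive work is already hidden in Lemma \ref{QvsK}; the step here is essentially a stability argument for eigenvalues. The only mild subtlety is to verify that the spectral gap makes the labeling unambiguous, which requires $\delta'$ to be small enough that eigenvalues near $1/2$ cannot be confused with the one near $3/2$; this is a harmless finite constraint on $r_0$.
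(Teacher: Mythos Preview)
Your proof is correct and follows essentially the same approach as the paper: invoke Lemma \ref{QvsK}, then use symmetry of $Q_{0,r}$ to diagonalize, then apply an eigenvalue perturbation bound. The only cosmetic difference is that the paper cites Gershgorin disks where you cite Weyl's inequality; both yield the needed eigenvalue control from the entrywise bound $\max_{ij}|q_{ij}-K_{ij}|<\delta'$.
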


\begin{proof}
Let $\delta>0$ be given. Let $\eta>0$ to be chosen small enough. Let $r_0$ be small enough that Lemma \ref{QvsK} is satisfied with $\eta$ in place of $\delta$. Fix $0<r\sleq r_0$, and let $(y_1, y_2, y_3, y_4)$ be the orthonormal basis given by Lemma \ref{QvsK}, i.e., such that
\begin{equation}\label{diagonalbasis.5}
\max_{ij}|q_{ij}-K_{ij}|\sleq \eta.
\end{equation}

 Note that the eigenvalues of $K$ are $1/2$ with multiplicity 3 and $3/2$ with multiplicity 1. Note that $Q_{0,r}$ has real eigenvalues because it is symmetric. Let $\lambda_1\sleq\lambda_2\sleq \lambda_3\sleq \lambda_4$ be the eigenvalues of $Q_{0,r}$.  By the theory of Gershgorin disks [see, for example \cite{Cha}], there exists $\eta$ small enough such that (\ref{diagonalbasis.2}) is satisfied. Further, because $Q_{0,r}$ is symmetric, there is an orthonormal basis $(x_1, x_2, x_3, x_4)$ diagonalizing $Q_{0,r}$ such that (\ref{diagonalbasis.1}) is satisfied.

\end{proof}

We now know a great deal about what $Q_{0,r}$ looks like, and so we also know a great deal about what $\twid Q_{0,r} = Q_{0,r}-|\cdot|^2$ looks like. We now seek to exploit this knowledge with a lemma about polynomials that look like $\twid Q_{0,r}$.

\begin{lemma}\label{Polynomial}
Let $(x_1,x_2,x_3,x_4)$ be orthonormal coordinates on $\RR^4$ and $P(x):\RR^4\to\RR$ be a polynomial of the form
\begin{equation}\label{Polynomial.1}
P(x) = \sum_{i = 1}^4 \eta_i x_i^2
\end{equation}
such that
\begin{equation}\label{Polynomial.2}
\begin{array}{c}
|\eta_i + \frac 12|< \frac 18\,\, \,\, \mbox{for }i\in\{1,2,3\}\\
|\eta_i - \frac 12|< \frac 18 \,\,\,\,\mbox{for }i = 4\\
\end{array}
\end{equation}
Let $\Sigma$ be a set such that
\begin{equation}\label{Polynomial.3}
 |P(x)|\sleq \epsilon \mbox{ for all }x\in \Sigma\cap B(0,1), 
\end{equation}
Then
\begin{equation}\label{Polynomial.4}
\hd{0,1}\Sigma{P\inv(0)} \sleq C\sqrt \epsilon.
\end{equation}

\end{lemma}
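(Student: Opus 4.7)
The proof I have in mind is by direct projection: given $x \in \Sigma \cap B(0,1)$ we will produce $y \in P^{-1}(0) \cap B(0,1)$ within distance $C\sqrt{\epsilon}$ by moving $x$ parallel to a well-chosen coordinate axis.

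First, we handle the trivial regime $|x| \le \sqrt{\epsilon}$ by taking $y = 0$: since $P(0) = 0$, this gives $|x-y| \le \sqrt{\epsilon}$. So assume $|x| > \sqrt{\epsilon}$. The key geometric observation is that the sign of $P(x)$ forces one of the coordinates of $x$ to be comparable to $|x|$. Using the bounds $\eta_4 \in [3/8, 5/8]$ and $|\eta_i| \in [3/8, 5/8]$ for $i \in \{1,2,3\}$, one checks:
\begin{itemize}
\item If $P(x) \ge 0$, then $\eta_4 x_4^2 \ge \sum_{i=1}^3 |\eta_i|\, x_i^2$, which upon plugging in the extremal values of the $\eta$'s yields $x_4^2 \ge (3/8)|x|^2$, so $|x_4| \ge |x|/2$.
\item If $P(x) \le 0$, the reverse inequality yields $x_1^2 + x_2^2 + x_3^2 \ge (3/8)|x|^2$; hence the coordinate $i_\star \in \{1,2,3\}$ maximizing $|x_i|$ satisfies $|x_{i_\star}| \ge |x|/(2\sqrt{2})$.
\end{itemize}

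Now pick the index $i$ as above (so $i = 4$ in the first case, $i = i_\star$ in the second), and search for $y$ of the form $y = x + t e_i$ with $P(y) = 0$. Since $P$ is quadratic in $t$, one obtains
\[
 \eta_i t^2 + 2\eta_i x_i t + P(x) = 0, \qquad t = -x_i \pm \sqrt{x_i^2 - P(x)/\eta_i}.
\]
The key point is that in both cases above $P(x)/\eta_i \ge 0$ (because in the first case both $P(x)$ and $\eta_4$ are nonnegative, and in the second case both $P(x)$ and $\eta_{i_\star}$ are nonpositive), so the discriminant is at most $x_i^2$ and the square root is real. Choosing the sign so that $t$ and $x_i$ have opposite signs, we get $y_i = \mathrm{sign}(x_i)\sqrt{x_i^2 - P(x)/\eta_i}$, whence $|y_i| \le |x_i|$ and therefore $|y| \le |x| \le 1$, so $y \in P^{-1}(0) \cap B(0,1)$.

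Finally, rationalizing gives
\[
 |t| = |x_i| - \sqrt{x_i^2 - P(x)/\eta_i} \;=\; \frac{|P(x)/\eta_i|}{|x_i| + \sqrt{x_i^2 - P(x)/\eta_i}} \;\le\; \frac{|P(x)|}{|\eta_i|\,|x_i|} \;\le\; \frac{C\epsilon}{|x|} \;\le\; C\sqrt{\epsilon},
\]
where the last step uses $|x| > \sqrt{\epsilon}$. Combining the two regimes proves $\hd{0,1}{\Sigma}{P^{-1}(0)} \le C\sqrt{\epsilon}$.

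The only mild subtlety is the sign bookkeeping in the second bullet: one needs to verify that choosing the coordinate of largest absolute value (and the root closer to $x_i$) simultaneously (i) gives a real solution, (ii) keeps $y$ inside $B(0,1)$, and (iii) places $y$ on $P^{-1}(0)$. The sign matching $P(x) \cdot \eta_i \ge 0$ above is precisely what makes all three conditions compatible, and it is the only place in the proof where the separation hypothesis (\ref{Polynomial.2}) on the $\eta_i$ is genuinely used.
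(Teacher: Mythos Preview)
Your approach is close in spirit to the paper's but has a genuine gap in the second case. The claim that $P(x)/\eta_i \ge 0$ makes the square root real is false when $i = i_\star \in \{1,2,3\}$: you need the discriminant $x_{i_\star}^2 - P(x)/\eta_{i_\star} \ge 0$, whereas $P(x)/\eta_{i_\star} \ge 0$ only gives the \emph{upper} bound $x_{i_\star}^2 - P(x)/\eta_{i_\star} \le x_{i_\star}^2$. For a concrete failure take $\eta_i = -\tfrac12$ ($i\le 3$), $\eta_4 = \tfrac12$, and $x = (a,a,0,0)$: then $P(x) = -a^2$ and $|x| = a\sqrt{2} > a = \sqrt{|P(x)|}$, so you are in your non-trivial regime, yet moving along $e_1$ gives discriminant $x_1^2 - P(x)/\eta_1 = a^2 - 2a^2 = -a^2 < 0$. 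The fix is easy: raise the trivial-regime threshold from $\sqrt\epsilon$ to, say, $(8/\sqrt3)\sqrt\epsilon$; then $x_{i_\star}^2 \ge |x|^2/8 \ge (8/3)\epsilon \ge |P(x)|/|\eta_{i_\star}| = P(x)/\eta_{i_\star}$ and the root is real. (Case $i=4$ is fine as written, since there the discriminant equals $\sum_{j\le 3} (|\eta_j|/\eta_4)\, x_j^2 \ge 0$ identically.)

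Once patched, your argument differs from the paper's in an interesting way. The paper moves \emph{outward}: when $P(x)\ge 0$ it increases the radial coordinate $r = |(x_1,x_2,x_3)|$, and when $P(x)\le 0$ it increases $|x_4|$; in both cases the projected point $\hat x$ may leave $B(0,1)$, so the paper first bounds $\hda{0,1}{\Sigma}{P^{-1}(0)}$ and then invokes Lemma~\ref{ConeG1} (using that $P^{-1}(0)$ is a cone through $0$) to pass to $\hd{0,1}{\Sigma}{P^{-1}(0)}$. You instead move \emph{inward} along a coordinate axis with $|y_i|\le |x_i|$, landing directly in $B(0,1)$ and avoiding Lemma~\ref{ConeG1}; the price is the reality check above. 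The paper's estimate $|\hat r - r| \le \sqrt{|\hat r^2 - r^2|} \le C\sqrt\epsilon$ and your rationalisation $|t| \le C\epsilon/|x| \le C\sqrt\epsilon$ are two sides of the same elementary inequality.
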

\begin{proof}
Let all notation and hypotheses hold. Let $x\in \Sigma\cap B(0,1)$. Write $x = (re, x_4)\in \RR^3\times \RR$, for $r\sgeq 0$ and $e = (e_1, e_2, e_3) \in \RR^3$,  $|e| = 1$. We consider two cases of (\ref{Polynomial.3}). First, we suppose that
\begin{equation}\label{Polynomial.5}
0\sleq P(x)\sleq \epsilon.
\end{equation}
By (\ref{Polynomial.2}), $\eta_i$ is negative for $i\in \{1,2,3\}$. Thus by (\ref{Polynomial.5}), there exists $\hat{r}\sgeq r$ such that $\hat{x}:= (\hat{r}e, x_4)$ satisfies
\begin{equation}\label{Polynomial.6}
P(\hat{x}) = 0.
\end{equation}
We next note that
\begin{equation}\label{Polynomial.7}
|x-\hat{x}| = |r-\hat{r}|.
\end{equation}
Let $\eta_e = \sum_{i=1}^3 \eta_i e_i^2$. Because $|e| = 1$ and (\ref{Polynomial.2}), we get that $|\eta_e + 1/2|\sleq 1/8$. We compute
\begin{equation}\label{Polynomial.8}
\epsilon\sgeq |P(x)-P(\hat{x})| = \left| \sum_{i=1}^3 \eta_i e_i^2 (r^2 - \hat{r}^2) \right| = |r^2 - \hat{r}^2| |\eta_e|.
\end{equation}
We recall that $\hat{r}\sgeq r\sgeq 0$. We note that if $a,b>0$, then $|a-b|\sleq \sqrt{|a^2 - b^2|}$. We use these observations and (\ref{Polynomial.8}) to compute
\begin{equation}\label{Polynomial.9}
|\hat{r} - r| \sleq \sqrt{|\hat{r}^2 - r^2|}\sleq \frac 1{\sqrt{|\eta_e|}}\sqrt \epsilon\sleq C \sqrt{\epsilon}.
\end{equation}

We now consider the other case of (\ref{Polynomial.3}). Suppose that
\begin{equation}\label{Polynomial.10}
-\epsilon\sleq P(x)\sleq 0.
\end{equation}
For ease, assume $x_4\sgeq 0$ (the case $x_4\sleq 0$ will be similar). Because $\eta_4>0$ by (\ref{Polynomial.2}), $x_4\sgeq 0$, and (\ref{Polynomial.10}), there exists $\hat{x_4}\sgeq x_4$ such that $\hat{x} :=(x_1, x_2, x_3, \hat{x_4})$ satisfies
\begin{equation}\label{Polynomial.11}
P(\hat{x}) = 0.
\end{equation}
Note that
\begin{equation}\label{Polynomial.12}
|x-\hat{x}| = |x_4- \hat{x_4}|.
\end{equation}
Note also that 
\begin{equation}\label{Polynomial.13}
\epsilon\sgeq |P(x)-P(\hat{x})| = |\eta_4 (x_4^2 - \hat{x_4}^2)| = |\eta_4| |x_4^2 - \hat{x_4}^2|.
\end{equation}
As before, we note that $\hat{x_4}\sgeq x_4\sgeq 0$. Hence, from (\ref{Polynomial.2}) and (\ref{Polynomial.13}), we get that
\begin{equation}\label{Polynomial.14}
|\hat{x_4}-x_4|\sleq \sqrt{|\hat{x_4}^2 - x_4^2|}\sleq \frac 1{\sqrt {\eta_4}} \sqrt \epsilon \sleq C\sqrt \epsilon.
\end{equation}

Hence, we've shown that for any $x\in \Sigma\cap B(0,1)$, there exists $\hat{x} \in P\inv (0)$ such that
$|x-\hat{x}|\sleq C\sqrt \epsilon.$
Hence,
\begin{equation}\label{Polynomial.16}
\hda{0,1}{\Sigma}{P\inv(0)}\sleq C\sqrt \epsilon.
\end{equation}
Because $P$ is a homogenous polynomial, $P\inv(0)$ is a cone centered at $0$. Hence, we apply Lemma \ref{ConeG1} to (\ref{Polynomial.16}) to get
\begin{equation}
\hd{0,1}{\Sigma}{P\inv(0)}\sleq C\sqrt \epsilon.
\end{equation}

\end{proof}

\subsection{The Tangent Cone at the Singularity}\label{atsing}

In this section we show that at a nonflat point there is a unique KP cone $\cC$ based at the origin to which the blowups converge at a H\"older rate. Specifically, we stive to show that there are an $r_0>0$ small enough, constant $C$, and exponent $\beta_1 = \beta_1(\alpha)$ such that for $0<r\sleq r_0$,
$$\D{0,r}\cC\Sigma\sleq C r^{\beta_1}.$$
To do this, we will apply Lemmas \ref{DKTbounds}(6), \ref{diagonalbasis}, and \ref{Polynomial}. Lemma \ref{diagonalbasis} allows us to diagonalize $Q_{0,r}$, which allows us to apply Lemma \ref{Polynomial} with the bounds from Lemma \ref{DKTbounds}(6). Taken together, these will give us the estimates on $\hd{0,r}\Sigma\cC$. To get estimates on $\hd{0,r}\cC\Sigma$, we will apply Theorem \ref{Top}, which will allow us to show that every point in $ \cC\cap B(0,r)$ has a point in $\Sigma$ which is no further than $\hd{0,2r}\Sigma\cC$, and this will finish the argument.

 Fix $\epsilon>0$ small. By Theorem \ref{AODstructure}, there is some radius $r_0$ small enough such that
\begin{equation}\label{atsing.0}
\vartheta_\Sigma^C(x,r)<\epsilon\quad\mbox{ for }x\in \Sigma\cap B(0,r_0), 0<r\sleq r_0.
\end{equation}
We work at a scale of $\rho = r^{1+\gamma_0}$ for some $0<\gamma_0<\alpha/2$. For ease, let $\rho_0 = r_0^{1+\gamma_0}$. Define $\cE(\rho)$ by
\begin{equation}\label{atsing.1}
\cE(\rho) = \twid Q_{0,r}\inv(0).
\end{equation}
We recall Lemma \ref{DKTbounds}(6), which tells us that that for $r_0<1/4$, $0<r\sleq r_0$,
\begin{equation}\label{atsing.2}
\twid Q_{0,\tau r}(z) \sleq C r^{\beta_0} \quad\mbox{ for all } z\in \Sigma_\rho \cap B(0,2) \mbox{ and } \tau\in\left[1/2,1\right].
\end{equation}
From Lemma \ref{diagonalbasis}, we know that for $r_0$ small enough, we can diagonalize $\twid Q_{0,r}$ as $\twid Q_{0,r} = \sum_{i=1}^4 (\lambda_i - 1) x_i^2$ with $|\lambda_i-1/2|<1/8$ for $i = 1,2,3$ and $\lambda_4 - 3/2|<1/8.$ This allows us to apply Lemma \ref{Polynomial} with the bound of (\ref{atsing.2}), which yields that
\begin{equation}\label{atsing.3'}
\hd{0,2}{\Sigma_\rho}{\cE((\tau r)^{1+\gamma_0})}\sleq C r\frexp{\beta_0} 2 \quad\mbox{ for all } \tau\in \left[1/2, 1\right].
\end{equation}
Manipulating the $\tau$ above,
\begin{equation}\label{atsing.3}
\hd{0,2}{\Sigma_\rho}{\cE(\tau \rho)}\sleq C r\frexp{\beta_0} 2 \quad\mbox{ for all } \tau\in \left[1/{2^{1+\gamma_0}}, 1\right].
\end{equation}
In particular, the above holds for all $\tau\in [1/2,1]$.

Let $\sigma>0$. By Theorem \ref{AODstructure}, for $\rho_0$ small enough and $0<\rho\sleq \rho_0$, there exists a KP cone $\cC(\rho)$ based at the origin such that
\begin{equation}\label{atsing.4}
\D{0,2}{\Sigma_\rho}{\cC(\rho)}<\sigma.
\end{equation}
From (\ref{atsing.3}) and (\ref{atsing.4}), it follows that
\begin{equation}\label{atsing.5}
\hd{0,2}{C(\rho)}{\cE(\tau\rho)}<\sigma + C r\frexp{\beta_0}2 \quad \mbox{ for all } 0<\rho\sleq \rho_0, \tau\in \left[1/2,1\right].
\end{equation}
Fix $\tau\in [1/2, 1]$. Let $a\in \cE(\tau\rho)$ such that$ 1/2\sleq |a|\sleq 1$. Let $\nu_a$ be the normal vector to $\cE(\tau\rho)$ at $a$, and let $\ell_a = \{a + t\nu_a\mid t\in \RR\}$ be the line going through $a$ parallel to $\nu_a$. 

Let $\delta>0$. Recall that $\cE(\tau\rho)$ is the zero set of a homogenous degree 2 polynomial. By (\ref{atsing.5}) and requiring that $\sigma$ and $\rho_0$ be small enough, we can not only guarantee that $\ell_a$ interesects $\cC(\rho)$, but also that at the point of intersection (nearest to $a$), the angle between $\ell_a$ and $\cC(\rho)$ is arbitrarily close to $\pi/2$. In particular, we can guarantee that it is greater than $\pi/ 4$. It follows from this observation, (\ref{atsing.0}), and (\ref{atsing.4}) that we may invoke Theorem \ref{Top} with $\nu_a$ at the point of intersection. Hence, for small enough $\rho_0$ and $\sigma$, there exists $z\in \Sigma_\rho \cap \ell_a$ such that $d(z, \cC(\rho))\sleq \delta$. Coupling this with (\ref{atsing.5}), we get that
\begin{equation}\label{atsing.6}
d(z, \cE(\tau\rho))\sleq \delta + 2\sigma + Cr\frexp{\beta_0}2.
\end{equation}
For $\delta$, $\sigma$, and $\rho_0$ small enough, the fact that $\nu_a$ is normal to $\cE(\tau\rho)$ implies that the nearest point on $\cE(\tau\rho)$ to $z$ is $a$. Hence,
\begin{equation}\label{atsing.7}
|z-a| = d(z, \cE(\tau\rho))\sleq 2 \hd{0,2}{\Sigma_\rho}{\cE(\tau\rho)}\sleq Cr\frexp{\beta_0}2.
\end{equation}
Recall that our only assumption on $a$ was that $1/2\sleq |a|\sleq 1$ and $a\in \cE(\tau\rho)$. Hence, (\ref{atsing.7}) tells us that
\begin{equation}\label{atsing.8}
\hd{0,1}{\cE(\tau\rho)\setminus B\left(0,1/2\right)}{\Sigma_\rho}\sleq Cr\frexp {\beta_0 }2.
\end{equation}
Hence, combining (\ref{atsing.3}) and (\ref{atsing.8}), we get that
\begin{equation}\label{atsing.9}
\hda{0,1}{\cE(\tau\rho)\setminus B\left(0,1/2\right)}{\cE(\tau'\rho)}\sleq Cr\frexp {\beta_0}2 \quad\mbox{ for } \tau, \tau' \in \left[1/2,1\right].
\end{equation}
From the fact that both $\cE(\tau\rho)$ and $\cE(\tau'\rho)$ are cones based at 0, it follows from (\ref{atsing.9}) that
\begin{equation}\label{atsing.10}
\hd{0,1}{\cE(\tau\rho)}{\cE(\tau'\rho)}\sleq Cr\frexp {\beta_0}2 \quad\mbox{ for } \tau, \tau' \in \left[1/2,1\right]
\end{equation}
(see Lemma \ref{ConeG1}). Because condition (\ref{atsing.10}) is symmetric, we get that
\begin{equation}\label{atsing.11}
\D{0,1}{\cE(\tau\rho)}{\cE(\tau'\rho)}\sleq Cr\frexp {\beta_0}2 \quad\mbox{ for } \tau,\tau' \in \left[1/2,1\right].
\end{equation}

We now exploit (\ref{atsing.11}) to get a rate of convergence for the one parameter family $\cE(\rho)$. Suppose that $0<\rho'<\rho\sleq \rho_0$. Write $\rho' = \tau\rho/{2^N}$ for $\tau\in[1/2,1]$, $N\in\ZZ_{\sgeq 0}$. Then we get from (\ref{atsing.11}) that
\begin{equation}\label{atsing.12}
\begin{split}
\D{0,1}{\cE(\rho)}{\cE(\rho')}&\sleq \sum_{j=0}^{N-1} \D{0,1}{\cE\left(\frac\rho{2^j}\right)}{\cE\left(\frac{\rho}{2^{j+1}}\right)} + \D{0,1}{\cE\left(\frac\rho{2^N}\right)}{\cE(\rho')}\\
&\sleq \sum_{j=0}^{N} C \left(\frac r{2^j}\right)\frexp{\beta_0}2\sleq C r\frexp{\beta_0}2\sum_{j=0}^\infty \left(\frac 1{2\frexp {\beta_0}2}\right)^j = C r\frexp {\beta_0}2.
\end{split}
\end{equation}

We now recall Lemma \ref{diagonalbasis} and that $\cE(\rho) = \twid Q_{0,r}\inv(0).$ Together, they imply that any convergent sequence $\cE(\rho_i)$ is converging to a KP cone based at the origin. Moreover, by (\ref{atsing.12}), the one parameter family is Cauchy and hence converges to a unique KP cone based at the origin. Call this KP cone $\cC$. We now use (\ref{atsing.12}) to compute that for $0<\rho\sleq \rho_0$,
\begin{equation}\label{atsing.13}
\D{0,1}{\cE(\rho)}{\cC}\sleq \lim_{\rho'\downarrow 0} \D{0,1}{\cE(\rho)}{\cE(\rho')} + \D{0,1}{\cE(\rho')}{\cC} \sleq C r\frexp{\beta_0}2.
\end{equation}
We now wish to show that
\begin{equation}\label{atsing.14}
\D{0,1}{\Sigma_\rho}{\cC}\sleq C r\frexp{\beta_0}2.
\end{equation}
First, we note that by (\ref{atsing.3}), Lemma \ref{ConeG1}, and (\ref{atsing.13}), we get that
\begin{equation}\label{atsing.15}
\hd{0,1}{\Sigma_\rho}{\cC}\sleq \hd{0,1}{\Sigma_\rho}{\cE(\rho)} + \hd{0,1}{\cE(\rho)}{\cC} \sleq C r\frexp{\beta_0}2.
\end{equation}
We now recall that by Theorem \ref{AODstructure}, we have that for any sequence $\rho_i\downarrow 0$ with $\Sigma_{\rho_i}$ convergent, $\Sigma_{\rho_i}\to \cC'$ for some KP cone $\cC'$. We note that (\ref{atsing.15}) implies that $\cC' \subseteq \cC$, and because each is a KP cone, $\cC' = \cC$. Hence, we derive that $\D{0,1}{\Sigma_{\rho_i}}{\cC}\to 0$ as $i\to \infty$ for any convergent sequence $\rho_i$. From this observation, we get that
\begin{equation}\label{atsing.16}
\D{0,1}{\Sigma_\rho}{\cC}\to 0 \quad\mbox{ as }\rho\downarrow 0.
\end{equation}
Let $0< \rho\sleq \rho_0$ and $a\in \cC\cap B(0,1)\setminus B(0,1/2)$. Similarly to how we proved (\ref{atsing.8}), we apply (\ref{atsing.0}), (\ref{atsing.15}), (\ref{atsing.16}) to apply Theorem \ref{Top} to the vector $\nu_a$ (which we recall is the unit normal to $\cC$ at $a$). This gives us that there is a point $z\in \Sigma_\rho$ with $z = a + t\nu_a$ and $t$ small. As before, we get that $|z-a| = d(z,\cC)\sleq |a| \hd{0,|a|}{\Sigma}\cC\sleq C r\frexp{\beta_0}{2}$. Hence, we get that
\begin{equation}\label{atsing.17}
\hd{0,1}{\cC\setminus B(0,1/2)}{\Sigma_\rho}\sleq C r\frexp{\beta_0}2.
\end{equation}
Let $a\in \cC\cap B(0,1)\setminus\{0\}$. Then we apply scale invariance of $\cC$ and (\ref{atsing.17}) to compute that
\begin{equation}
\hd{0,|a|}{\cC\setminus B(0,|a|/2)}{\Sigma_\rho} = \hd{0,1}{\cC\setminus B(0,1/2)}{\Sigma_{|a|\rho}}\sleq C r\frexp{\beta_0}2.
\end{equation}
Hence,
\begin{equation}
d(a, \Sigma_\rho)\sleq C |a| r\frexp{\beta_0}{2}\sleq Cr\frexp {\beta_0}2.
\end{equation}
From which we get
\begin{equation}\label{atsing.18}
\hd{0,1}\cC{\Sigma_\rho}\sleq Cr\frexp{\beta_0}2.
\end{equation}
Combining (\ref{atsing.15}) and (\ref{atsing.18}), 
\begin{equation}
\D{0,1}{\Sigma_\rho}\cC\sleq Cr\frexp{\beta_0}2.
\end{equation}
It is now convenient to switch back from $\rho = r^{1+\gamma_0}$ to $r$. Making the substitution to (\ref{atsing.17}), we get
\begin{equation}
\D{0,1}{\Sigma_r}\cC \sleq C r\frexp{\beta_0}{2(1+\gamma_0)}.
\end{equation}
Let $\beta_1 = {\beta_0}/{(2(1+\gamma_0))}$. Recall that $\beta_0$ and $\gamma_0$ depended only on $\alpha$, and hence so does $\beta_1$. Then we've shown the following.

\begin{lemma}\label{singularpart}
 Recall hypotheses (\ref{4hypos}) and (\ref{atsing.0}). There is a (unique) KP cone $\cC$ based at the origin, $r_0 = r_0(C_0, \alpha, \epsilon) >0$ small enough, a constant $C = C(C_0, \alpha)$, and $\beta_1 = \beta_1(\alpha)$ (defined above) such that for $0<r\sleq r_0$,
\begin{equation}
\D{0,1}{\Sigma_r}\cC\sleq C r^{\beta_1}.
\end{equation}
\end{lemma}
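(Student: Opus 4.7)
The plan is to work at an auxiliary scale $\rho = r^{1+\gamma_0}$ for some small $\gamma_0 \in (0, \alpha/2)$, extract KP cone approximants $\cE(\rho)$ from the moment data, show that $\{\cE(\rho)\}$ is Cauchy as $\rho \downarrow 0$, and bootstrap the limit cone $\cC$ into a H\"older approximation of $\Sigma_\rho$ itself.

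First, by Lemma \ref{DKTbounds}(6), $|\twid Q_{0,\tau r}(z)| \sleq Cr^{\beta_0}$ on $\Sigma_\rho \cap B(0,2)$ uniformly in $\tau \in [1/2,1]$. By Lemma \ref{diagonalbasis}, for small enough $r$ the form $Q_{0,r}$ diagonalizes with eigenvalues close to $(1/2, 1/2, 1/2, 3/2)$, so $\twid Q_{0,\tau r}$ satisfies the hypotheses of Lemma \ref{Polynomial}, and its zero set $\cE(\tau\rho) := \twid Q_{0,\tau r}^{-1}(0)$ is a KP cone based at the origin. Lemma \ref{Polynomial} then yields $\hd{0,2}{\Sigma_\rho}{\cE(\tau\rho)} \sleq C r^{\beta_0/2}$. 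For the reverse direction, given $a \in \cE(\tau\rho)$ with $1/2 \sleq |a| \sleq 1$ and unit normal $\nu_a$, Theorem \ref{AODstructure} produces a KP cone $\cC(\rho)$ close to $\Sigma_\rho$, and the preceding bound makes $\nu_a$ nearly normal to $\cC(\rho)$ as well; this supplies the transversality needed by Theorem \ref{Top}, which returns a point $z \in \Sigma_\rho$ on the normal line with $|z - a| \sleq C r^{\beta_0/2}$. Symmetrizing gives $\D{0,1}{\cE(\tau\rho)}{\cE(\tau'\rho)} \sleq C r^{\beta_0/2}$ for all $\tau, \tau' \in [1/2, 1]$.

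A telescoping argument, summing the geometric series $\sum_j 2^{-j\beta_0/2}$ (which converges since $\beta_0 > 0$), then gives $\D{0,1}{\cE(\rho)}{\cE(\rho')} \sleq C r^{\beta_0/2}$ for all $0 < \rho' < \rho \sleq \rho_0$. So $\{\cE(\rho)\}$ is Cauchy in the Hausdorff topology on compact balls; the space of KP cones based at the origin is compact (a continuous image of $O(4)$), so the limit $\cC$ is itself such a cone, and passing $\rho' \downarrow 0$ gives $\D{0,1}{\cE(\rho)}{\cC} \sleq C r^{\beta_0/2}$. Combining with the first-step one-sided bound produces $\hd{0,1}{\Sigma_\rho}{\cC} \sleq C r^{\beta_0/2}$, and the reverse inequality $\hd{0,1}{\cC}{\Sigma_\rho} \sleq C r^{\beta_0/2}$ comes from one more application of Theorem \ref{Top} along the normal directions to $\cC$, together with scale invariance of $\cC$ to cover $|a| \sleq 1/2$. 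Substituting back $\rho = r^{1+\gamma_0}$ yields the H\"older exponent $\beta_1 = \beta_0/(2(1+\gamma_0))$, depending only on $\alpha$.

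The main obstacle will be parameter compatibility: the various $\epsilon$'s and $\sigma$'s, along with the top scale $r_0$, must be chosen small enough simultaneously that Theorem \ref{Top} applies at every scale with the required transversality (angle $\geq \pi/4$), and the exponent $\beta_0 = \min(\theta - \gamma, \gamma, \alpha - 2\gamma)$ must remain strictly positive, which is guaranteed by the choice $0 < \gamma < \theta < \alpha/2$ fixed at the outset.
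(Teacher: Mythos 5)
Your proposal tracks the paper's proof step for step: the auxiliary scale $\rho = r^{1+\gamma_0}$, the zero sets $\cE(\rho)$ of $\twid Q_{0,r}$, the one-sided bound via Lemmas \ref{DKTbounds}(6), \ref{diagonalbasis}, and \ref{Polynomial}, the reverse direction via Theorem \ref{Top} with $\cC(\rho)$ supplying transversality, the telescoping Cauchy argument, and the final substitution giving $\beta_1 = \beta_0/(2(1+\gamma_0))$. The only small imprecision is calling $\cE(\tau\rho)$ a KP cone based at the origin --- it is merely an elliptical cone whose eigenvalues approach those of the KP quadratic as $r \downarrow 0$ (Lemma \ref{diagonalbasis}), which is exactly what is needed to conclude that the Cauchy limit $\cC$ is a genuine KP cone, so the argument goes through unchanged.
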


\subsection{H\"older Closeness Away from the Origin}\label{AwayFromOrigin}
In this section we investigate how the quantity $\theta_\Sigma^P(x,r)$ behaves for points $x$ near the origin and scales $r$ which are appropriately small in terms of $|x|$. Recall hypotheses (\ref{4hypos}). We begin by stating a result of \cite{DKT}.

\begin{theorem}\label{DKTprop} Suppose that $\mu$ is a Radon measure on $\RR^n$ which is $(\alpha,n-1)$-H\"older asymptotically optimally doubling with convergence constants  $C_K$ for each compact set $K\subseteq \Sigma$. Then there exist $\delta = \delta(n,\alpha)$ and $r_1 = r_1(C_K, \alpha, n,\delta)$ such that if $r_1'\sleq r_1$ and 
$$\sup_{x\in K, 0<r\sleq r_1'}\theta_\Sigma^P(x,r)\sleq \delta,$$
 then there exist $C_K' = C_K'(C_K, \delta, n,\alpha)$ (but not otherwise dependent on $\mu$) and $\beta = \beta(\alpha)$ such that for all $0<r\sleq r_1'$ and $x\in K$,
$$\theta_\Sigma^P(x,r)\sleq C_K'\left(\frac r {r_1'}\right)^{\beta}.$$
\end{theorem}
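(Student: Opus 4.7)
The plan is to adapt the moment-based strategy developed in Section~4.1 of the paper to the flat regime, replacing the KP-cone approximants by affine hyperplane approximants and exploiting Lemma~\ref{HAL} to pass from the H\"older asymptotically optimally doubling setting to the H\"older asymptotically uniform setting (at a mild cost in the H\"older exponent).

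First I would show that the flatness hypothesis forces the second moment $Q_{x,r}(\mu)$ to nearly agree with the moment $Q_{x,r}(\cH^{n-1}\res_{P(x,r)})$ of $(n-1)$-dimensional Hausdorff measure restricted to a best approximating hyperplane $P(x,r)$. By splitting the ball integral into the contribution of the layer near $P(x,r)$ and an error controlled by $\theta^P_\Sigma(x,r)$, and using the H\"older asymptotically uniform property, one obtains
\[
\bigl| Q_{x,r}(\mu)(v) - Q_{x,r}(\cH^{n-1}\res_{P(x,r)})(v)\bigr| \sleq C\bigl(r^\alpha + \theta^P_\Sigma(x,r)^2\bigr)|v|^2,
\]
with an analogous estimate for $b_{x,r}$. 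The power $2$ on $\theta$ is crucial: since the moment weights the squared distance to the center, the deviation of $\Sigma$ from $P(x,r)$ in the normal direction enters quadratically.

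Second I would compare moments at consecutive dyadic scales. Direct computation expresses $Q_{x,r}(\mu)$ in terms of $Q_{x,r/2}(\mu)$ and an integral over the shell $B(x,r)\setminus B(x,r/2)$; applying the H\"older doubling bound $|\mu(B(x,\tau r))/\mu(B(x,r)) - \tau^{n-1}|\sleq C_K r^\alpha$ shows that these two moments are determined by each other up to an $O(r^\alpha)$ error. Feeding in the moment-to-plane identifications from the first step converts this into a tilt estimate of the form
\[
\meang\bigl(P(x,r), P(x,r/2)\bigr) \sleq C\bigl(\theta^P_\Sigma(x,r) + r^\alpha\bigr),
\]
together with the corresponding estimate on the affine offsets through $b_{x,r}$. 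Combined with a Campanato/variance-type argument on the half-ball, this yields a strict improvement recursion
\[
\theta^P_\Sigma(x, r/2)\sleq \eta\, \theta^P_\Sigma(x, r) + C r^\alpha
\]
for some $\eta = \eta(n) \in (0,1)$, valid once $\delta$ is small enough that we remain in the linear regime of the moment expansion.

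Third, setting $r_k = r_1'/2^k$ and $\theta_k = \theta^P_\Sigma(x, r_k)$ and iterating the recursion gives $\theta_k \sleq C_K'(2^{-k})^\beta$ with $\beta = \min(-\log_2\eta, \alpha) > 0$, which is exactly the asserted H\"older bound $\theta^P_\Sigma(x,r_k)\sleq C_K'(r_k/r_1')^\beta$; non-dyadic scales are handled by monotonicity. Note that $\beta$ depends only on $\alpha$ since $\eta$ depends only on $n$. The main obstacle is obtaining the strict improvement factor $\eta < 1$: this is where the Reifenberg flatness hypothesis is essential, since on a hyperplane model the height function of $\Sigma$ over its best affine approximant has $L^2$ norm that decreases upon passing to a concentric smaller ball, and the asymptotically uniform bound upgrades this $L^2$ decay to the pointwise decay of $\theta^P_\Sigma$. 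In contrast, near the apex of a KP cone no such improvement is available, which is precisely why the nonflat analog (Theorem~\ref{introHAOD}) requires the much heavier machinery of Sections~4 and~5.
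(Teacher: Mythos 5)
The paper does not prove Theorem~\ref{DKTprop}: it is stated as a result of \cite{DKT}, and the only content the paper adds is the remark that follows, which explains how to adjust the constant-tracking in \cite{DKT} Section 8 (specifically Lemma 8.2) to obtain the stated uniform dependence of $\delta$, $r_1$, and $C_K'$ on the data $(n,\alpha,C_K)$. Your proposal, by contrast, tries to re-derive the theorem from scratch. The high-level ingredients you list --- pass from H\"older-doubling to H\"older-uniform via Lemma~\ref{HAL}, compare the second moment $Q_{x,r}$ with that of a hyperplane, compare moments across dyadic scales, iterate --- do resemble the moment machinery of \cite{DKT} Section 8 in outline, so this is not a truly alternative route so much as an attempted reconstruction.

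There are two substantive gaps. First, the claimed bound
$\bigl|Q_{x,r}(\mu)(v)-Q_{x,r}(\cH^{n-1}\res_{P(x,r)})(v)\bigr|\sleq C\bigl(r^\alpha+\theta_\Sigma^P(x,r)^2\bigr)|v|^2$
is not justified and is very likely false for the off-diagonal entries $q_{in}$ (where $e_i$ is tangent to $P$ and $e_n$ is normal). Moving a point of $\Sigma$ by $O(\theta r)$ in the normal direction changes the integrand $z_i z_n$ by $O(\theta r)\cdot O(r)$; after the $r^{-(n+1)}$ normalization and the $\mu(B)\approx\omega_{n-1}r^{n-1}$ factor, this gives $q_{in}=O(\theta)$, not $O(\theta^2)$. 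The quadratic gain would hold for the single entry $q_{nn}$ (and for the best $L^2$ plane, where the off-diagonal entries vanish by a variational identity), but your $P(x,r)$ is a Hausdorff-distance minimizer, which does not automatically kill the off-diagonal terms. As it happens the linear-in-$\theta$ bound is enough, so this is fixable, but as written the quadratic claim is an error that you single out as "crucial".

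Second, and more seriously, the heart of your argument is the improvement recursion
$\theta_\Sigma^P(x,r/2)\sleq\eta\,\theta_\Sigma^P(x,r)+Cr^\alpha$ with $\eta=\eta(n)<1$,
which you attribute to a "Campanato/variance-type argument" without giving any mechanism. Note that $\theta_\Sigma^P$ is a sup-norm Hausdorff quantity, not an $L^2$ oscillation, and it does not shrink under ball-halving: for a fixed set, halving the radius and renormalizing by $r/2$ can \emph{double} the relative deviation (e.g.\ a bump localized in $B(x,r/4)$). So any contraction factor $\eta<1$ has to come entirely from the doubling hypothesis, and you never explain how the moment bounds give a factor strictly less than one on the $\theta$-term (as opposed to merely producing a comparable bound with a new additive $r^\alpha$ error). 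Since the whole H\"older conclusion rests on this step, the argument has a genuine gap. What \cite{DKT} actually do is estimate directly the distance from $\Sigma$ to a moment-determined plane at each scale, rather than run a sup-norm contraction, and that is a different and more delicate computation than your sketch suggests.

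Separately, the passage through Lemma~\ref{HAL} replaces $\alpha$ by $\alpha/(\alpha+1)$, which you mention but do not track; this affects the exponent $\beta$ and would need to be carried through step 3.
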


\begin{remark}
Although this result is slightly more quantitative than  \cite{DKT} Proposition 8.6, it is not difficult to obtain. The result is proven in Section 8 of \cite{DKT}, and the only place where a more complicated relationship between $\delta$ and $r_1$  can arise is in the proof of Lemma 8.2. However, one removes all ambiguity of the dependence in this proof by simply choosing $r_0$ small enough such that $9\epsilon (\gamma_1)\sleq 1/(8n+8)$ and $\delta$ small enough such that $9C\delta\sleq1/(8n+8)$.
\end{remark}

In light of Theorem \ref{DKTprop}, we recall Corollary \ref{FlatSides}, which tells us that there is an $A$ large enough such that for some $r_0>0$,
\begin{equation}
\theta_\Sigma^P(x,r)< \delta \quad\mbox{ for all }x\in \Sigma\cap B(0,r_0)\setminus \{0\}, 0<r\sleq \frac {|x|}A.
\end{equation}
Hence we can apply Theorem \ref{DKTprop} to get
\begin{equation}\theta_\Sigma^P\left(x,r\right)\sleq C\left(\frac r{r_1'}\right)^{\beta}\quad\mbox{ for all } x\in\Sigma\cap B(0,r_0)\setminus\{0\}, 0<r\sleq r_1'\sleq \frac {|x|}A. \end{equation}
Taking
\begin{equation}r_1'=\frac1A |x|\end{equation}
gives that
\begin{equation}\label{8.8}
\theta_\Sigma^P\left(x,r\right)\sleq C \left(\frac{r}{|x|}\right)^\beta.
\end{equation}

We now fix $0<\gamma_1<\beta_1$.
Let $0<r\sleq{|x|^{1+\gamma_1}}/{A}$, we get that ${1}/{|x|}\sleq C r\frexp1{1+\gamma_1}$. Combining this with (\ref{8.8}) gives us that
\begin{equation}\label{8.9}
\theta_\Sigma^P\left(x,r\right)\sleq C \left(\frac{r}{r\frexp{1}{1+\gamma_1}}\right)^\beta = Cr\frexp{\beta\gamma_1}{1+\gamma_1}.
\end{equation}
Setting $\beta_2 = \frac{\gamma_1\beta}{1+\gamma_1}$, (\ref{8.9}) says that
\begin{equation}\theta_\Sigma^P\left(x,r\right)\sleq C r^{\beta_2}.\end{equation}
Hence, we have proven the following.

\begin{lemma}\label{flatparts}
Recall hypotheses (\ref{4hypos}).  There is an $r_0>0$ and a constant $C$ small enough such that for all $0<r\sleq r_0$,
\begin{equation}
\theta_\Sigma^P\left(x,r\right)\sleq C r^{\beta_2}.
\end{equation}
\end{lemma}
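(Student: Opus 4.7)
The plan is to combine Corollary~\ref{FlatSides}, which provides a baseline flatness estimate at points $x$ near the singular point $0$ at scales controlled linearly by $|x|$, with the quantitative Reifenberg-type upgrade of Theorem~\ref{DKTprop} that converts a uniform baseline flatness into a H\"older decay rate. A final rescaling trade-off then converts the $|x|$-dependence in the bound into pure $r$-dependence, at the cost of restricting $r$ to be smaller than a fractional power of $|x|$.

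First I will fix $\delta = \delta(n,\alpha)$ coming from Theorem~\ref{DKTprop} and feed it into Corollary~\ref{FlatSides}; this produces constants $A = A(\delta)$, $\epsilon(\delta)$, $\eta(\delta)$, and a radius $r_0>0$ (chosen small enough that the hypotheses \eqref{Flatsides0} are satisfied, which is possible by Theorem~\ref{AODstructure} because $0$ is assumed nonflat) such that
\begin{equation*}
\theta_\Sigma^P(x,r) < \delta \quad\text{whenever } x \in \Sigma \cap B(0, 2r_0/3)\setminus\{0\}\ \text{and } 0 < r \leq |x|/A.
\end{equation*}
For each such $x$ I will invoke Theorem~\ref{DKTprop} with $K=\{x\}$ and $r_1' = |x|/A$: the baseline hypothesis holds by the previous display, and the conclusion provides an exponent $\beta = \beta(\alpha)>0$ and a constant $C = C(C_0,\alpha,\delta)$ with
\begin{equation*}
\theta_\Sigma^P(x,r) \leq C\left(\frac{r}{|x|/A}\right)^\beta = C' \left(\frac{r}{|x|}\right)^\beta \quad\text{for } 0 < r \leq |x|/A.
\end{equation*}

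Next, fix $0 < \gamma_1 < \beta_1$ (so the resulting exponent is positive and compatible with Lemma~\ref{singularpart} in Section~\ref{Parametrization}) and restrict attention to the regime $0 < r \leq |x|^{1+\gamma_1}/A$. Shrinking $r_0$ if necessary so that $r_0 \leq 1$ guarantees $|x|^{1+\gamma_1} \leq |x|$, hence this restriction is stronger than $r \leq |x|/A$ and the preceding inequality applies. The restriction rearranges to $|x| \geq (Ar)^{1/(1+\gamma_1)}$, which yields
\begin{equation*}
\frac{r}{|x|} \leq \frac{r}{(Ar)^{1/(1+\gamma_1)}} = A^{-1/(1+\gamma_1)}\, r^{\gamma_1/(1+\gamma_1)}.
\end{equation*}
Plugging this into the displayed estimate and setting $\beta_2 := \beta\gamma_1/(1+\gamma_1)$ produces the desired bound $\theta_\Sigma^P(x,r) \leq C r^{\beta_2}$.

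The statement offers no genuine analytic obstacle, as all the substantive work (the classification of tangent measures, the moment identification of Section~\ref{Control of the Moments}, and the quantitative parametrization of Theorem~\ref{DKTprop}) has already been completed. The only care required is in the parameter bookkeeping: the constants must be selected in the order $\delta \to (A,\epsilon,\eta) \to r_0 \to r_1'(x)$ so that each invocation is valid, and one must verify that the final constant depends only on $C_0$ and $\alpha$ (not on $x$) so that the conclusion is uniform across $x \in \Sigma\cap B(0,2r_0/3)\setminus\{0\}$ in the regime $r \leq |x|^{1+\gamma_1}/A$.
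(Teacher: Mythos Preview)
Your proposal is correct and follows essentially the same approach as the paper: invoke Corollary~\ref{FlatSides} to obtain a uniform baseline flatness $\theta_\Sigma^P(x,r)<\delta$ for $r\le |x|/A$, feed this into Theorem~\ref{DKTprop} with $r_1'=|x|/A$ to get the decay $\theta_\Sigma^P(x,r)\le C(r/|x|)^\beta$, and then trade the $|x|$-dependence for pure $r$-dependence by restricting to $r\le |x|^{1+\gamma_1}/A$ and setting $\beta_2=\beta\gamma_1/(1+\gamma_1)$. Your explicit attention to the order of parameter selection and to the $x$-independence of the constants is, if anything, more careful than the paper's presentation.
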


\section{Parametrization}\label{Parametrization}

In this section, we use the geometric information we have gathered to construct a $C^{1,\beta}$ parametrization of a neighborhood of 0 by a KP cone. We work toward the Theorem \ref{parametrization}, which when paired with Lemmas \ref{singularpart} and \ref{flatparts}, proves Theorem \ref{theorem}.

\begin{theorem}\label{parametrization}
Let $\Sigma\subseteq\RR^4$ be a closed set containing $0$,  $0<\gamma<\beta_1$, and $0<\beta_2$. There exists $\beta = \beta(\gamma,\beta_1,\beta_2)$ with the following property. Let $\cC$ be a KP cone cnetered at 0, and assume the following estimates on $\Sigma$:
\begin{enumerate}
\item[(E0)] for $x\in B(0,r_0)\cap \Sigma$ and $0<r\sleq 2 r_0$, $\vartheta^C_\Sigma(x,r)<\epsilon$,
\item[(E1)] for $0<r\sleq 2 r_0$, $\D{0,r}\Sigma\cC\sleq \min(\sigma, C_1 r^{\beta_1}),$
\item[(E2)]  for $x\in \Sigma\cap B(0,r_0)\setminus\{0\}$, $0<r\sleq 16 \frac{|x|^{1+\gamma}}{A}$, $\theta_\Sigma^P(x,r) \sleq\min(\delta, C_2 r^{\beta_2})$.
\end{enumerate}
For $A>1$ large enough and $\sigma, \delta, r_0>0$ small enough, we have that $\Sigma$ admits a $C^{1,\beta}$ parametrization by $\cC$. That is, there exist neighborhoods $U$ of $0$ and $U'$ of $0$ and a diffeomorphism $\vphi\in C^{1,\beta}(U\to U')$ such that $\vphi(\cC\cap U) = \Sigma\cap U'$. Further, $\vphi$ has the property that $\vphi(0) = 0$ and $D_0\vphi = \Id$, and $U$ has the property that $U\cap \cC \supseteq B(0,r_0)\cap \cC$.
\end{theorem}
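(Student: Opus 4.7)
The plan is to construct $\vphi$ in a tubular neighborhood of $\cC\setminus\{0\}$ and then extend by continuity to $0$. First I will build a parametrization $\vphi_0:\cC\cap (B(0,r_0)\setminus\{0\})\to\Sigma\setminus\{0\}$ of the smooth part of $\Sigma$ by the smooth part of $\cC$, then extend $\vphi_0$ to an ambient map via the tubular-neighborhood formula $\vphi(y+v) = \vphi_0(y) + v$ for $v$ in the $1$-dimensional normal line $N_y\cC$ to $\cC$ at $y$, and finally set $\vphi(0) = 0$.

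For the construction of $\vphi_0$, fix $x\in\cC$ with $0<|x|\leq r_0$ and work on the ball $B(x,|x|/A)$. On this ball $\cC$ is nearly flat with deviation $O(1/A)$ by Lemma \ref{KPflat1}, so (E2) combined with the quantitative $C^{1,\beta}$-Reifenberg parametrization of Theorem \ref{DKTprop} (and its $C^{1,\beta}$-strengthening implicit in Theorem \ref{DKThaod}) produces, after rescaling to unit scale, a local $C^{1,\beta'}$ chart $\tau_x$ from a disk in $T_x\cC$ onto a piece of $\Sigma$, with $\beta' = \beta'(\beta_2)$ and uniform $C^{1,\beta'}$-norm bounds. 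I define $\vphi_0(x)$ to be the unique point of $\Sigma$ of the form $x+t\,\nu_x$ with $|t|$ small, where $\nu_x$ is a unit normal to $T_x\cC$; existence is furnished by Theorem \ref{Top} (its hypotheses being supplied at scale $|x|$ by (E0) and (E1)) and uniqueness by (E2), which realizes $\Sigma$ as the graph of a small H\"older function over $T_x\cC$ inside $B(x,|x|/A)$. Consistency of this definition across overlapping base points follows from the uniqueness, and the $C^{1,\beta'}$ bounds on $\tau_x$ then yield locally uniform H\"older control on $D\vphi_0$ on each annulus $\{|x|\sim\rho\}$.

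The main step, and the principal obstacle, is to verify that the ambient extension is $C^{1,\beta}$ across the singular point. The formula $\vphi(y+v) = \vphi_0(y) + v$ is well-defined on a tube of radius $\sim|y|$ about $\cC\setminus\{0\}$, and a scale-invariant rewrite of (E1) at scale $|x|$ yields $|\vphi_0(x)-x|\leq C|x|^{1+\beta_1}$, so the extension $\vphi(0)=0$ is continuous and already forces $D_0\vphi = \Id$. For the H\"older regularity of $D\vphi$ across $0$, I decompose $D\vphi(y+v)-\Id$ into tangential and normal components relative to $T_y\cC$: the normal component is the identity, while the tangential component is $D\vphi_0(y)|_{T_y\cC}-\Id$, controlled by the $C^{1,\beta'}$ bound on $\tau_x$ together with the angle between $T_{\vphi_0(y)}\Sigma$ and $T_y\cC$ (estimated via Lemma \ref{planes} and the H\"older flatness from (E2)). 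The main obstacle will be the bookkeeping: one must verify that the rate from (E1) controlling how the tangent planes of $\Sigma$ near $0$ approach those of $\cC$, combined with the $C^{1,\beta'}$ rate coming from (E2), tempered by the local flatness scale $|x|^{1+\gamma}$, produces a single exponent $\beta = \beta(\gamma,\beta_1,\beta_2)>0$ that is uniform across the dyadic annuli $\{|x|\sim 2^{-k}r_0\}$ and comes with a finite global H\"older constant.
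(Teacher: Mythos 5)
Your route differs in a substantive way from the paper's. The paper does not build local $C^{1,\beta}$ charts away from $0$ at all: it defines $\vphi$ directly as the inverse of the cross-sectional projection $\pi$ (moving along $\eta_a$, the horizontal normal to the circle $\cC_{a_4}$, not the cone normal $\nu_a$), then writes down explicit affine maps $P_a(x)=\vphi(a)+M_a(x-a)$ whose linear parts $M_a$ are prescribed on the radial, normal, and $\theta$ directions, and feeds these into the $C^{1,\beta}$ Whitney Extension Theorem after verifying the compatibility estimates (Lemmas~\ref{Mnorm1}, \ref{Mnorm2}, \ref{poly}). That one application of Whitney produces the extension across the singular point in a single stroke; there is no intermediate step in which $\Sigma\setminus\{0\}$ is shown to be a $C^{1,\beta}$ manifold and then glued at $0$. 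So your tubular-coordinate construction is a genuinely different, and at first glance more geometric, strategy.

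That said, as written your proposal has two gaps that together carry most of the load. First, Theorem~\ref{DKTprop} and Theorem~\ref{DKThaod} are statements about H\"older asymptotically optimally doubling \emph{measures} $\mu$; Theorem~\ref{parametrization} has no measure in its hypotheses, only the geometric decay (E0)--(E2) on a closed set. So the citations do not supply the local $C^{1,\beta'}$ charts $\tau_x$ you use. You would need a purely set-theoretic ``$C^{1,\beta}$-Reifenberg'' theorem (true, but not stated anywhere in this paper, and not the same as Theorem~\ref{DKTdisk}, which only gives bi-H\"older). Moreover, even granting such charts, you still have to show that they coincide with your pointwise definition of $\vphi_0(x)$ as the intersection $\Sigma\cap(x+\RR\nu_x)$ and that their $C^{1,\beta'}$-norms are uniform after rescaling by $|x|$ on each dyadic annulus; this is essentially the content of the paper's Lemmas~\ref{vectors}, \ref{phi}, \ref{Mnorm1}--\ref{poly}, which you are implicitly absorbing into ``uniform $C^{1,\beta'}$-norm bounds.''

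Second, the step you flag as ``the main obstacle'' --- that the tubular extension $\vphi(y+v)=\vphi_0(y)+v$ is $C^{1,\beta}$ across $0$ --- is precisely where the actual difficulty lies, and your sketch omits a specific issue: $D\vphi$ at a point $z=y+v$ with $v\neq 0$ is \emph{not} simply $D\vphi_0(y)|_{T_y\cC}\oplus\Id_{N_y\cC}$. Writing $y(z)$ and $v(z)$ for the tubular-coordinate decomposition, $D_z\vphi = D_{y(z)}\vphi_0\circ D_z y + D_z v$, and $D_z y$, $D_z v$ involve the second fundamental form of $\cC$ at $y$, which scales like $1/|y|$. That the resulting correction is of order $|v|/|y|\lesssim|y|^{\gamma}$ (because your tube has width $\lesssim|y|^{1+\gamma}$) is exactly the estimate one must make, and it needs to be done with enough care to produce a single exponent $\beta(\gamma,\beta_1,\beta_2)$ with a \emph{finite} H\"older constant across all annuli. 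The paper avoids this entirely: Whitney extension only requires the compatibility estimates $\|M_a-M_b\|\lesssim|a-b|^{\beta}$ and $|P_b(b)-P_a(b)|\lesssim|a-b|^{1+\beta}$, both proved by hand, with no need to differentiate a nearest-point projection onto $\cC$. So while your approach is a reasonable alternative outline, essentially all of the paper's technical work would need to be redone to fill it in, and in a couple of places the work would be strictly harder.
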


 Let $\Sigma\subseteq \RR^4$ be a closed set containing 0. We fix exponents $0<\gamma<\beta_1$, and $0<\beta_2$. The parameters $\epsilon, \sigma, \delta, 1/A$ and $r_0$ will be chosen small enough throughout this section. Let $(x_1, x_2, x_3, x_4)$ be orthonormal coordinates centered at the origin and $\cC = \{x_4^2 = x_1^2 + x_2^2 + x_3^2\}$. We assume the following estimates on $\Sigma$.

For $x\in\Sigma\cap B(0,r_0), \setminus \{0\}$ and $0<r\sleq {|x|^{1+\gamma}}/{A}$, let $P(x,r)$ be a plane such that
\begin{equation}\label{P(x,r)}
\D{x,r}{P(x,r)} \Sigma\sleq \min(\delta, C r^{\beta_2} )\quad\mbox{ and }x\in P(x,r).
\end{equation}
For each cross section $\cC_h = \cC\cap\{x_4 = h\}$, we note that $\cC_h$ is the 2 sphere of radius $h$ centered at $(0,h)$ inside of the plane $\{x_4 = h\}$. This means that nearest point projection in the cross section is defined for all $x\neq (0,h)$. Let $\tau(x): \RR^4\to \RR^3$ be orthogonal projection onto the first 3 coordinates. We thus define $\pi:\RR^4\setminus \{x: \tau(x) = 0\}\to \cC$ to be nearest point projection in the cross section.  One can check that
\begin{equation}\label{nearestpoint}
\pi(x) = \left(\frac {|x_4|}{|\tau(x)|}\tau(x), x_4\right).
\end{equation}
We define the vector field $\eta$ on $\RR^4 \setminus \RR x_4$  by 
\leqn{eta_a}
\eta_x = \frac1{|\tau(x)|}\left({-\tau(x)},0\right).
\endeqn
Note that for $x = a\in \cC$, this is the vector normal to the cross section $\cC_{a_4}$ at $a$ viewed in the plane $\{x_4 = a_4\}$. Hence
\begin{equation}
\pi(x)-x = \pm d(x,\cC_{x_4}) \eta_{\pi(x)}
\end{equation}
depending on whether $x$ is inside or outside of the sphere $\cC_{a_4}$. We also note that $\pi(x)$ is the nearest point in $\cC$ to $x$ along the line based at $x$ in the direction $\eta_x$. Thus, for $a\in \RR^4\setminus \RR x_4$, we define the half line
\begin{equation}\label{ell_a}
\ell_a = \{b\in \RR^4: b-a = t\eta_a \mbox{ for some }t\in \RR \mbox{ and } \tau(a)\cdot\tau(b)\sgeq0\}.
\end{equation}
Note that the half lines $\ell_a$ are the integral curves of $\eta$ and that for any $x\in \RR^4-\RR x_4$, $\pi(x)$ is the unique intersection of the integral curve containing $x$ with $\cC$. 
 For $a\in\cC$, note that $\eta_a$ is the normal vector to the cross section $\cC_{a_4}$, as opposed to the normal vector to the cone $\cC$, which is 
\begin{equation}\label{nu_a}
\nu_a = \frac{1}{|a|}\left(-\tau(a), a_4\right).
\end{equation}
Further, we note the following.

\begin{lemma}\label{pidistnaught}
Recall hypotheses (E0)-(E2).
For $x\in \RR^4\setminus \RR x_4$,
$|x-\pi(x)| = \sec(\pi/4)\, d(x,\cC).$
\end{lemma}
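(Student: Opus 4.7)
The plan is to reduce by rotational symmetry to a two-dimensional picture and then invoke elementary planar geometry; the identity really just says that the outward normal to $\cC$ makes angle $\pi/4$ with every horizontal cross-section.

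First I would fix $x \in \RR^4 \setminus \RR x_4$, set $e = \tau(x)/|\tau(x)| \in \Ss^2$, and let $\Pi$ denote the $2$-plane through the origin spanned by $(e,0)$ and $(0,0,0,1)$. Any rotation of the first three coordinates fixing $e$ also fixes $x$ and preserves $\cC$, hence preserves the distance function $y \mapsto |x - y|$ restricted to $\cC$. Taking fixed points of the stabilizer of $e$ in $SO(3)$, the set of nearest points of $\cC$ to $x$ is contained in $\cC \cap \Pi$; formula (\ref{nearestpoint}) also shows $\pi(x) \in \Pi$. Thus both $d(x,\cC)$ and $|x-\pi(x)|$ can be computed entirely inside $\Pi$.

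Next I would introduce linear coordinates $(s,t)$ on $\Pi$ associated to the ordered basis $((e,0),(0,0,0,1))$, in which $\cC \cap \Pi$ becomes the union of the two lines $L_\pm = \{t = \pm s\}$ (each meeting the $t$-axis at angle $\pi/4$) and $x$ becomes $(r,h)$ with $r = |\tau(x)| > 0$ and $h = x_4$. A direct computation from (\ref{nearestpoint}) gives $\pi(x) = (|h|, h)$, so $|x - \pi(x)| = \bigl|r - |h|\bigr|$. A short triangle calculation then shows that the perpendicular foot from $x$ onto $L_{\mathrm{sgn}(h)}$ realizes $d(x, \cC \cap \Pi) = \bigl|r - |h|\bigr|/\sqrt{2}$ (while the foot onto $L_{-\mathrm{sgn}(h)}$ lies at distance $(r + |h|)/\sqrt{2} \geq \bigl|r - |h|\bigr|/\sqrt{2}$), yielding $|x - \pi(x)| = \sqrt{2}\, d(x,\cC) = \sec(\pi/4)\, d(x,\cC)$.

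The only mild subtlety is justifying that $d(x,\cC)$ is attained inside $\Pi$ (uniqueness of the minimizer can fail when $h = 0$, but even then both minimizers lie in $\Pi$). This is immediate from the symmetry argument above; aside from that reduction, the proof is a routine calculation in the $(s,t)$-plane, with no serious obstacle.
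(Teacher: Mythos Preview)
Your proof is correct and follows essentially the same approach as the paper: both reduce to the two-dimensional plane $\Pi$ through the $x_4$-axis and $x$, and exploit that the trace of $\cC$ there is a pair of lines meeting the horizontal at angle $\pi/4$. The paper's version is terser, phrasing the computation as ``the angle between $\nu_{\pi(x)}$ and $\eta_{\pi(x)}$ is $\pi/4$, and since $\cC$ is a cone the $\eta$-direction distance is $\sec(\pi/4)$ times the normal distance,'' whereas you carry out the symmetry reduction and the planar distance calculation explicitly.
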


\begin{proof}
First, we note that because $\cC$ is a smooth manifold away from $0$ and $0$ is not the closest point to $x$, the distance $d(x,\cC)$ is the length of the vector based at $x$ in the $\nu_{\pi(x)}$ direction ending on $\cC$. Second, we note that the angle between $\nu_{\pi(x)}$ and $\eta_{\pi(x)}$ is $\frac\pi4$. Thus because $\cC$ is a cone, the vector based at $x$ pointing in the $\eta_x$ ending on $\cC$ has length $\sec(\frac\pi 4) d(x,\cC)$.
\end{proof}

\begin{lemma}\label{pidist}
Recall hypotheses (E0)-(E2).
For $x\in \Sigma\cap B(0,r_0)\setminus \{0\}$, 
\begin{equation}
|\pi(x)-x|\sleq \min(C |x|^{1 + \beta_1}, 2 \sigma |x|).
\end{equation}
\end{lemma}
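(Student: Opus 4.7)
\textbf{Proof plan for Lemma \ref{pidist}.} The strategy is to combine hypothesis (E1) with Lemma \ref{pidistnaught}, with only one small wrinkle: we first need to rule out the possibility that $x$ lies on the singular axis $\RR x_4$ (where $\pi$ is undefined).

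The first step is to observe that for $\sigma$ chosen sufficiently small, any $x\in \Sigma\cap B(0,r_0)\setminus\{0\}$ satisfies $\tau(x)\neq 0$. Indeed, a direct minimization shows that for any $y = (0,0,0,t)$ with $t\neq 0$, the distance $d(y,\cC) = |t|/\sqrt 2 = |y|/\sqrt 2$. On the other hand, since $x\in\Sigma\cap B(0,|x|)$ and $0\in\cC\cap B(0,|x|)$, hypothesis (E1) applied at scale $r=|x|\sleq r_0$ yields
\begin{equation*}
d(x,\cC\cap B(0,|x|))\sleq |x|\,\hd{0,|x|}\Sigma\cC\sleq |x|\min(\sigma,C_1|x|^{\beta_1}),
\end{equation*}
so in particular $d(x,\cC)\sleq \sigma|x|$. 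If $\tau(x) = 0$, this would force $|x|/\sqrt{2}\sleq \sigma|x|$, a contradiction once $\sigma<1/\sqrt{2}$.

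Given $\tau(x)\neq 0$, the point $\pi(x)$ is defined via (\ref{nearestpoint}), and Lemma \ref{pidistnaught} gives $|x-\pi(x)| = \sec(\pi/4)\, d(x,\cC) = \sqrt{2}\,d(x,\cC)$. Combining with the displayed bound above,
\begin{equation*}
|x-\pi(x)|\sleq \sqrt 2\,|x|\min(\sigma,C_1|x|^{\beta_1})\sleq \min(2\sigma|x|,\,C|x|^{1+\beta_1}),
\end{equation*}
where $C = \sqrt 2\, C_1$, which is the required estimate.

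There is essentially no obstacle here; the lemma is a direct consequence of (E1) and Lemma \ref{pidistnaught}. The only point requiring care is the (mild) smallness requirement on $\sigma$ needed to ensure $\pi(x)$ is well-defined, and this fits squarely within the standing convention that $\sigma$ is chosen small enough in Section \ref{Parametrization}.
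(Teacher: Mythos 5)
Your proof is correct and takes essentially the same route as the paper: apply (E1) at a scale comparable to $|x|$ to bound $d(x,\cC)$, then invoke Lemma \ref{pidistnaught} to convert this into a bound on $|\pi(x)-x|$. (The paper uses $r=2|x|$ where you use $r=|x|$; either works.) The one addition you make — observing that $d(x,\cC)\sleq\sigma|x|$ together with $d((0,0,0,t),\cC)=|t|/\sqrt2$ forces $\tau(x)\neq0$ once $\sigma<1/\sqrt2$, so that $\pi(x)$ is genuinely defined — is a worthwhile rigor check that the paper's proof leaves implicit.
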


\begin{proof}
Let $r = 2|x|$. Applying Lemma \ref{pidistnaught} and assumption (E1), we get that
\begin{equation}
d(x,\cC)\sleq \min(C|x|^{1+\beta_1}, 2\sigma |x|).
\end{equation}
We then apply Lemma \ref{pidistnaught} and conclude.
\end{proof}

\begin{lemma}\label{Angle1} 
Recall hypotheses (E0)-(E2) and (\ref{P(x,r)}). For $r_0$ small enough, depending on $C_2$, $\beta_2$ and $\gamma$, we have the following. Let $x_1, x_2\in \Sigma\cap B(x,r_0), 0<t_1, t_2<r_0$ such that
\begin{equation}\label{Angle1.1}
\frac14 t_1\sleq\frac12 t_2\sleq t_1 \sleq \frac{16}A |x_1|^{1+\gamma}\quad\mbox{ and }\quad|x_1-x_2|\sleq \frac{t_1}{16}.
\end{equation}
Then $\meang(P(x_1, t_1), P(x_2,t_2)) \sleq C t_1^{\beta_2}.$
\end{lemma}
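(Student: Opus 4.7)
The plan is to convert the angle between $P_1 := P(x_1,t_1)$ and $P_2 := P(x_2,t_2)$ into a relative Hausdorff distance on a common ball where both planes approximate $\Sigma$, then use $\Sigma$ as a ``bridge'' via the triangle inequality. By Lemma \ref{planes}(1) it suffices to bound $\hd{y_1, t_1/4}{P_1}{P_2}$ for some well-chosen $y_1 \in P_1$; I will take $y_1$ to be the orthogonal projection of $x_2$ onto $P_1$. Since $x_2 \in \Sigma \cap B(x_1,t_1)$, hypothesis (E2) at $x_1$ gives $|y_1 - x_2| \leq t_1\min(\delta, C t_1^{\beta_2})$, which is tiny. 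One then checks using $|x_1-x_2|\leq t_1/16$, $t_2 \geq t_1/2$, and the smallness of $\delta$ that $B(y_1, t_1/4) \subseteq B(x_1, t_1) \cap B(x_2, t_2)$.

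With these inclusions in place, I apply the triangle inequality for the asymmetric distance $d^{y,r}$ through the set $\Sigma$:
\begin{equation*}
\hd{y_1, t_1/4}{P_1}{P_2} \;\leq\; \D{y_1, t_1/4}{P_1}{\Sigma} + \D{y_1, t_1/4}{\Sigma}{P_2}.
\end{equation*}
For the first term, $P_1$ is a cone based at $y_1 \in P_1$, so Lemma \ref{ConeG2} (small ball at $y_1$, large ball $B(x_1, t_1)$) together with (E2) gives $\D{y_1, t_1/4}{P_1}{\Sigma} \leq 8 \min(\delta, C t_1^{\beta_2})$. For the second term, $P_2$ is a cone based at $x_2$, and Lemma \ref{ConeG2} applied at $x_2$ (small radius $s' = t_1/4 + |y_1 - x_2|$, large radius $t_2 \leq 2t_1$) yields $\D{x_2, s'}{P_2}{\Sigma} \leq C \min(\delta, C t_2^{\beta_2})$; this estimate is then transferred to $B(y_1, t_1/4) \subseteq B(x_2, s')$ using Lemma \ref{ConeG1} (since $P_2$, being an affine plane, is a cone at each of its points) to get the correct ``restriction-to-the-ball'' quantity. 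Combining both bounds and inserting into Lemma \ref{planes}(1) gives $\meang(P_1, P_2) \leq C t_1^{\beta_2}$, with the requirement on $r_0$ serving to make $Cr_0^{\beta_2} \leq \delta$ so that the $\min$'s reduce to $Ct_1^{\beta_2}$.

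The main technical obstacle is the asymmetry built into Lemma \ref{ConeG2}: the cone must be based at the center of the smaller ball. Since in general $x_2 \notin P_1$ and $y_1 \notin P_2$, neither plane is automatically a cone at the center we would naively pick. The resolution is to pick the center to be on one of the planes (here, $y_1 \in P_1$) and then, to handle the other plane, to exploit that $P_2$ is a \emph{plane} (so a cone at every one of its points) combined with Lemma \ref{ConeG1}, using the fact that $x_2 \in \Sigma$ is approximated by $P_1$ to guarantee $y_1$ and $x_2$ are close enough that the extra shift costs only higher-order corrections absorbed into $C t_1^{\beta_2}$.
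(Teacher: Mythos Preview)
Your approach is correct and is essentially the same as the paper's: both use $\Sigma$ as a bridge between $P_1$ and $P_2$ on a common small ball, get an $\hda$-type bound, and then invoke Lemma~\ref{ConeG1} (with the plane through the center playing the role of the cone) to upgrade to $\hd$, before applying Lemma~\ref{planes}(1).

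Two remarks on presentation. First, your introduction of $y_1$ as the projection of $x_2$ onto $P_1$ buys nothing: since $x_1\in P_1$ already, the paper simply centers at $x_1$ with radius $t_1/8$ and runs the same chain $P_1\to\Sigma\to P_2$ pointwise. Second, your ``transfer'' step for the second term is slightly imprecise: you invoke Lemma~\ref{ConeG1} ``since $P_2$ is a cone at each of its points,'' but Lemma~\ref{ConeG1} requires the cone to be based at the \emph{center} of the ball, and in general $y_1\notin P_2$. The clean fix (and what the paper does) is to avoid splitting via the triangle inequality for $D^{y_1,t_1/4}$ altogether: argue pointwise to obtain $\hda{x_1,t_1/8}{P_1}{P_2}\sleq C t_1^{\beta_2}$ directly, and then apply the second inequality in Lemma~\ref{ConeG1} with $P_1$ (which \emph{is} a cone at the center $x_1$) to get $\hd{x_1,t_1/8}{P_1}{P_2}\sleq C t_1^{\beta_2}$. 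Your route can be repaired the same way (use the second part of Lemma~\ref{ConeG1} for $P_1$ at $y_1$ rather than trying to use it for $P_2$), but it is more circuitous than necessary.
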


\begin{proof}

By Lemma \ref{planes}, it will suffice to find a radius $r$ such that
\begin{equation}\label{angle.0}
\hd{x_1,r}{P(x_1,t_1)}{P(x_2, t_2)}\sleq C t_1^{\beta_2}.
\end{equation}
We take $r$ to be $t_1/8$. Let $x\in P_1\cap B(x_1, t_1/8)$. By (\ref{P(x,r)}) there exists $y\in \Sigma\cap B(x_1,t_1)$ with $|x-y|\sleq Ct_1^{1+\beta_2}$. Further, we may take $|x-y|\sleq \delta t_1\sleq t_1/8$. We estimate that
\begin{equation}
|y- x_2|\sleq |y-x| + |x-x_1| + |x_1-x_2|\sleq 5\frac{ t_1}{16} < \frac{t_1}2\sleq t_2.
\end{equation}
Hence, $y\in B(x_2,t_2)$. So by assumption there exists $z\in P(x_2,t_2)\cap B(x_2,t_2)$ such that
$$|y-z|\sleq C t_2^{1+\beta_2}\sleq C t_1^{1+\beta_2}.$$
Thus,
\begin{equation}\label{angle.1}
|x-z|\sleq |x-y|+|y-z| \sleq C t_1^{1+\beta_2}.
\end{equation}
Because for each $x\in P(x_1,t_1)\cap B(x_1, t_1/8)$ there exists $z\in P(x_2, t_2)$ such that (\ref{angle.1}) holds, we have that
\begin{equation}
\hda{x_1,\frac{ t_1}{8}}{P(x_1,t_1)}{P(x_2,t_2)}\sleq C t_1^{\beta_2}.
\end{equation}
Because $P_1$ is a cone based at $x_1,$ by Lemma \ref{ConeG1} we obtain that (\ref{angle.0}) holds for $r=t_1/8$, and we are done.
\end{proof}

\begin{corollary}
Recall hypotheses (E0)-(E2). For $x,y\in B(0, r_0)$, the following hold:
\begin{enumerate}[(1)]\label{angle}
\item $\meang(P(x,r),P(x,r'))\sleq Cr^{\beta_2}$ for $ 0<r'\sleq r \sleq 16{|x|^{1+\gamma}}/A$
\item $P(x) = \lim_{r\downarrow 0} P(x,r)$ exists
\item\label{xrangle} $\meang\left(P\left(x,r\right), P(x)\right) \sleq C r^{\beta_2}\quad\mbox{for}\quad 0< r\sleq {|x|^{1+\gamma}}/{A}$
\item\label{xyangle}$
\meang\left(P(x),P(y)\right) \sleq C |x-y|^{\beta_2} \quad\mbox{when }  |x-y|\sleq {|x|^{1+\gamma}}/{A}$
\item\label{tangentdistance} $d(y,P(x))\sleq C|x-y|^{1+\beta_2}\quad\mbox{when }|x-y|\sleq {|x|^{1+\gamma}}/{A}$
\end{enumerate}
\end{corollary}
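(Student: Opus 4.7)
The plan is to derive each of (1)--(5) from Lemma~\ref{Angle1} and (for (5)) hypothesis (E2), via dyadic telescoping and the subadditivity of angles (\ref{subadditivityofangles}).

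For (1), I fix $x$ and $0 < r' \leq r \leq 16|x|^{1+\gamma}/A$, and let $N \geq 0$ satisfy $r/2^{N+1} < r' \leq r/2^N$. Applying Lemma~\ref{Angle1} with $x_1 = x_2 = x$ to the consecutive dyadic pairs $(t_1,t_2) = (r/2^k, r/2^{k+1})$ for $k = 0, \ldots, N-1$, and once more at $(r/2^N, r')$, I obtain $\meang(P(x, r/2^k), P(x, r/2^{k+1})) \leq C(r/2^k)^{\beta_2}$ at each step. The geometric sum $\sum_{k \geq 0}(1/2^{\beta_2})^k$ then yields $\meang(P(x, r), P(x, r')) \leq C r^{\beta_2}$. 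This Cauchy property is precisely what is needed for (2) (via compactness of the Grassmannian), and (3) follows immediately from (1) by sending $r' \downarrow 0$ and using continuity of $\meang$.

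For (4), I apply Lemma~\ref{Angle1} with $x_1 = x$, $x_2 = y$, and $t_1 = t_2 = t := 16|x-y|$. The hypothesis $|x-y| \leq |x|^{1+\gamma}/A$ makes the required inequalities $|x-y| = t/16$ and $t \leq 16|x|^{1+\gamma}/A$ immediate, so the lemma gives $\meang(P(x,t), P(y,t)) \leq C t^{\beta_2} \leq C|x-y|^{\beta_2}$. The triangle inequality for angles, combined with (3) applied at $x$ and at $y$, then finishes (4). Here I use that $r_0$ is small enough that $|x-y| \leq |x|^{1+\gamma}/A$ forces $|y| \geq |x|/2$, so (3) at $y$ applies at a scale comparable to that at $x$.

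For (5), I set $r := 2|x-y|$ so that $y \in B(x, r)$ and $r$ is within the range of (E2). Hypothesis (E2) produces $y' \in P(x,r) \cap B(x,r)$ with $|y - y'| \leq r \cdot C r^{\beta_2} = C|x-y|^{1+\beta_2}$. Since $x \in P(x,r) \cap P(x)$, estimate~(\ref{distlessthanangle}) (with $y = x$ in its notation) together with (3) gives $d(y', P(x)) \leq 2r \meang(P(x,r), P(x)) \leq C r^{1+\beta_2}$, and the triangle inequality closes the proof. The only delicate point throughout is bookkeeping of constants: the auxiliary scales $16|x-y|$ and $2|x-y|$ must simultaneously lie in the ranges of (3) and (E2) for both $x$ and $y$. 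This is guaranteed by taking $r_0$ small enough that $|y| \asymp |x|$ throughout, and enlarging $A$ in the statement by a harmless constant factor if needed.
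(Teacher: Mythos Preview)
Your argument is correct and follows essentially the same path as the paper: dyadic telescoping via Lemma~\ref{Angle1} for (1)--(3), applying Lemma~\ref{Angle1} at scale $16|x-y|$ together with (3) at both endpoints for (4), and combining (E2) at scale $2|x-y|$ with (3) for (5). Your explicit remarks on ensuring $|y|\asymp|x|$ and on absorbing the factor of~$16$ into~$A$ address bookkeeping that the paper leaves implicit.
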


\begin{proof}

Let $B\in[1/2,1)$ and write $r' = B r/{2^j}$ for $j\in \NN$. By Lemma \ref{planes}, Lemma \ref{Angle1}, and subadditivity of angles we have that
\begin{equation}\label{angle.3}
\begin{split}
\meang(P(x,r), P(x,r'))&\sleq \sum_{i=0}^{j-1} \meang\left({P\left(x,{r}/{2^i}\right), P\left(x,{r}/{2^{i+1}}\right)}\right) + \meang\left(P\left(x,{r}/{2^j}\right), P\left(x,{Br}/{2^j}\right)\right)\\
&\sleq C \sum_{i=0}^{j} \left({r}/{2^i}\right)^{\beta_2} \sleq C r^{\beta_2}\sum_{i=0}^\infty
\left({1}/{2^{\beta_2}}\right)^{i} = C r^{\beta_2},
\end{split}
\end{equation}
establishing (1). In particular, it follows that the one parameter family $P(x,r)$ is Cauchy as $r\downarrow 0$, establishing (2). With the help of (1) and subadditivity of angles, we compute
\begin{equation}\label{angle.4}
\begin{split}
\meang(P(x,r),P(x))&\sleq \lim_{r'\downarrow 0}\meang(P(x,r), P(x,r')) + \meang(P(x,r'),P(x))\\
&\sleq C r^{\beta_2} + \lim_{r'\downarrow 0} \meang(P(x,r'),P(x)) = C r^{\beta_2} ,
\end{split}
\end{equation}
establishing (3).

We now prove (4). Let $x\in B(0,r_0)\cap\Sigma\setminus\{0\}$ and $|x-y|\sleq {|x|^{1+\gamma}}/A$. Let $r = 16|x-y|$. Then we apply (3), Lemma \ref{Angle1}, and subadditivity of angles to compute
\begin{equation}
\begin{split}
\meang(P(x),P(y))&\sleq \meang(P(x),P(x,r)) + \meang(P(x,r),P(y,r)) + \meang(P(y,r),P(y))\\
&\sleq C r ^{\beta_2}= C |x-y| ^{\beta_2}.
\end{split}
\end{equation}
Finally, we prove (5). Under the same assumptions that $x\in B(0,r_0)\cap\Sigma\setminus\{0\}$ and $|x-y|\sleq {|x|^{1+\gamma}}/A$, we apply (\ref{P(x,r)}) and (3) to get that
\begin{equation}
\begin{split}
d(y,P(x))&\sleq d(y,P(x,2|x-y|)) + d(P(x)\cap B(x, 2|x-y|), P(x,2|x-y|)\cap B(x,2|x-y|))\\
&\sleq d(y,P(x,2|x-y|)) + 2|x-y|\D{x,2|x-y|}{P(x)}{ P(x,2|x-y|)}\\
&\sleq C|x-y|^{1+{\beta_2}}.
\end{split}
\end{equation}

\end{proof}

\begin{lemma}\label{ConeToPlane}
Recall hypotheses (E0)-(E2). Let $x\in \Sigma\cap B(0,r_0)$,  $R = 8 |x|^{1+\gamma}/A$, and $0<r\sleq 2R$. Then
\begin{equation}\label{ConeToPlane.1}
\hd{x,R}\cC{P(x,r)}\sleq C |x|^{\min(\beta_1-\gamma, \beta_2(1+\gamma))}.
\end{equation}
\end{lemma}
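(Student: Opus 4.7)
The plan is to leverage estimates (E1) and (E2) together with angular control: (E1) will let us pass from $\cC$ to $\Sigma$ at the scale $|x|$, while (E2) at scale $2R$ will let us pass from $\Sigma$ to the plane $P(x,2R)$; finally, Corollary \ref{angle}(1) will let us replace $P(x,2R)$ by $P(x,r)$ up to a small angular deviation. Since $P(x,r)$ is a plane through $x$ (hence a cone based at $x$), and since (E1) guarantees $\cC\cap B(x,R)\neq\emptyset$ for $r_0$ small (because $d(x,\cC)\sleq 2|x|\cdot C_1(2|x|)^{\beta_1}\ll R$), Lemma \ref{ConeG1} will reduce the claim to bounding $\hda{x,R}\cC{P(x,r)}$; that is, for every $c\in\cC\cap B(x,R)$ I only need to produce a point of $P(x,r)$ (with no ball constraint) within $R\cdot C|x|^{\min(\beta_1-\gamma,\beta_2(1+\gamma))}$ of $c$.

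Fix such a $c$. Since $|c|\sleq|x|+R\sleq 2|x|$ for $r_0$ small, I will invoke (E1) at scale $2|x|$ to produce $y\in\Sigma$ with $|y-c|\sleq C|x|^{1+\beta_1}$; because $\beta_1>\gamma$ and $r_0$ is small, this forces $|y-x|\sleq R+C|x|^{1+\beta_1}\sleq 2R$, so $y\in\Sigma\cap B(x,2R)$. Next, since $2R\sleq 16|x|^{1+\gamma}/A$, (E2) applies at scale $2R$ and gives $p'\in P(x,2R)$ with $|y-p'|\sleq CR^{1+\beta_2}$. Finally, Corollary \ref{angle}(1), applied to the ordered radii $2R\sgeq r$, yields $\meang(P(x,2R),P(x,r))\sleq CR^{\beta_2}$; since both planes contain $x$ and $|p'-x|\sleq 3R$, orthogonal projection of $p'$ onto $P(x,r)$ delivers a point $p\in P(x,r)$ with $|p-p'|\sleq 3R\sin(\meang(P(x,2R),P(x,r)))\sleq CR^{1+\beta_2}$.

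Stringing the triangle inequality through $c,y,p',p$ yields $d(c,P(x,r))\sleq C|x|^{1+\beta_1}+CR^{1+\beta_2}$. Dividing by $R=8|x|^{1+\gamma}/A$ and using $R\sleq C|x|^{1+\gamma}$, this becomes $C|x|^{\beta_1-\gamma}+C|x|^{\beta_2(1+\gamma)}\sleq C|x|^{\min(\beta_1-\gamma,\beta_2(1+\gamma))}$, which is exactly the required bound. The main technical obstacle is the scale mismatch: $\Sigma$ approximates $\cC$ well only at the relatively large scale $\sim|x|$, whereas $P(x,r)$ only approximates $\Sigma$ at the much smaller scale $r\sleq 2R$, so one cannot compare $\cC$ to $P(x,r)$ directly. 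The intermediate plane $P(x,2R)$, together with the H\"older control on the family $P(x,\cdot)$ furnished by Corollary \ref{angle}, provides the crucial bridge across this gap, and the final exponent emerges as the worse of the two contributions (the ``radial'' error $|x|^{\beta_1-\gamma}$ from step one and the ``angular/tangential'' error $|x|^{\beta_2(1+\gamma)}$ from steps two and three).
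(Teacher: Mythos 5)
Your proof is correct and takes essentially the same route as the paper's: use (E1) at scale $\sim|x|$ to pass from $\cC$ to $\Sigma$, use (E2) at scale $2R$ to pass from $\Sigma$ to $P(x,2R)$, and then use the angular H\"older control on $P(x,\cdot)$ from Corollary \ref{angle} to replace $P(x,2R)$ by $P(x,r)$. The only cosmetic difference is bookkeeping: the paper applies Lemma \ref{ConeG1} to the intermediate bound $\hd{x,R}\cC{P(x,2R)}$ and then invokes the $\hd{x,R}$ triangle inequality together with (\ref{distlessthanangle}) to absorb the plane change, whereas you reduce to $\hda{x,R}\cC{P(x,r)}$ at the outset and build one explicit chain $c\to y\to p'\to p$, handling the plane change by direct orthogonal projection; both are valid and yield the same exponent.
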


\begin{proof}
Let $a\in \cC\cap B(x,R).$ Note that $B(x,R)\subseteq B(0, 2|x|)$. Hence by (E1) there exists $y\in \Sigma\cap B(0,2|x|)$ such that
\begin{equation}\label{ConeToPlane.2}
|a-y|\sleq C|x|^{1+\beta_1} = R\left(C|x|^{\beta_1-\gamma}\right).
\end{equation}
Further, we have that
\begin{equation}\label{ConeToPlane.3}
|x-y|\sleq |x-a| + |a-y| \sleq R + R\left(C|x|^{\beta_1-\gamma}\right)\sleq R\left(1 + C|r_0|^{\beta_1-\gamma}\right)\sleq 2R,
\end{equation}
for $r_0$ small enough.
Hence, $y\in B(x,2R)\cap \Sigma$. Because $2R = 16|x|^{1+\gamma}/A$, (E2) tells us that there exists $p\in P(x,2R)$ such that
\begin{equation}\label{ConeToPlane.5}
|y-p|\sleq 2R \cdot CR^{\beta_2} = R\cdot C|x|^{\beta_2(1+\gamma)}.
\end{equation}
Putting (\ref{ConeToPlane.2}) and (\ref{ConeToPlane.5}) together, we get
\begin{equation}\label{ConeToPlane.6}
|a-p|\sleq |a-y| + |y-p|\sleq CR\left(|x|^{\beta_1-\gamma} + |x|^{\beta_2(1+\gamma)}\right).
\end{equation}
Because for each $a\in B(x,R)$ there exists a $p\in P(x,2R)$ such that (\ref{ConeToPlane.6}) is satisfied, we get that
\begin{equation}\label{ConeToPlane.7}
\hda{x,R}\cC{P(x,2R)}\sleq C|x|^{\min(\beta_1-\gamma,\beta_2(1+\gamma)}.
\end{equation}
Thus, by Lemma \ref{ConeG1}, (\ref{ConeToPlane.7}) tells us that
\begin{equation}\label{ConeToPlane.8}
\hd{x,R}\cC{P(x,2R)}\sleq C|x|^{\min(\beta_1-\gamma,\beta_2(1+\gamma))}.
\end{equation}
Corollary \ref{angle}, (\ref{distlessthanangle}), and (\ref{ConeToPlane.8}) tells us that
\begin{equation}\label{ConeToPlane.9}
\hd{x,R}\cC{P(x,r)}\sleq \hd{x,R}\cC{P(x,2R)} + \hd{x,R}{P(x,2R)}{P(x,r)}\sleq C|x|^{\min(\beta_1-\gamma,\beta_2(1+\gamma))}.
\end{equation}

\end{proof}

We now derive information about the angle $P(x)$ makes with the tangent plane $T_{\pi(x)}\cC$. We define
\begin{equation}\label{beta3}
\beta_3 = \min(\beta_1-\gamma, \beta_2(1+\gamma), \gamma).
\end{equation}

\begin{lemma}\label{tangentangle}
Recall hypotheses (E0)-(E2).  For $x\in \Sigma\cap B(0,r_0)$, $\meang\left(P(x), T_{\pi(x)} \cC\right)\sleq C|x|^{\beta_3}.$
\end{lemma}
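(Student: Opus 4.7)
The strategy is to use subadditivity of angles to decompose the quantity and then convert to a Hausdorff distance via Lemma \ref{planes}(1). Set $R := 8|x|^{1+\gamma}/A$ and $\delta_0 := |x-\pi(x)|$. By Lemma \ref{pidist}, $\delta_0 \leq C|x|^{1+\beta_1}$, and since $\beta_1 > \gamma$ this is much smaller than $R$ for $|x|$ small; this lets me freely pass between $x$-centered and $\pi(x)$-centered balls at scale $R$ with at most a geometric factor of error. I would write
\[
\meang(P(x), T_{\pi(x)}\cC) \;\leq\; \meang(P(x), P(x,R)) + \meang(P(x,R), T_{\pi(x)}\cC)
\]
and treat the two pieces separately.

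The first piece is immediate from Corollary \ref{angle}: by (1) we have $\meang(P(x,R), P(x,r')) \leq CR^{\beta_2}$ for all $r' \leq R$, and letting $r' \downarrow 0$ using (2) gives $\meang(P(x), P(x,R)) \leq CR^{\beta_2} = C|x|^{(1+\gamma)\beta_2}$, which is $\leq C|x|^{\beta_3}$.

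For the second piece I would apply Lemma \ref{planes}(1) with $y = x \in P(x,R)$ (noting $\pi(x) \in T_{\pi(x)}\cC \cap B(x,R)$) to get $\meang(P(x,R), T_{\pi(x)}\cC) \leq (3\pi/2)\,\hd{x,R}{P(x,R)}{T_{\pi(x)}\cC}$, and then split the right side through $\cC$ by the triangle inequality for $d^{x,R}$. The term $\hd{x,R}{P(x,R)}\cC$ can be bounded by $C|x|^{\min(\beta_1 - \gamma,\, \beta_2(1+\gamma))}$: since $P(x,R)$ is a cone based at $x$, Lemma \ref{ConeG1} reduces matters to $\hda{x,R}{P(x,R)}\cC$, which follows from (E2) applied at scale $R$ (a point of $P(x,R)\cap B(x,R)$ is within $CR^{1+\beta_2}$ of $\Sigma$) followed by (E1) applied at scale $2|x|$ (that point of $\Sigma$ is within $C|x|^{1+\beta_1}$ of $\cC$).

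The real obstacle is the remaining term $\hd{x,R}\cC{T_{\pi(x)}\cC}$: Lemma \ref{KPflat1}(2), applied at $\pi(x)$ on scale $9R/8 \leq |\pi(x)|/2$, produces for each $c \in \cC \cap B(x,R) \subseteq \cC \cap B(\pi(x), 9R/8)$ a point $t \in T_{\pi(x)}\cC$ with $|t-c| \leq (9R/8)\,C_0|x|^\gamma$, but $t$ can lie just outside $B(x,R)$ because of the center offset. I would handle this by retracting $t$ along the line segment in the plane $T_{\pi(x)}\cC$ from $t$ to $\pi(x) \in T_{\pi(x)}\cC \cap B(x,R)$: the segment meets $\partial B(x,R)$ at some $t'$, and the elementary estimates $|t'-\pi(x)| \geq R-\delta_0$ and $|t-\pi(x)| \leq |t-c|+|c-x|+\delta_0$ give $|t-t'| \leq CR|x|^\gamma + 2\delta_0 \leq CR|x|^{\min(\gamma,\,\beta_1-\gamma)}$; hence $|c-t'| \leq CR|x|^{\min(\gamma,\,\beta_1-\gamma)}$ with $t' \in T_{\pi(x)}\cC \cap B(x,R)$. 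This yields $\hd{x,R}\cC{T_{\pi(x)}\cC} \leq C|x|^{\min(\gamma,\,\beta_1-\gamma)}$, and combining everything produces the bound $\meang(P(x), T_{\pi(x)}\cC) \leq C|x|^{\beta_3}$ with $\beta_3 = \min(\beta_1-\gamma,\,\beta_2(1+\gamma),\,\gamma)$ as defined in \eqref{beta3}.
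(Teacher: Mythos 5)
Your proof is correct, and reaches the same exponent $\beta_3 = \min(\beta_1-\gamma,\beta_2(1+\gamma),\gamma)$, but the route differs from the paper's in a way worth comparing. You center your final Hausdorff comparison at $x$, apply Lemma \ref{planes}(1) with $P(x,R)$ as the base plane, and separately split off $\meang(P(x),P(x,R))$ via subadditivity; the paper instead centers at $\pi(x)$, applies Lemma \ref{planes}(1) with $T_{\pi(x)}\cC$ as the base plane (legitimate since $\pi(x)\in T_{\pi(x)}\cC$), and works with $P(x)$ directly. The key consequence of centering at $\pi(x)$ is that the paper can use the $\hda$-quantity and invoke Lemma \ref{ConeG1} with $T_{\pi(x)}\cC$ as the cone through $\pi(x)$, so the candidate points $c\in\cC$ automatically land in $B(x,R)$ once $|x-\pi(x)|\sleq R/2$; no boundary retraction is needed. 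Your retraction of $t$ to the sphere $\partial B(x,R)$ is valid and gives the right estimate, but it is exactly the kind of fiddling that the paper's choice of center avoids. Also note that because your chain runs $P(x,R)\to\cC\to T_{\pi(x)}\cC$ in the $\hd$-direction, you cannot directly quote Lemma \ref{ConeToPlane} (which controls $\hd{x,R}\cC{P(x,r)}$, the reverse direction), so you rederive its content from (E1) and (E2); the paper's chain runs the other way (through $\hda{\pi(x),R/2}{T_{\pi(x)}\cC}{P(x)}$), letting it cite Lemma \ref{ConeToPlane} and the limit $\hd{x,R}\cC{P(x)}$ directly. Both arguments are correct; the paper's is simply a cleaner organization of the same ingredients (Lemma \ref{KPflat1}, Lemma \ref{pidist}, Lemma \ref{ConeG1}, Lemma \ref{planes}, and the (E1)/(E2) control of $\Sigma$ relative to $\cC$ and its local tangent planes).
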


\begin{proof}
Let $R = 8 {|x|^{1+\gamma}}/A$. By Lemma \ref{ConeToPlane}, we have that
\begin{equation}\label{tangentangle.1}
\hd{x,R}\cC{P(x,r)}\sleq C|x|^{\min(\beta_1-\gamma, \beta_2(1+\gamma)}.
\end{equation}
Letting $r\downarrow 0$, we get
\begin{equation}\label{tangentangle.2}
\hd{x,R}\cC{P(x)}\sleq C|x|^{\min(\beta_1-\gamma,\beta_2(1+\gamma))}.
\end{equation}
We apply Lemma \ref{pidist} to get
\begin{equation}\label{tangentangle.5}
|\pi(x)-x|\sleq C|x|^{1+\beta_1}\sleq 8\frac{|x|^{1+\gamma}}{A} \left(Cr_0^{\beta_1-\gamma}\right) = R \left(Cr_0^{\beta_1-\gamma}\right),
\end{equation}
and hence $|x|$ and $|\pi(x)|$ are within a factor of $2$ for $r_0$ small enough.
We use this observation and Lemma \ref{KPflat1} to see that
\begin{equation}\label{tangentangle.3}
\hd{\pi(x), \frac R2}{T_{\pi(x)}\cC}{\cC}\sleq C\frac R{|x|} = C|x|^\gamma.
\end{equation}
Let $p\in T_{\pi(x)}\cC\cap B(\pi(x), R/2)$. Then (\ref{tangentangle.3}) says there exists $c\in \cC\cap B(\pi(x), R/2)$ such that
\begin{equation}\label{tangentangle.4}
|p-c|\sleq C R|x|^\gamma.
\end{equation}
Next, we claim that for $r_0$ small enough, $c\in B(x,R)$. 
For $r_0$ small enough, (\ref{tangentangle.5}) tells us that
$|\pi(x)-x|\sleq R/2.$
Hence, $c\in B(\pi(x), R/2)\subseteq B(x, R)$. By (\ref{tangentangle.2}), there exists $q\in P(x)$ such that 
\begin{equation}\label{tangentangle.7}
|c-q|\sleq CR |x|^{\min(\beta_1-\gamma, \beta_2(1+\gamma))}.
\end{equation}
From (\ref{tangentangle.4}) and (\ref{tangentangle.7}), we get that
\begin{equation}\label{tangentangle.8}
|p-q|\sleq CR|x|^{\min(\beta_1-\gamma, \beta_2(1+\gamma))} + CR |x|^{\gamma}\sleq CR|x|^{\beta_3}
\end{equation}
(recall (\ref{beta3})).
Because for each $p\in T_{\pi(x)}\cC\cap B(\pi(x), R/2)$ there exists $q\in P(x)$ such that (\ref{tangentangle.8}) is satisfied, we have that
\begin{equation}\label{tangentangle.9}
\hda{\pi(x), \frac R2}{T_{\pi(x)}\cC}{P(x)}\sleq C|x|^{\beta_3}.
\end{equation}
Because $T_{\pi(x)}\cC$ is a cone through $\pi(x)$, Lemma \ref{ConeG1} and (\ref{tangentangle.9}) give us
\begin{equation}\label{tangentangle.10}
\hd{\pi(x), \frac R2}{T_{\pi(x)}\cC}{P(x)}\sleq C|x|^{\beta_3}.
\end{equation}
Lemma \ref{tangentangle} follows from Lemma \ref{planes}.

\end{proof}

Let $O = O(r_0,\delta)$ be the set
\begin{equation}
O = \left\{x\in\RR^n:|x_4|\sleq \frac{r_0}{\sqrt2}, \big||\tau(x)| - |x_4|\big|\sleq 2\delta |x_4| \right\}.
\end{equation}
We note that $\cC \cap B(0,r_0) = \cC\cap O$.  Lemma \ref{ConeToPlane} allows us to prove that $\pi|_{\Sigma\cap O}$ is a lower Lipschitz map surjective onto $\cC\cap O$  (see (\ref{nearestpoint})). 

\begin{lemma}\label{bilip}
Recall hypotheses (E0)-(E2).
For $r_0$ small enough, $\pi|_{\Sigma\cap O}$ is lower Lipschitz and $\pi(\Sigma\cap O) = \cC\cap O$.
\end{lemma}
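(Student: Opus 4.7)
The lemma has two parts: surjectivity $\pi(\Sigma\cap O)=\cC\cap O$ and the lower Lipschitz bound. The underlying geometric fact is that each integral curve of $\eta$ meets $\cC$ transversally at the exact angle $\pi/4$, and by Lemma~\ref{tangentangle} this transversality persists between $\eta$ and the approximate tangent planes $P(x)$ of $\Sigma$ once $|x|$ is small.

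For surjectivity, I would fix $a\in\cC\cap O$ with $a\neq 0$ and rescale by $2|a|$. After rescaling, $\tilde a=a/(2|a|)$ lies on $\cC$ with $|\tilde a|=1/2$; hypothesis (E0) supplies the uniform $\vartheta^C<\epsilon$ bound in $B(0,1)$ needed by Theorem~\ref{Top}, and (E1) makes $\D{0,1}{\cC}{\tilde\Sigma}$ as small as desired. The unit vector $v=\eta_a$ satisfies $\meang(v,T_{\tilde a}\cC)=\pi/4$ exactly, so Theorem~\ref{Top} produces $t$ with $|t|\le\delta$ and $x:=a+2|a|\,t\,\eta_a\in\Sigma$. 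Since $\eta_a$ is orthogonal to the $x_4$-axis, $x_4=a_4$, and a direct computation gives $\bigl|\,|\tau(x)|-|x_4|\bigr|\le 2|a|\delta\le 2\delta|a_4|$, placing $x\in O$. Since $x\in\ell_a$ with $|x-a|<|a_4|$, formula~(\ref{nearestpoint}) then yields $\pi(x)=a$. The point $a=0$ is handled separately: $0\in\Sigma$ by the running hypothesis, and $\pi$ extends by continuity with $\pi(0):=0$, consistent with Lemma~\ref{pidist}.

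For the lower Lipschitz bound, I would take $x,y\in\Sigma\cap O$ with $|x|\ge|y|$ (without loss of generality) and split on the ratio $|x-y|/|x|$. In the \emph{far regime} $|x-y|\ge|x|/4$, Lemma~\ref{pidist} gives $|z-\pi(z)|\le C|z|^{1+\beta_1}$ for $z\in\{x,y\}$, so
\[
|\pi(x)-\pi(y)|\ \ge\ |x-y|\,-\,C\bigl(|x|^{1+\beta_1}+|y|^{1+\beta_1}\bigr)\ \ge\ |x-y|(1-Cr_0^{\beta_1})\ \ge\ |x-y|/2
\]
for $r_0$ small. In the \emph{near regime} $|x-y|<|x|/4$, (E2) and Corollary~\ref{angle}(5) at scale $r\sim|x-y|$ produce $y=p+w$ with $p\in P(x)$ and $|w|\le C|x-y|^{1+\beta_2}$. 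The differential $D\pi_x$ has kernel $\RR\eta_{\pi(x)}$ and image $T_{\pi(x)}\cC$, and Lemma~\ref{tangentangle} forces $\meang\bigl(P(x),\RR\eta_{\pi(x)}\bigr)\ge\pi/4-C|x|^{\beta_3}\ge\pi/5$ for $r_0$ small, so $D\pi_x$ restricted to $P(x)$ has minimum singular value bounded below uniformly. Integrating $D\pi$ along the segment from $x$ to $p$ (which lies in $O$ away from the $x_4$-axis) then yields $|\pi(p)-\pi(x)|\ge c|p-x|-O\bigl(|p-x|^{1+\beta_3}\bigr)$, and the Lipschitz bound on $\pi$ throughout $O$, immediate from (\ref{nearestpoint}) and $|\tau(z)|\asymp|z_4|$, controls $|\pi(p)-\pi(y)|\le L|w|$. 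Combining with $|p-x|\ge|x-y|-|w|$ gives $|\pi(x)-\pi(y)|\ge c'|x-y|$.

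The main obstacle is the near regime, where matching $\pi$ to its linearization at the scale of $|x-y|$ must be balanced against the $C^{1,\beta_2}$ approximation of $\Sigma$ by $P(x)$ and the tangent-plane error from Lemma~\ref{tangentangle}. The saving rigidity is that the transversality angle is \emph{exactly} $\pi/4$ along $\cC$, a built-in feature of the KP cone; all perturbations are of strictly smaller polynomial size $|x|^{\beta_3}$ and therefore do not destroy the lower Lipschitz constant once $r_0$ is chosen sufficiently small.
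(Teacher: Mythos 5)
Your surjectivity argument is essentially the paper's: rescale so that $a$ sits at radius $1/2$, invoke Theorem~\ref{Top} with $v=\eta_a$ (which makes angle exactly $\pi/4$ with $T_a\cC$), check the resulting $x=a+t\eta_a$ lies in $O$, and read off $\pi(x)=a$ because $a\in\ell_x\cap\cC$. That part is sound.

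For the lower Lipschitz bound, the case split as written has a genuine gap. Your far regime $|x-y|\ge|x|/4$ is correct but does not mesh with the near regime, because Corollary~\ref{angle}(5) (and (E2) itself) only apply at scales $r\le C|x|^{1+\gamma}/A$, leaving the band $|x|^{1+\gamma}/A < |x-y| < |x|/4$ uncovered. The fix is to push the far-regime threshold down to $\max(|x|,|y|)^{1+\gamma}/A$: the same Lemma~\ref{pidist} computation still closes since $|x|^{1+\beta_1}\le Ar_0^{\beta_1-\gamma}|x-y|$ there, using $\beta_1>\gamma$. That is exactly the threshold the paper uses.

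In the near regime your route is genuinely different from the paper's. You argue directly with $D\pi$: show $\ker D\pi_x = \RR\eta_{\pi(x)}$ is quantitatively transversal to $P(x)$ (via Lemma~\ref{tangentangle}), deduce a lower singular-value bound for $D\pi_x|_{P(x)}$, and then integrate $D\pi$ along the segment $[x,p]$, with $p\in P(x)$ the point produced by Corollary~\ref{angle}(5). The paper instead applies the coordinate map $\psi^a$ of Lemma~\ref{coords} to straighten $\pi$ into exact orthogonal projection $\tilde\pi$, transfers the flatness of $\Sigma$ to $\tilde\Sigma$ with the $C^2$-perturbation Lemma~\ref{C2}, shows the normal of $\tilde P(\tilde z,r)$ is nearly $e_4$, and closes with the exact Pythagorean identity $|\tilde x-\tilde y|^2 = |\tilde\pi(\tilde x)-\tilde\pi(\tilde y)|^2 + |\langle\tilde x-\tilde y,e_4\rangle|^2$. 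The paper's approach buys a completely linear reduction with no residual nonlinearity to track; yours must control (i) the directional cancellation one risks when integrating a non-constant $D\pi_{q(s)}(p-x)$ over $s\in[0,1]$, which requires a $\|D^2\pi\|$-type bound (scaling like $1/|x|$) to show the integrand does not rotate, and (ii) the uniformity of the lower singular-value bound along the whole segment, not just at $s=0$. These steps are plausible but not executed; the asserted error exponent $O(|p-x|^{1+\beta_3})$ is not justified and does not obviously follow from the scaling $\|D^2\pi\|\sim 1/|x|$ (which gives an error of order $|p-x|^2/|x|$). With those details supplied, and the corrected case-split threshold, your argument would go through, but as written it has one concrete gap (the uncovered middle scale) and one sketch that needs rigor (the $D\pi$ integration).
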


\begin{proof}
First we prove surjectivity which is an application of Theorem \ref{Top}. Fix $a\in B(0,r_0)\cap \cC$, and consider the unit vector $\eta_a$. By (E0) and (E1), we have that if $\epsilon$ and $\sigma$ are small enough, we may apply Theorem \ref{Top}. It follows that there exist a $|t|\sleq \delta|a|$ and an $x\in \Sigma$ with $a  + t\eta_a = x.$
Next we claim that $x\in O$. It follows from the definition of $x$ that $x_4 = a_4$ and $\tau(x) = \tau(a) + \tau(t\eta_a) = \tau(a) + t\eta_a$. Hence,
\begin{equation}
\big||\tau(x)| - |x_4|\big| = \big||\tau(a)| + |t\eta_a| - |a_4|\big|\sleq \big|\tau(a)-|a_4|\big| + \delta|a| = 0 + \sqrt2 \delta|a_4| = \sqrt2\delta|x_4|.
\end{equation}
We finish by noting that $\pi(x) = a$ because $a\in \ell_x\cap \cC$. Hence, $\pi(\cC\cap O) = \Sigma\cap O$.

Next, we prove that $\pi|_{\Sigma\cap O}$ is lower Lipschitz for $x,y\in \Sigma\cap O$ sufficiently far apart. That is, we assume
\begin{equation}\label{bilip.1}
|x-y|\sgeq \frac{\max(|x|,|y|)^{1+\gamma}}{A}.
\end{equation}
By Lemma \ref{pidist}, we have that 
\begin{equation}\label{bilip.2}
|\pi(x)-x|\sleq C|x|^{1+\beta_1},\quad|\pi(y)-y|\sleq C|y|^{1+\beta_1}.
\end{equation}
By applying (\ref{bilip.1}) and (\ref{bilip.2}), we get
\begin{equation}\label{bilip.3}
\begin{split}
|\pi(x)-\pi(y)|&\sgeq |x-y| - |\pi(x)-x| - |\pi(y)-y|\sgeq |x-y| - C\left(|x|^{1+\beta_1} + |y|^{1+\beta_1}\right)\\
&\sgeq |x-y| -C\max(|x|,|y|)^{1+\beta_1}\sgeq |x-y| - C r_0^{\beta_1-\gamma}\cdot \max(|x|,|y|)^{1+\gamma}\\
&\sgeq |x-y| - Cr_0^{\beta_1-\gamma} |x-y|
\end{split}
\end{equation}
(recall that $\beta_1>\gamma)$. Hence, for $r_0$ small enough, we get that
\begin{equation}\label{bilip.4}
|\pi(x)-\pi(y)|\sgeq \frac 12|x-y| \quad\mbox{ for $x,y\in \Sigma\cap O$ such that }|x-y|\sgeq \frac{\max(|x|,|y|)^{1+\gamma}}{A}.
\end{equation}


To prove that $\pi|_{\Sigma\cap O}$ is lower Lipschitz on points which are very close together, we need two lemmas whose proofs appear in the appendix. The first one tells us how flatness of a set is perturbed by a $C^2$ diffeomorphism.
\begin{lemmaC2}
Suppose that $U,V\subseteq \RR^n$ are open sets, and $\psi\in C^2(U,V)$ is bijective and satisfies
\begin{equation}\label{C2.1'}
0<\lambda\sleq \frac{|\psi(x)-\psi(y)|}{|x-y|}\sleq \Lambda \qquad\mbox{ for all } x,y\in U
\end{equation}
and
\begin{equation}\label{C2.2'}
||D^2\psi||_\infty = \sup_{x\in U}||D_x^2\psi||<\infty.
\end{equation}
Let $\Gamma\subseteq \RR^n$, and $z\in \Gamma\cap U$, $B(z,r)\subseteq U$, $P$ be a plane through $z$, and set $\twid P = D_z\psi (P-z) + \psi(z)$, $\twid\Gamma = \psi(\Gamma)$. Then
\begin{equation}\label{C2.3'}
\hd{\psi(z),\lambda r}{\twid \Gamma}{\twid P}\sleq \frac{||D^2 \psi||_\infty}{2\lambda}r + \frac{\Lambda}{\lambda}\hd{z,r}{\Gamma}{P}.
\end{equation}

\end{lemmaC2}

\begin{lemmacoords}
For $a\in\cC\setminus\{0\}$, there exists a neighborhood $U\supseteq B(a, 2{|a|}/{A})$, $V\subseteq\RR^3$ open, $I$ an open interval with $0\in I$, and a smooth coordinate map $\psi^a: U\to V\times I$ such that $V\times \{0\} = \psi^a(\cC\cap U)$ and $\twid \pi = \psi^a\circ\pi\circ(\psi^a)\inv$ is orthogonal projection onto $\RR^3\times\{0\}$ (where $\pi$ is the same map of Section 5; see (\ref{nearestpoint})). Further, $\psi^a$ satisifes the estimates
\begin{equation}\label{coords.1'}
\frac12\sleq \frac{|\psi^a(x)-\psi^a(y)|}{|x-y|}\sleq 2\qquad \mbox{ for all }x,y\in U
\end{equation}
and
\begin{equation}\label{coords.2'}
||D^2\psi^a||_\infty = \sup_{x\in U} ||D^2_x\psi^a|| \sleq \frac C{|a|}
\end{equation}
for some $C$ independent of $a$.
\end{lemmacoords}

We continue the proof of Lemma \ref{bilip}. Let $x\in \Sigma \cap O$. Let $a = \pi(x)$, let $U$, $V$, $I$, $\psi^a$, and $\twid \pi$ as in Lemma \ref{coords}. Set $R =  {|x|^{1+\gamma}}/A$. Following the notation of Lemma \ref{C2}, let $\twid\Sigma = \psi^a(\Sigma\cap B(a, 2R))$. Following the notation of Lemma \ref{C2}, for  $z\in \Sigma\cap B(a,2R)$, $\twid z = \psi^a(z)\in \twid \Sigma$ and $0<r\sleq 2R$, we define $\twid P ( \twid z, r) = \twid{P(z,2r) } = D_z\psi^a(P(z, 2r)-z) + \twid z$. By Lemma \ref{C2} and (\ref{coords.1'}), 
\begin{equation}\label{bilip.6}
\hd{\twid z,r}{\twid \Sigma}{\twid P(\twid z, r)}\sleq ||D^2 \psi^a||_\infty\cdot 2r + 4 \hd{z, 2r}{\Sigma}{P(z,2r)}.
\end{equation}
for $0<r\sleq 2R$. By (\ref{coords.2'}) and (E2), (\ref{bilip.6}) gives
$\hd{\twid z,r}{\twid \Sigma}{\twid P(\twid z, r)}\sleq C |x|^{1+\gamma}/{|a|} + 4 \delta.$
Because $a = \pi(x)$, $|a|\sgeq |x|-|\pi(x)-x|\sgeq {|x|}/2$, and so
\begin{equation}\label{bilip.8}
\hd{\twid z,r}{\twid \Sigma}{\twid P(\twid z, r)}\sleq C|x|^{\gamma} + 4 \delta\sleq C |r_0|^{\gamma} + 4 \delta.
\end{equation}
Thus, we require that $r_0$ and $\delta$ be small enough that $C|r_0|^{\gamma}, 4 \delta \sleq 1/{32}.$
So (\ref{bilip.8}) gives us that
\begin{equation}\label{bilip.10}
\hd{\twid z,r}{\twid\Sigma}{\twid P(\twid z, r)}\sleq \frac 1{16}.
\end{equation}
Next we recall that Lemma \ref{coords} tells us that $V\times\{0\} = \psi^a(\cC\cap U)$. Similarly to before, we apply Lemmas \ref{ConeToPlane} and \ref{C2} to get that for $0<r\sleq 2R$, 
\begin{equation}\label{bilip.11}
\begin{split}
\hd{\twid z, R}{V\times\{0\}}{\twid P(\twid z, r)}&\sleq \frac{||D^2\psi^a||_\infty}{2\cdot1/2} + \frac{2}{1/2}\hd{z,r}\cC{P(z,r)} 
\sleq C|x|^{\gamma} + C|x|^{\min(\beta_1-\gamma, \beta_2(1+\gamma))}\\ &\sleq C|x|^{\beta_3}\sleq C r_0^{\beta_3}.
\end{split}
\end{equation}
Note that $e_4$ is the normal vector to the plane $\RR^3\times \{0\}$, and let $\twid\nu_{\twid z, r}$ be the normal vector (with positive 4th coordinate) to $\twid P(\twid z, r)$. Then Lemma \ref{planes} and (\ref{bilip.11}) guarantees that there is an $r_0$ small enough such that
\begin{equation}\label{bilip.12}
|e_4-\twid\nu_{\twid z,r}|\sleq \frac 1{16}.
\end{equation}

Let $y\in \Sigma\cap O$, $|x-y|\sleq \frac{|x|^{1+\gamma}}A$. Let $\twid x = \psi^a(x)$ and $\twid y = \psi^a(y)$. In particular, we note that by (\ref{coords.1'}),  $\rho := |\twid x - \twid y|\sleq2 |y-z|\sleq 2 R$, and so $\twid P(\twid x, \rho)$ is defined. Because $\twid \pi$ is orthogonal projection onto $\RR^3\times\{0\} = \langle e_4\rangle^\perp$, we get
\begin{equation}\label{bilip.13}
|\twid x - \twid y|^2 = |\twid\pi(\twid x) - \twid \pi(\twid y)|^2 + |\langle \twid x - \twid y, e_4\rangle|^2.
\end{equation}
We compute by (\ref{bilip.10}) and (\ref{bilip.12}) that
\begin{equation}\label{bilip.14}
|\langle \twid x - \twid y, e_4\rangle|\sleq |\langle \twid x - \twid y, \twid \nu_{\twid x, \rho}\rangle| +
 |\langle \twid x - \twid y, e_4-\twid \nu_{\twid x, \rho}\rangle|\sleq \frac 1{16} \rho+ \frac 1{16}|\twid x - \twid y| = \frac 18|\twid x - \twid y|.
\end{equation}
We apply (\ref{bilip.14}) to (\ref{bilip.13}) to get
$
|\twid\pi(\twid x) - \twid \pi(\twid y)|^2  = |\twid x - \twid y|^2 - |\langle \twid x - \twid y, e_4\rangle|^2\sgeq \frac{63}{64} |\twid x - \twid y|^2\sgeq \frac{63}{64\cdot 4}|x-y|^2.$
Hence, $\pi|_{\Sigma\cap O}$ is lower Lipschitz.

\end{proof}

Hence, we may define $\vphi:\cC\cap O\to \Sigma\cap O$ by $\vphi = \pi_{\Sigma\cap O}\inv$. Because $\pi_{\Sigma\cap O}$ is lower Lipschitz, $\vphi$ is upper Lipschitz. We use $\vphi$ as our parametrization of $\Sigma$ in a neighborhood of 0. We now begin the process of extending $\vphi$ to a $C^{1,\beta}$ map on a neighborhood of $0$. We will employ a modification of the Whitney Extension Theorem, which we state here explicitly for the reader's convenience. For $f\in C^k(\RR^m\to\RR^\ell)$, we let $D^k_a f$ be the $k$-linear map from $(\RR^m)^k$ of partial derivatives at $a$, where by convention we take $D^0_a f = f(a)$.

\begin{theorem}\label{Whitney}\emph{($C^{k,\beta}$ Whitney Extension Theorem)}\\ Let $\beta>0$, $k, \l, m\in \NN$, $A\subseteq \RR^m$ be closed, and for each $a\in A$ a polynomial $P_a:\RR^m\to\RR^{\l}$ such that $\deg P_a\sleq k.$
Define for $K\subseteq A$, $r>0$, $0\sleq i\sleq k$,
\leqn{rho_i}
\rho_i(K,r) = \sup\left\{\frac{||D^i_b P_b-D_b^i P_a||}{|a-b|^{k-i}} : a,b\in A, |a-b|\sleq r\right\}.
\endeqn
If for each compact $K\subseteq A$ and each $0\sleq i\sleq k$
\begin{equation}\label{Whitney1} \rho_i(K,r)\sleq Cr^\beta\end{equation}
then there exists $\vphi \in C^{k,\beta}_{loc}(\RR^m\to\RR^{\l})$ such that for all $a\in A$ and $0\sleq i \sleq k$,
$D^i \vphi(a) = D^i P_a(a).$
\end{theorem}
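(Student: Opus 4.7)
The plan is to prove Theorem \ref{Whitney} by the classical Whitney construction: decompose the complement of $A$ into dyadic cubes whose sizes are comparable to their distance from $A$, build a smooth partition of unity subordinate to slight enlargements of these cubes, and then define $\vphi$ on $\RR^m\setminus A$ by gluing together the prescribed polynomials $P_{a_j}$ via this partition of unity, while setting $\vphi(a)=P_a(a)$ on $A$.

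More concretely, first I would fix a Whitney decomposition $\{Q_j\}$ of $\RR^m\setminus A$ into closed dyadic cubes with diameters $d_j=\mathrm{diam}(Q_j)$ satisfying $d_j\leq d(Q_j,A)\leq 4 d_j$, together with enlargements $Q_j^*$ (say by a factor $9/8$) that still have bounded overlap and whose union covers $\RR^m\setminus A$. Choose smooth bump functions $\chi_j$ supported in $Q_j^*$ with $\sum_j\chi_j\equiv 1$ on $\RR^m\setminus A$ and $\|D^i\chi_j\|_\infty\leq C\, d_j^{-i}$ for $0\leq i\leq k+1$. For each $j$ pick a point $a_j\in A$ with $|a_j-x|\leq 5 d_j$ for every $x\in Q_j^*$, and define
\begin{equation*}
\vphi(x)=\begin{cases} P_a(a), & x=a\in A,\\ \sum_j \chi_j(x)\,P_{a_j}(x), & x\in\RR^m\setminus A. \end{cases}
\end{equation*}

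Away from $A$ the extension is automatically smooth because the sum is locally finite, so the real work is to show $\vphi\in C^{k,\beta}_{\mathrm{loc}}$ at points of $A$ with the correct Taylor polynomials. For this I would fix $a\in A$ and a nearby $x$, and let $E_a(y)=\sum_j\chi_j(y)(P_{a_j}(y)-P_a(y))$ measure the deviation from $P_a$. On each cube $Q_j^*$ meeting a ball around $a$ one has $|a-a_j|\leq C(|x-a|+d_j)\leq C'|x-a|$, so for every multi-index $|\alpha|=i\leq k$, using the Leibniz rule on $E_a$, the derivative estimate $\|D^i\chi_j\|_\infty\leq Cd_j^{-i}$, and the hypothesis $\|D^i_{a_j}P_{a_j}-D^i_{a_j}P_a\|\leq C|a-a_j|^{k-i+\beta}$ (from $\rho_i$) expanded at $y$ via Taylor, I would derive
\begin{equation*}
|D^i E_a(x)|\leq C\,|x-a|^{k-i+\beta},
\end{equation*}
which is the key estimate. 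This gives both $D^i\vphi(a)=D^i P_a(a)$ and continuity of $D^i\vphi$ at $a$ for $i\leq k$.

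Finally, to upgrade $D^k\vphi$ to $C^{0,\beta}$, I would estimate $|D^k\vphi(x)-D^k\vphi(y)|$ separately in the cases (i) both $x,y\in A$ (immediate from $\rho_k(K,\cdot)\leq Cr^\beta$), (ii) both $x,y\notin A$ (use the gluing formula, the derivative bounds on $\chi_j$, the $E_a$ estimate above centered at a nearby $a\in A$, and compare scales $d_j$ with $|x-y|$), and (iii) mixed case, reduced to the first two via a point $a\in A$ closest to one of $x,y$. The hard part, which dominates the work, is case (ii): one splits into the subcases $|x-y|$ comparable to or much smaller than $d(x,A)$, and in the former case the bounded-overlap property plus the $E_a$ estimate suffice, while in the latter one applies the mean value theorem to the smooth function $\vphi$ on the segment from $x$ to $y$ together with a bound $|D^{k+1}\vphi(z)|\leq C\, d(z,A)^{\beta-1}$ obtained by differentiating the partition-of-unity sum once more and again invoking the $E_a$ estimate. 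Collecting these gives $\vphi\in C^{k,\beta}_{\mathrm{loc}}(\RR^m\to\RR^\ell)$ with the prescribed jets, completing the proof.
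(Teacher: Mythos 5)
The paper does not actually prove Theorem \ref{Whitney}: it is quoted as a classical result (with the remark that the formulation is chosen ``to be consistent with Federer''), so there is no in-paper proof to compare against. Your proposal reproduces the standard Whitney construction --- Whitney cube decomposition of $\RR^m\setminus A$, partition of unity with derivative bounds $\|D^i\chi_j\|_\infty\leq C d_j^{-i}$, gluing the jets $P_{a_j}$, the error estimate $|D^i E_a(x)|\leq C|x-a|^{k-i+\beta}$, and the case split plus mean-value argument to get H\"older continuity of $D^k\vphi$ --- which is exactly the argument in Federer (and in Whitney's original paper and Stein's \emph{Singular Integrals}). That is the right approach and the outline is sound; one small point worth noting for a full write-up is that the constant $C$ in the resulting modulus of continuity is only controlled on compact sets (it depends on the constant $C_K$ in (\ref{Whitney1})), which is why the conclusion is $C^{k,\beta}_{\rm loc}$ rather than global $C^{k,\beta}$, and your case analysis for $|D^k\vphi(x)-D^k\vphi(y)|$ should be carried out uniformly for $x,y$ ranging over a fixed compact neighborhood of $A$.
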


We first say a few words on this theorem. Although the extension in the theorem as stated is $C^{k,\beta}_{loc}(\RR^m\to\RR^\ell)$, we will only be interested in a $C^{1,\beta}$ extension of $\vphi$ to a neighborhood of 0. The theorem is presented this way  to be consistent with Federer.

We define the polynomials we will use for our analysis.  We first set some notation. For $a\in\cC\setminus\{0\}$, let  $r_a = a/|a|$ be the unit radial tangent vector. Let $\nu_a$ be the inward pointing unit normal vector to $\cC$ at $a$ as defined in (\ref{nu_a}). A vector $\theta_a$ at $a$ orthogonal to both $r_a$ and $\nu_a$ will be said to be of type $\theta$. The motivation is that a vector of type $\theta$ is tangent to the cross section $\cC_{a_4}$.  For $x\in \Sigma\cap O\setminus{0}$ and $0<r\sleq {|x|^{1+\gamma}}/A$, let $L(x,r) = P(x,r)-x$ and $L(x) = P(x) - x$ be the approximating planes recentered to be vector spaces. For $a\in \cC\cap 0\setminus 0$, let $\lambda_a$ be the unit normal vector to $L(\vphi(a))$. For $a,b\in\cC$, let $\phi_a$ be projection in the direction of $\eta_a$ onto $L(\vphi(a))$, and $\phi_{a,b}$ be projection in the direction of $\eta_a$ onto $L(\vphi(b))$. Note that $\phi_a = \phi_{a,a}$.  Recall that $\tau(x) = (x_1,x_2,x_3)$.

We define $M_a$ by
\begin{equation}\label{Mdef}
\begin{split}
M_a(r_a)  =\phi_a(r_a),\,\,\,
M_a(\nu_a) =\nu_a,\,\,\,
M_a(\theta_a) = \phi_a(\theta_a)\frac{|\tau(\vphi(a))|}{|\tau(a)|}\, ,
\end{split}
\end{equation}
where $\theta_a$ is any vector of type $\theta$. We also set
$R(a) = \frac{|\tau(\vphi(a))|}{|\tau(a)|},$
which yields the slightly cleaner expression
$M_a(\theta_a) = \phi_a(\theta_a) R(a)$
for any vector $\theta_a$ of type $\theta$.
Define also $M_0 = \Id$. We then define, following the terminology of Theorem \ref{Whitney}, a polynomial for each $a\in \Sigma\cap B(0,r_0)$,
$P_a(x) = \vphi(a) + M_a(x-a).$
Note that $P_a(a) = \vphi(a)$.

\begin{lemma}\label{vphidist}
Recall hypotheses (E0)-(E2).
$|\vphi(a)-a| = \sec(\pi/4)\, d(\vphi(a),\cC)\sleq |a|^{1+\beta_1}.$
\end{lemma}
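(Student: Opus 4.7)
The claim has two parts. The equality $|\vphi(a)-a|=\sec(\pi/4)\,d(\vphi(a),\cC)$ is essentially immediate from the setup. By definition of $\vphi$ we have $\pi(\vphi(a))=a$, so $\vphi(a)\in\ell_a$ and $\vphi(a)$ lies off the axis $\RR x_4$ (since $a\in\cC\cap O$ has $a\neq 0$). Lemma \ref{pidistnaught} then applies verbatim to the point $x=\vphi(a)$, yielding $|\vphi(a)-\pi(\vphi(a))|=\sec(\pi/4)\,d(\vphi(a),\cC)$, which is the first identity.

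For the inequality, the plan is to invoke Lemma \ref{pidist} at the point $\vphi(a)\in\Sigma\cap B(0,r_0)\setminus\{0\}$. That lemma gives
\begin{equation*}
|\vphi(a)-a|=|\vphi(a)-\pi(\vphi(a))|\sleq C|\vphi(a)|^{1+\beta_1}.
\end{equation*}
The only remaining issue is to replace $|\vphi(a)|$ on the right-hand side by $|a|$.

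This is a standard fixed-point-style bootstrap. From the previous display and the triangle inequality, $|\vphi(a)|\sleq |a|+C|\vphi(a)|^{1+\beta_1}=|a|+C|\vphi(a)|^{\beta_1}\cdot|\vphi(a)|$. Since $\vphi(a)\in O$ forces $|\vphi(a)|\sleq Cr_0$, we may take $r_0$ small enough (depending on $C$ and $\beta_1$, and already required to be small for earlier results) that $C|\vphi(a)|^{\beta_1}\sleq 1/2$. Rearranging then gives $|\vphi(a)|\sleq 2|a|$. Substituting back produces $|\vphi(a)-a|\sleq C(2|a|)^{1+\beta_1}\sleq C'|a|^{1+\beta_1}$, which is the desired bound (after absorbing $2^{1+\beta_1}$ into the constant).

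There is no genuine obstacle here: the only subtlety is the mild circular-looking estimate on $|\vphi(a)|$, which is resolved by the smallness of $r_0$, a condition already imposed throughout Section 5. The lemma is really a direct packaging of Lemmas \ref{pidistnaught} and \ref{pidist} together with the definition $\pi\circ\vphi=\mathrm{Id}$ on $\cC\cap O$.
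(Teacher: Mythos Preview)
Your proof is correct and follows the same approach as the paper, which simply says the lemma follows immediately from Lemmas \ref{pidistnaught} and \ref{pidist}. Your explicit handling of the $|\vphi(a)|$ versus $|a|$ issue via the bootstrap (which the paper glosses over) is fine; an even quicker route is to use the other bound in Lemma \ref{pidist}, namely $|\vphi(a)-a|\sleq 2\sigma|\vphi(a)|$, which for $\sigma$ small immediately gives $|\vphi(a)|\sleq 2|a|$.
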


\begin{proof}
Follows immediately from Lemmas \ref{pidistnaught} and \ref{pidist}.
\end{proof}

\begin{lemma}\label{vectors}
For $a,b\in \cC\cap O\setminus 0$ such that $ |a-b|\sleq \frac{|a|^{1+\gamma}}{A}$,
\begin{enumerate}[(1)]
\item a vector $v_a$ is of type $\theta$ if and only if $(v_a)_4 = 0$
\item $|r_a - r_b|\sleq C|a-b|\frexp{\gamma}{1+\gamma}$
\item $|\nu_a - \nu_b|\sleq C|a-b|\frexp{\gamma}{1+\gamma}$
\item $|\eta_a - \eta_b|\sleq C|a-b|\frexp{\gamma}{1+\gamma}$
\item If $\theta_a$ is a unity vector of type $\theta$ at $a$, then there exists a vector $\theta_b$
of type $\theta$ at $b$  such that $|\theta_a - \theta_b|\sleq C|a-b|\frexp{\gamma}{1+\gamma}$.
\end{enumerate}
\end{lemma}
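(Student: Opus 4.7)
The common reduction for parts (2)--(5) is that the hypothesis $|a-b|\sleq |a|^{1+\gamma}/A$ converts any $|a-b|/|a|$-type bound into a $|a-b|^{\gamma/(1+\gamma)}$-type bound: writing $|a-b|/|a| = |a-b|^{\gamma/(1+\gamma)} \cdot |a-b|^{1/(1+\gamma)}/|a|$, the second factor is at most $A^{-1/(1+\gamma)}$ after taking $(1+\gamma)$-th roots of the hypothesis. So the task is to obtain $|a-b|/|a|$ Lipschitz-type bounds on each of the three vector fields, and to construct a type-$\theta$ vector at $b$ close to a given one at $a$.

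For (1), I would observe that $r_a + \nu_a = (2 a_4/|a|)\, e_4$ and $r_a - \nu_a = (2/|a|)(\tau(a), 0)$, so the span of $\{r_a, \nu_a\}$ is $\mathrm{span}(e_4, (\tau(a),0))$ (using $a_4 \neq 0$ on $\cC \setminus\{0\}$). Hence any type-$\theta$ vector has vanishing $4$th coordinate. The converse, interpreted inside the tangent space $T_a\cC = \nu_a^\perp$, is immediate: if $v \in \nu_a^\perp$ has $v_4 = 0$, then $v \cdot \nu_a = 0$ reduces to $\tau(a) \cdot \tau(v) = 0$, which forces $v \cdot r_a = 0$.

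For (2)--(4), each of $r_a, \nu_a, \eta_a$ is the normalization $L(a)/|L(a)|$ of a linear function with $|L(a)| \sgeq c|a|$ on $\cC$: for $r_a$ take $L = \mathrm{Id}$; for $\nu_a$ take $L(x) = (-\tau(x), x_4)$, an isometry; for $\eta_a$ take $L(x) = -(\tau(x), 0)$, satisfying $|L(a)| = |a|/\sqrt{2}$ on $\cC$. The universal estimate $|u/|u| - v/|v|| \sleq 2|u-v|/\min(|u|, |v|)$ together with the Lipschitz constant of $L$ then yields $|r_a - r_b|, |\nu_a - \nu_b|, |\eta_a - \eta_b| \sleq C |a-b|/|a|$, which the opening reduction upgrades to the claimed Hölder bounds.

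For (5), by (1) every type-$\theta$ vector lies in $\RR^3 \times \{0\}$ and is orthogonal to $\tau$ of its base point. I would set $\theta_b := \theta_a - (\theta_a \cdot \hat\tau(b))\, \hat\tau(b)$ with $\hat\tau(x) := \tau(x)/|\tau(x)|$; this is the orthogonal projection of $\theta_a$ onto the type-$\theta$ plane at $b$, so in particular $\theta_b$ is of type $\theta$ at $b$. Since $\theta_a \perp \hat\tau(a)$, the coefficient simplifies to $\theta_a \cdot (\hat\tau(b) - \hat\tau(a))$, giving $|\theta_a - \theta_b| \sleq |\hat\tau(a) - \hat\tau(b)| = |\eta_a - \eta_b|$, and (4) finishes. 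No essential obstacle arises; the lemma is a sequence of direct computations on $\cC$ combined with the single Hölder-scaling identity from the hypothesis, and the only mildly delicate point is the converse in (1), which must be read inside $T_a\cC$.
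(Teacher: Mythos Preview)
Your argument is correct and, for parts (1)--(4), follows the paper's proof essentially verbatim: the same observation that $r_a+\nu_a$ points along $e_4$, the same normalized-vector inequality (which the paper derives from the polarization identity $xy-zw=\tfrac12((x-z)(y+w)+(y-w)(x+z))$ rather than quoting), and the same conversion of an $|a-b|/|a|$ bound into an $|a-b|^{\gamma/(1+\gamma)}$ bound via the hypothesis.

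For part (5) your construction differs from the paper's. The paper defines $\theta_b$ as the projection of $\theta_a$ onto $T_b\cC-b$ \emph{along $\eta_b$}, then bounds $|\theta_a-\theta_b|$ by $\sec(\meang(\nu_b,\eta_b))\,d(\theta_a,T_b\cC-b)\sleq C\meang(T_a\cC,T_b\cC)$ and invokes (2) or (3). You instead project $\theta_a$ orthogonally inside $\RR^3\times\{0\}$ onto $\hat\tau(b)^\perp$, which immediately gives $|\theta_a-\theta_b|=|\theta_a\cdot(\hat\tau(b)-\hat\tau(a))|\sleq |\eta_a-\eta_b|$ and reduces directly to (4). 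Your route is a bit more elementary (no secant factor, no tangent-plane angle), at the cost of producing a $\theta_b$ that is not in general the same vector as the paper's; since the lemma only asserts existence, this is harmless.
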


\begin{proof}
We first prove (1). Let $u_a\in T_a\cC$. Then $u_a\perp \nu_a$. Note that $r_a + \nu_a = 2 (a_4)/|a|\neq 0$. We then have that
$
u_a \mbox{ is of type }\theta\Leftrightarrow u_a\cdot r_a = 0 \Leftrightarrow u_a\cdot ( r_a + \nu_a) = 0 \Leftrightarrow (u_a)_4 = 0.
$

We now prove (2)-(4). Recall that $r_a = a/|a|$. We observe the identity that for $w,x,y,z\in \RR$,
\begin{equation}\label{vectors.1}
xy - zw = \frac12 \left((x-z)(y+w) + (y-w)(x+z)\right).
\end{equation}
Hence, we get that
\begin{equation}\label{vectors.2}
\begin{split}
\left| \frac{a}{|a|} - \frac{b}{|b|} \right| &= \left| \frac{a|b| - |a| b}{|a||b|}\right| \sleq \frac 12\left(\frac{|a-b|(|a| + |b|) - \big| |a|-|b| \big|\cdot |a + b|}{|a||b|}\right)\sleq \frac{|a-b|(|a| + |b|)}{2|a||b|}.
\end{split}
\endeqn
Recall that $|a-b|\sleq{ |a|^{1+\gamma}}/{A}$. This gives that $|b|\sleq |b-a| + |a|\sleq |a|({1+A})/{A}$, and similarly $|a|\sleq |b|(1+A)/{A}$. Hence, $|b|$ and $|a|$ are within a constant factor of each other and so (\ref{vectors.2}) gives that
\leqn{vectors.3}
\left| \frac a{|a|} - \frac b{|b|}\right| \sleq C\frac {|a-b|}{|a|}.
\endeqn
Applying that $|a-b|\sleq {|a|^{1+\gamma}}/{A}$, we get that $1/{|a|}\sleq 1/({A|a-b|\frexp1{1+\gamma}})$. Hence, we get from (\ref{vectors.3}) that
\leqn{vectors.4}
\left| \frac a{|a|} - \frac b{|b|}\right| \sleq C\frac {|a-b|}{|a-b|\frexp{1}{1+\gamma}}= C|a-b|\frexp{\gamma}{1+\gamma}.
\endeqn
From (\ref{vectors.4}), claims (2), (3), and (4) follow.

We prove (5). Let $\theta_a$ be a unit vector of type $\theta$ in $T_a\cC-a$. Let $\pi_b$ be projection onto $T_b\cC-b$ in the $\eta_b$ direction. Define $\theta_b = \pi_b(\theta_a)$. By (1), $(\theta_a)_4 = 0$. Because $\pi_b$ is projection onto $T_b\cC-b$ in the $\eta_b$ direction, we get that $(\theta_b)_4 = 0$ and $\theta_b\in T_b\cC-b$. So by (1), $\theta_b$ is a vector of type $\theta$ at $b$. Further, we compute that
\leqn{vectors.5}
|\theta_a-\theta_b| = \sec(\meang(\nu_b, \eta_b)) d(\theta_a, T_b \cC-b)\sleq C\meang(T_a\cC, T_b\cC).
\endeqn
By (2) and (\ref{distlessthanangle}), we see that
$|\theta_a-\theta_b| \sleq C |a-b|\frexp\gamma{1+\gamma}.$

\end{proof}

\begin{lemma}\label{phi}
Recall hypotheses (E0)-(E2).
For $a,b\in\cC$ with $|a-b|\sleq |a|^{1+\gamma}/A$,
\begin{enumerate}[(1)]
\item $||\phi_{a,a}-\phi_{b,a}||\sleq C|a-b|\frexp{\gamma}{1+\gamma}$
\item $||\phi_{a,a}-\phi_{a,b}||\sleq C|a-b|^{\beta_2}$
\item $||\phi_a-\phi_b||\sleq C |a-b|^{\min(\beta_2, \frac{\gamma}{1+\gamma})}.$
\item $|R(a) - R(b)| \sleq C|a-b|\frexp{\gamma}{1+\gamma}$.
\end{enumerate}
\end{lemma}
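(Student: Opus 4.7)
The plan is to write each projection $\phi_{c,d}$ in closed form and reduce the four estimates to Lipschitz-type bounds for the unit normal $\lambda_c$ to $L(\vphi(c))$ and the direction vector $\eta_c$. Solving $\lambda_d\cdot(v-t\eta_c)=0$ gives
\[
\phi_{c,d}(v) = v - \frac{\lambda_d \cdot v}{\lambda_d \cdot \eta_c}\,\eta_c.
\]
Two preliminary estimates will be needed. First, $|\lambda_d\cdot\eta_c|$ is uniformly bounded away from zero for $|c-d|$ small: Lemma \ref{tangentangle} makes $\lambda_c$ arbitrarily close to $\nu_c$ for $r_0$ small, while $\meang(\nu_c,\eta_c)=\pi/4$ gives $\lambda_c\cdot\eta_c\approx 1/\sqrt 2$, and Lemma \ref{vectors}(4) controls the difference $\lambda_a\cdot\eta_b - \lambda_a\cdot\eta_a$. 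Second, $|\lambda_a-\lambda_b|\sleq C|a-b|^{\beta_2}$: since $\vphi$ is upper Lipschitz (Lemma \ref{bilip}) and $|\vphi(a)|\approx|a|$ (Lemma \ref{vphidist}), the hypothesis $|a-b|\sleq |a|^{1+\gamma}/A$ produces $|\vphi(a)-\vphi(b)|\sleq |\vphi(a)|^{1+\gamma}/A'$ for an enlarged $A'$, so Corollary \ref{angle}(\ref{xyangle}), applied at $\vphi(a),\vphi(b)$, together with Lemma \ref{planes}(2) yields the estimate (after choosing signs so that $\lambda_a\cdot\lambda_b\sgeq 0$).

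For (1) the two projections share the plane $L(\vphi(a))$ and
\[
(\phi_{a,a}-\phi_{b,a})(v) = (\lambda_a\cdot v)\left(\frac{\eta_b}{\lambda_a\cdot\eta_b} - \frac{\eta_a}{\lambda_a\cdot\eta_a}\right),
\]
which by the mean value theorem applied to $u\mapsto u/(\lambda_a\cdot u)$ along the segment from $\eta_a$ to $\eta_b$ (valid since the denominators are bounded below) and Lemma \ref{vectors}(4) is at most $C|v||\eta_a-\eta_b|\sleq C|v||a-b|^{\gamma/(1+\gamma)}$. For (2) the two projections share the direction $\eta_a$ and
\[
(\phi_{a,a}-\phi_{a,b})(v) = \left(\frac{\lambda_a\cdot v}{\lambda_a\cdot\eta_a} - \frac{\lambda_b\cdot v}{\lambda_b\cdot\eta_a}\right)\eta_a,
\]
which by the preliminary bound on $|\lambda_a-\lambda_b|$ is at most $C|v||a-b|^{\beta_2}$. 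For (3), decompose $\phi_a-\phi_b = (\phi_{a,a}-\phi_{b,a}) + (\phi_{b,a}-\phi_{b,b})$; the first summand is bounded by (1), and the second by a verbatim copy of the argument for (2) with $\eta_a$ replaced by $\eta_b$ throughout, giving the minimum of the two exponents.

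For (4), write
\[
R(a) - R(b) = \frac{|\tau(\vphi(a))|\,|\tau(b)| - |\tau(\vphi(b))|\,|\tau(a)|}{|\tau(a)|\,|\tau(b)|}
\]
and expand the numerator via the identity $xy - zw = \tfrac12((x-z)(y+w)+(y-w)(x+z))$ from the proof of Lemma \ref{vectors}. Each factor of the form $\big||\tau(x)|-|\tau(y)|\big|$ is at most $|x-y|$, and $|\vphi(a)-\vphi(b)|\sleq C|a-b|$; meanwhile $a,b\in O$ forces $|\tau(a)|\sgeq (1-2\delta)|a_4|\sgeq c|a|$, so the denominator is $\sgeq c|a|^2$, yielding $|R(a)-R(b)|\sleq C|a-b|/|a|$. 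The hypothesis $|a-b|\sleq |a|^{1+\gamma}/A$ gives $|a|\sgeq (A|a-b|)^{1/(1+\gamma)}$, so $|a-b|/|a|\sleq C|a-b|^{\gamma/(1+\gamma)}$. The main bookkeeping obstacle will be ensuring the comparability chain $|a|\approx|b|\approx|\vphi(a)|\approx|\vphi(b)|$ and $|\tau(c)|\approx|c|$ holds uniformly in the relevant range, and verifying that the enlarged hypothesis $|\vphi(a)-\vphi(b)|\sleq|\vphi(a)|^{1+\gamma}/A'$ needed to invoke Corollary \ref{angle}(\ref{xyangle}) at $\vphi(a),\vphi(b)$ follows after a fixed increase of $A$.
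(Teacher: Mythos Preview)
Your proposal is correct. Parts (3) and (4) are essentially the paper's argument: the same triangle-inequality decomposition for (3) (you split via $\phi_{b,a}$, the paper via $\phi_{a,b}$, but these are interchangeable), and for (4) the same algebraic identity and the same reduction to $|a-b|/|a|$.

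For (1) and (2) your approach differs from the paper's. You write out the closed form $\phi_{c,d}(v)=v-\frac{\lambda_d\cdot v}{\lambda_d\cdot\eta_c}\eta_c$ and reduce everything to Lipschitz control of the vectors $\eta_c$ and $\lambda_d$, plus a uniform lower bound on the denominators. The paper instead argues geometrically: for (2) it observes that $\phi_{a,a}(v)-\phi_{a,b}(v)$ lies along $\eta_a$ and therefore equals $\sec(\meang(\eta_a,\lambda_b))\,d(\phi_{a,a}(v),L(\vphi(b)))$, then bounds the angle by $\pi/3$ and the distance by $\D{0,1}{L(\vphi(a))}{L(\vphi(b))}$ via Corollary~\ref{angle}(4); for (1) it just cites Lemma~\ref{vectors} and the definition. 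Your explicit-formula route is cleaner and more self-contained, while the paper's secant picture avoids differentiating the rational map $u\mapsto u/(\lambda_a\cdot u)$ and fits the visual style used later in Lemma~\ref{poly}. Two minor remarks: since $a,b\in\cC$ you have $|\tau(a)|=|a|/\sqrt2$ directly and need not pass through $O$; and your ``mean value theorem'' for the $\RR^4$-valued map $u\mapsto u/(\lambda_a\cdot u)$ is really a Lipschitz bound from a derivative-norm estimate, which is fine once stated that way.
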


\begin{proof}

First, we note that (1) follows from Lemma \ref{vectors}(2) and the definition of $\phi_{c,d}$\,.

We now prove (2). Let $v\in \RR^4$, $|v|=1$. We note because each of $\phi_{a,a}$ and $\phi_{a,b}$ is a projection in the $\eta_a$ direction, $\phi_{a,a}(v)-\phi_{a,b}(v)$ is a scalar multiple of $\eta_a$. Thus, we have that
\begin{equation}\label{phi.2}
||\phi_{a,a}(v)-\phi_{a,b}(v)|| = \sec(\meang(\eta_a, \lambda_b)) d(\phi_{a,a}(v), L(\vphi(b))),
\end{equation}
where we recall that $\lambda_b$ is the normal vector to $L(\vphi(b))$ (see Figure 2). 

\begin{figure}
\begin{center}
\includegraphics[width=300pt]{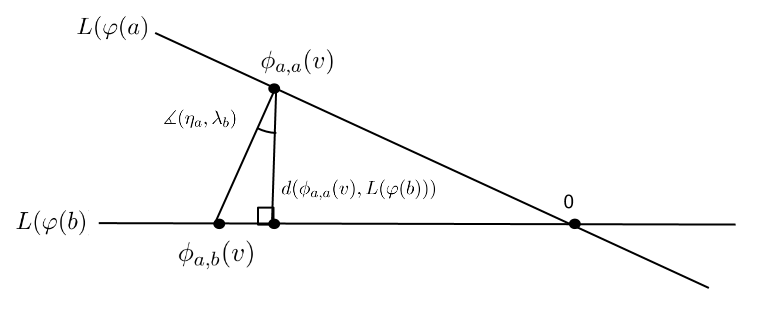}
\caption{Diagram of (\ref{phi.2})}
\end{center}
\end{figure}

Next, we note that
\begin{equation}\label{phi.3}
\meang(\eta_a,\lambda_b)\sleq \meang(\eta_a,\eta_b)+\meang(\eta_b,\nu_b) + \meang(\nu_b, \lambda_b)\sleq \frac\pi 3
\end{equation}
 for $r_0$ small enough by Lemma \ref{vectors}(2), $\meang(\eta_b,\nu_b) = \pi/4$, and Lemma \ref{tangentangle}. Thus, $\sec(\meang(\eta_a, \lambda_b))\sleq 2$. In addition, for $r_0$ small enough we have that $\meang(\nu_a, \lambda_a)\sleq5\pi/12$ (see (\ref{phi.3}) and Lemma \ref{tangentangle}), so $|\phi_{a,a}(v)|\sleq R$ for some $R$ (independent of $a$ and $b$). Hence, (\ref{phi.2}) and (\ref{phi.3}) gives us that
\begin{equation}\label{phi.4}
||\phi_{a,a}(v)-\phi_{a,b}(v)|| \sleq C \D{0,R}{L(\vphi(a))}{ L(\vphi(b))} \sleq C\D{0,1}{L(\vphi(a))}{ L(\vphi(b))}.
\end{equation}
Because $\vphi$ is Lipschitz, (\ref{distlessthanangle}), Corollary \ref{angle} and (\ref{phi.4}) tell us that
\begin{equation}\label{phi.5}
||\phi_{a,a}(v)-\phi_{a,b}(v)|| \sleq C |\vphi(a)-\vphi(b)|^{\beta_2}\sleq C|a-b|^{\beta_2}.
\end{equation}

To prove (3), we apply (1) and (2) to get
\begin{equation}
||\phi_a - \phi_b||  \sleq ||\phi_{a,a}-\phi_{a,b}|| + ||\phi_{a,b}-\phi_{b,b}||\sleq C|a-b|^{\min(\beta_2,\frac\gamma{1+\gamma})}.
\end{equation}

We now prove (4). Applying the definition of $R$, we get
\begin{equation}\label{phi.6}
|R(a) - R(b)| = \left|\frac{|\tau(\vphi(a))|}{|\tau(a)|}-\frac{|\tau(\vphi(b))|}{|\tau(b)|}\right| = 
\left|\frac{|\tau(\vphi(a))|\,|\tau(b)| - |\tau(\vphi(b))|\,|\tau(a)|}{|\tau(a)||\tau(b)|}\right|.
\end{equation}
We reobserve the identity that for $w,x,y,z\in \RR$,
\begin{equation}\label{phi.7}
xy - zw = \frac12 \left((x-z)(y+w) + (y-w)(x+z)\right).
\end{equation}
Applying (\ref{phi.7}) to (\ref{phi.6}) gives
\begin{equation}\label{phi.8}\begin{split}
|R(a)-R(b)| \sleq \frac1{2|\tau(a)||\tau(b)|}
&\Big(
\big| |\tau(\vphi(a))| - |\tau(\vphi(b))|\big|\big(|\tau(a)| + |\tau(b)|\big)
\\
\empty&+\big| |\tau(a)| - |\tau(b)| \big| \big(|\tau(\vphi(a))| + |\tau(\vphi(b))|\big)
\Big).\end{split}
\end{equation}
We note that because $|\tau(a)| = |a|/{\sqrt2}$ because $a\in\cC$, and the same holds for $b$. We then apply that $\vphi$ is Lipschitz (as well as $\vphi(0) = 0$), and the fact that $|\tau(x) - \tau(y)|\sleq |x-y|$ to (\ref{phi.8}) get that
\begin{equation}\label{phi.9}
|R(a)-R(b)|\sleq \frac C{|a||b|}\Big(|a-b| (|a|+|b|) \Big)
\end{equation}
Recall that we showed that $|a|$ and $|b|$ are within a constant factor of eachother (while proving (\ref{vectors.3}), so (\ref{phi.9}) gives
\begin{equation}\label{phi.10}
|R(a)-R(b)|\sleq C \frac{|a-b|}{|a|}\sleq C \frac{|a-b|}{|a-b|\frexp1{1+\gamma}} = C|a-b|\frexp\gamma{1+\gamma}.
\end{equation}

\end{proof}

We note that the argument used in (\ref{vectors.5}) and (\ref{phi.2})  will recur in many variations in the proofs to come. The reader who glazed over this point is encouraged to review it.

\begin{lemma}\label{Mnorm1}
Recall hypotheses (E0)-(E2). For $a\in C\cap O$, $||M_a - M_0|| \sleq C|a|^{\beta_3}$.
\end{lemma}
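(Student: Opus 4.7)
The plan is to bound $\|M_a - \mathrm{Id}\|$ by evaluating $M_a - \mathrm{Id}$ on the orthonormal frame $\{r_a, \nu_a, \theta_a^1, \theta_a^2\}$ at $a$, where $\theta_a^1, \theta_a^2$ are two orthonormal vectors of type $\theta$. The case $a = 0$ is trivial since $M_0 = \mathrm{Id}$; for $a \neq 0$, an orthonormal-basis estimate yields $\|M_a - \mathrm{Id}\| \leq 2 \max_i |(M_a - \mathrm{Id})(e_i)|$, so it suffices to prove each evaluation is at most $C|a|^{\beta_3}$. The direction $\nu_a$ contributes $0$ since $M_a(\nu_a) = \nu_a$ by definition.

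The key step handles the two tangential directions uniformly. Both $r_a$ and $\theta_a^i$ lie in $T_a\cC - a$: the ray through $0$ and $a$ lies on $\cC$, and any type-$\theta$ vector is tangent to the cross section $\cC_{a_4} \subseteq \cC$. Now apply Lemma \ref{tangentangle} at $x = \vphi(a)$, noting $\pi(\vphi(a)) = a$ and $|\vphi(a)| \leq C|a|$ (from $\vphi(0) = 0$ and upper-Lipschitz-ness of $\vphi$, or directly from Lemma \ref{vphidist}). This gives $\meang(L(\vphi(a)), T_a\cC - a) \leq C|a|^{\beta_3}$, and hence for any unit $v \in T_a\cC - a$ the orthogonal distance $d(v, L(\vphi(a))) \leq C|a|^{\beta_3}$. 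Since $\phi_a$ is projection onto $L(\vphi(a))$ along $\eta_a$, and
\[
\meang(\eta_a, \lambda_a) \leq \meang(\eta_a, \nu_a) + \meang(\nu_a, \lambda_a) \leq \tfrac{\pi}{4} + C|a|^{\beta_3} \leq \tfrac{\pi}{3}
\]
for $r_0$ small enough, we conclude
\[
|\phi_a(v) - v| = \sec\!\meang(\eta_a, \lambda_a) \cdot d(v, L(\vphi(a))) \leq C|a|^{\beta_3}.
\]
For $v = r_a$ this is exactly $|M_a(r_a) - r_a|$, closing that case.

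For $v = \theta_a^i$, the extra factor $R(a)$ must be handled. Using $|\tau(a)| = |a|/\sqrt{2}$ (which holds for $a \in \cC$ since $a_4^2 = |\tau(a)|^2$) and $|\vphi(a) - a| \leq C|a|^{1+\beta_1}$ from Lemma \ref{vphidist},
\[
|R(a) - 1| = \frac{\bigl||\tau(\vphi(a))| - |\tau(a)|\bigr|}{|\tau(a)|} \leq \frac{|\vphi(a) - a|}{|\tau(a)|} \leq C|a|^{\beta_1} \leq C|a|^{\beta_3},
\]
since $\beta_3 \leq \beta_1$ and $|a| \leq r_0 \leq 1$. By the triangle inequality and $|\phi_a(\theta_a^i)| \leq C$ (which follows from the $\sec$ bound above),
\[
|M_a(\theta_a^i) - \theta_a^i| \leq |\phi_a(\theta_a^i)|\cdot|R(a) - 1| + |\phi_a(\theta_a^i) - \theta_a^i| \leq C|a|^{\beta_3}.
\]
Combining the four basis estimates yields $\|M_a - M_0\| \leq C|a|^{\beta_3}$. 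The main subtlety is simply tracking that $\beta_3$ really is the bottleneck: it arises through Lemma \ref{tangentangle} in the tangent-plane comparison, and the much smaller contribution $|a|^{\beta_1}$ from the $R(a)$ factor is automatically absorbed.
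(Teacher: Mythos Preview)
Your proof is correct and follows essentially the same approach as the paper: evaluate $M_a - \Id$ on the orthonormal frame $\{r_a,\nu_a,\theta_a^1,\theta_a^2\}$, handle $\nu_a$ trivially, use Lemma~\ref{tangentangle} (via $\pi(\vphi(a))=a$) and the secant identity for oblique projection to bound $|\phi_a(v)-v|$ on tangential vectors, and control $|R(a)-1|$ by Lemma~\ref{vphidist}. The only differences are cosmetic---you make the $\meang(\eta_a,\lambda_a)\sleq\pi/3$ bound and the passage from $|\vphi(a)|^{\beta_3}$ to $|a|^{\beta_3}$ explicit, whereas the paper absorbs these into the constant $C$ without comment.
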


\begin{proof}
Note that at $a$ we can choose an orthonormal basis $\cB_a=\{r_a,\nu_a,\theta^1_a,\theta^2_a\}$ where $r_a$ and $\nu_a$ are the radial and normal vectors as before, and $\theta^i_a$ is a vector of type $\theta$. We thus check that each vector $v\in\cB_a$ will satisfy the bound
\begin{equation}\label{Mnorm1.1}
|M_a(v)-M_0(v)|\sleq C|a|^{\beta_3},
\end{equation}
from which the result follows. Recall that by definition $M_0 = \Id$.

First, we consider the easiest case, $\nu_a$. By definition $M_a(\nu_a) = \nu_a$, and so $M_a(\nu_a)-\nu_a = 0$.

 Next, consider $M_a(r_a)-r_a$. By definition, $M_a(r_a) = \phi_a(r_a)$. Because $\phi_a$ is a projection in the $\eta_a$ direction, we get that 
\begin{equation}\label{Mnorm1.2}
|\phi_a(r_a)-r_a| = \sec(\meang(\eta_a, \lambda_a)) d(r_a, L(\vphi(a)))\sleq C \D{0,1}{T_a\cC-a} {L(\vphi(a))}\sleq C|a|^{\beta_3}.
\end{equation}

Finally, we consider $\theta_a$ a vector of type $\theta$. Recall that $M_a(\theta_a) = \frac{|\tau(\vphi(a))|}{|\tau(a)|}\phi_a(\theta_a)$. We compute
\begin{equation}\label{Mnorm1.3}
\begin{split}
|M_a(\theta_a)-\theta_a|& =\left|\frac{|\tau(\vphi(a))|}{|\tau(a)|}\phi_a(\theta_a) - \theta_a\right|\sleq \left| \frac{|\tau(\vphi(a))|}{|\tau(a)|}-1\right||\phi_a(\theta_a)| + |\phi_a(\theta_a)-\theta_a|.
\end{split}
\end{equation}
From Lemma \ref{vphidist} we get that
\begin{equation}\label{Mnorm1.4}
\left| \frac{|\tau(\vphi(a))|}{|\tau(a)|}-1\right| = \frac{\big||\tau(\vphi(a))|-|\tau(a)|\big|}{|\tau(a)|}\sleq C\frac{|\vphi(a)-a|}{|a|}\sleq C{|a|}^{\beta_1}.
\end{equation}
Applying (\ref{Mnorm1.4}) to (\ref{Mnorm1.3}), we get
\begin{equation}
\begin{split}
|M_a(\theta_a)-\theta_a|&\sleq C|a|^{\beta_1} + \sec(\meang(\eta_a,\lambda_a))d(\theta_a,L(\vphi(a)))\\& \sleq C|a|^{\beta_1} +  C\D{0,1}{T_a\cC - a}{ L(\vphi(a))}\sleq C|a|^{\beta_3}.
\end{split}
\end{equation}

\end{proof}

\begin{lemma}\label{Mnorm2}
Recall hypotheses (E0)-(E2).
For $a,b\in \cC\cap O$, we have that $||M_a - M_b|| \sleq C|a-b|\frexp{\beta_3}{1+\gamma}.$
Or in the language of Theorem \ref{Whitney}, $\rho_1(\cC\cap O, r)\sleq C r\frexp{\beta_3}{1+\gamma}$
(recall (\ref{rho_i}), and note that $\cC\cap O$ is compact).
\end{lemma}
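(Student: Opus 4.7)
The plan is to split into two regimes depending on whether $a$ and $b$ are ``far'' or ``close'' relative to $|a|^{1+\gamma}/A$. In the far regime $|a-b|\sgeq |a|^{1+\gamma}/A$, I will use the triangle inequality through the third vertex $M_0 = \Id$, writing
\[
\|M_a-M_b\|\sleq \|M_a-M_0\|+\|M_0-M_b\| \sleq C\bigl(|a|^{\beta_3}+|b|^{\beta_3}\bigr)
\]
by Lemma \ref{Mnorm1}. Assuming without loss of generality $|a|\sgeq|b|$, the distance hypothesis gives $|a|\sleq (A|a-b|)^{1/(1+\gamma)}$, which turns the bound into $C|a-b|^{\beta_3/(1+\gamma)}$, as desired. (The degenerate case $a=0$ automatically lands in this regime.)

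The main work is the close regime, $|a-b|\sleq |a|^{1+\gamma}/A$. Here I will evaluate on the orthonormal basis $\cB_a = \{r_a, \nu_a, \theta_a^1, \theta_a^2\}$ at $a$ and bound $|M_a(v)-M_b(v)|$ for each $v\in\cB_a$. By Lemma \ref{vectors}(2)--(5), each $v_a\in\cB_a$ has a companion unit vector $v_b$ at $b$ of the same type (radial, normal, or type $\theta$) satisfying $|v_a-v_b|\sleq C|a-b|^{\gamma/(1+\gamma)}$, and the formula defining $M_b$ on $v_b$ is structurally the same as the formula defining $M_a$ on $v_a$. I will then use the decomposition
\[
M_a(v_a)-M_b(v_a) = \bigl(M_a(v_a)-M_b(v_b)\bigr) - M_b(v_a-v_b).
\]
The second term is controlled by $\|M_b\|\cdot|v_a-v_b|$, and choosing $r_0$ small enough Lemma \ref{Mnorm1} guarantees $\|M_b\|\sleq 2$, yielding a contribution of at most $C|a-b|^{\gamma/(1+\gamma)}$.

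The first term splits naturally by the type of $v_a$. When $v_a=\nu_a$, it is simply $\nu_a-\nu_b$, already under control. When $v_a=r_a$, we have $M_a(v_a)-M_b(v_b) = \phi_a(r_a) - \phi_b(r_b)$, which I will split as $(\phi_a(r_a)-\phi_a(r_b)) + (\phi_a(r_b) - \phi_b(r_b))$ and bound using the boundedness of $\phi_a$ together with Lemma \ref{phi}(3), obtaining $C|a-b|^{\min(\beta_2,\gamma/(1+\gamma))}$. The $\theta$ case is analogous but produces one extra term, since $M_a(\theta_a) - M_b(\theta_b) = R(a)\phi_a(\theta_a) - R(b)\phi_b(\theta_b)$; I will insert the intermediate quantity $R(b)\phi_a(\theta_a)$ and invoke Lemma \ref{phi}(4) for $|R(a)-R(b)|\sleq C|a-b|^{\gamma/(1+\gamma)}$, together with Lemma \ref{phi}(3) as before. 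The worst exponent among all pieces is $\min(\beta_2,\gamma/(1+\gamma))$; since $\beta_3\sleq \min(\beta_2(1+\gamma),\gamma)$ gives $\beta_3/(1+\gamma)\sleq \min(\beta_2,\gamma/(1+\gamma))$, every bound dominates $C|a-b|^{\beta_3/(1+\gamma)}$, finishing the proof.

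The principal obstacle will be the bookkeeping in the close regime, since $M_a$ depends on $a$ through three distinct mechanisms (the vector $\eta_a$ used to define $\phi_a$, the target plane $L(\vphi(a))$, and the scaling factor $R(a)$), each contributing its own H\"older exponent. The $\theta$ case in particular produces four separate error pieces that must all be absorbed into a single bound with exponent $\beta_3/(1+\gamma)$, so correct tracking of the worst exponent is the essential point.
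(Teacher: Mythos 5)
Your proposal is correct and matches the paper's argument in every essential respect: the same two-scale split via $|a-b|\lessgtr |a|^{1+\gamma}/A$, the same passage through $M_0=\Id$ and Lemma \ref{Mnorm1} in the far regime, and the same evaluation on the frame $\{r_a,\nu_a,\theta^1_a,\theta^2_a\}$ with the decomposition $M_a(v_a)-M_b(v_a)=(M_a(v_a)-M_b(v_b))-M_b(v_a-v_b)$ and the Lemma \ref{vectors}/\ref{phi} estimates in the close regime. The only (cosmetic) difference is that in the type-$\theta$ case you insert the intermediate term $R(b)\phi_a(\theta_a)$, where the paper instead applies the symmetrized identity $xy-zw=\tfrac12((x-z)(y+w)+(y-w)(x+z))$; both yield the same bound.
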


\begin{proof}
Recall that $\beta_3 = \min(\beta_1-\gamma, \beta_2(1+\gamma), \gamma)$. We break the proof into two scales. First, we assume that $|a-b|\sgeq \max(|a|, |b|)^{1+\gamma}/A$. Then Lemma \ref{Mnorm1} tells us that
\begin{equation}\label{Mnorm2.1}
||M_a-M_b|| \sleq ||M_a -M_0|| + ||M_b-M_0|| \sleq C\left(|a|^{\beta_3} + |b|^{\beta_3}\right)\sleq C|a-b|\frexp{\beta_3}{1+\gamma}.
\end{equation}

Let us now assume that $|a-b|\sleq |a|^{1+\gamma}/A$. As in Lemma \ref{Mnorm1}, we will consider the radial, normal, and type $\theta$ vectors separately. By Lemma \ref{vectors}(3), we have that $|\nu_a-\nu_b|\sleq  C|a-b|\frexp{\gamma}{1+\gamma}.$
We recall that $M_a(\nu_a) = \nu_a$. Thus, we compute
\begin{equation}
\begin{split}
|M_a(\nu_a) - M_b(\nu_a)|&\sleq |M_a(\nu_a)-M_b(\nu_b)| + |M_b(\nu_b-\nu_a)|\sleq |\nu_a-\nu_b| + ||M_b||\cdot|\nu_a-\nu_b|\\
&\sleq C|a-b|\frexp\gamma{1+\gamma}
\end{split}
\end{equation}
(note  $||M_b||\sleq ||M_b - M_0|| + 1\sleq  C$ by Lemma \ref{Mnorm1}).

We now show that
\begin{equation}\label{Mnorm2.2}
|M_a(r_a)-M_b(r_a)|\sleq C|a-b|\frexp{\beta_3}{1+\gamma}.
\end{equation}
 By Lemma \ref{vectors}(2), we have that $|r_a-r_b|\sleq C|a-b|\frexp\gamma{1+\gamma}.$
We thus compute
\begin{equation}\label{Mnorm2.4}
\begin{split}
|M_a(r_a) - M_b(r_a)|&\sleq |M_a(r_a) - M_b(r_b)| + |M_b(r_b-r_a)|\sleq |\phi_a(r_a)-\phi_b(r_b)| + ||M_b|| |r_b-r_a|\\
&\sleq |\phi_a(r_a) - \phi_b(r_b)| + C|b-a|\frexp\gamma{1+\gamma}.
\end{split}
\end{equation}
We now consider $|\phi_a(r_a) - \phi_b(r_b)|$. Applying  By Lemma \ref{vectors}(2) and Lemma \ref{phi}, we get
\begin{equation}\label{Mnorm2.5}\begin{split}
|\phi_a(r_a)-\phi_b(r_b)|&\sleq |\phi_a(r_a-r_b)| + |(\phi_a-\phi_b)(r_b)|\sleq ||\phi_a||\cdot|r_a-r_b| + ||\phi_a-\phi_b|| \cdot|r_b|\\
&\sleq C|a-b|^{\min(\beta_2, \frac{\gamma}{1+\gamma})}.\end{split}
\end{equation}
Coupling (\ref{Mnorm2.4}) and (\ref{Mnorm2.5}), we prove (\ref{Mnorm2.2}).

Finally, we consider vectors of type $\theta$. Let $\theta_a$ be a vector of type $\theta$ at $a$. By Lemma \ref{vectors}(5) there is a vector $\theta_b$ of type $\theta$ at $b$ such that
$|\theta_a-\theta_b|\sleq  C|a-b|\frexp\gamma{1+\gamma}.$
We now show that
\begin{equation}\label{Mnorm2.7}
|M_a(\theta_a)-M_b(\theta_a)|\sleq C|a-b|\frexp{\beta_3}{1+\gamma}.
\end{equation}
We compute that
\begin{equation}\label{Mnorm2.8}
\begin{split}
|M_a(\theta_a) - M_b(\theta_a)|&\sleq |M_a(\theta_a) - M_b(\theta_b)| + |M_b(\theta_b-\theta_a)|\sleq |M_a(\theta_a)-M_b(\theta_b)| + ||M_b||\cdot |\theta_b-\theta_a|\\
&\sleq \left|R(a) \phi_a(\theta_a) -R(b) \phi_b(\theta_b) \right| + C|b-a|\frexp\gamma{1+\gamma}.
\end{split}
\end{equation}
We now consider 
$\left|R(a) \phi_a(\theta_a) -R(b) \phi_b(\theta_b) \right|.$
We reobserve the identity
\begin{equation}\label{Mnorm2.9}
xy - zw = \frac12 \left((x-z)(y+w) + (y-w)(x+z)\right).
\end{equation}
Applying (\ref{Mnorm2.9}) to $\left|R(a) \phi_a(\theta_a) -R(b) \phi_b(\theta_b) \right|$ gives that
\begin{equation}\label{Mnorm2.10}
|R(a)\phi_a(\theta_a) - R(b)\phi_b(\theta_b)|\sleq |R(a)-R(b)|(|\phi_a(\theta_a)| + |\phi_b(\theta_b)|) + |\phi_a(\theta_a) - \phi_b(\theta_b)| (R(a) + R(b)).
\end{equation}
We then apply that $|R(a)|$, $|R(b)|$, $||\phi_a||$ and $||\phi_b||$ are all bounded, plus Lemma \ref{phi}(4) to (\ref{Mnorm2.10}) to get
\begin{equation}\label{Mnorm2.11}
|R(a)\phi_a(\theta_a) - R(b)\phi_b(\theta_b)|\sleq C |\phi_a(\theta_a) - \phi_b(\theta_b)| + C|a-b|\frexp{\gamma}{1+\gamma}.
\end{equation}
Thus to establish (\ref{Mnorm2.7}) (and finish the proof), we establish
\begin{equation}\label{Mnorm2.12}
|\phi_a(\theta_a) - \phi_b(\theta_b)|\sleq C |a-b|\frexp{\beta_3}{1+\gamma}.
\end{equation}
Applying $|\theta_a-\theta_b|\sleq  C|a-b|\frexp\gamma{1+\gamma}$
 and Lemma \ref{phi}, we compute that
\begin{equation}\label{Mnorm2.13}\begin{split}
|\phi_a(\theta_a)-\phi_b(\theta_b)|&\sleq |\phi_a(\theta_a-\theta_b)|+ |(\phi_a-\phi_b)(\theta_b)|\sleq ||\phi_a||\cdot|\theta_a-\theta_b| + ||\phi_a-\phi_b||\cdot |\theta_b|\\
&\sleq C |a-b|^{\min(\beta_2, \frac{\gamma}{1+\gamma})}\end{split}
\end{equation}
and conclude that (\ref{Mnorm2.7}) holds.
\end{proof}

\begin{lemma}\label{poly}
Recall hypotheses (E0)-(E2).
For $a,b\in\cC \cap O$, we have 
\leqn{poly.1}
\frac{|P_b(b)-P_a(b)|}{|b-a|}\sleq C|b-a|\frexp{\beta_3}{1+\gamma}.
\endeqn
Or, in the notation of Theorem \ref{Whitney}, we have that
$\rho_0(\cC\cap O, r)\sleq C r\frexp{\beta_3}{1+\gamma}$
(recall (\ref{rho_i}), and note that $\cC\cap O$ is compact).
\end{lemma}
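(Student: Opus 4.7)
Noting that $P_b(b)-P_a(b)=\vphi(b)-\vphi(a)-M_a(b-a)$, the lemma amounts to $|\vphi(b)-\vphi(a)-M_a(b-a)|\le C|a-b|^{1+\beta_3/(1+\gamma)}$. The case $a=0$ (where $P_0=\Id$ by definition) is immediate from Lemma \ref{vphidist}: $|P_b(b)-P_0(b)|=|\vphi(b)-b|\le C|b|^{1+\beta_1}\le C|b|^{1+\beta_3/(1+\gamma)}$, using $\beta_1\ge\beta_3/(1+\gamma)$. For $a,b\ne 0$, I follow the scheme of Lemma \ref{Mnorm2} and split into two regimes.

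\emph{Far regime} $(|a-b|\ge\max(|a|,|b|)^{1+\gamma}/A)$. The triangle inequality gives
\[
|\vphi(b)-\vphi(a)-M_a(b-a)|\le |\vphi(b)-b|+|\vphi(a)-a|+\|M_a-\Id\|\,|b-a|,
\]
which by Lemma \ref{vphidist} and Lemma \ref{Mnorm1} is $\le C\max(|a|,|b|)^{1+\beta_1}+C|a|^{\beta_3}|b-a|$. Since $\max(|a|,|b|)\le C|a-b|^{1/(1+\gamma)}$ in this regime, this is $\le C|a-b|^{(1+\beta_1)/(1+\gamma)}+C|a-b|^{1+\beta_3/(1+\gamma)}$; both exponents dominate $1+\beta_3/(1+\gamma)$ because $\beta_3\le\beta_1-\gamma$.

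\emph{Close regime} $(|a-b|\le|a|^{1+\gamma}/A)$. I decompose
\[
\vphi(b)-\vphi(a)-M_a(b-a)=\bigl[\vphi(b)-\vphi(a)-\phi_a(\vphi(b)-\vphi(a))\bigr]+\bigl[\phi_a(\vphi(b)-\vphi(a))-M_a(b-a)\bigr],
\]
call these $\mathrm{(I)}$ and $\mathrm{(II)}$. Part $\mathrm{(I)}$ is parallel to $\eta_a$ with length $\sec(\meang(\eta_a,\lambda_a))\cdot d(\vphi(b),P(\vphi(a)))\le C|\vphi(b)-\vphi(a)|^{1+\beta_2}\le C|a-b|^{1+\beta_2}$ by Corollary \ref{angle}(5), which suffices since $\beta_2(1+\gamma)\ge\beta_3$. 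For Part $\mathrm{(II)}$, parametrize $\vphi(x)=x+\psi(x)\eta_x$ for $x\in\cC\setminus\{0\}$, where by Lemma \ref{vphidist} $|\psi(x)|\le C|x|^{1+\beta_1}$, and in the close regime $|\psi(a)-\psi(b)|\le C|a-b|$ (using the Lipschitz property of $\vphi$ from Lemma \ref{bilip} together with $|\eta_a-\eta_b|\le C|a-b|/|a|$, a consequence of \eqref{vectors.3}). Writing $a=(h_0\omega_0,h_0)$, $b=(h_1\omega_1,h_1)$ on $\cC$, the decomposition of $b-a$ in the orthonormal basis $\{r_a,\nu_a,\theta^1_a,\theta^2_a\}$ has a quadratically small normal component $|\alpha_\nu|\le C|a-b|^2/|a|$ (since $\cC$ curves quadratically away from its tangent plane) and a type-$\theta$ component of the form $(h_1\Theta,0)$ with $|\Theta|\le C|\Delta\omega|\le C|a-b|/|a|$. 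Using $\phi_a(\eta_a)=0$, the identities $(\omega_0,0)=-\eta_a$ and hence $\phi_a(\eta_b-\eta_a)=-\phi_a((\Theta,0))$, and $R(a)-1=-\psi(a)/h_0$, a direct algebraic reduction yields
\[
\mathrm{(II)}=\bigl[\psi(a)h_1/h_0-\psi(b)\bigr]\phi_a((\Theta,0))+\alpha_\nu\bigl(\phi_a(\nu_a)-\nu_a\bigr).
\]
Each summand is bounded by $C|a-b|^2/|a|$: for the first, $|\psi(a)h_1/h_0-\psi(b)|\le|\psi(a)-\psi(b)|+|\psi(a)\Delta h/h_0|\le C|a-b|$ and $|\phi_a((\Theta,0))|\le C|a-b|/|a|$; for the second, $|\phi_a(\nu_a)-\nu_a|$ is bounded. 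Finally $|a-b|^2/|a|\le C|a-b|^{1+\gamma/(1+\gamma)}\le C|a-b|^{1+\beta_3/(1+\gamma)}$ because $\gamma\ge\beta_3$.

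The main technical step is the algebraic reduction for $\mathrm{(II)}$. The two contributions $\psi(b)\phi_a(\eta_b-\eta_a)$ (from the variation of $\eta$ along $\cC$) and $(1-R(a))h_1\phi_a((\Theta,0))$ (from $M_a$'s rescaling on the $\theta$-factor) are each of order $C|a|^{\beta_1}|a-b|$ individually, a bound that is \emph{insufficient} in the close regime since $|a|^{\beta_1}$ need not be controlled by $|a-b|^{\beta_3/(1+\gamma)}$ there. The key observation is that these two terms combine into the single scalar factor $\psi(a)h_1/h_0-\psi(b)$, which benefits from the Lipschitz bound on $\psi$ to produce the required quadratic bound $C|a-b|^2/|a|$.
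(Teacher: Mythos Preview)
Your proof is correct, and it takes a genuinely different route from the paper's. Both proofs handle the far regime $|a-b|\ge\max(|a|,|b|)^{1+\gamma}/A$ identically, via Lemmas~\ref{vphidist} and~\ref{Mnorm1}. The divergence is in the close regime.

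The paper splits the close regime into two geometric sub-cases: \emph{radially separated} pairs (where $b-a\parallel r_a$, hence $\eta_a=\eta_b$) and \emph{$\theta$-separated} pairs (where $a_4=b_4$). In each sub-case the paper shows, by direct geometric reasoning---using that $\ell_a=\ell_b$ in the radial case, and a similar-triangles argument based at $(0,a_4)$ in the $\theta$-separated case---that a suitable affine image of $b$ lies on $\ell_b$, reducing the estimate to $d(\vphi(b),P(\vphi(a)))$. General pairs are then handled by passing through an intermediate point $c=(\tau(a),b_4)$ that is radially separated from $a$ and $\theta$-separated from $b$, and invoking Lemma~\ref{Mnorm2} to control the cross term $\|M_a-M_c\|\,|c-b|$.

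Your approach avoids the case split entirely. You write $\vphi(b)-\vphi(a)-M_a(b-a)=\mathrm{(I)}+\mathrm{(II)}$ where $\mathrm{(I)}$ is the component of $\vphi(b)-\vphi(a)$ killed by $\phi_a$; this is immediately controlled by Corollary~\ref{angle}(5). For $\mathrm{(II)}$ you compute directly in coordinates, writing $\vphi(x)=x+\psi(x)\eta_x$ and expanding everything in the frame $\{r_a,\nu_a,\theta_a^1,\theta_a^2\}$. The crux is the algebraic identity
\[
\mathrm{(II)}=\bigl[\psi(a)h_1/h_0-\psi(b)\bigr]\phi_a((\Theta,0))+\alpha_\nu\bigl(\phi_a(\nu_a)-\nu_a\bigr),
\]
which you derive cleanly from $\phi_a(\eta_a)=0$, $\phi_a(\eta_b-\eta_a)=-\phi_a((\Theta,0))$, and $R(a)-1=-\psi(a)/h_0$. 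The cancellation you isolate---between $\psi(b)\phi_a(\eta_b-\eta_a)$ and $(1-R(a))\phi_a(\theta_a)$, each individually only $O(|a|^{\beta_1}|a-b|)$---is exactly what the paper's similar-triangles argument encodes geometrically; you have made it explicit algebraically. The resulting bound $C|a-b|^2/|a|\le C|a-b|^{1+\gamma/(1+\gamma)}$ suffices since $\beta_3\le\gamma$.

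What each approach buys: the paper's route keeps the geometry visible (one sees \emph{why} $R(a)$ is the right scaling via the similar triangles), at the cost of a case split and the extra bookkeeping of the intermediate point $c$. Your route is shorter and handles all close pairs uniformly, but requires the reader to verify the coordinate identity for $\mathrm{(II)}$ and to recognize that the Lipschitz bound $|\psi(a)-\psi(b)|\le C|a-b|$ (which you correctly deduce from Lemma~\ref{bilip} and the bound $|\eta_a-\eta_b|\le C|a-b|/|a|$) is what rescues the combined scalar factor.
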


\begin{proof}
Applying the definition of the polynomials $P_a$ and $P_b$, we get
\begin{equation}\label{poly.2}
P_b(b)-P_a(b) = \vphi(b) + M_b(b-b) - \vphi(a)-M_a(b-a) = \vphi(b)-\vphi(a)-M_a(b-a).
\end{equation}
To begin, we consider the case where $|a-b|\sgeq \max(|a|,|b|)^{1+\gamma}/A$. We compute that
\begin{equation}\label{poly.3}
\begin{split}
|\vphi(b)-\vphi(a)-M_a(b-a)|&\sleq |b-a-M_a(b-a)| + |\vphi(b)-b| + |\vphi(a)-a|.
\end{split}
\end{equation}
By Lemmas \ref{vphidist} and \ref{Mnorm1}, we get that
\begin{equation}\label{poly.4}
\begin{split}
|b-a-M_a(b-a)| + |\vphi(b)-b| + |\vphi(a) - a|& \sleq||M_a-\Id||\cdot|b-a| + C|b|^{1+\beta_1} + C|a|^{1+\beta_1}\\
&\sleq |a|^{\beta_3}|b-a| + C|a-b|\frexp{1+\beta_1}{1+\gamma}\sleq C|b-a|\frexp{\beta_3}{1+\gamma}.
\end{split}
\end{equation}

Assume now that $a,b\in\cC\cap O$ and  $|a-b|\sleq |a|^{1+\gamma}/A$. First, we define some ways that $a$ and $b$ may differ from each other. Let $\pi_a$ be projection in the $\eta_a$ direction onto $T_a\cC-a$.
\begin{itemize}
  \item[(i)] We say that $a$ and $b$ are \emph{radially separated} if $b-a$ is a radial vector.
  \item[(ii)] We say that $a$ and $b$ are \emph{$\theta$ separated} if $\pi_a(b-a)$ is a vector of type $\theta$.
\end{itemize}
\begin{figure}[htb]
\begin{center}
\includegraphics[width=150pt]{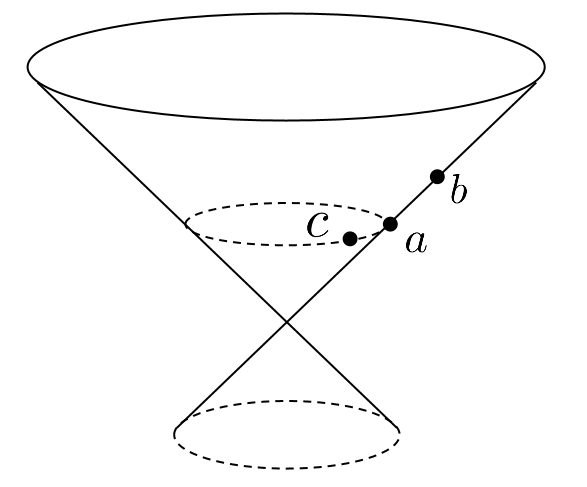}
\caption{$a$ and $b$ are radially separated, $a$ and $c$ are $\theta$ separated}
\end{center}
\end{figure}
Note that if $b-a = c r_a,$ then $\pi_a(b-a) = b-a$, and so conditions (i) and (ii) have more symmetry than may initially appear. We develop an alternative characterization of condition (ii), that
\begin{equation}\label{poly.5}
\mbox{$a$ and $b$ are $\theta$ separated}\quad\Leftrightarrow \quad a_4 = b_4.
\end{equation}
To see this, we first note that for all vectors $v\in T_a\cC-a$, that $v\cdot\nu_a = 0$. Also, $\nu_a + r_a = 2{a_4}/{|a|}\neq 0$ because $a\in \cC\setminus \{0\}$. Also, we note that because $\pi_a$ is projection in the $\eta_a$ direction and $(\eta_a)_4 = 0$, $\pi_a$ does not change the 4th coordinate. Thus, we have that
\begin{equation}\begin{split}
a_4 = b_4&\Leftrightarrow \pi_a(b-a)_4 = 0\Leftrightarrow \pi_a(b-a)\cdot (\nu_a + r_a) = 0 \Leftrightarrow
\pi_a(b-a)\cdot r_a = 0\\
 &\Leftrightarrow\,\mbox{$a$ and $b$ are $\theta$ separated},
\end{split}\end{equation}
establishing (\ref{poly.5}).

First, assume that $a$ and $b$ are radially separated. Then because $b-a$ is a mulitple of $r_a$, we apply the definition of $M_a$ to get that
\begin{equation}\label{poly.6}
P_b(b)-P_a(b) = \vphi(b) - \vphi(a) - M_a(b-a) = \vphi(b)-\vphi(a) - \phi_a(b-a).
\end{equation}
Note that when $a$ and $b$ are radially separated, $a/|a| = b/|b|$. So $r_a = r_b$  and $\eta_a = \eta_b$. We now use this to claim that
\begin{equation}\label{poly.7}
\vphi(a) + \phi_a(b-a)\in \ell_b.
\end{equation}
To see this, we expand
\begin{equation}\label{poly.8}
\vphi(a) + \phi_a(b-a) = \big(\vphi(a)-a\big) + \big(\phi_a(b-a) - (b-a)\big) + b.
\end{equation}
Because $\vphi$ is the inverse of $\pi$ which was projection in the $\eta_a$ direction, $\vphi(a)-a$ is a multiple of $\eta_a$, which in this case satisfies $\eta_a= \eta_b$. Because $\phi_a$ is a projection in the $\eta_a$ direction, we have that $\phi_a(b-a)-(b-a)$ is a scalar multiple of $\eta_a$, and $\eta_a = \eta_b$. Thus, from (\ref{poly.8}), we have that for some $s\in \RR$, 
\begin{equation}\label{poly.9}
\vphi(a) + \phi_a(b-a) = b + s\eta_b\in \ell_b
\end{equation}
(recall (\ref{ell_a})). Because $\vphi(b) \in \ell_b$, we have that $\vphi(b) - \vphi(a) - \phi_a(b-a)$ is a scalar multiple of $\eta_b$. Recall that $\lambda_a$ is the normal vector to $P(\vphi(a))$. Thus, because $\vphi(a) + \phi_a(b-a)\in P(\vphi(a))$ (because $\phi_a$ is projection into $L(\vphi(a))$), we have that
\begin{equation}\label{poly.10}
|\vphi(b) - \vphi(a) - \phi_a(b-a)| = \sec(\meang(\eta_b,\lambda_a)) d(\vphi(b), P(\vphi(a))\sleq C |b-a|^{1+\beta_2}
\end{equation}
by Corollary \ref{angle} and $\vphi$ being Lipschitz (Lemma \ref{bilip}). Thus, we have established (a stronger inequality than) (\ref{poly.1}) for $a$ and $b$ radially separated.

\begin{figure}
\begin{center}
\includegraphics[width = 250pt]{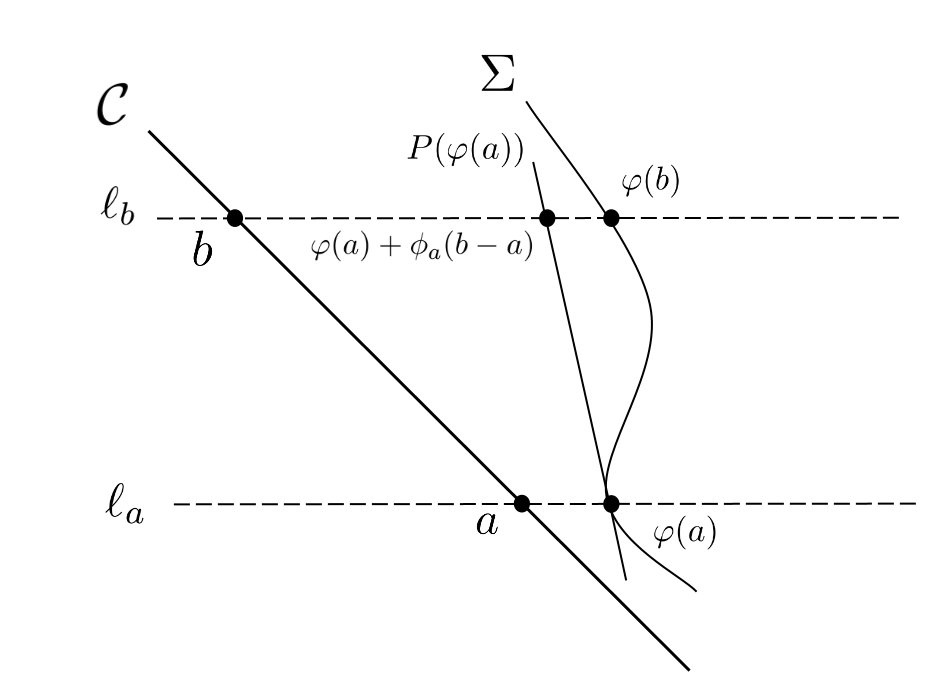}
\caption{The argument for radially separated points}
\end{center}
\end{figure}

Next, we assume that $a$ and $b$ are $\theta$ separated. In this case, many of the simplifications that we were able to make in the radial case will not hold true, but will be true up to $O\big(|b-a|^{1+\frac{\beta_3}{1+\gamma}}\big)$. Thus, while the core ideas remain the same, several more estimates must be applied. We begin by applying the assumption that $a$ and $b$ are $\theta$ separated to get
\begin{equation}\label{poly.11}
\begin{split}
|\vphi(b)-\vphi(a) -M_a(b-&a)|\sleq |\vphi(b)-\vphi(a) - M_a(\pi_a(b-a))| + |M_a(\pi_a(b-a)) - M_a(b-a)|\\
\sleq&|\vphi(b) - \vphi(a) - R(a)\phi_a(\pi_a(b-a))| + ||M_a||\cdot |\pi_a(b-a) - (b-a)|.
\end{split}
\end{equation}
By Lemma \ref{KPflat1}(2) and that $\pi_a$ is projection onto $T_a\cC$ in the $\eta_a$ direction, we have that
\begin{equation}\label{poly.12}
|\pi_a(b-a)-(b-a)|\sleq \sec(\meang(\eta_a,\nu_a)) d(b, T_a\cC)\sleq C\frac {|b-a|^2}{|a|}\sleq
 C |b-a|^{1+\frac{\gamma}{1+\gamma}}.
\end{equation}
Thus applying (\ref{poly.12}) to (\ref{poly.11}), we get that
\begin{equation}\label{poly.13}
|\vphi(b)-\vphi(a)-M_a(b-a)|\sleq |\vphi(b)-\vphi(a) - R(a)\phi_a(\pi_a(b-a))| + C|b-a|^{1+\frac\gamma{1+\gamma}}.
\end{equation}
Thus, we strive to establish
\begin{equation}\label{poly.14}
|\vphi(b)-\vphi(a)-R(a)\phi_a(\pi_a(b-a))|\sleq C |b-a|^{1+\frac{\beta_3}{1+\gamma}},
\end{equation}
which by (\ref{poly.13}) will establish (\ref{poly.1}) for $a$ and $b$ which are $\theta$ separated.

To begin proving (\ref{poly.14}), we note that because both $\phi_a$ and $\pi_a$ are projections in the $\eta_a$ direction, that $\phi_a(\pi_a(b-a)) = \phi_a(b-a)$. We use this and the linearity of $\phi_a$ to get
\begin{equation}\label{poly.15}
\vphi(b)-\vphi(a) - R(a)\phi_a(\pi_a(b-a)) = \vphi(b)-\vphi(a) - \phi_a(R(a)(b-a)).
\end{equation}
Next, we apply Lemma \ref{phi}(1) to get
\begin{equation}\label{poly.16}
\begin{split}
|\vphi(b) - \vphi(a) - \phi_a(R(a)(b-a))|&\sleq |\vphi(b)-\vphi(a) - \phi_{b,a}(R(a)(b-a))| + |(\phi_{b,a}-\phi_a)(R(a)(b-a))|\\
&\sleq |\vphi(b)-\vphi(a)-\phi_{b,a}(R(a)(b-a))| + C|b-a|^{1+\frac{\gamma}{1+\gamma}}.
\end{split}
\end{equation}

Similarly to the radial case, we now claim that
\begin{equation}\label{poly.17}
\vphi(a) + \phi_{b,a}(R(a)(b-a))\in \ell_b.
\end{equation}
Recall that by (\ref{poly.5}), $a_4  = b_4$. Note that $a + |\tau(a)|\eta_a = (0,a_4)$, where $0$ here represents $0\in \RR^3$. Thus, we have that $\ell_a\ni (0,a_4) = (0,b_4)\in \ell_b$. For $x,y,z\in \RR^4$ not colinear, let $\Delta x,y,z$ be the triangle with corners $x,y,$ and $z$. Consider $\Delta (0,a_4),a,b$. Let $z$ be the point so that $\Delta(0,a_4),\vphi(a),z$ is similar to $\Delta(0,a_4),a,b$ (see Fig. 2). Then we have that $z\in \ell_b$. Further, because the length of side $(0,a_4) , a$ is $|\tau(a)|$ and the length of side $(0,a_4), \vphi(a)$ is $|\tau(\vphi(a))|$, we get that $z = \vphi(a) + \frac{|\tau(\vphi(a))|}{|\tau(a)|}(b-a) = \vphi(a) + R(a)(b-a)$. Thus, $\vphi(a) + R(a)(b-a)\in \ell_b$. Further, since $\phi_{b,a}$ is a projection in the $\eta_b$ direction, we have that (\ref{poly.17}) holds.

\begin{figure}[htb]
\begin{center}
\includegraphics[width=250pt]{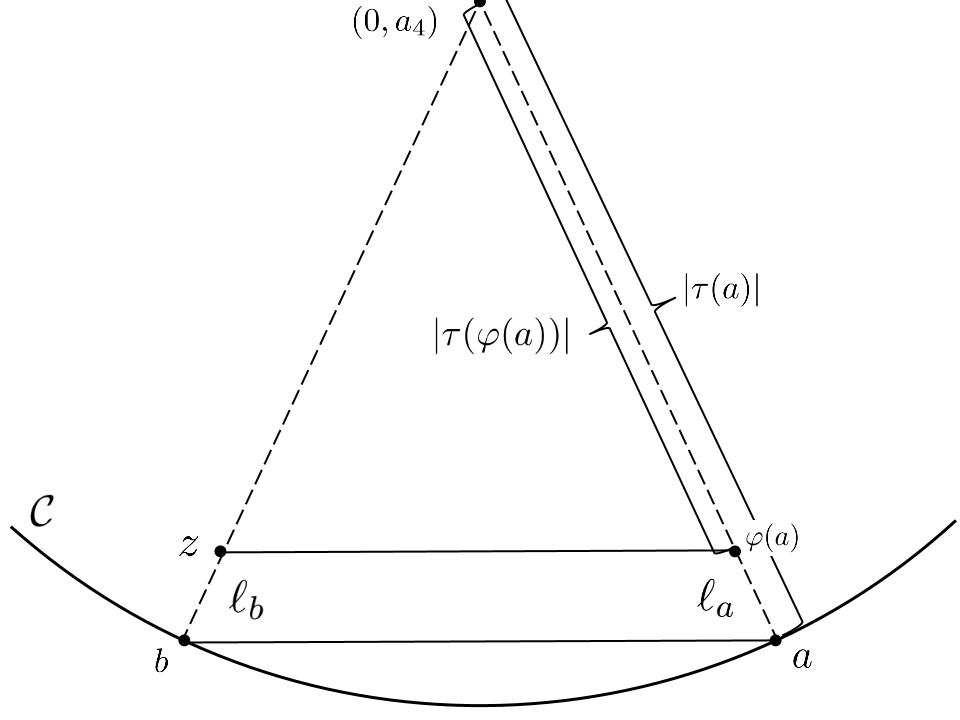}
\caption{
The point $z$ in the argument above; $z = \vphi(a) + \frac{|\tau(\vphi(a))|}{|\tau(a)|}(b-a)$}
\end{center}
\end{figure}

Note $\vphi(b)\in\ell_b$. By (\ref{poly.17}), we have that $\vphi(b)-\vphi(a) - \phi_{b,a}(R(a)(b-a))$ is a scalar multiple of $\eta_b$ and $\vphi(a)+\phi_{b,a}(R(a)(b-a))\in P(\vphi(a))$. Thus, as in the radial case, we get that
\begin{equation}\label{poly.18}
|\vphi(b)-\vphi(a) - \phi_{b,a}(R(a)(b-a))| = \sec(\meang(\eta_b,\lambda_a)) d(\vphi(b),P(\vphi(a)))\sleq C|b-a|^{1+\beta_2}.
\end{equation}
Thus, we have established (\ref{poly.14}), and thus established (a slightly stronger version of) (\ref{poly.1}) for $\theta$ separated points.

\begin{figure}[htb]
\begin{center}
\includegraphics[width=350pt]{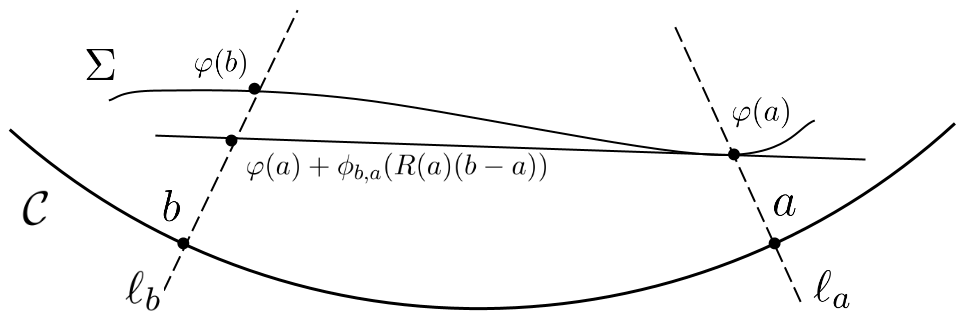}
\caption{The argument for $\theta$ separated points.}
\end{center}
\end{figure}

Finally, we consider general points $a,b\in \cC\cap O$ with $|a-b|\sleq |a|^{1+\gamma}/A$. We define $c = (\tau(a),b_4)$. Note that $a$ and $c$ are radially separated, $b$ and $c$ are $\theta$ separated, and $|a-c|,|c-b|\sleq |a-b|$. We compute
$P_b(b) - P_a(b) = P_b(b)-P_c(b) +P_c(b)-P_a(b).$
We expand to get
\begin{equation}\label{poly.20}
\begin{split}
P_c(b)-P_a(b)& = \vphi(c) + M_c(b-c) - \vphi(a) - M_a(b-a)\\
& = \vphi(c)-\vphi(a) - M_a(c-a) + M_a(c-a) - M_a(b-a) + M_c(b-c)\\
& = P_c(c)-P_a(c) + M_a(c-b) - M_c(c-b).
\end{split}
\end{equation}
Thus, by (\ref{poly.20}), we get that
\begin{equation}\label{poly.21}
|P_b(b)-P_a(b)|\sleq |P_b(b)-P_c(b)| + |P_c(c)-P_a(c)| + ||M_a-M_c||\cdot |c-b|.
\end{equation}
We now use that (\ref{poly.1}) holds for radially and $\theta$ separated points, Lemma \ref{Mnorm2}, and (\ref{poly.21}) get that
\begin{equation}\label{poly.22}
|P_b(b)-P_a(b)|\sleq C|b-c|^{1+\frac{\beta_3}{1+\gamma}} + C |c-a|^{1+\frac{\beta_3}{1+\gamma}} + C|a-c|\frexp{\beta_3}{1+\gamma}|c-b|.
\end{equation}
We now apply that $|b-c|,|c-a|\sleq |a-b|$ to get that $|P_b(b)-P_a(b)|\sleq C|a-b|^{1+\frac{\beta_3}{1+\gamma}}$ for all points $a,b\in \cC\cap O$.

\end{proof}

We now have gathered all of the estimates necessary to prove Theorem \ref{parametrization}.

\begin{proof}[Proof of Theorem \ref{parametrization}]
Let $\beta = {\beta_3}/({1+\gamma})$. First, note that $\vphi(a) = P_a(a)$. By Lemmas \ref{Mnorm2} and \ref{poly}, we have that Theorem \ref{Whitney} says that $\vphi$ extends to a $C^{1,\beta}$ map on $\RR^4$ (which we also call $\vphi$) such that $\vphi(a) = P_a(a)$ and $D_a\vphi = D_a P_a$ for all $a\in \cC\cap O$. Because $\vphi$ is a bijection from $\cC\cap O$ to $\Sigma\cap O$ (see Lemma \ref{bilip}), and $D_aP_a = M_a$ which is always of full rank (see definition of $M_a$), we have that there is some open set $U$ containing $\cC\cap O$ such that $\vphi$ is a diffeomorphism on $U$. Taking $U' = \vphi(U)$ completes the proof.

\end{proof}

\begin{remark}
We finish by reminding ourselves that Theorem \ref{theorem} is a local theorem. For example, Figure 7 (which is of course a dimension short) shows a set which at every point is smoothly parametrized by a KP cone or a plane.
\begin{figure}
\begin{center}
\includegraphics[width = 200pt]{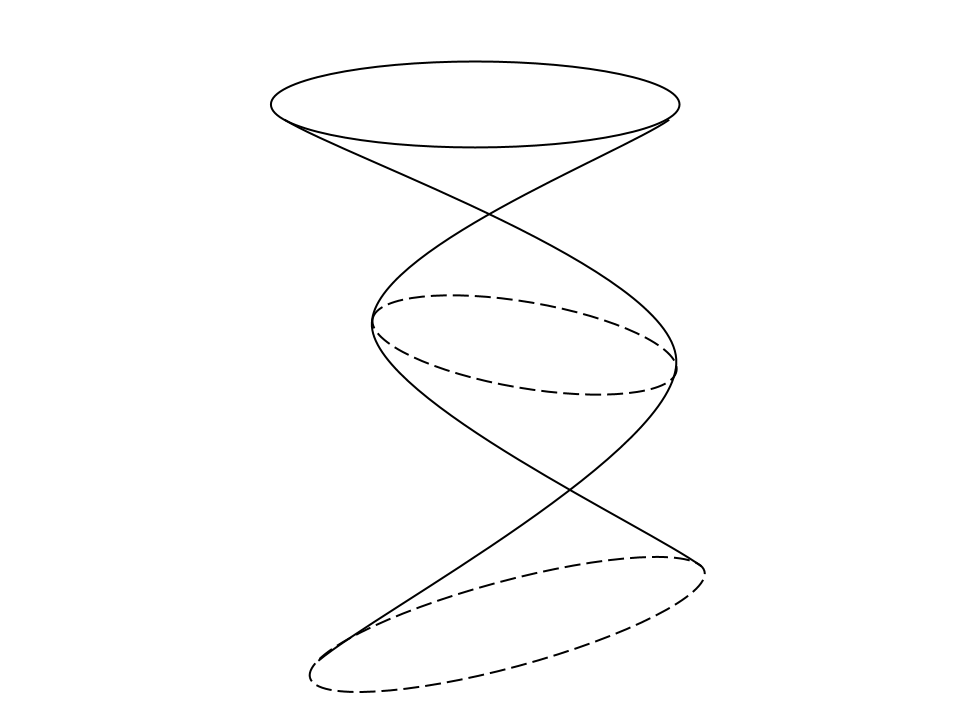}
\caption{A set locally smoothly parametrized by planes and KP cones.}
\end{center}
\end{figure}
\end{remark}

\appendix
\section{Appendix}

\begin{lemma}\label{C2}
Suppose that $U,V\subseteq \RR^n$ are open sets, $\Gamma\subseteq \RR^n$, $0<m<n$, and $\psi\in C^2(U,V)$ is bijective and satisfies
\begin{equation}\label{C2.1}
0<\lambda\sleq \frac{|\psi(x)-\psi(y)|}{|x-y|}\sleq \Lambda\mbox{ for all } x,y\in U
\quad
\mbox{ and }\quad
||D^2\psi^a||_\infty = \sup_{x\in U}||D_x^2\psi||<\infty.
\end{equation}
Let $z\in \Gamma\cap U$, $B(z,r)\subseteq U$, $P$ be a plane of dimension $m$ through $z$, and set $\twid P = D_z\psi (P-z) + \psi(z)$, $\twid\Gamma = \psi(\Gamma)$. Then
\begin{equation}\label{C2.3}
\hd{\psi(z),\lambda r}{\twid \Gamma}{\twid P}\sleq \frac{||D^2 \psi||_\infty}{2\lambda}r + \frac{\Lambda}{\lambda}\hd{z,r}{\Gamma}{P}.
\end{equation}
\end{lemma}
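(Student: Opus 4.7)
The plan is to reduce $\hd{\psi(z),\lambda r}{\twid\Gamma}{\twid P}$ to the flatness estimate $\hd{z,r}\Gamma P$ on the source side, using the bilipschitz bound in (\ref{C2.1}) to transport balls between source and target, and Taylor's theorem to control the deviation of $\psi$ from its linearization $D_z\psi$. Fix $\twid y \in \twid\Gamma \cap B(\psi(z),\lambda r)$ and write $\twid y = \psi(y)$ with $y \in \Gamma$. The lower bound in (\ref{C2.1}) gives $|y-z| \leq |\psi(y)-\psi(z)|/\lambda \leq r$, so $y \in \Gamma \cap B(z,r)$.

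Given $\epsilon > 0$, by the definition of $\hd{z,r}\Gamma P$ choose $p \in P \cap B(z,r)$ with $|y-p| \leq r\,\hd{z,r}\Gamma P + \epsilon$, and set $\twid p_0 := D_z\psi(p-z) + \psi(z) \in \twid P$. Since $B(z,r) \subseteq U$ and $\psi \in C^2(U)$, Taylor's theorem with integral remainder along the segment $[z,y]$ yields
\[
|\psi(y) - \psi(z) - D_z\psi(y-z)| \leq \tfrac{1}{2}\|D^2\psi\|_\infty\, |y-z|^2 \leq \tfrac{1}{2}\|D^2\psi\|_\infty\, r^2.
\]
Differentiating the upper bound in (\ref{C2.1}) at $z$ gives $\|D_z\psi\| \leq \Lambda$, so by the triangle inequality
\[
|\twid y - \twid p_0| \leq \tfrac{1}{2}\|D^2\psi\|_\infty\, r^2 + \Lambda |y-p| \leq \tfrac{1}{2}\|D^2\psi\|_\infty\, r^2 + \Lambda r\,\hd{z,r}\Gamma P + \Lambda \epsilon.
\]

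The only remaining subtlety is that $\twid p_0$ need not lie in $B(\psi(z),\lambda r)$, whereas the definition of $\hd{\psi(z),\lambda r}{\twid\Gamma}{\twid P}$ takes infima only over $\twid P \cap B(\psi(z),\lambda r)$. This is handled cleanly by the fact that $\twid P$ is an affine plane through $\psi(z)$ and hence a cone based at $\psi(z)$: by Lemma \ref{ConeG1} (or directly, orthogonal projection of $\twid y$ onto $\twid P$) one obtains $\twid p \in \twid P \cap B(\psi(z),|\twid y - \psi(z)|) \subseteq \twid P \cap B(\psi(z),\lambda r)$ with $|\twid y - \twid p| \leq |\twid y - \twid p_0|$. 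Dividing by $\lambda r$, letting $\epsilon \downarrow 0$, and taking the supremum over $\twid y \in \twid\Gamma \cap B(\psi(z),\lambda r)$ delivers (\ref{C2.3}). The main obstacle is purely this mismatch between the target ball $B(\psi(z),\lambda r)$ and the natural pushforward ball of radius $\Lambda r$, and the flatness of $\twid P$ resolves it essentially for free.
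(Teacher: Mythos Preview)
Your proof is correct and follows essentially the same approach as the paper's: pull a point of $\twid\Gamma\cap B(\psi(z),\lambda r)$ back to $\Gamma\cap B(z,r)$ via the lower bilipschitz bound, approximate by a nearby point of $P$, push forward via $D_z\psi$, bound the error by Taylor plus the Lipschitz constant, and finally invoke Lemma~\ref{ConeG1} to pass from $\twid d^{\psi(z),\lambda r}$ to $d^{\psi(z),\lambda r}$. The only cosmetic difference is that the paper applies Taylor along the segment $[z,p]$ (controlling $|D_z\psi(p-z)+\psi(z)-\psi(p)|$) and then the bilipschitz upper bound to $|\psi(p)-\psi(y)|$, whereas you apply Taylor along $[z,y]$ and then the operator bound $\|D_z\psi\|\sleq\Lambda$ to $|D_z\psi(y-p)|$; both decompositions yield the identical estimate.
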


\begin{proof}
Without loss of generality, take $z = \psi(z) = 0$. Let $P\subseteq \RR^n$ be an $m$-plane through $0$ and set $d = \hd{0,r}{\Gamma}P.$
Note that $\lambda\sleq \frac{|D_0\psi (v) |}{|v|} \sleq \Lambda$ for all $v\in \RR^n \setminus\{0\}$ by (\ref{C2.1}). Further, note that we have
\begin{equation}\label{C2.5}
B(0,\lambda r)\subseteq \psi(B(0,r)).
\end{equation}
Let $y\in \twid \Gamma\cap B(0,\lambda r)$.  Then by (\ref{C2.5}) and bijectivity, we have that there exists $x\in B(0,r)\cap \Gamma$ such that $y=\psi(x)$. By $d = \hd{0,r}{\Gamma}P$, we get that there exists $p\in P$ such that $|p-x|\sleq rd.$ Let $\twid p = D_0\psi(p)\in \twid P$. We compute
\begin{equation}\label{C2.7}
\begin{split}
|\twid p-y| & = |D_0\psi(p) - \psi(x)|\sleq |D_0\psi (p) - \psi(p)| + |\psi(p) - \psi(x)|.
\end{split}
\end{equation}
By (\ref{C2.1}) and Taylor expansion, we get that
\begin{equation}\label{C2.8}
|\twid p-y|\sleq \frac{||D^2\psi||_\infty}{2}|p-0|^2 + \Lambda |p-x|\sleq \frac{||D^2\psi||_\infty}{2}r^2 + \Lambda  rd.
\end{equation}
Because for every $y\in B(0,\lambda r)\cap \twid \Gamma$, there exists $\twid p\in \twid P$ satisfying (\ref{C2.8}), we get that
\begin{equation}\label{C2.9}
\hda{0,\lambda r}{\twid \Gamma}{\twid P}\sleq  \frac{||D^2\psi||_\infty}{2\lambda}r + \frac{\Lambda}{\lambda} d .
\end{equation}
By Lemma \ref{ConeG1}, we get that (\ref{C2.9}) tells us (\ref{C2.3}).
\end{proof}

\begin{lemma}\label{coords}
For $a\in\cC\setminus\{0\}$, and $A>0$ large enough, there exists a neighborhood $U\supseteq B(a, 2|a|/{A})$, $V\subseteq\RR^3$ open, $I\ni 0$ an open interval, and a smooth coordinate map $\psi^a: U\to V\times I$ such that $V\times \{0\} = \psi^a(\cC\cap U)$ and $\twid \pi = \psi^a\circ\pi\circ(\psi^a)\inv$ is orthogonal projection onto $\RR^3\times\{0\}$ (where $\pi$ is the same map defined in Section 5; see (\ref{nearestpoint})). Further, $\psi^a$ satisifes the estimates
\begin{equation}\label{coords.1}
\frac12\sleq \frac{|\psi^a(x)-\psi^a(y)|}{|x-y|}\sleq 2\qquad \mbox{ for all }x,y\in U
\end{equation}
and
\begin{equation}\label{coords.2}
||D^2\psi^a||_\infty = \sup_{x\in U} ||D^2_x\psi^a|| \sleq \frac C{|a|}
\end{equation}
for some $C$ independent of $a$.
\end{lemma}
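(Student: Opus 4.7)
By the rotational symmetry of $\cC$ about the $x_4$-axis (and the commutativity of $\pi$ with these rotations), it suffices to construct $\psi^a$ at a single representative point in each orbit; rotate so that $a = (c,0,0,c)$ with $c = |a|/\sqrt{2}$ (the case $a_4<0$ is handled by the additional reflection $x_4\mapsto -x_4$, which preserves both $\cC$ and $\pi$ up to a sign). The plan is to write $\psi^a$ down explicitly, verify the geometric properties, and control both estimates by direct differentiation.

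Define
\[
\psi^a(y_1, y_2, y_3, x_4) = \left(\tfrac{x_4 y_2}{|y|},\ \tfrac{x_4 y_3}{|y|},\ x_4,\ x_4 - |y|\right).
\]
This formula is smooth on $W = \{y_1 > 0,\ x_4 > 0\}$, which for $A$ large contains $B(a, 4|a|/A)$. Explicit inversion (recover $x_4 = \xi_3$, $|y| = \xi_3-\xi_4$, $(y_2, y_3) = \frac{\xi_3-\xi_4}{\xi_3}(\xi_1, \xi_2)$, $y_1 = (|y|^2 - y_2^2 - y_3^2)^{1/2}$) shows that $\psi^a$ is a diffeomorphism of $W$ onto its image. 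Since $\cC \cap W = \{|y|=x_4\}$, the fourth coordinate of $\psi^a$ vanishes precisely on $\cC$; I would take $V \subset\RR^3$ and an interval $I\ni 0$ to be a product neighborhood of $\psi^a(a)$ containing $\psi^a(B(a, 2|a|/A))$ and set $U = (\psi^a)^{-1}(V\times I)$, which satisfies $B(a, 2|a|/A) \subseteq U \subseteq B(a, K|a|/A)$ for some uniform $K$ and obeys $\psi^a(\cC\cap U) = V \times \{0\}$. For the conjugation, substitute $\pi(x) = (x_4 y/|y|, x_4)$ into $\psi^a$: the first three coordinates are preserved while the fourth becomes $x_4 - x_4 = 0$, which is precisely orthogonal projection onto $\RR^3\times\{0\}$.

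For the quantitative estimates, every second partial of $\psi^a$ is a rational function whose numerator is polynomial in $x_4$ and the $y_i$ and whose denominator is a power of $|y|$; on $U$ both $|y|$ and $x_4$ are comparable to $|a|$, which yields $\|D^2\psi^a\|_\infty \leq C/|a|$ with $C$ independent of $a$. At the base point, a direct calculation gives
\[
D_a \psi^a = \begin{pmatrix}0 & 1 & 0 & 0\\ 0 & 0 & 1 & 0\\ 0 & 0 & 0 & 1\\ -1 & 0 & 0 & 1\end{pmatrix},
\]
and diagonalizing $D_a\psi^a(D_a\psi^a)^T$ reveals singular values $1,\, 1,\, \sqrt{(3-\sqrt 5)/2} \approx 0.618$, and $\sqrt{(3+\sqrt 5)/2}\approx 1.618$, all strictly inside $(1/2, 2)$. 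Since singular values are $1$-Lipschitz in the operator norm, the $C^2$ estimate yields $\|D_x\psi^a - D_a\psi^a\| \leq (C/|a|)\cdot K|a|/A = O(1/A)$ on $U$, so the singular values of $D_x\psi^a$ stay in $[1/2, 2]$ throughout $U$ provided $A$ is taken large enough. Integrating $D_x\psi^a$ and $D_y(\psi^a)^{-1}$ along line segments in the convex sets $U$ and $V\times I$ then gives the bi-Lipschitz bound in both directions.

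The main subtlety is conceptual rather than computational: $\psi^a$ must simultaneously straighten $\cC$ (killing the fourth coordinate on $\cC$) and trivialize the $\eta$-foliation (so that the projection $\pi$ becomes linear). Because $\eta_a$ and the unit normal $\nu_a$ to $\cC$ meet at a $\pi/4$ angle, $D_a\psi^a$ cannot be an isometry, and one must settle for a chart whose linearization at $a$ has singular values \emph{near}, but not equal to, $1$. The specific spread $\approx [0.618,\,1.618]$ above sits strictly inside the target window $(1/2, 2)$ with room to spare, and it is exactly this quantitative slack—combined with the $O(1/A)$ variation coming from the $C^2$ bound—that makes the constants $[1/2, 2]$ attainable after choosing $A$ sufficiently large.
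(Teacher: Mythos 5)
Your proof is correct and gives a genuinely different — and in some ways cleaner — construction than the paper's. The paper defines $(\psi^a)^{-1}$ abstractly: pick orthonormal coordinates on $T_a\cC$, let $p$ be orthogonal projection $\cC\to T_a\cC$, and set $(\psi^a)^{-1}(z,t) = p^{-1}(z) + t\,\eta_{p^{-1}(z)}$; it then reads off $D_a\psi^a$ on the (non-orthogonal) frame $\{e_1,e_2,e_3,\eta_a\}$, invokes smoothness for the $C^2$ bound at the normalized scale $|a|=1$, and finally conjugates by a rotation and rescales to handle general $a$. You instead write the chart $\psi^a$ down in closed form after rotating $a$ to $(c,0,0,c)$; this makes the conjugation $\psi^a\circ\pi\circ(\psi^a)^{-1} = $ projection immediate, makes the $C^2$ bound a matter of counting homogeneity degrees in a rational expression, and packages the rotation/scaling step into the choice of representative $a$. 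Your singular-value computation at $a$ (values $1,1,\phi^{-1},\phi$, all in $(1/2,2)$) is correct; the fact that it gives a different window than the paper's $[1/\sqrt 2,\sqrt2]$ is because the two charts are literally different maps — both land comfortably in $[1/2,2]$, which is all the lemma requires.

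Two small housekeeping points you may want to tighten. First, for the bi-Lipschitz estimate you integrate $D\psi^a$ and $D(\psi^a)^{-1}$ along segments, which requires a convex domain; your $U = (\psi^a)^{-1}(V\times I)$ need not be convex, and indeed there is a mild circularity in choosing $V\times I$ to contain $\psi^a(B(a,2|a|/A))$ before the Lipschitz control is established. The standard fix is to first establish both estimates on the convex set $B(a,K|a|/A)\subseteq W$ (valid for $A$ large, since $\psi^a$ is globally defined and smooth on $W$), and only then carve out $V\times I$ and $U$ from within it. Second, the lower Lipschitz bound uses convexity of $V\times I$ for the segment in the target; you flag this but it is worth stating explicitly that both singular-value bounds ($\geq 1/2$ and $\leq 2$) are needed so that $D(\psi^a)^{-1}$ also has operator norm $\leq 2$. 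Neither issue is a gap in the mathematics — the paper glosses over essentially the same point — but they are the spots a careful referee would poke.
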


\begin{proof}
First, we fix an $a\in \cC$, $|a| = 1$. We define $\psi^a$ by defining its inverse. Choose orthonormal coordinates $(z_1, z_2, z_3)$ on $T_a\cC$ centered at $a$. Let $p$ be orthogonal projection from $\cC$ onto $T_a\cC$, and take $U'\supseteq B(a, 8/A)$ to be an open set where $p\inv$ is defined. Identifying $T_a\cC$ with $\RR^3$ under the $z$ coordinates, let $V = U'\cap T_a\cC$. Let $I = (-8/A,8/A)$. Define for $(z,t)\in V\times I$
\begin{equation}\label{coords.3}
(\psi^a)\inv(z,t) = p\inv(z) + t\eta_{p\inv(z)}.
\end{equation}

Assume that $A\sgeq 16$, so that $8/A\sleq 1/2$. Note that $\psi^a$ is bijective onto $U  = (\psi^a)\inv(V\times I)$, because $\eta$ is a smooth vector field (away from the $x_4$ axis), and the point $(z,t)$ is the flow after time $t$ of the point $p\inv z$ along the integral curves of $\eta$. Further, the same comments imply that it is smooth. Then we note that because $p\inv(z)\in \cC$, $V\times\{0\} = \psi^a(\cC\cap U)$. Further, $\twid\pi (z,t) = \psi^a(\pi( p\inv(z) + t\eta_{p\inv(z)})) = \psi^a(p\inv(z)) = z$, and so $\twid\pi$ is orthogonal projection onto $V\times \{0\}$.  

We now show that (\ref{coords.1}) holds for $\psi^a$ as long as $U'$ is chosen small enough and $1/A$ is chosen small enough. Continuing the identifcation of $T_a\cC$ with $\RR^3$, we set $e_i$ to be the coordinate vector of $z_i$, and note that $D_a\psi^a$ is the map
\begin{equation}\label{coords.4}
D_a\psi^a(e_i) = e_i,\quad D_a\psi^a \eta_a = e_4\qquad\mbox{ for } i =1,2,3.
\end{equation}
Because the $z_i$ are orthonormal and $\meang(\eta_a, T_a\cC) = \pi/4$, we get that 
\begin{equation}\label{coords.5}
\langle e_i, e_j\rangle = \delta_{ij}, \quad |\langle e_i, \eta_a\rangle|\sleq 1/{\sqrt 2}.
\end{equation}
From (\ref{coords.4}) and (\ref{coords.5}), as well as recalling that $|\nu_a| = 1$, we get that
\begin{equation}\label{coords.6}
\frac 1{\sqrt2} \sleq \frac{|D_a\psi^a v|}{|v|}\sleq \sqrt 2\quad\mbox{ for } v\in \RR^4\setminus\{ 0\}.
\end{equation}
Because $\psi^a$ is smooth, it follows from (\ref{coords.6}) that for $U'$ small enough,
\begin{equation}\label{coords.7}
\frac 12\sleq \frac{|\psi^a(x) - \psi^a(y)|}{|x-y|} \sleq 2 \quad\mbox{ for }x,y\in U'.
\end{equation}
Thus if $A$ is large enough, $B(a, 8/A)\subseteq U'$. By definition of $U$ and (\ref{coords.7}), we have that $B(a, 2/A)\subseteq U$. Because $\psi$ is $C^2$, by potentially restricting to a compactly contained open set, we may assume
\begin{equation}\label{coords.8}
C = ||D^2\psi^a||_\infty< \infty.
\end{equation}

Let $b\in \cC$, $|b| = 1$. Then there is a rotation $O\in O(4)$ taking $b$ to $a$ and fixing $\cC$. Define $\psi^b = O\inv\circ \psi^a\circ O$. For $b\in \cC\setminus \{0\}$, we define $\psi^b = |b|\psi^{\frac b{|b|}}(\cdot/{|b|})$. Note that $\psi^{\frac b{|b|}}$ satisfies $(\ref{coords.7})$ and $(\ref{coords.8})$, we have that $\psi^b$ satisfies $(\ref {coords.1})$ and $(\ref{coords.2})$.

\end{proof}

\end{document}